\definecolor{marin}{rgb}   {0.,   0.3,   0.7} 
\definecolor{rouge}{rgb}   {0.8,   0.,   0.} 
\definecolor{sepia}{rgb}   {0.8,   0.5,   0.}
\newtheorem{theorem}{Theorem}[section]
\newtheorem{corollary}[theorem]{Corollary}
\newtheorem{proposition}[theorem]{Proposition}
\newtheorem{remark}{Remark}
\newtheorem{lemma}[theorem]{Lemma}
\newcommand{\R}{\mathbb{R}}
\newcommand{\K}{\mathcal{K}}
\newcommand{\Sp}{\mathbb{S}}
\newcommand{\N}{\mathbb{N}}
\newcommand{\T}{\mathbb{T}}
\newcommand{\Z}{\mathbb{Z}}
\title{On the Production of Dissipation by\\
Interaction of Forced Oscillating Waves\\ 
in Fluid Dynamics }
\author{ Aur\'elien Klak
\footnote{Universit\'e de Rennes I, IRMAR, UMR 6625-CNRS,Campus de Beaulieu, 35042 Rennes, France, E.mail: aurelien.klak@univ-rennes1.fr}}
\date{}
\begin{document}

\maketitle


\begin{abstract}
In the context of some bidimensionnal Navier-Stokes model, we exhibit a family of exact oscillating solutions $\{u_{\varepsilon}\}_{\varepsilon}$ defined on some strip $[0,T]\times\R^2$ which does not depend on $\varepsilon\in]0,1]$. The exact solutions is described thanks to a complete expansions which reveal a boundary layer in time $t=0$. The interactions of the various scales ($1$, $1/\varepsilon$ and $1/\varepsilon^2$)  produce a macroscopic effect given by the addition of a diffusion. To justify the existence of $\{u_\varepsilon\}_{\varepsilon}$, we need to perform various Sobolev estimates that rely on a refined balance between the informations coming from the hyperbolic and parabolic parts of the equations.
\end{abstract}


\noindent Mathematics subject classification (2000): 35-XX, 76N17, 76M45\\
Keywords: Equations of mixed hyperbolic and parabolic type, oscillations, BKW Calculus, stability.


\section{Introduction}

\label{Introduction} 
In Section \ref{Introduction}, we introduce the underlying equations
and the functional framework. Then, we state our main result.


\subsection{The equations}

\label{Equations} 
The time and space variables are $t\in\R_{+}$ and $x:=(x_{1},x_{2})\in\R^{2}$.
The state variables are the density $\rho\in\R_{+}$ and the two components
$u_{1}$ and $u_{2}$ of the velocity of the fluid $u:={}^{t}(u_{1},u_{2})\in\R^{2}$.
Given a function $u:\R^{2}\longrightarrow\R^{2}$, note as usual:
\[
\text{div}\, u:=\partial_{1}u_{1}+\partial_{2}u_{2}\,,\qquad\partial_{1}:=\frac{\partial}{\partial x_{1}}\,,\quad\partial_{2}:=\frac{\partial}{\partial x_{2}}\,.
\]
 In what follows, $\varepsilon\in\,]0,1]$ is a parameter approaching
zero. Introduce the dissipation: 
\[
\mathcal{P}_{\varepsilon}u=\,^{t}\left(\mathcal{P}_{\varepsilon}^{1}u\,,\mathcal{P}_{\varepsilon}^{2}u\right):=\mu\,\varepsilon^{2}\,\Delta_{x}u+\lambda\ \varepsilon^{3}\ \nabla\,\text{div}\ u
\]
 where $\mu,\,\lambda\in\R_{+}^{*}$ are fixed. Let $h$ be some smooth
periodic function with mean zero: 
\[
h\,:\,\T\longrightarrow\R\,,\qquad\T:=\R/\Z\,,\qquad h\in{\mathcal{C}}^{\infty}(\T;\R)\,,\qquad\int_{\T}\, h(\theta)\ d\theta=0\,.
\]

Consider the following oscillation which is polarized on the second
component: 
\[
F_{\varepsilon}(x)={}^{t}(0,F_{\varepsilon}^{2})(x):=\varepsilon^{-2}\ {}^{t}\Bigl(0,\mu\,\partial_{\theta\theta}^{2}h\left(\varepsilon^{-2}x_{1}\right)\Bigr)\,,\qquad\varepsilon\in\,]0,1]\,.
\]
 Our starting point is the study of a model based on two-dimensional
compressible isentropic equations of the Navier-Stokes type, as can
be found in \cite{MR2296806,MR1637634}, forced by the source term
$F_{\varepsilon}\,$: 
\[
\begin{cases}
\partial_{t}\rho+\text{div}(\rho\, u)=0\,,\\
\partial_{t}(\rho u)+\text{div}(\rho\, u\otimes u)+\nabla\rho^{\gamma}=\rho\left(\mathcal{P}_{\varepsilon}u-F_{\varepsilon}\right)\,,
\end{cases}\quad u\otimes u:=\begin{pmatrix}u_{1}u_{1} & u_{1}u_{2}\\
u_{1}u_{2} & u_{2}u_{2}
\end{pmatrix}\,,
\]
 where $\gamma$ the adiabatic constant is supposed to be larger than
one. To obtain a quasi linear system having a symmetric form, it is
classical \cite{MR928191} to introduce the state variable $p:=\frac{\sqrt{\gamma}}{C}\rho^{C}$
with $C:=\frac{\gamma-1}{2}$. Then, we have to deal with: 
\begin{equation}
\begin{cases}
\partial_{t}p+u\cdot\nabla p+C\, p\,\text{div}\, u=0\,,\\
\partial_{t}u+u\cdot\nabla u+C\, p\,\nabla p=\mathcal{P}_{\varepsilon}u-F_{\varepsilon}\,.
\end{cases}\label{eqref}
\end{equation}
 Observe that: 
\[
\mathcal{P}_{\varepsilon}{}^{t}(0,h_{\varepsilon})-F_{\varepsilon}=0\,,\qquad h_{\varepsilon}(x):=h\Bigl(\frac{x_{1}}{\varepsilon^{2}}\Bigr)\,,\qquad\forall\,\varepsilon\in\,]0,1]\,.
\]
 It follows that, for all $\varepsilon\in\,]0,1]$, the oscillation
$^{t}(0,0,h_{\varepsilon})$ satisfies Equation~\eqref{eqref}.

Our aim is to consider the problem of the \textit{stability} of such
families of solutions. To this end, at the initial time $t=0$, we
modify $^{t}(0,0,h_{\varepsilon})$ by adding some perturbation. More
precisely, we start with: 
\begin{equation}
(p,u^{1},u^{2})(0,x)\,=\,\bigl(0,0,h\bigr)\Bigl(\frac{x_{1}}{\varepsilon^{2}}\Bigr)\,+\,\bigl(\,\varepsilon^{\nu}\, q_{0,\varepsilon}\,,\,\varepsilon^{M}\, v_{0,\varepsilon}^{1}\,,\,\varepsilon^{M}\, v_{0,\varepsilon}^{2}\,\bigr)\Bigl(\frac{x_{1}}{\varepsilon^{2}},\frac{x_{2}}{\varepsilon}\Bigr)\label{eqrefdatas}
\end{equation}
 where $(\nu,M)\in\N^{2}$ with $\nu$ large enough and $M\geq7/2$
(retain that $\nu\gg M$), whereas: 
\[
(q_{0,\varepsilon},v_{0,\varepsilon}^{1},v_{0,\varepsilon}^{2})(\theta,y)\in H^{\infty}(\T\times\R;\R^{3})\,,\qquad y:=\frac{x_{2}}{\varepsilon}\in\R\,.
\]

One effect of the above perturbation is to introduce a dependence
on $x_{2}\in\R$ (or $y\in\R$). Despite the smallness of $\varepsilon^{\nu}$
(and maybe $\varepsilon^{M}$), when solving (\ref{eqref})-(\ref{eqrefdatas}),
we have to understand the interactions that occur between the very
fast oscillations in the direction $x_{1}$ (with wavelength $\varepsilon^{2}$)
and the fast variations in the transversal direction $x_{2}$ (with
wavelength $\varepsilon$). On this way, we are faced with questions
about \textit{turbulence}, in the spirit of models proposed in \cite{MR2182053,MR2233700,che2008}.\\

\medskip{}

Another insight on the subject can be obtained by looking at (\ref{eqref})
in the variables $(\theta,y)\in\T\times\R$. Then, we are faced with
a hyperbolic-parabolic system implying some singular (in $\varepsilon\in\,]0,1]$)
symmetric quasilinear part: 
\begin{equation}
\begin{cases}
\partial_{t}p\ \,+\varepsilon^{-2}\,\left(u^{1}\,\partial_{\theta}p+\varepsilon\, u^{2}\,\partial_{y}p\right)\ \ \ +C\,\varepsilon^{-2}\, p\,\left(\partial_{\theta}u^{1}+\varepsilon\,\partial_{y}u^{2}\right)=0\,,\\
\partial_{t}u^{1}+\varepsilon^{-2}\,\left(u^{1}\,\partial_{\theta}u^{1}+\varepsilon\, u^{2}\,\partial_{y}u^{1}\right)+C\,\varepsilon^{-2}\, p\ \partial_{\theta}p=\widetilde{\mathcal{P}}_{\varepsilon}^{1}u\,,\\
\partial_{t}u^{2}+\varepsilon^{-2}\,\left(u^{1}\,\partial_{\theta}u^{2}+\varepsilon\, u^{2}\,\partial_{y}u^{2}\right)+C\,\varepsilon^{-1}\, p\ \partial_{y}p=\widetilde{\mathcal{P}}_{\varepsilon}^{2}u-F_{\varepsilon}^{2}\,,
\end{cases}\label{eqrefthetay}
\end{equation}
 and some viscosity which is degenerate on the density and becomes
large when $\varepsilon\rightarrow0$: 
\[
\widetilde{\mathcal{P}}_{\varepsilon}u:=\begin{pmatrix}\widetilde{\mathcal{P}}_{\varepsilon}^{1}u\\
\widetilde{\mathcal{P}}_{\varepsilon}^{2}u
\end{pmatrix}=\frac{1}{\varepsilon^{2}}\begin{pmatrix}\mu\left(\partial_{\theta\theta}u^{1}+\varepsilon^{2}\partial_{yy}u^{1}\right)+\lambda\varepsilon\left(\partial_{\theta\theta}u^{1}+\varepsilon\partial_{\theta y}u^{2}\right)\\
\mu\left(\partial_{\theta\theta}u^{2}+\varepsilon^{2}\partial_{yy}u^{2}\right)+\lambda\varepsilon\left(\varepsilon\partial_{\theta y}u^{1}+\varepsilon^{2}\partial_{yy}u^{2}\right)
\end{pmatrix}.
\]

In this article, we show that (for $\nu$ large enough and $M\geq7/2$)
the \textit{oscillating Cauchy problem} \eqref{eqref}-\eqref{eqrefdatas}
is locally well posed in time. We prove (Theorem \ref{propestimationreste})
the existence of a time $T\in\R_{+}^{*}$ independent of $\varepsilon\in\,]0,1]$
with solutions $(p_{\varepsilon},u_{\varepsilon}^{1},u_{\varepsilon}^{2})=(\varepsilon^{\nu}\, q_{\varepsilon},\varepsilon^{M}\, v_{\varepsilon}^{1},h_{\varepsilon}+\varepsilon^{M}\, v_{\varepsilon}^{2})$
of (\ref{eqref})-(\ref{eqrefdatas}) on the interval $[0,T]$. We
also exhibit (Propositions \ref{propconstructionvelocity} and \ref{propconstructionpressure})
a complete expansion as $\varepsilon$ approaches $0$ for the expression
$(q_{\varepsilon},v_{\varepsilon}^{1},v_{\varepsilon}^{2})$. We find
(in a sense to be specified later) that $q_{\varepsilon}\simeq q_{\varepsilon}^{a}$
and $v_{\varepsilon}:=(v_{\varepsilon}^{1},v_{\varepsilon}^{2})\simeq v_{\varepsilon}^{a}:=(v_{\varepsilon}^{a1},v_{\varepsilon}^{a2})$
with: 
\begin{equation}
{\displaystyle q_{\varepsilon}^{a}(t,y,\theta)\,=\sum_{k=0}^{N+1}\,\varepsilon^{k}\, q_{k}^{\varepsilon}(t,y,\theta)\,,\qquad v_{\varepsilon}^{a}(t,y,\theta)\,=\sum_{k=0}^{N+1}\,\varepsilon^{k}\,\Bigl(v_{k}^{s}(t,y,\theta)+v_{k}^{f}\bigl(\frac{t}{\varepsilon^{2}},y,\theta\bigr)\Bigr)\,.}\label{asymplty}
\end{equation}
 These expansions reveal some time boundary layer at time $t=0$ (recorded
at the level of the contribution $v_{k}^{f}(\tau,\cdot)$ which is
exponentially decreasing with respect to the variable $\tau$) together
with some mean evolution behaviour (described by $v_{k}^{s}$). A
noticeable aspect is the production of some dissipation when looking
at the transport equation (\ref{eqhomogenizationg}) on $v_{k}^{s}$.
The present approach is not in the continuation of usual $k-\varepsilon$
models \cite{MR804006}. But, in the same spirit, it confirms (and
justifies) that the interaction of oscillations can indeed be described
at a macroscopic level by the introduction of some \textit{turbulent
viscosity}.


\subsection{The functional framework}

\subsubsection{Sobolev spaces}

Here $K$ denotes $\R$, $\T\times\R$ or $\R^{2}$. Let $\alpha=(\alpha_{1},\alpha_{2})\in\N^{2}$.
The length of $\alpha$ is $|\alpha|:=\alpha_{1}+\alpha_{2}$. The
notation $\partial^{\alpha}$ is for the differential operator $\partial_{\theta}^{\alpha_{1}}\partial_{y}^{\alpha_{2}}$.
\\

- Given $m\in\N\cup\{+\infty\}$ and $p\in\N^{*}\cup\{+\infty\}$,
recall that $W^{m,p}$ is: 
\[
W^{m,p}:=\bigl\{ f\in L^{p}(K)\ ;\ \partial^{\alpha}f\in L^{p}(K),\ |\alpha|\leq m\bigr\}\,,\qquad H^{m}:=W^{m,2}\,.
\]
 When $m\in\N$, the space $W^{m,p}$ can be equipped with the following
semi-norm and norm: 
\[
\forall\, p\in\N^{*}\cup\{+\infty\},\qquad\left\Vert f\right\Vert _{\overset{\circ\quad\,}{W^{m,p}}}:=\!\sum_{\alpha\in\N^{2},\ |\alpha|=m}\left\Vert \partial^{\alpha}f\right\Vert _{L^{p}}\,,\qquad\left\Vert f\right\Vert _{W^{m,p}}:=\sum_{k=0}^{m}\,\left\Vert f\right\Vert _{\overset{\circ\quad\,}{W^{k,p}}}\,.
\]
 For $s\in\R_{+}\setminus\N$, we can still define spaces $W^{s,p}$
and $H^{s}$ by interpolation theory. \\

- Let $(m,n)\in\N^{2}$ with $n\leq m$. Define the functional spaces:
\begin{align*}
\mathcal{W}_{T}^{m,n} & :=\Bigl\lbrace\, f\,;\, f\in C^{j}\bigl([0,T];W^{m-j,\infty}\bigr),\ \forall\, j\in\left\{ 0,\ldots,n\right\} \Bigr\rbrace\,,\\
\mathcal{H}_{T}^{m,n} & :=\Bigl\lbrace\, f\,;\, f\in C^{j}\bigl([0,T];H^{m-j}\bigr),\ \forall\, j\in\left\{ 0,\ldots,n\right\} \Bigr\rbrace\,,\qquad T\in\R_{+}\cup\{+\infty\}\,,
\end{align*}
 which can be seen as Banach spaces when provided with the norms:
\[
\left\Vert f\right\Vert _{\mathcal{W}_{T}^{m,n}}:=\sup_{t\in[0,T]}\ \sum_{j=0}^{n}\ \|\partial_{t}^{j}f(t,\cdot)\|_{W^{m-j,\infty}}\,,\qquad\left\Vert f\right\Vert _{\mathcal{H}_{T}^{m,n}}:=\sup_{t\in[0,T]}\ \sum_{j=0}^{n}\ \|\partial_{t}^{j}f(t,\cdot)\|_{H^{m-j}}\,.
\]

- In order to deal with functions $f(t,\cdot)$ defined on $\R_{+}\times K$,
which are exponentially decreasing in the time $t\in\R_{+}$, and
which take their values in the Sobolev space $H^{s}$, define: 
\[
\mathcal{E}_{\delta}^{s}:=\Bigl\{ f\ ;\ \sup_{t\in[0,+\infty[}\,\bigl(\, e^{\delta t}\ ||f(t,\cdot)||_{H^{s}(K)}\,\bigr)<+\infty\Bigr\}\,,\qquad\delta\in\R_{+}^{*}\,.
\]

- Finally, introduce ${\displaystyle \mathcal{E}_{\delta}^{\infty}:=\bigcap_{j\in\N}\mathcal{E}_{\delta}^{j}}$,
${\displaystyle \mathcal{V}_{T}^{\infty,0}:=\bigcap_{j\in\N}\mathcal{V}_{T}^{j,0}}$
and ${\displaystyle \mathcal{V}_{T}^{\infty}:=\bigcap_{j\in\N}\mathcal{V}_{T}^{j,j}}$
where $\mathcal{V}\in\{\mathcal{H},\mathcal{W}\}$.

\subsubsection{Families of functions}

In this paragraph, we fix some $\varepsilon_{0}\in\,]0,1]$ and look
at families of the type $\{f_{\varepsilon}\}_{\varepsilon\in\,]0,\varepsilon_{0}]}$.\\

\smallskip{}

- Assume that $f_{\varepsilon}\in W^{m,p}(K)$ for all $\varepsilon\in\,]0,\varepsilon_{0}]$.
To control the size of $f_{\varepsilon}$, we can use the following
weighted anisotropic semi-norm and norm: 
\[
\forall\, p\in\N^{*}\cup\{+\infty\},\qquad\left\Vert f_{\varepsilon}\right\Vert _{\overset{\circ\quad\,}{W_{(1,\varepsilon)}^{m,p}}}:=\!\sum_{\alpha\in\N^{2},\ |\alpha|=m}\left\Vert \varepsilon^{\alpha_{1}}\partial^{\alpha}f_{\varepsilon}\right\Vert _{L^{p}}\,,\qquad\left\Vert f_{\varepsilon}\right\Vert _{W_{(1,\varepsilon)}^{m,p}}:=\sum_{k=0}^{m}\,\left\Vert f_{\varepsilon}\right\Vert _{\overset{\circ\quad\,}{W_{(1,\varepsilon)}^{k,p}}}\,.
\]
 We will say that $\{f_{\varepsilon}\}_{\varepsilon}$ is bounded
in $W_{(1,\varepsilon)}^{m,p}$ when: 
\[
{\displaystyle \quad\vert\!\vert\!\vert f_{\cdot}\vert\!\vert\!\vert_{W_{(1,\cdot)}^{m,p}}:=\sup_{\varepsilon\in]0,\varepsilon_{0}]}\ \left\Vert f_{\varepsilon}\right\Vert _{W_{(1,\varepsilon)}^{m,p}}<+\infty\,.}
\]

- Assume that $f_{\varepsilon}\in\mathcal{V}_{T}^{m,n}$ for all $\varepsilon\in\,]0,\varepsilon_{0}]$
where $\mathcal{V}=\mathcal{W}$ or $\mathcal{V}=\mathcal{H}$. To
control the size of $f_{\varepsilon}$, we can use the following norms:
\[
\left\Vert f_{\varepsilon}\right\Vert _{\mathcal{V}_{T,\varepsilon}^{m,n}}:=\!\sup_{t\in[0,T]}\ \sum_{j=0}^{\lfloor n/2\rfloor}\ \|\varepsilon^{2j}\,\partial_{t}^{j}f_{\varepsilon}(t,.)\|_{V^{m-j}}\,,\quad\left\Vert f_{\varepsilon}\right\Vert _{\mathcal{V}_{T,(1,\varepsilon)}^{m,n}}\!:=\sup_{t\in[0,T]}\ \sum_{j=0}^{\lfloor n/2\rfloor}\,\|\varepsilon^{2j}\,\partial_{t}^{j}f_{\varepsilon}(t,.)\|_{V_{(1,\varepsilon)}^{m-j}}\,.
\]
 We will say that $\{f_{\varepsilon}\}_{\varepsilon}$ is bounded
in $\mathcal{V}_{T,\varepsilon}^{m,n}$ or in $\mathcal{V}_{T,(1,\varepsilon)}^{m,n}$
when we have respectively: 
\[
\vert\!\vert\!\vert f_{\cdot}\vert\!\vert\!\vert_{\mathcal{V}_{T,\cdot}^{m,n}}:=\sup_{\varepsilon\in\,]0,\varepsilon_{0}]}\ \left\Vert f_{\varepsilon}\right\Vert _{\mathcal{V}_{T,\varepsilon}^{m,n}}<+\infty\,,\qquad\vert\!\vert\!\vert f_{\cdot}\vert\!\vert\!\vert_{\mathcal{V}_{T,(1,\cdot)}^{m,n}}:=\sup_{\varepsilon\in\,]0,\varepsilon_{0}]}\ \left\Vert f_{\varepsilon}\right\Vert _{\mathcal{V}_{T,(1,\varepsilon)}^{m,n}}<+\infty\,.
\]

\noindent \textit{{Classical embedding} }: for $s>1$ we have $H^{s}(\T\times\R)\hookrightarrow W^{0,\infty}(\T\times\R)\equiv L^{\infty}(\T\times\R)$.
When taking into account the dependence on $\varepsilon\in\,]0,1]$,
there is a loss of powers in $\varepsilon$. Retain here that: 
\begin{equation}
\exists\, C\in\R_{*}^{+}\,;\qquad\left\Vert f_{\varepsilon}\right\Vert _{L^{\infty}(\T\times\R)}\,\leq\, C\ \varepsilon^{-1/2}\ \|f_{\varepsilon}\|_{H_{(1,\varepsilon)}^{s}(\T\times\R)}\,,\quad\forall\,\varepsilon\in\,]0,\varepsilon_{0}]\,.\label{eqinfininorm}
\end{equation}


\subsubsection{Decomposition of a periodic function}

\label{Decompositionofaperiodic}

Any function $u\in L^{2}(\T;\R)$ can be decomposed as: 
\[
u(\theta)=\left<u\right>+u^{*}(\theta)\,,\qquad\left<u\right>\in\R\,,\qquad u^{*}\in L^{2}(\T;\R)\,,\qquad\left<u\right>\equiv\Pi u:={\displaystyle \int_{\T}u(\theta)\ d\theta\,.}
\]

In what follows, given a symbol $\mathcal{V}\in\left\{ W^{m,p},H^{s},\mathcal{W}_{T}^{m,s},\mathcal{H}_{T}^{s},\mathcal{E}_{\delta}^{s}\right\} $,
we will manipulate functions $f(t,\theta,y)\in\mathcal{V}(\T\times\R)$.
We will often decompose $f$ into its mean and oscillating parts according
to 
\[
u(t,\theta,y)=\left<u\right>(t,y)+u^{*}(t,\theta,y),
\]
 with, \begin{subequations}\label{convenpre} 
\begin{align}
f^{\sslash}(t,y) & :=\Pi f(t,y):=\langle f(t,\cdot,y)\rangle\in\mathcal{V}^{\sslash}\!:=\Pi\mathcal{V}(\T\times\R)\,,\label{convenpre1}\\
f^{\perp}(t,\theta,y) & :=f^{*}(t,\theta,y)\in V^{\perp}\!:=(I-\Pi)\mathcal{V}(\T\times\R)\,.\quad\label{convenpre2}
\end{align}
 \end{subequations} To signal that we consider functions $f(t,\theta,y)$
which do not depend on $\theta\in\T$ ($\Pi f=f$) or whose mean value
is zero ($\Pi f=0$), we will use respectively (as above) the marks
$\sslash$ and $\perp$. By extension, when dealing with some operator
$P$, we will note 
\begin{equation}
P^{\sslash}:=P\Pi\,,\qquad P^{\perp}:=P(I-\Pi)\,.\label{convensec}
\end{equation}
 Be careful, in the case of operators, the composition by $\Pi$ and
$I-\Pi$ is put \textit{on the right}.\\

The derivative $\partial_{\theta}$ acts in the sense of distributions
on the space $L^{2}(\T;\R)$. We find: 
\[
\K:=\text{ker}\,\partial_{\theta}=\{u\equiv c\,;\, c\in\R\}\,,\qquad\K^{\perp}:=(\text{ker}\,\partial_{\theta})^{\perp}=\bigl\{ u\in L^{2}(\T;\R)\,;\,\Pi u=0\bigr\}\,.
\]
 The action $\partial_{\theta}$ has a (right) inverse $\partial_{\theta}^{-1}:\K^{\perp}\longrightarrow\K^{\perp}\cap H^{1}(\T;\R)$
which is given by: 
\[
{\displaystyle \quad\partial_{\theta}^{-1}u(\theta):=\int_{0}^{\theta}u(s)\ ds-\int_{\T}\int_{0}^{\theta}u(s)\ ds\, d\theta\,,\qquad\forall\,\theta\in\T\,.}
\]


\subsection{Main statements}


Since we impose $\mathcal{\nu}\gg M\geq7/2$, the equations on the
components $u^{1}$ and $u^{2}$ can be considered as being partially
decoupled from the equation on $p$. Up to some extent, we can first
deal with: 
\begin{align*}
\mathcal{L}_{1}(\varepsilon,q_{\varepsilon},v_{\varepsilon}) & :=\,\partial_{t}v_{\varepsilon}^{1}+\varepsilon^{-1}\, h\ \partial_{y}v_{\varepsilon}^{1}+\varepsilon^{M-2}\,\left(v_{\varepsilon}^{1}\,\partial_{\theta}v_{\varepsilon}^{1}+\varepsilon\, v_{\varepsilon}^{2}\,\partial_{y}v_{\varepsilon}^{1}\right)+C\,\varepsilon^{2\,\nu-M-2}\, q_{\varepsilon}\,\partial_{\theta}q_{\varepsilon}-\widetilde{\mathcal{P}}_{\varepsilon}^{1}v_{\varepsilon}\,,\\
\mathcal{L}_{2}(\varepsilon,q_{\varepsilon},v_{\varepsilon}) & :=\,\partial_{t}v_{\varepsilon}^{2}+\varepsilon^{-1}\, h\ \partial_{y}v_{\varepsilon}^{2}+\varepsilon^{-2}\,\partial_{\theta}h\ v_{\varepsilon}^{1}+\varepsilon^{M-2}\,\left(v_{\varepsilon}^{1}\,\partial_{\theta}v_{\varepsilon}^{2}+\varepsilon\, v_{\varepsilon}^{2}\,\partial_{y}v_{\varepsilon}^{2}\right)\\
 & \qquad\qquad\qquad\qquad\qquad\qquad\qquad\qquad\qquad\qquad\qquad\ \,+C\,\varepsilon^{2\,\nu-M-1}\, q_{\varepsilon}\,\partial_{y}q_{\varepsilon}-\widetilde{\mathcal{P}}_{\varepsilon}^{2}v_{\varepsilon}\,,
\end{align*}
 and then look at the remaining part as a transport equation on $q_{\varepsilon}\,$:
\[
\mathcal{L}_{0}(\varepsilon,q_{\varepsilon},v_{\varepsilon})\,:=\,\partial_{t}q_{\varepsilon}+\varepsilon^{-1}\, h\ \partial_{y}q_{\varepsilon}+\varepsilon^{M-2}\,\left(v_{\varepsilon}^{1}\,\partial_{\theta}q_{\varepsilon}+\varepsilon\, v_{\varepsilon}^{2}\,\partial_{y}q_{\varepsilon}\right)+C\ \varepsilon^{M-2}\, q_{\varepsilon}\,\left(\partial_{\theta}v_{\varepsilon}^{1}+\varepsilon\,\partial_{y}v_{\varepsilon}^{2}\right)\,.
\]

In what follows, the results will be expressed in terms of the quantities
$q_{\varepsilon}$ and $v_{\varepsilon}$. Of course, the expression
\begin{equation}
(p_{\varepsilon},u_{\varepsilon}^{1},u_{\varepsilon}^{2})=(\varepsilon^{\nu}\, q_{\varepsilon},\varepsilon^{M}\, v_{\varepsilon}^{1},h+\varepsilon^{M}\, v_{\varepsilon}^{2})\label{eqdico}
\end{equation}
 is a solution of (\ref{eqrefthetay}) if and only if: 
\begin{equation}
\mathcal{L}_{j}(\varepsilon,q_{\varepsilon},v_{\varepsilon})\,=\,0\,,\qquad\forall\, j\in\{0,1,2\}\,.\label{eqonLj}
\end{equation}


\subsubsection{Construction of approximated solutions}

We start by constructing approximated solutions for the system \eqref{eqonLj}.
The first step is to look at the two last equations of (\ref{eqonLj}),
where the $O(\varepsilon^{2\,\nu-M-2})\ll1$ contributions (implying
$q_{\varepsilon}$) are neglected. Thus, we start by considering the
system: 
\[
\left\lbrace \begin{array}{l}
\mathcal{L}_{1}^{a}(\varepsilon,v_{\varepsilon}):=\,\partial_{t}v_{\varepsilon}^{1}+\varepsilon^{-1}\, h\ \partial_{y}v_{\varepsilon}^{1}+\varepsilon^{M-2}\,\left(v_{\varepsilon}^{1}\,\partial_{\theta}v_{\varepsilon}^{1}+\varepsilon\, v_{\varepsilon}^{2}\,\partial_{y}v_{\varepsilon}^{1}\right)-\widetilde{\mathcal{P}}_{\varepsilon}^{1}v_{\varepsilon}\,,\\
\mathcal{L}_{2}^{a}(\varepsilon,v_{\varepsilon}):=\,\partial_{t}v_{\varepsilon}^{2}+\varepsilon^{-1}\, h\ \partial_{y}v_{\varepsilon}^{2}+\varepsilon^{-2}\,\partial_{\theta}h\ v_{\varepsilon}^{1}+\varepsilon^{M-2}\,\left(v_{\varepsilon}^{1}\,\partial_{\theta}v_{\varepsilon}^{2}+\varepsilon\, v_{\varepsilon}^{2}\,\partial_{y}v_{\varepsilon}^{2}\right)-\widetilde{\mathcal{P}}_{\varepsilon}^{2}v_{\varepsilon}\,.
\end{array}\right.
\]

\begin{proposition}\label{propconstructionvelocity}Fix an integer
$M\in{\mathbb{N}}$ with $M\geq2$. Choose any integer $N\in{\mathbb{N}}$
and any decay rate $\delta\in\,]0,\mu[$. Select any functions $v_{k}^{0}\in H^{\infty}(\T\times\R;\R^{2})$
indexed by $k\in\llbracket0,N+1\rrbracket$. There are functions 
\[
v_{k}^{s}\in{\displaystyle \bigcap_{T\in\R_{+}^{*}}\mathcal{H}_{T}^{\infty}\,,\qquad v_{k}^{f}\in\mathcal{E}_{\delta}^{\infty}\,,\qquad k\in\llbracket0,N+1\rrbracket}
\]
 such that the family $\{v_{\varepsilon}^{a}\}_{\varepsilon}$ defined
as indicated in (\ref{asymplty}) satisfies the following conditions:

\textit{i)} At the initial time $t=0$, the trace $v_{\varepsilon}^{a}(0,\cdot)$
is prescribed in the following way: 
\begin{equation}
v_{\varepsilon}^{a}(0,\theta,y)=\sum_{k=0}^{N+1}\,\varepsilon^{k}\ v_{k}^{0}(\theta,y)\,.\label{prescribed}
\end{equation}

\textit{ii)} For all time $T\in\R_{+}^{*}$, for all $m\in\N$, the
family $\bigl\lbrace\varepsilon^{-N}\,\mathcal{L}_{j}^{a}(\varepsilon,v_{\varepsilon}^{a})\bigr\rbrace_{\varepsilon}$
with $j=1$ or $j=2$ is bounded in~$\mathcal{H}_{T,\varepsilon}^{m,0}$
in the sense that: 
\begin{equation}
\sup_{\varepsilon\in\,]0,1]}\quad\sup_{t\in[0,T]}\quad\max_{j\in\{1,2\}}\quad\left\Vert \varepsilon^{-N}\,\mathcal{L}_{j}^{a}(\varepsilon,v_{\varepsilon}^{a})\right\Vert _{H^{m}(\T\times\R)}<\,+\infty\,.\label{eqapproxsolv}
\end{equation}
 Furthermore, the expression $\Pi v_{k}^{s}$ is determined through
an equation of the form: 
\begin{align}
\partial_{t}\Pi v_{k}^{s}-\Bigl(\mu+\frac{1}{\mu}\,\Pi\left((\partial_{\theta}^{-1}h)^{2}\right)\Bigr)\,\partial_{yy}\Pi v_{k}^{s}=S_{k}\label{eqhomogenizationg}
\end{align}
 where the source term $S_{k}$ depends only on the $v_{j}^{s}$ with
$j\leq k-1$. \end{proposition}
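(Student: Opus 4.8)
The plan is to construct the profiles $v_k^s$ and $v_k^f$ recursively in $k$ by a formal WKB/two-scale expansion of $\mathcal{L}_j^a(\varepsilon,v_\varepsilon^a)$ in powers of $\varepsilon$, simultaneously in the slow time $t$ and the fast time $\tau=t/\varepsilon^2$. First I would plug the ansatz \eqref{asymplty} into $\mathcal{L}_1^a$ and $\mathcal{L}_2^a$, expand $\widetilde{\mathcal{P}}_\varepsilon^j$ (which contributes $\mu\varepsilon^{-2}\partial_{\theta\theta}$ at leading order, plus lower-order terms), use $\partial_t[v_k^f(t/\varepsilon^2,\cdot)] = \varepsilon^{-2}\partial_\tau v_k^f$, and collect the coefficient of each power $\varepsilon^{j}$ for $j=-2,-1,0,1,\dots$. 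The $\varepsilon^{-2}$ and (after applying $\Pi$ and $I-\Pi$) $\varepsilon^{-1}$ orders are purely algebraic/fast constraints: they will force the fast profiles to solve linear ODEs in $\tau$ of the form $\partial_\tau v_k^f = \mu\,\partial_{\theta\theta} v_k^f + (\text{source in lower }k)$, which on $\K^\perp$ (the zero-$\theta$-mean part) is a heat semigroup acting on $\T$, hence exponentially decaying at rate $\ge\delta$ for any $\delta\in\,]0,\mu[$ — this gives membership in $\mathcal{E}_\delta^\infty$. The slow profiles $v_k^s$ will be required to absorb the remaining (non-decaying, $\tau$-independent) part.

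The heart of the matter is the solvability (Fredholm-type) condition at each order. Writing $L := \mu\,\partial_{\theta\theta}$ on $L^2(\T)$, which has kernel $\K=$ constants and is invertible on $\K^\perp$, the equation at a given order splits via $\Pi$ and $I-\Pi$: the $\K^\perp$-component determines $(v_k^s)^\perp$ and $(v_k^f)^\perp$ explicitly in terms of already-known data (inverting $L$, resp. solving the fast heat equation with exponentially decaying forcing), while the $\K$-component — the $\theta$-average — carries no $\partial_{\theta\theta}$ dissipation and must be propagated by a genuine evolution equation in $(t,y)$. This is exactly where \eqref{eqhomogenizationg} comes from: the $\varepsilon^{-1}h\,\partial_y$ transport term, combined with the polarization term $\varepsilon^{-2}\partial_\theta h\cdot v^1$ coupling the two components and the relation $\mathcal{P}_\varepsilon{}^t(0,h_\varepsilon)-F_\varepsilon=0$, produces after one step of elimination a term $\frac1\mu\,\partial_\theta^{-1}h$ feeding back into $\partial_y$, whose $\theta$-average is $\frac1\mu\Pi((\partial_\theta^{-1}h)^2)\,\partial_{yy}$. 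I would isolate the $O(\varepsilon^0)$ equation for $\Pi v_k^s$, substitute the expression for the oscillating correctors obtained at the previous (sub-leading) orders, apply $\Pi$, and read off that the coefficient of $\partial_{yy}\Pi v_k^s$ is precisely $\mu+\frac1\mu\Pi((\partial_\theta^{-1}h)^2)$, with $S_k$ collecting all terms built from $v_j^s$, $j\le k-1$ (the quadratic interactions $v^1\partial_\theta v^1$ etc. only ever couple strictly lower indices because of the $\varepsilon^{M-2}$ prefactor with $M\ge2$). Since the diffusion coefficient is $>0$, \eqref{eqhomogenizationg} is a well-posed linear parabolic Cauchy problem on $\R_{+}\times(\T\times\R)$, solvable in $\bigcap_T\mathcal{H}_T^\infty$ by standard energy estimates with $H^\infty$ data; the non-$\Pi$ part of $v_k^s$ is then recovered algebraically.

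For the initial condition \textit{i)}: the total trace at $t=0$ of $\varepsilon^k(v_k^s(0,\cdot)+v_k^f(0,\cdot))$ must equal $\varepsilon^k v_k^0$; since $v_k^f(0,\cdot)$ is itself partly determined by lower-order profiles through the fast equation, I would assign the "free" Cauchy data for $\Pi v_k^s$ (needed for \eqref{eqhomogenizationg}) so that, order by order, the boundary-layer initial values and the slow initial values add up to $v_k^0$ — this is the usual matched-asymptotics trick of splitting $v_k^0$ into a slow part and a fast-decaying part. Finally, for \textit{ii)}, once all profiles up to index $N+1$ are constructed so that the coefficients of $\varepsilon^{-2},\dots,\varepsilon^{N-1}$ in $\mathcal{L}_j^a(\varepsilon,v_\varepsilon^a)$ vanish identically, the residual $\mathcal{L}_j^a(\varepsilon,v_\varepsilon^a)$ is a finite sum of terms of the form $\varepsilon^{k}$ (with $k\ge N$) times products of the $v_\ell^s, v_\ell^f$ and their derivatives; each such term lies in $\mathcal{H}_T^m$ uniformly, using that $v_\ell^s\in\mathcal{H}_T^\infty$, $v_\ell^f\in\mathcal{E}_\delta^\infty$ (hence bounded on $[0,T]$), the algebra property of $H^m(\T\times\R)$ for $m$ large and the fast scale only costs bounded powers of $\varepsilon$ on a fixed window $[0,T]$ — so dividing by $\varepsilon^N$ leaves a family bounded in $\mathcal{H}_{T,\varepsilon}^{m,0}$, which is \eqref{eqapproxsolv}. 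The main obstacle is organizing the recursion so that the $\Pi$-solvability conditions close properly — i.e.\ verifying that at each order the obstruction to solving the $\K^\perp$-equation is exactly compensated by the freedom in choosing $\Pi v_k^s$ and that $S_k$ genuinely involves only $j\le k-1$, which requires careful bookkeeping of how the singular prefactors $\varepsilon^{-1},\varepsilon^{-2}$ redistribute orders when combined with $\partial_\theta^{-1}$ and with the fast-time derivative.
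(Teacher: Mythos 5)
Your overall strategy coincides with the paper's: a two-time-scale BKW expansion, a cascade of profile equations split by $\Pi$ and $I-\Pi$, fast profiles handled through the flow of $\partial_{\tau}-\mu\,\partial_{\theta\theta}$ on the zero-mean part (hence membership in $\mathcal{E}_{\delta}^{\infty}$), the $\theta$-mean of the slow part governed by the homogenized parabolic equation with coefficient $\mu+\frac{1}{\mu}\Pi\left((\partial_{\theta}^{-1}h)^{2}\right)$ obtained by eliminating the oscillating correctors, and the residual estimated after factoring $\varepsilon^{N}$. However, there is a genuine gap at the critical value $M=2$, which the proposition explicitly covers. You justify that $S_{k}$ involves only the $v_{j}^{s}$ with $j\leq k-1$ by asserting that the quadratic interactions ``only ever couple strictly lower indices because of the $\varepsilon^{M-2}$ prefactor with $M\geq2$''. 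That is true only for $M\geq3$ (implication \eqref{implicationres}); when $M=2$ the index sets satisfy $\mathcal{J}(2,k+2)=\mathcal{J}(0,k)$, which contains the extremal pairs $(k,0)$ and $(0,k)$, so $v_{k}^{s}$ itself enters the would-be source through the operator $SP(v_{0}^{s})$ of \eqref{eqSPK}, and a priori neither does the homogenized equation close with a source depending on $j\leq k-1$, nor is the order-zero problem linear. The paper resolves this by a transparency phenomenon: the $k=0$ equation forces $v_{1,0}^{s\perp}\equiv0$ (condition \eqref{fondacriterion}), which kills $SP_{1}$, reduces $SP_{2}$ to the linear non-differential operator \eqref{eqSPK2bis} acting on the already-determined component $v_{1,k}^{s\perp}$, and yields $SP(v_{0}^{s})\,v_{0}^{s}\equiv0$; only then does the modified triangular operator $\widetilde{\mathcal{A}}$ remain solvable line by line and $S_{k}$ genuinely depend on indices $j\leq k-1$. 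Without this argument your recursion does not close for $M=2$.

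A secondary omission: the fast hierarchy is not a family of autonomous linear ODEs in $\tau$ with lower-order sources, because the nonlinear terms couple $v^{f}(\tau,\cdot)$ with $v^{s}(\varepsilon^{2}\tau,\cdot)$. To make the fast cascade well defined and compatible with exponential decay, the paper Taylor-expands the slow profiles at $t=0$ (passing from $\mathcal{L}^{af}$ to $\mathcal{L}^{aft}$), which creates polynomial-in-$\tau$ factors absorbed by the decay $e^{-\delta\tau}$ and a Taylor remainder controlled by Lemma \ref{lemexpdecay}. Moreover, the $\theta$-mean components of the fast profiles see no dissipation, so their decay is possible only if their initial values are chosen as in \eqref{VGf1}--\eqref{VGf2}; it is this constraint, not a free matched-asymptotics choice, that dictates how $v_{k}^{0}$ is split between slow and fast initial data. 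Your sketch gestures at both points, but the $M=2$ issue is the one that actually breaks the proof as written.
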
 To complete $v_{\varepsilon}^{a}$
into some approximated solution $(q_{\varepsilon}^{a},v_{\varepsilon}^{a})$
of the complete system (\ref{eqrefthetay}), there remains to identify
the pressure component $q_{\varepsilon}^{a}$. To this end, we are
satisfied to solve directly the transport equation $\mathcal{L}_{0}(\varepsilon,q_{\varepsilon}^{a},v_{\varepsilon}^{a})=0$
where $v_{\varepsilon}^{a}$ is adjusted as in Proposition \ref{propconstructionvelocity}.
Since the expression $v_{\varepsilon}^{a}$ is a function of the scales
of time $t$ and $\frac{t}{\varepsilon^{2}}$, that goes for $q_{\varepsilon}^{a}(t,y,\theta)$
too. In what follows, we will not need to precise the way by which
$q_{\varepsilon}^{a}$ depends on the different time scales $\bigl(\, t$,
$\frac{t}{\varepsilon}$, $\frac{t}{\varepsilon^{2}}$, $\cdots\,\bigr)$.

\begin{proposition}\label{propconstructionpressure} The context
is as in Proposition $\ref{propconstructionvelocity}$. Note $\{v_{\varepsilon}^{a}\}_{\varepsilon}$
the family issued from the Proposition~\ref{propconstructionvelocity}.
Select functions $q_{k}^{0}\in H^{\infty}(\T\times\R;\R^{3})$ indexed
by $k\in\llbracket0,N+1\rrbracket$. There are functions $q_{k}^{\varepsilon}$
with: 
\[
\left\{ q_{k}^{\varepsilon}\right\} _{\varepsilon}\in{\displaystyle \bigcap_{T\in\R_{+}^{*}}\mathcal{H}_{T}^{\infty,0}\,,\qquad k\in\llbracket0,N+1\rrbracket\,,\,\qquad\varepsilon\in\,]0,1]}
\]
 such that the expressions $q_{\varepsilon}^{a}$ defined as indicated
in (\ref{asymplty}) are solutions of the Cauchy problem: 
\begin{equation}
\mathcal{L}_{0}(\varepsilon,q_{\varepsilon}^{a},v_{\varepsilon}^{a})=0\,,\qquad q_{\varepsilon}^{a}(0,\theta,y)={\displaystyle \sum_{k=0}^{N+1}\,\varepsilon^{k}\ q_{k}^{0}(\theta,y)\,.}\label{eqapproxsolq}
\end{equation}
 Moreover, for all time $T\in\R_{+}^{*}$, for all $m\in\N$ and for
all $k\in\llbracket0,N+1\rrbracket$, the family $\{q_{k}^{\varepsilon}\}_{\varepsilon}$
is bounded in $\mathcal{H}_{T,(1,\varepsilon)}^{m,0}$ in the sense
that: 
\begin{equation}
\sup_{\varepsilon\in]0,1]}\quad\sup_{t\in[0,T]}\quad\left\Vert q_{k}^{\varepsilon}(t,\cdot)\right\Vert _{H_{(1,\varepsilon)}^{m}(\T\times\R)}<+\infty\,.\label{eqapproxsolq2}
\end{equation}
 \end{proposition}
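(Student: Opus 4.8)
The plan is to exploit the fact that, for a fixed $\varepsilon$ and a fixed velocity field, the operator $q\mapsto\mathcal{L}_{0}(\varepsilon,q,v_{\varepsilon}^{a})$ is $\R$-linear in the pressure slot. Accordingly, I would define, for each $k\in\llbracket0,N+1\rrbracket$, the function $q_{k}^{\varepsilon}$ as the unique solution of the \emph{linear} transport Cauchy problem
\[
\mathcal{L}_{0}(\varepsilon,q_{k}^{\varepsilon},v_{\varepsilon}^{a})=0,\qquad q_{k}^{\varepsilon}(0,\theta,y)=q_{k}^{0}(\theta,y),
\]
and then set $q_{\varepsilon}^{a}:=\sum_{k=0}^{N+1}\varepsilon^{k}\,q_{k}^{\varepsilon}$; with this choice \eqref{eqapproxsolq} holds automatically, by linearity of $\mathcal{L}_{0}$ and additivity of the traces. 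To produce the $q_{k}^{\varepsilon}$, I would use that, by Proposition~\ref{propconstructionvelocity}, $v_{\varepsilon}^{a}$ is for each \emph{fixed} $\varepsilon\in\,]0,1]$ a function of $(t,\theta,y)$ on $[0,T]\times(\T\times\R)$ which is smooth and bounded together with all its derivatives (the slow part $v_{k}^{s}$ lies in $\bigcap_{T}\mathcal{H}_{T}^{\infty}$ and the fast part $v_{k}^{f}(t/\varepsilon^{2},\cdot)$, being drawn from $\mathcal{E}_{\delta}^{\infty}$, is likewise bounded on $[0,T]$). Consequently the transport vector field $(\varepsilon^{M-2}v_{\varepsilon}^{a1},\ \varepsilon^{-1}h+\varepsilon^{M-1}v_{\varepsilon}^{a2})$ and the zeroth-order coefficient $C\varepsilon^{M-2}(\partial_{\theta}v_{\varepsilon}^{a1}+\varepsilon\,\partial_{y}v_{\varepsilon}^{a2})$ are admissible; the characteristic system $\dot\theta=\varepsilon^{M-2}v_{\varepsilon}^{a1}$, $\dot y=\varepsilon^{-1}h(\theta)+\varepsilon^{M-1}v_{\varepsilon}^{a2}$ has a complete flow on $[0,T]$, and integrating the scalar linear ODE carried along the characteristics yields $q_{k}^{\varepsilon}$ explicitly ($q_{k}^{0}$ transported by the flow, times an amplitude factor). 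Since a $y$-translation is an isometry of each $H^{m}(\T\times\R)$ and all ingredients are smooth with bounded derivatives on $[0,T]$, this gives $q_{k}^{\varepsilon}\in\bigcap_{T}\mathcal{H}_{T}^{\infty,0}$ with the prescribed initial trace; uniqueness is the plain $L^{2}$ energy identity.

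The substantive part is the uniform bound \eqref{eqapproxsolq2}. I would fix $m$ and $k$, and for a multi-index $\alpha$ with $|\alpha|\le m$ apply $\varepsilon^{\alpha_{1}}\partial^{\alpha}$ to the equation for $q_{k}^{\varepsilon}$, setting $g_{\alpha}:=\varepsilon^{\alpha_{1}}\partial^{\alpha}q_{k}^{\varepsilon}$, so that
\[
\partial_{t}g_{\alpha}+\varepsilon^{-1}h\,\partial_{y}g_{\alpha}+\varepsilon^{M-2}v_{\varepsilon}^{a1}\,\partial_{\theta}g_{\alpha}=-\,\mathcal{C}^{\mathrm{sing}}_{\alpha}-\mathcal{C}^{\mathrm{slow}}_{\alpha}-\mathcal{Z}_{\alpha},
\]
where $\mathcal{C}^{\mathrm{sing}}_{\alpha}=[\varepsilon^{\alpha_{1}}\partial^{\alpha},\varepsilon^{-1}h\,\partial_{y}]q_{k}^{\varepsilon}$, $\mathcal{C}^{\mathrm{slow}}_{\alpha}$ gathers the commutators with $\varepsilon^{M-2}v_{\varepsilon}^{a1}\partial_{\theta}+\varepsilon^{M-1}v_{\varepsilon}^{a2}\partial_{y}$, and $\mathcal{Z}_{\alpha}=\varepsilon^{\alpha_{1}}\partial^{\alpha}\bigl(C\varepsilon^{M-2}q_{k}^{\varepsilon}(\partial_{\theta}v_{\varepsilon}^{a1}+\varepsilon\partial_{y}v_{\varepsilon}^{a2})\bigr)$. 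The decisive estimate is the one on $\mathcal{C}^{\mathrm{sing}}_{\alpha}$: since $h=h(\theta)$, Leibniz gives
\[
\mathcal{C}^{\mathrm{sing}}_{\alpha}=\varepsilon^{-1}\sum_{j=1}^{\alpha_{1}}\binom{\alpha_{1}}{j}(\partial_{\theta}^{j}h)\,\varepsilon^{\alpha_{1}}\partial_{\theta}^{\alpha_{1}-j}\partial_{y}^{\alpha_{2}+1}q_{k}^{\varepsilon},
\]
and for every $j\ge1$ the multi-index $(\alpha_{1}-j,\alpha_{2}+1)$ has length $\le|\alpha|$ and first component $\alpha_{1}-j$, whereas $\varepsilon^{-1}\varepsilon^{\alpha_{1}}=\varepsilon^{j-1}\varepsilon^{\alpha_{1}-j}$ with $\varepsilon^{j-1}\le1$; hence $\|\mathcal{C}^{\mathrm{sing}}_{\alpha}\|_{L^{2}}\le C(\|h\|_{W^{m,\infty}})\,\|q_{k}^{\varepsilon}\|_{H^{m}_{(1,\varepsilon)}}$ \emph{uniformly} in $\varepsilon$. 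This is exactly the mechanism for which the anisotropic norm $H^{m}_{(1,\varepsilon)}$ is introduced: the weight $\varepsilon^{\alpha_{1}}$ attached to $\theta$-derivatives absorbs the $\varepsilon^{-1}$ produced by the fast transport in the direction $x_{1}$. For $\mathcal{C}^{\mathrm{slow}}_{\alpha}$ and $\mathcal{Z}_{\alpha}$ I would invoke anisotropic Moser/product estimates in $H^{m}_{(1,\varepsilon)}$ together with the embedding \eqref{eqinfininorm} (monitoring its $\varepsilon^{-1/2}$ loss) and the fact that, by Proposition~\ref{propconstructionvelocity}, $v_{\varepsilon}^{a}$ is bounded in the \emph{isotropic} space $\mathcal{H}_{T}^{\infty}$, hence \emph{a fortiori} in $H^{m}_{(1,\varepsilon)}$; all these contributions carry the prefactor $\varepsilon^{M-2}$, which for the admissible values of $M$ compensates the at most $\varepsilon^{-1}$ lost each time a $\partial_{\theta}$ falls on $q_{k}^{\varepsilon}$.

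To close the estimate, I would take the $L^{2}$ scalar product of the $g_{\alpha}$-equation with $g_{\alpha}$: the singular transport term contributes nothing, because $\int h\,\partial_{y}g_{\alpha}\cdot g_{\alpha}=-\tfrac12\int(\partial_{y}h)\,g_{\alpha}^{2}=0$ since $\partial_{y}h=0$; and the sub-principal transport term is disposed of by a single integration by parts, $\varepsilon^{M-2}\int v_{\varepsilon}^{a1}\partial_{\theta}g_{\alpha}\cdot g_{\alpha}=-\tfrac{\varepsilon^{M-2}}{2}\int(\partial_{\theta}v_{\varepsilon}^{a1})\,g_{\alpha}^{2}$, which is harmless since $\varepsilon^{M-2}\le1$. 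Summing over $|\alpha|\le m$ and inserting the bounds of the preceding paragraph gives
\[
\frac{d}{dt}\,\|q_{k}^{\varepsilon}(t,\cdot)\|^{2}_{H^{m}_{(1,\varepsilon)}}\ \le\ C_{m,T}\,\|q_{k}^{\varepsilon}(t,\cdot)\|^{2}_{H^{m}_{(1,\varepsilon)}},\qquad t\in[0,T],
\]
with $C_{m,T}$ independent of $\varepsilon\in\,]0,1]$; Gronwall's lemma and $\|q_{k}^{\varepsilon}(0,\cdot)\|_{H^{m}_{(1,\varepsilon)}}\le\|q_{k}^{0}\|_{H^{m}}$ then yield \eqref{eqapproxsolq2}, and summing over $k$ transfers the bound to $q_{\varepsilon}^{a}$ itself. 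I expect the main obstacle to be precisely the uniform closure of this energy estimate: one must verify that \emph{every} commutator and product term, once the anisotropic weights are inserted, is dominated by $\|q_{k}^{\varepsilon}\|_{H^{m}_{(1,\varepsilon)}}$ with a constant that stays bounded as $\varepsilon\to0$ — a bookkeeping of powers of $\varepsilon$ that is tight at the top order and is, ultimately, the reason the construction requires $M$ bounded below.
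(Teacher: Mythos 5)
Your construction is the same as the paper's: for each $k$ you solve the linear transport problem $\mathcal{L}_{0}(\varepsilon,q_{k}^{\varepsilon},v_{\varepsilon}^{a})=0$, $q_{k}^{\varepsilon}(0,\cdot)=q_{k}^{0}$, sum by linearity of $\mathcal{L}_{0}$ in the pressure slot, and then prove the uniform bound \eqref{eqapproxsolq2} by a weighted ($H^{m}_{(1,\varepsilon)}$) energy estimate; your treatment of the singular commutator $[\varepsilon^{\alpha_{1}}\partial^{\alpha},\varepsilon^{-1}h\,\partial_{y}]$ is exactly the mechanism that makes the anisotropic norm work, and it is carried out correctly (the paper itself only asserts this boundedness without writing the estimate).

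There is, however, one concrete point where your bookkeeping does not close under the stated hypothesis $M\geq2$. In $\mathcal{C}^{\mathrm{slow}}_{\alpha}$, the commutator of $\varepsilon^{\alpha_{1}}\partial^{\alpha}$ with $\varepsilon^{M-2}v_{\varepsilon}^{a1}\partial_{\theta}$ contains the terms
\[
\varepsilon^{M-2}\,\binom{\alpha_{2}}{\gamma_{2}}\,(\partial_{y}^{\gamma_{2}}v_{\varepsilon}^{a1})\ \varepsilon^{\alpha_{1}}\,\partial_{\theta}^{\alpha_{1}+1}\partial_{y}^{\alpha_{2}-\gamma_{2}}q_{k}^{\varepsilon}\,,\qquad \gamma_{2}\geq1\,,
\]
in which an extra $\partial_{\theta}$ lands on $q_{k}^{\varepsilon}$ while only $y$-derivatives are removed. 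The anisotropic norm controls this derivative of $q_{k}^{\varepsilon}$ only with the weight $\varepsilon^{\alpha_{1}+1}$, whereas you carry $\varepsilon^{M-2+\alpha_{1}}$; the ratio is $\varepsilon^{M-3}$, so your Gronwall constant is uniform in $\varepsilon$ only when $M\geq3$. Your blanket claim that ``all these contributions carry the prefactor $\varepsilon^{M-2}$, which compensates the at most $\varepsilon^{-1}$ lost each time a $\partial_{\theta}$ falls on $q_{k}^{\varepsilon}$'' is therefore false at the borderline value $M=2$, which is admitted by the proposition (its context is Proposition \ref{propconstructionvelocity}, i.e.\ $M\geq2$). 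Note that the paper faces the identical term when estimating the remainder and there explicitly assumes $M\geq3$ (Lemma \ref{lemm3}); so either restrict your energy argument to $M\geq3$ (harmless for the main theorem, which requires $M\geq7/2$) and say so, or supply an additional argument for $M=2$ — e.g.\ exploiting that the fast part of $v_{\varepsilon}^{a1}$ contributes only $O(\varepsilon^{2})$ after time integration — because as written the estimate for $M=2$ degenerates like $e^{Ct/\varepsilon}$. The remaining ingredients (characteristics at fixed $\varepsilon$, vanishing of the principal transport contributions after integration by parts, the zeroth-order term, and the initial bound $\|q_{k}^{\varepsilon}(0,\cdot)\|_{H^{m}_{(1,\varepsilon)}}\leq\|q_{k}^{0}\|_{H^{m}}$) are fine for $M\geq2$.
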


Coming back to $\mathcal{L}_{1}$ and $\mathcal{L}_{2}$, we can now
make the following statement.

\begin{proposition}\label{corapproxoperateur} Select $m,\, M,\, N,\,\nu\in\N$
satisfying: 
\begin{equation}
M\geq2\,,\qquad m\geq2\qquad\text{and}\qquad2\nu-M-5/2-(m+1)-N\geq0.\label{eqwsN}
\end{equation}
 Note $\{v_{\varepsilon}^{a}\}_{\varepsilon}$ and $\{q_{\varepsilon}^{a}\}_{\varepsilon}$
the families obtained with Propositions $\ref{propconstructionvelocity}$
and $\ref{propconstructionpressure}$. Then, for all $j\in\{1,2\}$,
we have: 
\[
\sup_{\varepsilon\in]0,1]}\quad\sup_{t\in[0,T]}\quad\left\Vert \varepsilon^{-N}\,\mathcal{L}_{j}(\varepsilon,q_{\varepsilon}^{a},v_{\varepsilon}^{a})\right\Vert _{H^{m}(\T\times\R)}\,<\,+\infty\,.
\]
 \end{proposition}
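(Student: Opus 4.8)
The point is that Proposition \ref{corapproxoperateur} follows by comparing $\mathcal L_j$ with $\mathcal L_j^a$ and invoking the estimates already obtained in Propositions \ref{propconstructionvelocity} and \ref{propconstructionpressure}. Indeed, by inspecting the definitions of $\mathcal L_1,\mathcal L_2$ and $\mathcal L_1^a,\mathcal L_2^a$, the only difference is the addition of the coupling terms with the pressure, so that for $j\in\{1,2\}$ one has the exact identity
\begin{equation*}
\mathcal L_1(\varepsilon,q_\varepsilon^a,v_\varepsilon^a)=\mathcal L_1^a(\varepsilon,v_\varepsilon^a)+C\,\varepsilon^{2\nu-M-2}\,q_\varepsilon^a\,\partial_\theta q_\varepsilon^a\,,\qquad
\mathcal L_2(\varepsilon,q_\varepsilon^a,v_\varepsilon^a)=\mathcal L_2^a(\varepsilon,v_\varepsilon^a)+C\,\varepsilon^{2\nu-M-1}\,q_\varepsilon^a\,\partial_y q_\varepsilon^a\,.
\end{equation*}
Multiplying by $\varepsilon^{-N}$, it therefore suffices to bound $\varepsilon^{-N}\mathcal L_j^a(\varepsilon,v_\varepsilon^a)$, which is exactly the content of \eqref{eqapproxsolv} in Proposition \ref{propconstructionvelocity}, and to bound $\varepsilon^{-N}\varepsilon^{2\nu-M-2}q_\varepsilon^a\partial_\theta q_\varepsilon^a$ and $\varepsilon^{-N}\varepsilon^{2\nu-M-1}q_\varepsilon^a\partial_y q_\varepsilon^a$ in $H^m(\T\times\R)$, uniformly in $\varepsilon\in\,]0,1]$ and $t\in[0,T]$.

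For the pressure contributions I would argue as follows. Write $q_\varepsilon^a=\sum_{k=0}^{N+1}\varepsilon^k q_k^\varepsilon$; by Proposition \ref{propconstructionpressure} each $\{q_k^\varepsilon\}_\varepsilon$ is bounded in $\mathcal H_{T,(1,\varepsilon)}^{m',0}$ for every $m'$, hence so is $\{q_\varepsilon^a\}_\varepsilon$ (finite sum, with $\varepsilon^k\le 1$). Since $H^m(\T\times\R)$ with $m\ge 2 >1$ is an algebra, and since $\partial_\theta q_\varepsilon^a$ costs no power of $\varepsilon$ in the $(1,\varepsilon)$-weighted norm whereas $\varepsilon\,\partial_y q_\varepsilon^a$ is what is controlled, one gets, choosing $m'=m+1$,
\begin{equation*}
\|q_\varepsilon^a\,\partial_\theta q_\varepsilon^a\|_{H^m}\lesssim \|q_\varepsilon^a\|_{H^{m+1}}^2\lesssim \varepsilon^{-?}\,,\qquad
\|q_\varepsilon^a\,\partial_y q_\varepsilon^a\|_{H^m}\lesssim \varepsilon^{-1}\,\|q_\varepsilon^a\|_{H^{m+1}_{(1,\varepsilon)}}^2\,,
\end{equation*}
and more precisely, tracking the powers of $\varepsilon$ lost when passing from $H^{m+1}_{(1,\varepsilon)}$ to $H^{m+1}$ together with the anisotropic Gagliardo--Nirenberg/algebra estimate, one finds a loss of at most $\varepsilon^{-(m+1)}$ in each factor being replaced, plus the explicit $\varepsilon^{-1/2}$ type losses from \eqref{eqinfininorm}; altogether the worst case is controlled by $C\,\varepsilon^{-(m+1)-1/2}$ for the $\partial_\theta$ term after using the algebra property carefully, and similarly for the $\partial_y$ term. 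Hence
\begin{equation*}
\varepsilon^{-N}\,\varepsilon^{2\nu-M-2}\,\|q_\varepsilon^a\,\partial_\theta q_\varepsilon^a\|_{H^m}\;\lesssim\;\varepsilon^{\,2\nu-M-2-N-(m+1)-1/2}\,,
\end{equation*}
and the exponent is $\ge 0$ precisely under the hypothesis $2\nu-M-5/2-(m+1)-N\ge 0$ of \eqref{eqwsN}; the $\partial_y$ term is even better by one power of $\varepsilon$ (the extra $\varepsilon^{+1}$ in front beats the extra $\varepsilon^{-1}$ from the unweighted $\partial_y$, up to the same count). This is where the precise numerology of \eqref{eqwsN} is used, and it is the only genuinely quantitative point in the proof.

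The main obstacle, then, is purely bookkeeping: one must make explicit the ``loss of powers of $\varepsilon$'' incurred when estimating a product $q_\varepsilon^a\partial q_\varepsilon^a$ in the unweighted $H^m$ norm starting from bounds in the weighted norm $H^{m+1}_{(1,\varepsilon)}$, being careful that in the nonlinear (algebra) estimate one factor may be taken in $L^\infty$ via \eqref{eqinfininorm} so that only $\varepsilon^{-1/2}$ rather than a full $\varepsilon^{-(m+1)}$ is lost on that factor. A clean way to organize this is: (i) observe $\|\partial^\alpha g\|_{L^2}\le \varepsilon^{-\alpha_1}\|\varepsilon^{\alpha_1}\partial^\alpha g\|_{L^2}\le \varepsilon^{-|\alpha|}\|g\|_{\overset{\circ\quad}{H^{|\alpha|}_{(1,\varepsilon)}}}$, so $\|g\|_{H^m}\le \varepsilon^{-m}\|g\|_{H^m_{(1,\varepsilon)}}$; (ii) use the Moser-type inequality $\|fg\|_{H^m}\lesssim \|f\|_{L^\infty}\|g\|_{H^m}+\|g\|_{L^\infty}\|f\|_{H^m}$ together with \eqref{eqinfininorm}; (iii) feed in the $\varepsilon$-uniform bounds of Proposition \ref{propconstructionpressure}. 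Once this elementary lemma on weighted norms and products is in place, the proof of Proposition \ref{corapproxoperateur} is immediate: add the two identities above, estimate term by term, and check that \eqref{eqwsN} makes every exponent of $\varepsilon$ nonnegative. No new analytic idea beyond what already appears in Propositions \ref{propconstructionvelocity}--\ref{propconstructionpressure} is required.
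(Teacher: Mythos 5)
Your proposal is correct and follows essentially the same route as the paper: the decomposition $\mathcal{L}_{j}=\mathcal{L}_{j}^{a}+C\,\varepsilon^{2\nu-M-2}\,{}^{t}(q_{\varepsilon}^{a}\partial_{\theta}q_{\varepsilon}^{a},\varepsilon\, q_{\varepsilon}^{a}\partial_{y}q_{\varepsilon}^{a})$, the bound \eqref{eqapproxsolv} for the velocity part, and a Gagliardo--Nirenberg/Moser product estimate combined with \eqref{eqinfininorm} (loss $\varepsilon^{-1/2}$) and the norm comparison \eqref{eqrefnorm} (loss $\varepsilon^{-(m+1)}$), yielding exactly the exponent $2\nu-M-5/2-(m+1)-N\geq0$ of \eqref{eqwsN}. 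Only note a slip of prose: in $H_{(1,\varepsilon)}^{m}$ the weight $\varepsilon^{\alpha_{1}}$ attaches to the $\theta$-derivatives, so it is $\varepsilon\,\partial_{\theta}q_{\varepsilon}^{a}$ that is controlled and $\partial_{y}q_{\varepsilon}^{a}$ that is free (the opposite of what you wrote), but this does not alter your final count, which matches the paper's.
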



\subsubsection{Existence and stability result}

The parameter $\varepsilon\in\,]0,1]$ being fixed, the local in time
well-posedness of the Cauchy problem (\ref{eqref})-(\ref{eqrefdatas})
is standard, with corresponding solutions 
\begin{equation}
(p_{\varepsilon}^{e},u_{\varepsilon}^{e})=(\varepsilon^{\nu}\, q_{\varepsilon}^{e},\varepsilon^{M}\, v_{\varepsilon}^{e1},h+\varepsilon^{M}\, v_{\varepsilon}^{e2}).\label{eqdiko}
\end{equation}
 It means that, for all $\varepsilon\in\,]0,1]$, there is a time
$T_{\varepsilon}\in\R_{+}^{*}$ (eventually shrinking to zero when
$\varepsilon$ goes to zero) such that $(q_{\varepsilon}^{e},v_{\varepsilon}^{e})$
with $v_{\varepsilon}^{e}:=(v_{\varepsilon}^{e1},v_{\varepsilon}^{e2})$
is a solution of (\ref{eqonLj}) on the time interval $[0,T_{\varepsilon}]$
with initial data as indicated at the level of (\ref{prescribed})
and (\ref{eqapproxsolq}).

\smallskip{}

Fix any $R\in\N$. We can always define on the strip $[0,T_{\varepsilon}]$,
two functions $q_{\varepsilon}^{R}$ and $v_{\varepsilon}^{R}$ through
the identity 
\begin{equation}
(q_{\varepsilon}^{e},v_{\varepsilon}^{e})=(q_{\varepsilon}^{a},v_{\varepsilon}^{a})+\varepsilon^{R}\,(q_{\varepsilon}^{R},v_{\varepsilon}^{R})\,.\label{eqdefreste}
\end{equation}
 Two questions are solved below: the existence of exact solutions
of (\ref{eqref})-(\ref{eqrefdatas}) on a time interval $[0,T_{c}]$
with $T_{c}\in\R_{+}^{*}$ independent of $\varepsilon\in\,]0,1]\,$
and the production of controls on $(q_{\varepsilon}^{R},v_{\varepsilon}^{R})$
showing that $(q_{\varepsilon}^{a},v_{\varepsilon}^{a})$ gives indeed
some good asymptotic description of $(q_{\varepsilon}^{e},v_{\varepsilon}^{e})$
on $[0,T_{c}]$.

\begin{theorem}{[}Existence and stability{]}\label{propestimationreste}
Assume $\lambda<4\mu$. Let $m,\,\nu,\, M,\, N,\, R\in\N$ satisfying
\begin{equation}
M\geq7/2\qquad\text{and}\qquad w_{m}:=\min\left(2\nu-M-5/2-(m+3)-R,N-R\right)\geq0\,.\label{eqws}
\end{equation}

Let $T_{\varepsilon}$ the lifespan of the solution of the Cauchy problem \eqref{eqref}-\eqref{eqrefdatas}.
Then there exist $T_{c}>0$ and $\varepsilon_{crit}>0$ such that
\[
\forall\,\varepsilon\in]0,\varepsilon_{crit}],\quad T_{\varepsilon}\geq T_{c}.
\]

Furthermore, the approximated solution $^{t}(q_{\varepsilon}^{a},v_{\varepsilon}^{a})$,
constructed thanks to Propositions \ref{propconstructionvelocity}
and~\ref{propconstructionpressure}, is a relevant expansion for
the exact solution $^{t}(q_{\varepsilon}^{e},v_{\varepsilon}^{e})$
(associated with $^{t}(p_{\varepsilon}^{e},u_{\varepsilon}^{e})$
threw Equation \eqref{eqdiko}) in the sense that the remainder $^{t}(q_{\varepsilon}^{R},v_{\varepsilon}^{R})$,
defined in \eqref{eqdefreste}, satisfies the following statements.\\

$\textit{i)}$ The family $\{q_{\varepsilon}^{R}\}_{\varepsilon}$
is bounded in $\mathcal{H}_{T_{c},(1,\varepsilon_{c})}^{m+3,0}$:
\begin{equation}
\sup_{\varepsilon\in]0,\varepsilon_{c}]}\sup_{t\in[0,T_{c}]}\left\Vert q_{\varepsilon}^{R}(t,\cdot)\right\Vert _{H_{(1,\varepsilon)}^{m+3}}<\,+\infty.\label{ineqtime}
\end{equation}

\textit{ii)} The families $\{v_{\varepsilon}^{1R}\}_{\varepsilon}$
and $\{\varepsilon\, v_{\varepsilon}^{2R}\}_{\varepsilon}$ are bounded
in $\mathcal{H}_{T_{c},\varepsilon_{c}}^{m+3,0}$: 
\begin{equation}
\sup_{\varepsilon\in]0,\varepsilon_{c}]}\sup_{t\in[0,T_{c}]}\left\Vert v_{\varepsilon}^{1R}(t,\cdot)\right\Vert _{H^{m+3}}<\,+\infty,\qquad\sup_{\varepsilon\in]0,\varepsilon_{c}]}\sup_{t\in[0,T_{c}]}\left\Vert \varepsilon\, v_{\varepsilon}^{2R}(t,\cdot)\right\Vert _{H^{m+3}}<\,+\infty.\label{eqestimationv2R}
\end{equation}
 \end{theorem}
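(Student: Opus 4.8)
\section*{Proof proposal for Theorem \ref{propestimationreste}}

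The plan is to derive a closed system of singular energy estimates for the remainder $(q_\varepsilon^R,v_\varepsilon^R)$ and to run a continuation (bootstrap) argument on a time interval that does not shrink with $\varepsilon$. First I would substitute the decomposition \eqref{eqdefreste} into \eqref{eqonLj}. Using that $(q_\varepsilon^a,v_\varepsilon^a)$ solves the equations up to the residuals controlled by Propositions \ref{propconstructionpressure} and \ref{corapproxoperateur} (which, under the size condition \eqref{eqws} on $2\nu-M-5/2-(m+3)-R$ and $N-R$, are $O(1)$ in $\mathcal{H}^{m+3}$ after multiplication by $\varepsilon^{-R}$), one gets evolution equations for $(q_\varepsilon^R,v_\varepsilon^R)$ of the same hyperbolic--parabolic type as \eqref{eqrefthetay}: a singular transport part with the coefficient $\varepsilon^{-1}h\,\partial_y$ and the singular coupling $\varepsilon^{-2}\partial_\theta h$, a degenerate viscosity $\widetilde{\mathcal P}_\varepsilon$ acting only on the velocity, quadratic terms carrying positive powers of $\varepsilon$ (using $M\ge 7/2$ and that the background $h$ is smooth and $\theta$-oscillating), and a bounded source. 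The key structural points to exploit are: (a) the singular zeroth-order coupling between $v^1$ and $v^2$ given by $\varepsilon^{-2}\partial_\theta h\, v^1$ has the skew-symmetric companion $\varepsilon^{-1}h\,\partial_y$-type transport that, together with the $\varepsilon^{-2}\mu\partial_{\theta\theta}$ dissipation on $v^1$, produces exactly the effective diffusion $\mu+\frac1\mu\Pi((\partial_\theta^{-1}h)^2)$ seen in \eqref{eqhomogenizationg}; (b) the pressure $q$ only feels the transport operator, hence should be estimated in the anisotropic norm $H^{m+3}_{(1,\varepsilon)}$, which is why \eqref{ineqtime} is stated in that space; (c) $v^2$ is one power of $\varepsilon$ worse than $v^1$, which is recorded in \eqref{eqestimationv2R} by estimating $\varepsilon v_\varepsilon^{2R}$ rather than $v_\varepsilon^{2R}$.

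Concretely I would set up, for each multi-index $\alpha$ with $|\alpha|\le m+3$, the energy
\[
E_\varepsilon(t):=\tfrac12\Bigl(\|q_\varepsilon^R(t,\cdot)\|_{H^{m+3}_{(1,\varepsilon)}}^2+\|v_\varepsilon^{1R}(t,\cdot)\|_{H^{m+3}}^2+\|\varepsilon v_\varepsilon^{2R}(t,\cdot)\|_{H^{m+3}}^2\Bigr),
\]
possibly with a suitably weighted cross term $\langle \partial_\theta^{-1}h\,\partial_y v^{1R},\, \varepsilon v^{2R}\rangle$-type correction to symmetrize away the worst singular interaction (this is the standard trick that reveals the turbulent viscosity and requires the hypothesis $\lambda<4\mu$ so that the quadratic form coming from the full $\widetilde{\mathcal P}_\varepsilon$, including the $\lambda\,\varepsilon\,\partial_{\theta y}$ cross terms, stays coercive on the $\theta$-oscillating modes). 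Then I differentiate, integrate by parts, and collect terms. The singular transport terms $\varepsilon^{-1}h\,\partial_y$ are handled by integration by parts and commutator estimates: $[\partial^\alpha, h\,\partial_y]$ is of order $m+3$ with an $h$-dependent bound, so after integration by parts the net contribution is $O(\varepsilon^{-1})\cdot O(\text{energy})$ — but the $\varepsilon^{-1}$ must be absorbed either by the parabolic gain (the $\varepsilon^{-2}\mu\partial_{\theta\theta}$ term gives a good term $\sim \varepsilon^{-2}\|\partial_\theta v^{1R}\|^2$ which beats $\varepsilon^{-1}$ on oscillating modes) or, on the $\theta$-average part, by noting that $\Pi(h\,\partial_y)=0$ is false but $\Pi(h\,\partial_\theta(\cdots))$ vanishes and the residual $\varepsilon^{-1}$-term on $\Pi v^{1R}$ is actually a bounded transport because $\langle h\rangle=0$ kills it after one more integration by parts in $\theta$ — more precisely, the genuinely dangerous piece is the coupling $\varepsilon^{-2}\partial_\theta h\, v^{1R}$ in the $v^2$ equation, which is exactly cancelled at leading order by the symmetrizing cross term and leaves the effective diffusion. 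The quadratic interaction terms are estimated by the Moser/Gagliardo--Nirenberg inequalities together with the anisotropic embedding \eqref{eqinfininorm}: each quadratic term carries a prefactor $\varepsilon^{M-2}$ or $\varepsilon^{2\nu-M-\cdots}$, and the embedding costs at most $\varepsilon^{-1/2}$ per $L^\infty$ factor, so with $M\ge 7/2$ (giving $\varepsilon^{M-2}\cdot\varepsilon^{-1/2}=\varepsilon^{M-5/2}$, bounded for $M\ge 5/2$, and we need the extra room $M\ge 7/2$ because one of the factors is $v^{2R}$ which is itself $O(\varepsilon^{-1})$) and with $\nu\gg M$ the pressure-quadratic terms are harmless. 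The upshot is a differential inequality of the form
\[
\frac{d}{dt}E_\varepsilon(t)\le C\bigl(1+E_\varepsilon(t)\bigr)^{p}+C\,\varepsilon^{w_m}\,\bigl(\text{source})-c\,\varepsilon^{-2}\|\partial_\theta v_\varepsilon^{R}(t,\cdot)\|_{H^{m+3}}^2,
\]
where the last (good) term has already been used to absorb the singular commutators, $p$ is some fixed power coming from the quadratic nonlinearity, and $C$ is independent of $\varepsilon$ thanks to $w_m\ge0$.

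Finally I would close the argument by a continuity/bootstrap: since $E_\varepsilon(0)=0$ (the approximate and exact data agree, by construction in Propositions \ref{propconstructionvelocity} and \ref{propconstructionpressure}), there is a maximal time $T_\varepsilon^\sharp\le T_\varepsilon$ on which $E_\varepsilon(t)\le 1$, say; on that interval the differential inequality gives $E_\varepsilon(t)\le C't$ with $C'$ uniform in $\varepsilon$, hence $E_\varepsilon(t)\le 1/2$ for $t\le T_c:=\min(1/(2C'),1)$, which forbids $T_\varepsilon^\sharp< T_c$ for $\varepsilon$ small, so $T_\varepsilon\ge T_c$ and the bounds \eqref{ineqtime}--\eqref{eqestimationv2R} follow. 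The main obstacle — and the technical heart of the paper — is controlling the interplay between the $\varepsilon^{-1}$ and $\varepsilon^{-2}$ singular terms and the degenerate viscosity: the viscosity does not act on $q$ at all and acts only through $\partial_{\theta\theta}$, $\partial_{\theta y}$, $\partial_{yy}$ with mismatched powers of $\varepsilon$ on $v$, so the energy estimate only closes after the right choice of symmetrizing cross terms and the right splitting into $\theta$-mean and $\theta$-oscillating parts (using $\Pi$ and $I-\Pi$), and it is precisely here that the condition $\lambda<4\mu$ enters to guarantee positivity of the resulting quadratic form; getting every singular power to cancel or be absorbed, uniformly in $\varepsilon$, is the delicate point.
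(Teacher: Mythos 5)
Your skeleton is close to the paper's: the weight $\varepsilon v_\varepsilon^{2R}$ in your energy is exactly the paper's singular change of unknowns $\widetilde{v}_\varepsilon^{2R}:=\varepsilon v_\varepsilon^{2R}$ of \eqref{eqnewunknown}, the pressure is indeed estimated in $H^{m+3}_{(1,\varepsilon)}$, the data of the remainder vanish, and the final continuation argument is the paper's Corollary \ref{propestglobal}. But the two devices that actually make the estimate close are missing or replaced by an unproven claim. First, your treatment of the worst coupling $\varepsilon^{-2}\partial_\theta h\,v_\varepsilon^{1R}$ is not correct as stated: there is no ``exact cancellation at leading order by a symmetrizing cross term'' of the form $\langle\partial_\theta^{-1}h\,\partial_y v^{1R},\varepsilon v^{2R}\rangle$, and the effective diffusion $\mu+\frac1\mu\Pi((\partial_\theta^{-1}h)^2)$ never appears in the remainder estimate -- that diffusion belongs to the BKW construction of $v_\varepsilon^a$ (Equation \eqref{eqhomogenizationg}), not to the stability analysis. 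In the paper the rescaled coupling $\varepsilon^{-1}\partial_\theta h\,\widetilde{v}_\varepsilon^{1R}$ is \emph{absorbed}, after one integration by parts in $\theta$ (Lemma \ref{lemestC}), by the coercivity of the rescaled viscosity $\mathcal{Q}_\varepsilon$, namely $-\langle\mathcal{Q}_\varepsilon f,f\rangle\geq c_0\bigl(\Vert\varepsilon^{-1}\partial_\theta f\Vert_{L^2}^2+\Vert\partial_y f\Vert_{L^2}^2\bigr)$ (Lemma \ref{lemestQ}). That coercivity is not automatic: in the weighted variables the $\lambda\,\partial_{\theta y}$ cross terms of the viscosity are inflated to the same order as the diagonal (the symbol has off-diagonal entries $\tfrac{\lambda\varepsilon}{2}(1+\varepsilon^{-2})k\xi$), and this is precisely where $\lambda<4\mu$ enters, via a Fourier computation of the eigenvalues of $Q_\varepsilon(k,\xi)$. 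Without such a coercivity statement your ``good term $\varepsilon^{-2}\Vert\partial_\theta v^{1R}\Vert^2$'' is simply not at your disposal, because $\widetilde{\mathcal{P}}_\varepsilon$ is not diagonal in the weighted unknowns.

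Second, a single Gronwall inequality for the combined energy $E_\varepsilon$ does not close. The pressure carries no dissipation, and at the energy level the terms $q_\varepsilon^a(\partial_\theta\widetilde{v}_\varepsilon^{1R}+\partial_y\widetilde{v}_\varepsilon^{2R})$ and $\widetilde{v}_\varepsilon^{1R}\partial_\theta\widetilde{q}_\varepsilon^{R}$ in \eqref{eqqepsilon} force one derivative of the velocity beyond $H^{m+3}$, which is not controlled pointwise in time by $E_\varepsilon$. The paper resolves this by a strict ordering: first the velocity estimate (Proposition \ref{propestu}), by induction on the number of derivatives, which yields not only the sup-in-time bound but the time-integrated regularization \eqref{eqpropestu2}; then the pressure estimate (Proposition \ref{lemestq}), where the Gronwall coefficients $1+\Vert\widetilde{v}_\varepsilon^{R}\Vert_{H^{m+4}}^2$ are merely $L^1$ in time thanks to that regularization. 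This is also where $M\geq7/2$ is consumed (Lemmas \ref{lemm0} and \ref{lemsourceq}, to pay the $\varepsilon^{-1/2}$ of the anisotropic $L^\infty$ embedding and the $\varepsilon^{-1}$ needed to recreate $\varepsilon\partial_\theta$ on the pressure), whereas the velocity estimate only needs $M\geq2$; your placement of the $7/2$ threshold in the velocity quadratic terms is off. To repair your proposal you would need to (i) state and prove the coercivity of the rescaled dissipation under $\lambda<4\mu$, and (ii) replace the single differential inequality by the two-step (velocity then pressure) scheme exploiting the $L^2_t H^{m+4}$ gain.
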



\subsection{The context}

\subsubsection{Historical comments}

Let $N\in\N^{*}$. Consider the following scalar equation of evolution:
\begin{equation}
\partial_{t}f_{\varepsilon}+\frac{1}{\varepsilon}\ h(\text{{\rm v}})\cdot\nabla_{x}f_{\varepsilon}+\frac{1}{\varepsilon^{2}}\ \mathcal{Q}\, f_{\varepsilon}\,=\, S(t,x,\text{{\rm v}})\,,\qquad f_{\varepsilon}(t,x,{\rm v)\in\R}\label{eqkinetic}
\end{equation}
 where $h:\R^{N}\longrightarrow\R^{N}$ is some smooth function, $\mathcal{Q}$
is some linear operator acting on $L^{2}$, and $S(t,x,\text{{\rm v}})$
is some function depending on the variables $(t,x,\text{{\rm v}})\in\R_{+}\times\R^{N}\times\R^{N}$.
The unknown is the function $f_{\varepsilon}(t,x,\text{{\rm v}})$.
Depending on the choice of $\mathcal{Q}$, the Equation (\ref{eqkinetic})
can be the neutron equation \cite{MR802803}, the Fokker-Planck equation
\cite{MR906919} or the Boltzmann transport equation \cite{MR1127004}.
In this context, it is well-known that the family $\{f_{\varepsilon}\}_{\varepsilon\in]0,1]}$
has a weak limit , say $f_{0}$ as $\varepsilon$ goes to 0. In general,
the expression $f_{0}$ satisfies an equation implying a \textit{drift-diffusion}
term of the form $-\,\text{div}_{x}(D\,\nabla_{x}\,\cdot\,)$ where
$D$ is some squared matrix depending on the data. The proofs of the
related statements rely strongly on the structure of the collision
operator $\mathcal{Q}$ which is either a bounded operator or a self-adjoint
operator on some weighted version of $L^{2}$.

\smallskip{}

When the operator is less regular or when there is a lack of symmetries
\cite{MR1803225}, the convergence concerns only the mean value $\varrho_{\varepsilon}$
with respect to $\text{{\rm v}}$, called the density. For some function
$\varrho_{0}$ satisfying adequate restrictions, we have: 
\begin{equation}
\varrho_{\varepsilon}(t,x):=\int_{\R^{N}}\, f_{\varepsilon}(t,x,\text{{\rm v}})\ d\text{{\rm v}}\,\longrightarrow\,\varrho_{0}(t,x)\ .\label{integrv}
\end{equation}

\medskip{}

When looking at the structure of $\mathcal{L}^{a}:={}^{t}(\mathcal{L}_{1}^{a},\mathcal{L}_{2}^{a})$,
there is some analogy with (\ref{eqkinetic}). Indeed, the expression
$\mathcal{L}^{a}(\varepsilon,v_{\varepsilon})$ can be decomposed
into: 
\begin{equation}
\mathcal{L}^{a}(\varepsilon,v_{\varepsilon}):=\partial_{t}v_{\varepsilon}+\frac{1}{\varepsilon}\ {\mathcal{T}}\, v_{\varepsilon}+\frac{1}{\varepsilon^{2}}\ {\mathcal{Q}}\, v_{\varepsilon}+\mathcal{LL}(\varepsilon)\, v_{\varepsilon}+\varepsilon^{M-2}\,\mathcal{NL}(\varepsilon,v_{\varepsilon})\label{genericeq}
\end{equation}
 with: \vskip -8mm 
\begin{align*}
\mathcal{Q}=\begin{pmatrix}-\mu\partial_{\theta\theta} & 0\\
\partial_{\theta}h & -\mu\partial_{\theta\theta}
\end{pmatrix}\, & ,\qquad\mathcal{T}:=\begin{pmatrix}h\,\partial_{y}-\lambda\partial_{\theta\theta} & 0\\
0 & h\,\partial_{y}
\end{pmatrix},\\
\mathcal{LL}(\varepsilon):=\begin{pmatrix}-\mu\partial_{yy} & -\lambda\partial_{\theta y}\\
-\lambda\partial_{\theta y} & -(\mu+\varepsilon\lambda)\partial_{yy}
\end{pmatrix}\, & ,\qquad\mathcal{NL}(\varepsilon,v_{\varepsilon}):=\begin{pmatrix}v_{\varepsilon}^{1}\,\partial_{\theta}v_{\varepsilon}^{1}+\varepsilon\, v_{\varepsilon}^{2}\,\partial_{y}v_{\varepsilon}^{1}\\
v_{\varepsilon}^{1}\,\partial_{\theta}v_{\varepsilon}^{2}+\varepsilon\, v_{\varepsilon}^{2}\,\partial_{y}v_{\varepsilon}^{2}
\end{pmatrix}.
\end{align*}
 There are many analogies between (\ref{eqkinetic}) and (\ref{genericeq}).
In both cases, the hierarchy with respect to the negative powers of
$\varepsilon$ (namely $\varepsilon^{-2}$, $\varepsilon^{-1}$ and
$\varepsilon^{0}$) is the same, with in factor operators sharing
analogous structures. Also, the mean value operation $\langle v_{\varepsilon}\rangle$
is when considering~(\ref{genericeq}) what replaces the integration
with respect to $\text{{\rm v}}$ at the level of (\ref{integrv}).
However, there are two important differences when comparing (\ref{eqkinetic})
and (\ref{genericeq})$\,:$

\smallskip{}

- The Equation (\ref{genericeq}) is a system ($v_{\varepsilon}\in\R^{2}$).
When dealing only with the singular part $\varepsilon^{-1}{\mathcal{T}}+\varepsilon^{-2}{\mathcal{Q}}$,
this problem can be circumvented by first solving the equation on
$v_{\varepsilon}^{1}$ and then by plugging the result into the equation
on $v_{\varepsilon}^{2}$. However, once the influences of the contributions
$\mathcal{LL}$ or $\mathcal{NL}$ are incorporated, such strong decoupling
is no more available. When dealing with the full system~\eqref{genericeq},
the discussion must necessarily take into account \textit{vectorial
aspects}.

\smallskip{}

- The operator $\mathcal{Q}$ of (\ref{genericeq}) is neither selfadjoint
nor bounded (on $L^{2}$). Up to some extent, it can be viewed as
a non selfadjoint perturbation of the selfadjoint action $-\mu\partial_{\theta\theta}\, I$.
Still, we can compute the point spectrum $\sigma_{P}(\mathcal{Q})$
of $\mathcal{Q}:L^{2}(\T;\R^{2})\longrightarrow H^{-2}(\T;\R^{2})$.
We find that: 
\[
\sigma_{P}(\mathcal{Q})\,:=\,\bigl\{\,\delta\in{\mathbb{C}}\,;\ \mathcal{Q}-\delta\, I\ \,\text{is not injective}\,\bigr\}\,=\,\bigl\{\,\mu\, n^{2}\,;\ n\in\N\,\bigr\}\,.
\]

From the point of view of central variety theorems, the presence of
a (point) spectral gap between the eigenvalue $0$ and the other (positive)
eigenvalues indicates that there is a separation between two types
of behaviours in time, a \textit{slow} one and a \textit{fast} decaying
one, for instance in the spirit of \cite{MR2055317,MR1127004}. Of
course, such a separation is due to the presence of $-\mu\partial_{\theta\theta}\, I$
inside $\mathcal{Q}$. Again, the influence of this dissipation term
is what relates (\ref{eqkinetic}) and (\ref{genericeq}).

\medskip{}

In other respects, singular systems like (\ref{genericeq}) have been
studied in a purely hyperbolic context, that is when $\mu=\lambda_{}=0$.
Then, the discussion is based on tools coming from supercritical nonlinear
geometric optics \cite{MR2533930,MR2182053,MR2032985}.

\medskip{}

The asymptotic analysis of (\ref{genericeq}) under the assumptions
retained here is clearly at the interface of what is done in \cite{MR802803,MR1803225,MR1127004}
and \cite{MR2533930,MR2182053,MR2032985}. Nevertheless, it needs
to develop a specific approach which is the matter of the current
contribution. In the next paragraph, we give a few indications of
our strategy.


\subsubsection{Heuristic description}

Our analysis of $\mathcal{L}^{a}$ is based on a discrete Fourier
decomposition with respect to $\theta\,\in\T$. We can expand $h$
as well as $v=^{t}(v^{1},v^{2})$ into Fourier Series: 
\[
h=\sum_{k\in\Z^{*}}h_{k}\ e^{i\, k\,\theta}\,,\qquad v^{j}=\sum_{k\in\Z}v_{k}^{j}\ e^{ik\theta}\,,\qquad j\in\{1,2\}\,.
\]
 Introduce the following linear map: 
\[
\left.\begin{array}{rcl}
\widetilde{\Pi}\,:L^{2}(\T;\R^{2}) & \longrightarrow & L^{2}(\T;\R^{2})\\
v={}^{t}(v^{1},v^{2}) & \longmapsto & {\displaystyle ^{t}\Bigl(\, v_{0}^{1}\,,\, v_{0}^{2}\,-\, i\,\sum_{k\in\Z^{*}}\, h_{k}\ \mu^{-1}\ k^{-1}\ v_{0}^{1}\ e^{ik\theta}\,\Bigr)\,.}
\end{array}\right.
\]
The application $\widetilde{\Pi}$ is clearly a projector onto the
kernel of $\mathcal{Q}$. Retain that: 
\[
\widetilde{\Pi}\circ\widetilde{\Pi}=\widetilde{\Pi}\,,\qquad\widetilde{\Pi}\, L^{2}=\text{ker}\,\mathcal{Q}\,,\qquad\text{dim}\ (\text{ker}\,\mathcal{Q})=2\,.
\]

$\bullet$ \underline{\bf a} $\bullet$ To understand the action
of the several operators in $\mathcal{L}^{a}$ defined in \eqref{genericeq},
a first approach is to consider the simplified equation: 
\begin{equation}
\partial_{t}\tilde{v}_{\varepsilon}+\varepsilon^{-2}\,\mathcal{Q}\,\tilde{v}_{\varepsilon}=0\,,\qquad\tilde{v}_{\varepsilon}(0,\theta)=\sum_{k\in\Z}\,\tilde{v}_{k}(0)\ e^{ik\theta}\,.\label{modelfac}
\end{equation}
 The corresponding solution $\tilde{v}_{\varepsilon}={}^{t}(\tilde{v}_{\varepsilon}^{1},\tilde{v}_{\varepsilon}^{2})$
involves components $\tilde{v}_{\varepsilon}^{j}$ which can be put
in the form 
\[
\tilde{v}_{\varepsilon}^{j}(t)\,=\,\sum_{k\in\Z}\,\tilde{v}_{k}^{j}\bigl(t,\frac{t}{\varepsilon^{2}}\bigr)\ e^{ik\theta}\,,\qquad\tau:=\frac{t}{\varepsilon^{2}}\,,\qquad j\in\{1,2\}\,.
\]
 For $k=0$, we find that $\tilde{v}_{0}^{1}(t,\tau)=\tilde{v}_{0}^{1}(0)$
and: 
\[
\tilde{v}_{0}^{2}(t,\tau)=\tilde{v}_{0}^{2}(0)-\, i\,\sum_{p\in\Z^{*}}\, h_{p}\,\mu^{-1}\, p^{-1}\ \tilde{v}_{-p}^{1}(0)+\, i\,\sum_{p\in\Z^{*}}\, h_{p}\,\mu^{-1}\, p^{-1}\,\tilde{v}_{-p}^{1}(0)\ e^{-\mu p^{2}\tau}\,.
\]
 The second (constant) term in $\tilde{v}_{0}^{2}(t,\tau)$ is in
general non zero and it comes from contributions inside $\tilde{v}_{\varepsilon}(0,\cdot)$
which are polarized according to $(I-\widetilde{\Pi})\, L^{2}$. Thus,
even if $\tilde{v}_{\varepsilon}(0,\cdot)$ presses only on the positive
point spectrum, the corresponding solution $\tilde{v}_{\varepsilon}$
is not necessarily exponentially decreasing in time. We can see here
a first effect of the nonselfadjoint part inside $\mathcal{Q}$.

\medskip{}

For $k\in\Z^{*}$, noting $\aleph:=\{p\in\Z^{*};p\not=k,p\not=2k\}$,
we have $\tilde{v}_{k}^{1}(t,\tau)=\tilde{v}_{k}^{1}(0)\, e^{-\mu k^{2}\tau}$
and: 
\[
\begin{array}{l}
\tilde{v}_{k}^{2}(t,\tau)=-\, i\, h_{k}\,\mu^{-1}\, k^{-1}\,\tilde{v}_{0}^{1}(0)-2\, i\, k\, h_{2k}\ \tilde{v}_{-k}^{1}(0)\ \tau\ e^{-\mu k^{2}\tau}\\
\qquad{\displaystyle +\, i\,\sum_{p\in\aleph}\, h_{p}\,\mu^{-1}\,(p-2k)^{-1}\ \tilde{v}_{k-p}^{1}(0)\ e^{-\mu(k-p)^{2}\tau}-\, i\,\sum_{p\in\aleph}\, h_{p}\,\mu^{-1}\,(p-2k)^{-1}\ \tilde{v}_{k-p}^{1}(0)\ e^{-\mu k^{2}\tau}}\\
\qquad+\,\bigl\lbrack\tilde{v}_{k}^{2}(0)+i\, h_{k}\,\mu^{-1}\, k^{-1}\ \tilde{v}_{0}^{1}(0)\bigr\rbrack\ e^{-\mu k^{2}\tau}\,.
\end{array}
\]
 By bringing together all constant terms (in $\tau$) inside an expression
$v_{k}^{s}(t,\theta)$ which here does not depend on $t$, these formulas
fit with a decomposition like (\ref{asymplty}).

\medskip{}

$\bullet$ \underline{\bf b} $\bullet$ Next, consider the more elaborated
model: 
\begin{equation}
\partial_{t}\check{v}_{\varepsilon}+\varepsilon^{-1}\,\mathcal{T}\,\check{v}_{\varepsilon}+\varepsilon^{-2}\,\mathcal{Q}\,\check{v}_{\varepsilon}=0\,,\qquad\check{v}_{\varepsilon}(0,y,\theta)=\sum_{k\in\Z}\,\check{v}_{k}(0,y)\ e^{ik\theta}\,.\label{modelelabo}
\end{equation}
 One can expect that the intermediate singular term $\varepsilon^{-1}\,\mathcal{T}$
produces the scaling $t/\varepsilon$. However, such an effect does
not appear here. On the one hand, the contributions polarized according
to $(I-\widetilde{\Pi})\, L^{2}$ are mainly handled as in paragraph
\underline{\bf a}. On the other hand, the $\widetilde{\Pi}\, L^{2}$
parts disappear by a combination of two arguments: 
\begin{description}
\item [{$\quad$-}] Due to the relation $\int_{\T}hh'd\theta=0$, we can
use the following algebraic identity: 
\begin{equation}
\widetilde{\Pi}\circ\mathcal{T}\circ\widetilde{\Pi}\equiv0\,.\label{structuraleq}
\end{equation}
 
\item [{$\quad$-}] We can absorb the extra term $(I-\widetilde{\Pi})\mathcal{T}\widetilde{\Pi}$
through some ellipticity inside $\mathcal{Q}$. Indeed, in what follows,
we seek $\check{v}_{\varepsilon}$ as an expansion of the form $\check{v}_{\varepsilon}=\check{v}_{0}+\varepsilon\,\check{v}_{1}+O(\varepsilon^{2})$.
Assuming that $\tilde{v}_{0}=\widetilde{\Pi}\tilde{v}_{0}\in\text{ker}\,\mathcal{Q}$,
we can observe that: 
\[
(I-\widetilde{\Pi})\,(\varepsilon^{-1}\,\mathcal{T}+\varepsilon^{-2}\,\mathcal{Q})\,(\check{v}_{0}+\varepsilon\,\check{v}_{1})\,=\,\varepsilon^{-1}\ \bigl\lbrack(I-\widetilde{\Pi})\mathcal{T}\widetilde{\Pi}\,\check{v}_{0}+(I-\widetilde{\Pi})\mathcal{Q}(I-\widetilde{\Pi})\,\check{v}_{1}\bigr\rbrack+O(1)\,.
\]
 Now, the idea is to adjust $\check{v}_{1}$ conveniently in order
to remove the $O(\varepsilon^{-1})$ contribution. 
\end{description}
\smallskip{}

In practice, the implementation of these arguments must be done with
care because the different terms which come into play are more tangled
than what is indicated above.

\medskip{}

Note that a normal form approach (in the spirit of \cite{MR2032985}:
meaning to change $\check{v}$ into $(I+\varepsilon M)\dot{v}$ for
some well adjusted operator $M$), can be tried to get rid of $\mathcal{T}$.
However, such a method seems not to succeed. There are always remaining
$O(\varepsilon^{-1})$ terms and, all things considered, to deal with
the actual diagonal form of $\mathcal{T}$ appears to be more suitable.

\medskip{}

$\bullet$ \underline{\bf c} $\bullet$ Finally, consider the full
system (\ref{genericeq}). Our aim is to describe the asymptotic behaviour
of the family $\{v_{\varepsilon}\}_{\varepsilon}$ on a time scale
of the order $t\simeq1$. To this end, we have to understand the $O(1)$
contributions brought by the singularity $\varepsilon^{-1}\,\mathcal{T}+\varepsilon^{-2}\,\mathcal{Q}$.
This singular term is a perturbation of the self adjoint operator
$\mathcal{Q}_{0}:=\varepsilon^{-2}\mu\partial_{\theta\theta}I$. This
perturbation is of two types. 
\begin{description}
\item [{\quad{}-}] The interactions (at order one) between $\mathcal{Q}_{0}$
and $\varepsilon^{-1}\mathcal{T}$ turns out to be the source of some
creation of diffusion. The mechanism is similar to the one met in
the \textit{drift-diffusion} phenomena. Moreover, $\mathcal{Q}_{0}+\varepsilon^{-1}\mathcal{T}$
is a diagonal operator. The components of the velocity are decoupled
and the discussion deals more with scalar arguments than with vectorial
arguments. 
\item [{\quad{}-}] We perturb $\mathcal{Q}_{0}+\varepsilon^{-1}\mathcal{T}$
by $\begin{pmatrix}0 & 0\\
\partial_{\theta}h & 0
\end{pmatrix}$ at order $0$. A first effect is that $\mathcal{Q}$ is not selfadjoint.
It also induced some strong coupling at order $0$ between the two
components of the velocity. One aspect of the construction is to prove
that this strong coupling do not disrupt the production of dissipation.
The discussion has to take into account vectorial aspects and one
issue is to match the initial data between the slow profile $v_{\varepsilon}^{s}$
and the fast profile $v_{\varepsilon}^{f}$. 
\end{description}
\medskip{}

Moreover, we have to determine the effects of $\mathcal{LL}$ and
$\mathcal{NL}$ which are of two types. First, the presence of $\mathcal{LL}$
and $\mathcal{NL}$ reinforces the coupling. Secondly, it induces
nonlinear interactions which are delicate to deal with. In particular,
in the critical case $M=2$, it becomes necessary to exhibit \textit{transparency
phenomena} in order to achieve the analysis.

\medskip{}

In this article, we propose (Proposition \ref{corapproxoperateur})
and we justify (Theorem \ref{propestimationreste}) a complete expansion
for the family $\{v_{\varepsilon}\}_{\varepsilon}$. It is the occasion
to analyze precisely the linear features and the non linear aspects
alluded above. 

\subsubsection{Heuristic arguments for the energy estimates}

The variable $^{t}(q_{\varepsilon}^{R},v_{\varepsilon}^{R})$ can
be interpreted as the solution of the linearized operator $\mathcal{L}$
at $^{t}(q_{\varepsilon}^{a},v_{\varepsilon}^{a})$ perturbed by some
\textit{small} non-linear terms (when $\nu$ and $M$ are large enough).
It can still be interpreted as the solution of the linearized equation of System \eqref{eqrefthetay}
at point $^{t}(0,0,h)+(\varepsilon^{\nu}q_{\varepsilon}^{a},\varepsilon^{M}v_{\varepsilon}^{a})$ (perturbed b some small non-linear terms).
To underline the difficulties to obtain estimates on a strip independent
of $\varepsilon$ we only consider estimates over $(p_{\varepsilon}^{\ell},u_{\varepsilon}^{\ell})$
solution the linearized system \eqref{eqrefthetay} at point $^{t}(0,0,h(\theta))$:
\begin{equation}
\left\{ \begin{array}{llll}
\partial_{t}p_{\varepsilon}^{\ell} & +\varepsilon^{-1}h\,\partial_{y}p_{\varepsilon}^{\ell} &  & =S_{0}\,,\\
\partial_{t}u_{\varepsilon}^{1\ell} & +\varepsilon^{-1}h\,\partial_{y}u_{\varepsilon}^{1\ell} & {-\widetilde{\mathcal{P}}_{\varepsilon}^{1}u_{\varepsilon}^{\ell}} & =S_{1}\,,\\
\partial_{t}u_{\varepsilon}^{2\ell} & +\varepsilon^{-1}h\,\partial_{y}u_{\varepsilon}^{2\ell}+\fbox{\ensuremath{\varepsilon^{-2}\partial_{\theta}h\ u_{\varepsilon}^{1\ell}}} & {-\widetilde{\mathcal{P}}_{\varepsilon}^{2}u_{\varepsilon}^{\ell}} & =S_{2}\,,
\end{array}\right.\label{eqintrolin1}
\end{equation}
 for some sources $S:={}^{t}(S_{0},S_{1},S_{2})$ in $H^{\infty}(\T\times\R\,;\,\R^{3})$.
It is a parabolic-hyperbolic system singular in $\varepsilon$.\\

\paragraph{Purely hyperbolic approach.}

We first consider that $\mu=\lambda=0$ so that the dissipation vanishes.
We perform classical $L^{2}$-estimates on Equation \eqref{eqintrolin1}.
We obtain: 
\[
\left\Vert \left(p_{\varepsilon}^{\ell},u_{\varepsilon}^{\ell}\right)(t,\cdot)\right\Vert _{L^{2}}\lesssim e^{C_{\varepsilon}t}\sup_{t\in[0,T_{\varepsilon}^{l}]}\left\Vert S\right\Vert _{L^{2}},\quad\text{with}\quad C_{\varepsilon}\leq C\left(1+\textcolor{red}{\varepsilon^{-2}\|\partial_{\theta}h\|_{L^{\infty}}}\right).
\]
 Yet, it indicates that classical energy estimates only provide a
control over the solution for time of order $\varepsilon^{2}$. In
particular for bounded time the solution can exponentially increase
with the time~$t$.

Furthermore, let us consider the singular transport equation: 
\[
\partial_{t}v+\varepsilon h(\theta)\partial_{y}v=0,\qquad v_{|_{t=0}}=v_{0}\,.
\]
 The solution is explicit $v(t,\theta,y)=v_{0}(\theta,y-\varepsilon^{-1}th(\theta))$.
In particular each time we differentiate with respect to $\theta$, we
loss a power of $\varepsilon$. Thus, classical Sobolev space are
not well suited for the control of this family of solutions. We have to introduce
anisotropic Sobolev spaces (defined page \pageref{eqinfininorm})
for both the velocity and the pressure.\\

\paragraph{Parabolic-hyperbolic approach.}

To go further in time, we have to consider the dissipation. 
The operator $\mathcal{P}_{\varepsilon,}$ is positive and satisfies
some coercive estimates. There exists a positive constant $c$ such
that for any function $f\in H^{1}(\T\times\R)$, 
\begin{equation}
\forall\,\varepsilon\in]0,1],\quad-\left\langle \widetilde{\mathcal{P}}_{\varepsilon}f,f\right\rangle \geq c\,\left(\left\Vert \varepsilon^{-1}\partial_{\theta}f\right\Vert _{L^{2}(\T\times\R)}^{2}+\left\Vert \partial_{y}f\right\Vert _{L^{2}(\T\times\R)}^{2}\right):=\Phi_{\varepsilon}(\nabla,f).\label{controldissipation0}
\end{equation}
 It has two consequences. 
\begin{description}
\item [{\quad{}-}] At fixed $\varepsilon$, we should obtain a regularization
of the solution. The velocity $u_{\varepsilon}^{\ell}$ is in $L_{t}^{2}H_{\theta,y}^{1}$
(see Inequality \eqref{eqpropestu2}). This is the \textit{regularization
phenomena}. 
\item [{\quad{}-}] Considering the dependency in $\varepsilon$, the dissipation
should also absorb some singular terms. First the squared term can
be estimated as follows: 
\begin{align*}
\varepsilon^{-1}\left|\varepsilon^{-1}\int_{\T\times\R}\partial_{\theta}h\, v_{\varepsilon}^{1l}\, v_{\varepsilon}^{2l}\, d\theta\, dy\right| & \lesssim\varepsilon^{-1}\left(\|h\|_{L^{\infty}}^{2}\|v_{\varepsilon}^{l}\|_{L^{2}}^{2}+\|\varepsilon^{-1}\partial_{\theta}v_{\varepsilon}^{l}\|_{L^{2}}^{2}\right).
\end{align*}
 Thus it only seems to be singular of order one (in $\varepsilon$)
instead of being singular of order two (in $\varepsilon$). Furthermore
it is also desingularizes the singular transport. We can obtain estimates
over the velocity in the classical Sobolev spaces whereas the pressure
is still estimated in the anisotropic Sobolev spaces. It indicates
that the estimates over the velocity and the pressure have to be done
separately. 
\end{description}

Here, the addition of the parabolic aspect in the discussion still
does not allow us to obtain a control over $(p_{\varepsilon}^{\ell},u_{\varepsilon}^{\ell})$
for time of order one. Some additional arguments are required.\\

\paragraph{Singular change of unknowns.}

To keep on desingularizing the term $\varepsilon^{-2}\partial_{\theta}h\, u_{\varepsilon}^{1\ell}$
we consider the change of unknowns: 
\[
\widetilde{q}_{\varepsilon}^{\ell}:=q_{\varepsilon}^{\ell},\quad\widetilde{u}_{\varepsilon}^{1\ell}:=u_{\varepsilon}^{1\ell},\quad\widetilde{u}_{\varepsilon}^{2\ell}:=\textcolor{red}{\varepsilon}u_{\varepsilon}^{2\ell}.
\]
 The system \eqref{eqintrolin1} becomes: 
\begin{equation}
\left\{ \begin{array}{llll}
\partial_{t}\widetilde{p}_{\varepsilon}^{\ell} & +\varepsilon^{-1}h\,\partial_{y}\widetilde{p}_{\varepsilon}^{\ell} &  & =S_{0}\,,\\
\partial_{t}\widetilde{u}_{\varepsilon}^{1\ell} & +\varepsilon^{-1}h\,\partial_{y}\widetilde{u}_{\varepsilon}^{1\ell} & {-{\mathcal{Q}}_{\varepsilon}^{1}}\widetilde{u}_{\varepsilon}^{\ell} & =S_{1}\,,\\
\partial_{t}\widetilde{u}_{\varepsilon}^{2\ell} & +\varepsilon^{-1}h\,\partial_{y}\widetilde{u}_{\varepsilon}^{2\ell}+{\varepsilon^{-1}\partial_{\theta}h\,\widetilde{u}_{\varepsilon}^{1\ell}} & {-{\mathcal{Q}}_{\varepsilon}^{2}}\widetilde{u}_{\varepsilon}^{\ell} & =\varepsilon S_{2}\,.
\end{array}\right.\label{eqintrolin2}
\end{equation}
 where the operator $\mathcal{Q}_{\varepsilon}$ is defined in Equation
\eqref{eqQdiss}. We can notice that: 
\begin{description}
\item [{\quad{}-}] The term $\varepsilon^{-2}\partial_{\theta}h\, u_{\varepsilon}^{1\ell}$
is desingularized into ${\varepsilon^{-1}\partial_{\theta}h\,\widetilde{u}_{\varepsilon}^{1\ell}}$. 
\item [{\quad{}-}] However, the dissipation is turns into the operator
$\mathcal{Q}_{\varepsilon}$. It can no longer satisfies Inequality
\eqref{controldissipation0}. Assuming $\mu_{}$ is large enough,
it is still true (\textit{c.f.} Lemma~\ref{lemestQ}). 
\end{description}
Thus performing the same estimates for system \eqref{eqintrolin2}
as the one done in the previous case should lead to a control over
$(\widetilde{q}_{\varepsilon}^{\ell},\widetilde{u}_{\varepsilon}^{\ell})$
in $L^{2}$-norm for time of order one ($t\approx1$).\\

\paragraph{Conclusion.}

In Section \ref{Energy}, we justify that those heuristic arguments
work for the complete System~\eqref{eqepsilon}. Some technical arguments
must be added to deal with the complete System \eqref{eqepsilon}.
Indeed, it is obviously nonlinear and the pressure and the velocity
are coupled. Nonlinear terms has to be studied carefully.

As indicated, the pressure is expected to be controlled in anisotropic
Sobolev spaces whereas the velocity is estimated in the classical
Sobolev spaces. Thus it indicates that we have to deal with the problem
of the velocity and the pressure separately. However those variables
are coupled by the term 
\[
-\frac{C\varepsilon^{2\nu-M-R-2}}{2}\,^{t}(\partial_{\theta},\varepsilon\partial_{y})\left(q_{\varepsilon}^{a}+\varepsilon^{R}q_{\varepsilon}^{R}\right)^{2},
\]
 in Equation \eqref{eqepsilon}. The constant $\nu$ has to be large
enough so that the pressure does not interfere too much with the velocity.
Of course an other issue is that the pressure is only estimated in
the anisotropic Sobolev spaces. We can go back to the classical Sobolev
spaces using the equivalence of norms \eqref{eqrefnorm}. It has a
cost in power of $\varepsilon$ for each derivatives to estimate.
It explains why we lose $(m+3)$ precision in the definition of $w_{m}$
(see Equation \eqref{eqws}). 


\subsubsection{Contents}

What follows is divided in two main parts: Section \ref{construction}
and Section \ref{Energy}.

\medskip{}

The Section \ref{construction} is devoted to the construction of
the approximated solutions $(q_{\varepsilon}^{a},v_{\varepsilon}^{a})$.
The first step is to show the Proposition \ref{propconstructionvelocity}. 
\begin{description}
\item [{$\quad\ $-}] In this purpose, the paragraph \ref{subsectionconstructvelocity}
deal with the velocity field $v_{\varepsilon}^{a}$, that is with
the equation $\mathcal{L}^{a}(\varepsilon,v_{\varepsilon}^{a})=O(\varepsilon^{N})$.
The limit case $M=2$ is special because in this situation the non
linear terms can interfere at leading order. 
\item [{$\quad\ $-}] In the paragraph \ref{concerningsolutions}, we are
able to exhibit the control (\ref{eqapproxsolv}). 
\end{description}
The pressure component $q_{\varepsilon}^{a}$ is incorporated at the
level of subsection \ref{sectionconstructionpressure}. Then, the
complete construction of $(q_{\varepsilon}^{a},v_{\varepsilon}^{a})$
can be achieved in the form of Proposition \ref{corapproxoperateur}.

\medskip{}

The Section \ref{Energy} is concerned with energy estimates. We first
state the Proposition of control of the velocity and the pressure.
In particular we deduce  estimates  stated in the
Theorem \ref{propestimationreste}. In the subsection \ref{estvelocity},
we look at the equations $\mathcal{L}_{1}$ and $\mathcal{L}_{2}$.
To this end, we crucially need the properties brought by the dissipation.
In the Subsection \ref{estpressure}, we inject the informations which
have been obtained at the level of $\mathcal{L}_{0}$. By this way,
we can deduce controls concerning the pressure component. 


\section{Construction of the approximated solutions}

\label{construction}


We recall that $M$ is assume to be larger than $2$. This section
is dedicated to the proof of Propositions \ref{propconstructionvelocity},
\ref{propconstructionpressure} and \ref{corapproxoperateur}.


\subsection{Approximated velocity}

\label{subsectionconstructvelocity} 
In this Section, we construct an approximated velocity. Since $M\geq2$,
it follows that non linear effects are present. We are forced to work
with the two time scales $t$ and $\varepsilon^{-2}\, t$ together.
We construct expansions, 
\begin{equation}
v_{\varepsilon}^{s}(t,y,\theta)\,=\sum_{k=0}^{N+1}\,\varepsilon^{k}\, v_{k}^{s}(t,y,\theta)\,,\qquad v_{\varepsilon}^{f}\bigl(\frac{t}{\varepsilon^{2}},y,\theta\bigr)\,=\sum_{k=0}^{N+1}\, v_{k}^{f}\bigl(\frac{t}{\varepsilon^{2}},y,\theta\bigr)\,,\label{lesveps}
\end{equation}
 and plug the expression $v_{\varepsilon}^{s}+v_{\varepsilon}^{f}$
into $\mathcal{L}^{a}$ at the level of (\ref{asymplty}). We obtain:
\begin{align}\label{eqtimecouple}
\partial_{t}v_{\varepsilon}^{s}(t,.)+\varepsilon^{-1}\, h\,\partial_{y}v_{\varepsilon}^{s}(t,.) & +\varepsilon^{M-2}\left(v_{\varepsilon}^{1s}(t,.)\partial_{\theta}v_{\varepsilon}^{s}(t,.)+\varepsilon v_{\varepsilon}^{2s}(t,.)\partial_{y}v_{\varepsilon}^{s}(t,.)\right)\nonumber \\
 & +\,\varepsilon^{-2}\ {}^{t}\left(0,\partial_{\theta}h\, v_{\varepsilon}^{1s}(t,.)\right)-\widetilde{\mathcal{P}}_{\varepsilon}v_{\varepsilon}^{s}(t,.)\nonumber \\
+\,\varepsilon^{-2}\,\partial_{\tau}v_{\varepsilon}^{f}(t/\varepsilon^{2},.)+\varepsilon^{-1}h\,\partial_{y}v_{\varepsilon}^{f}(t/\varepsilon^{2},.) & +\,\varepsilon^{M-2}\, v_{\varepsilon}^{1f}(t/\varepsilon^{2},.)\,\partial_{\theta}v_{\varepsilon}^{f}(t/\varepsilon^{2},.)\nonumber \\
 & \hspace{-3cm}+\,\varepsilon^{M-1}\, v_{\varepsilon}^{2f}(t/\varepsilon^{2},.)\,\partial_{y}v_{\varepsilon}^{f}(t/\varepsilon^{2},.)+\,\varepsilon^{-2}\ {}^{t}\left(0,\partial_{\theta}h\, v_{\varepsilon}^{1f}(t/\varepsilon^{2},.)\right)-\widetilde{\mathcal{P}}_{\varepsilon}v_{\varepsilon}^{f}(t/\varepsilon^{2},.)\nonumber \\
 & \hspace{-3cm}+\,\varepsilon^{M-2}\left(v_{\varepsilon}^{1s}(t,.)\partial_{\theta}v_{\varepsilon}^{f}(t/\varepsilon^{2},.)+\varepsilon v_{\varepsilon}^{2s}(t,.)\partial_{y}v_{\varepsilon}^{f}(t/\varepsilon^{2},.)\right)\nonumber \\
 & \hspace{-3cm}+\,\varepsilon^{M-2}\, v_{\varepsilon}^{1f}(t/\varepsilon^{2},.)\,\partial_{\theta}v_{\varepsilon}^{s}(t,.)+\,\varepsilon^{M-1}\, v_{\varepsilon}^{2f}(t/\varepsilon^{2},.)\,\partial_{y}v_{\varepsilon}^{s}(t,.)\,.
\end{align}
 Fix any time $T\in\R_{+}^{*}$. Define 
\begin{equation}
\mathcal{L}^{as}(\varepsilon,v_{\varepsilon}^{s}):=\partial_{t}v_{\varepsilon}^{s}+\varepsilon^{-1}\, h\,\partial_{y}v_{\varepsilon}^{s}+\varepsilon^{M-2}\left(v_{\varepsilon}^{1s}\partial_{\theta}v_{\varepsilon}^{s}+\varepsilon v_{\varepsilon}^{2s}\partial_{y}v_{\varepsilon}^{s}\right)+\varepsilon^{-2}\ {}^{t}\left(0,\partial_{\theta}h\, v_{\varepsilon}^{1s}\right)-\widetilde{\mathcal{P}}_{\varepsilon}v_{\varepsilon}^{s}\,.\label{eqlas}
\end{equation}
 Recall that $v_{\varepsilon}^{f}\in\mathcal{E}_{\delta}^{\infty}$
is assumed to be exponentially decreasing with respect to $\tau\in\R_{+}$.
Since 
\[
e^{-\delta\, t/\varepsilon^{2}}\,=\, O\bigl((\varepsilon^{2}/t)^{\infty}\bigr)\,=\, O(\varepsilon^{N})\,,\qquad\forall\,(t,N)\in\,]0,T]\times\N\,,
\]
 when looking at the Equation (\ref{eqtimecouple}) for times $t\in\,]0,T]$
with in view a precision of the size $O(\varepsilon^{N})$, all terms
involving $v_{\varepsilon}^{f}$ can be neglected. Now, the idea is
simply to extend this (relaxed) smallness requirement on the whole
interval $[0,T]$. Briefly, we seek $v_{\varepsilon}^{s}$ so that
\begin{equation}
\mathcal{L}^{as}(\varepsilon,v_{\varepsilon}^{s})\,=\, O(\varepsilon^{N})\,,\qquad t\in[0,T]\,.\label{eqapproxt}
\end{equation}

The Equation (\ref{eqapproxt}) can be completed with some initial
data 
\begin{equation}
v_{\varepsilon}^{s}(0,\theta,y)\,=\, v_{\varepsilon}^{s0}(\theta,y)\,=\,\sum_{k=0}^{N+1}\,\varepsilon^{k}\ v_{k}^{s0}(\theta,y)\,.\label{someinidt}
\end{equation}
 Clearly, it suffices to specify $v_{\varepsilon}^{s0}$ to determine
what is $v_{\varepsilon}^{s}(t,\cdot)$ for $t\in[0,T]$, by solving
the Cauchy problem (\ref{eqapproxt})-(\ref{someinidt}). Now, in
order to select $v_{\varepsilon}^{s0}$ conveniently, we have to take
into account what happens for small times, in a boundary layer of
size $\varepsilon^{2}$ near $t=0$. To understand why, just come
back to the study of (\ref{eqtimecouple}) for $t\simeq\varepsilon^{2}$
or $\tau\simeq1$. Then, the contributions brought by $v_{\varepsilon}^{f}$
can no more be neglected. Considering (\ref{eqtimecouple}) with the
information (\ref{eqapproxt}) in mind, it seems natural to impose
\begin{equation}
\mathcal{L}^{af}(\varepsilon,v_{\varepsilon}^{f},v_{\varepsilon}^{s})\,=\, O(\varepsilon^{N})\,,\qquad\tau\in[0,1]\label{eqapproxtbiz}
\end{equation}
 where we have introduced 
\begin{align}\label{eqlaf}
\mathcal{L}^{af}(\varepsilon,v_{\varepsilon}^{f},v_{\varepsilon}^{s}):=\varepsilon^{-2}\partial_{\tau}v_{\varepsilon}^{f}(\tau,.) & +\varepsilon^{-1}h\,\partial_{y}v_{\varepsilon}^{f}(\tau,.)-\widetilde{\mathcal{P}}_{\varepsilon}v_{\varepsilon}^{f}(\tau,.)\nonumber \\
 & +\varepsilon^{M-2}\left(v_{\varepsilon}^{1f}(\tau,.)\partial_{\theta}v_{\varepsilon}^{f}(\tau,.)+\varepsilon v_{\varepsilon}^{2f}(\tau,.)\partial_{y}v_{\varepsilon}^{f}(\tau,.)\right)+^{t}\left(0,\partial_{\theta}h\, v_{\varepsilon}^{1f}(\tau,.)\right)\nonumber \\
 & +\varepsilon^{M-2}\left(v_{\varepsilon}^{1s}(\varepsilon^{2}\tau,.)\partial_{\theta}v_{\varepsilon}^{f}(\tau,.)+\varepsilon v_{\varepsilon}^{2s}(\varepsilon^{2}\tau,.)\partial_{y}v_{\varepsilon}^{f}(\tau,.)\right)\nonumber \\
 & +\varepsilon^{M-2}\left(v_{\varepsilon}^{1f}(\tau,.)\partial_{\theta}v_{\varepsilon}^{s}(\varepsilon^{2}\tau,.)+\varepsilon v_{\varepsilon}^{2f}(\tau,.)\partial_{y}v_{\varepsilon}^{s}(\varepsilon^{2}\tau,.)\right)\,.
\end{align}
 Assume that the data $v_{\varepsilon}^{s0}$ is known. The Cauchy
problem (\ref{eqapproxt})-(\ref{someinidt}) furnishes $v_{\varepsilon}^{s}(t,\cdot)$
for $t\in[0,T]$. In particular, it gives access to all derivatives
$(\partial_{\tau})^{l}v_{\varepsilon}^{\ast s}(0,.)$ with $l\in\N$.
Therefore, we can go further in the analysis by replacing in $\mathcal{L}^{af}(\varepsilon,v_{\varepsilon}^{f},v_{\varepsilon}^{s})$
all the expressions $v_{\varepsilon}^{\ast s}(\varepsilon^{2}\tau,.)$
by their corresponding Taylor expansions (say up to the order $N-1$)
near $t=0$. As long as $\tau\in\R$ is fixed, this operation is justified.
From now on, we look at 
\begin{equation}
\mathcal{L}^{aft}(\varepsilon,v_{\varepsilon}^{f})\,=\, O(\varepsilon^{N})\,,\qquad\tau\in\R_{+}\label{eqapproxtau}
\end{equation}
 where the definition of $\mathcal{L}^{aft}$ is 
\begin{align}
\mathcal{L}^{aft}(\varepsilon,v_{\varepsilon}^{f}): & =\,\varepsilon^{-2}\,\partial_{\tau}v_{\varepsilon}(\tau,.)+\varepsilon^{-1}h\,\partial_{y}v_{\varepsilon}(\tau,.)-\widetilde{\mathcal{P}}_{\varepsilon}v_{\varepsilon}(\tau,.)\nonumber \\
 & \hspace{-1cm}+\varepsilon^{M-2}\left(v_{\varepsilon}^{1}(\tau,.)\partial_{\theta}v_{\varepsilon}(\tau,.)+\varepsilon v_{\varepsilon}^{2}(\tau,.)\partial_{y}v_{\varepsilon}(\tau,.)\right)+\varepsilon^{-2}\ {}^{t}\left(0,\partial_{\theta}h\, v_{\varepsilon}^{1}(\tau,.)\right)\nonumber \\
 & \hspace{-1cm}+\varepsilon^{M-2}\left(\sum_{l=0}^{N-1}\frac{(\varepsilon^{2}\tau)^{l}}{l!}(\partial_{t})^{l}v_{\varepsilon}^{1s}(0,.)\partial_{\theta}v_{\varepsilon}(\tau,.)+\varepsilon\sum_{l=0}^{N-1}\frac{(\varepsilon^{2}\tau)^{l}}{l!}(\partial_{t})^{l}v_{\varepsilon}^{2s}(0,.)\partial_{y}v_{\varepsilon}(\tau,.)\right)\nonumber \\
 & \hspace{-1cm}+\varepsilon^{M-2}\left(v_{\varepsilon}^{1}(\tau,.)\sum_{l=0}^{N-1}\frac{(\varepsilon^{2}\tau)^{l}}{l!}(\partial_{t})^{l}\partial_{\theta}v_{\varepsilon}^{s}(0,.)+\varepsilon\, v_{\varepsilon}^{2}(\tau,.)\sum_{l=0}^{N-1}\frac{(\varepsilon^{2}\tau)^{l}}{l!}(\partial_{t})^{l}\partial_{y}v_{\varepsilon}^{s}(0,.)\right)\,.
\end{align}
 To be coherent with (\ref{asymplty}), we have to impose 
\begin{equation}
v_{\varepsilon}^{f}(0,\cdot)\,=\, v_{\varepsilon}^{f0}(\cdot)\,:=\,(v_{\varepsilon}^{a}-v_{\varepsilon}^{s})(0,\cdot)\,,\label{someinidtenc}
\end{equation}
 where $v_{\varepsilon}^{a}(0,\cdot)$ and $v_{\varepsilon}^{s}(0,\cdot)$
are prescribed as indicated in lines (\ref{prescribed}) and (\ref{someinidt}).
Now, we can recover some $v_{\varepsilon}^{f}(\tau,\cdot)$ for $\tau\in\R_{+}$
by solving the Cauchy problem (\ref{eqapproxtau})-(\ref{someinidtenc}).

\medskip{}

The difficulty comes from the condition $v_{\varepsilon}^{f}\in\mathcal{E}_{\delta}^{\infty}$.
Nothing guarantees that the criterion $v_{\varepsilon}^{f}\in\mathcal{E}_{\delta}^{\infty}$
can be verified for some well-chosen $v_{\varepsilon}^{s0}$. To show
the existence and the uniqueness of such a data $v_{\varepsilon}^{s0}$
is in fact what matters. The extraction of an adequate function $v_{\varepsilon}^{s0}$
is clarified in the construction described below.

\medskip{}

For the sake of brevity, for $k\geq M$, introduce the following notations:
\begin{align}
 & \mathcal{J}(M,k):=\left\{ (i,j)\in\llbracket0,N+1\rrbracket^{2}\ ;\ i+j=k-M\right\} \,,\label{mathcalJ(M,k)}\\
 & \mathcal{I}(M,k):=\left\{ (i,j,l)\in\llbracket0,N+1\rrbracket^{2}\times\ \llbracket0,N-1\rrbracket\ ;\ i+j+2l=k-M\right\} \,.\label{mathcalI(M,k)}
\end{align}
 For $k<M$, we set $\mathcal{J}(M,k)=\emptyset$ and $\mathcal{I}(M,k)=\emptyset$.
We also adopt the conventions $v_{k}^{s}\equiv v_{k}^{f}\equiv0$
for $k=-3$, $k=-2$ and $k=-1$. Let us now go into the details of
the BKW calculus.

\medskip{}

The first step is to inject some expansion $v_{\varepsilon}^{s}$
like in (\ref{lesveps}) into the Equation (\ref{eqapproxt}). By
this way, we can obtain a cascade of equations concerning the unknowns
$v_{k}^{s}={}^{t}(v_{1,k}^{s},v_{2,k}^{s})$. More precisely, for
$k\in\llbracket0,N+3\rrbracket$, we have to consider \begin{subequations}
\label{equtjnl} 
\begin{align}
\partial_{t}v_{1,k-2}^{s}+h\,\partial_{y}v_{1,k-1}^{s} & +\sum_{(i,j)\in\mathcal{J}(M,k)}v_{1,i}^{s}\partial_{\theta}v_{1,j}^{s}+\sum_{(i,j)\in\mathcal{J}(M+1,k)}v_{2,i}^{s}\partial_{y}v_{1,j}^{s}\nonumber \\
 & =\mu\partial_{\theta\theta}v_{1,k}^{s}+\mu\partial_{yy}v_{1,k-2}^{s}+\lambda\partial_{\theta\theta}v_{1,k-1}^{s}+\lambda\partial_{\theta y}v_{2,k-2}^{s}\,,\label{equtjnl:1}\\
\  & \ \nonumber \\
\partial_{t}v_{2,k-2}^{s}+h\,\partial_{y}v_{2,k-1}^{s} & +\sum_{(i,j)\in\mathcal{J}(M,k)}v_{1,i}^{s}\partial_{\theta}v_{2,j}^{s}+\sum_{(i,j)\in\mathcal{J}(M+1,k)}v_{2,i}^{s}\partial_{y}v_{2,j}^{s}+\partial_{\theta}h\, v_{1,k}^{s}\nonumber \\
 & =\mu\partial_{\theta\theta}v_{2,k}^{s}+\mu\partial_{yy}v_{2,k-2}^{s}+\lambda\partial_{\theta y}v_{1,k-2}^{s}+\lambda\partial_{yy}v_{2,k-3}^{s}\,.\label{equtjnl:2}
\end{align}
 \end{subequations} The next step is to plug some expansion $v_{\varepsilon}^{f}$
like in (\ref{lesveps}) into the Equation (\ref{eqapproxtau}). By
this way, we can obtain a cascade of equations concerning the unknowns
$v_{k}^{f}={}^{t}(v_{1,k}^{f},v_{2,k}^{f})$. More precisely, for
$k\in\llbracket0,N+1\rrbracket$, we have to consider \begin{subequations}
\label{equtaujnl} 
\begin{align}
\partial_{\tau}v_{1,k}^{f}+h\,\partial_{y}v_{1,k-1}^{f} & +\sum_{(i,j)\in\mathcal{J}(M,k)}v_{1,i}^{f}\partial_{\theta}v_{1,j}^{f}+\sum_{(i,j)\in\mathcal{J}(M+1,k)}v_{2,i}^{f}\partial_{y}v_{1,j}^{f}\nonumber \\
 & +\sum_{(i,j,l)\in\mathcal{I}(M,k)}\left(v_{1,i}^{f}\left(\partial_{t}\right)^{l}(\partial_{\theta}v_{1,j}^{s})(0)+\left(\partial_{t}\right)^{l}(v_{1,i}^{s})(0)\partial_{\theta}v_{1,j}^{f}\right)\frac{\tau^{l}}{l!}\nonumber \\
 & +\sum_{(i,j,l)\in\mathcal{I}(M+1,k)}\left(v_{2,i}^{f}\left(\partial_{t}\right)^{l}(\partial_{y}v_{1,j}^{s})(0)+\left(\partial_{t}\right)^{l}(v_{2,i}^{s})(0)\partial_{y}v_{1,j}^{f}\right)\frac{\tau^{l}}{l!}\nonumber \\
 & =\mu\partial_{\theta\theta}v_{1,k}^{f}+\mu\partial_{yy}v_{1,k-2}^{f}+\lambda\partial_{\theta\theta}v_{1,k-1}^{f}+\lambda\partial_{\theta y}v_{2,k-2}^{f}\,,\label{equtaujnl:1}\\
\  & \ \nonumber \\
\partial_{\tau}v_{2,k}^{f}+h\, v_{2,k-1}^{f}+\partial_{\theta}h\, v_{1,k}^{f} & +\sum_{(i,j)\in\mathcal{J}(M,k)}v_{1,i}^{f}\partial_{\theta}v_{2,j}^{f}+\sum_{(i,j)\in\mathcal{J}(M,k+1)}v_{2,i}^{f}\partial_{\theta}v_{2,j}^{f}\nonumber \\
 & +\sum_{(i,j,l)\in\mathcal{I}(M,k)}\left(v_{1,i}^{f}\left(\partial_{t}\right)^{l}(\partial_{\theta}v_{2,j}^{s})(0)+\left(\partial_{t}\right)^{l}(v_{1,i}^{s})(0)\partial_{\theta}v_{2,j}^{f}\right)\frac{\tau^{l}}{l!}\nonumber \\
 & +\sum_{(i,j,l)\in\mathcal{I}(M+1,k)}\left(v_{2,i}^{f}\left(\partial_{t}\right)^{l}(\partial_{y}v_{2,j}^{s})(0)+\left(\partial_{t}\right)^{l}(v_{2,i}^{s})(0)\partial_{y}v_{2,j}^{f}\right)\frac{\tau^{l}}{l!}\nonumber \\
 & =\mu\partial_{\theta\theta}v_{2,k}^{f}+\mu\partial_{yy}v_{2,k-2}^{f}+\lambda\partial_{\theta y}v_{1,k-2}^{f}+\lambda\partial_{yy}v_{2,k-3}^{f}\,.\label{equtaujnl:2}
\end{align}
 \end{subequations} In view of (\ref{prescribed}), we can associate
(\ref{equtjnl}) and (\ref{equtaujnl}) with initial data $v_{k}^{s0}$
and $v_{k}^{f0}$ satisfying the restriction: 
\begin{align}
v_{k}^{s}(0,\theta,y)+v_{k}^{f}(0,\theta,y)\,=\, v_{k}^{0}(\theta,y)\,,\qquad\forall\, k\in\llbracket0,N+1\rrbracket\,.\label{eqiniu0}
\end{align}
 \begin{proposition}{[}Solving (\ref{equtjnl}) and (\ref{equtaujnl})
together with (\ref{eqiniu0}) and the condition $v_{k}^{f}\in\mathcal{E}_{\delta}^{\infty}${]}
\label{propprofilenonlinear} Fix a time $T\in\R_{+}^{*}$, a number
$\delta\in\,]0,\mu[$ and, for all $k\in\llbracket0,\ldots,N+1\rrbracket$,
functions $v_{k}^{0}\in H^{\infty}(\T\times\R)$. Then, the conditions
(\ref{equtjnl}$\star$), (\ref{equtaujnl}$\star$) and (\ref{eqiniu0})
have a unique solution such that 
\begin{equation}
(v_{k}^{s},v_{k}^{f})\in\mathcal{H}_{T}^{\infty}\times\mathcal{E}_{\delta}^{\infty}\,,\qquad\forall\, k\in\left\llbracket 0,N+1\right\rrbracket \,.\label{solsatistezr}
\end{equation}
 Moreover, the component $v_{k}^{s}$ can be identified through the
homogenized Equation $(\ref{eqhomogenizationg})$. \end{proposition}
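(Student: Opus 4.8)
The plan is to construct the profiles $(v_k^s,v_k^f)$ by induction on $k\in\llbracket 0,N+1\rrbracket$ (from $k=0$ upward), exploiting the triangular structure of the cascades \eqref{equtjnl}--\eqref{equtaujnl}: at stage $k$ the only genuinely new unknowns are $v_k^s$ and $v_k^f$, while every source appearing at order $k$ — the quadratic sums indexed by $\mathcal J(M,k)$ and $\mathcal I(M,k)$, the terms carrying $\lambda$, and the Taylor coefficients $(\partial_t)^l v_j^s(0)$ — involves only profiles of strictly smaller index, available by the inductive hypothesis together with their traces and $t$-derivatives at $t=0$. As the text emphasises, the real point is that the datum $v_k^{s0}$ is \emph{not} free: it must be tuned so that the companion $v_k^f$ lies in $\mathcal E_\delta^\infty$, after which \eqref{eqiniu0} merely distributes the prescribed $v_k^0$ between the slow and the fast part. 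To cope with the polynomial-in-$\tau$ factors that the Taylor coupling generates, I would fix once and for all an auxiliary rate $\delta_0$ with $\delta<\delta_0<\mu$ and run the fast analysis at rate $\delta_0$; since only finitely many profiles are constructed, the resulting powers of $\tau$ have bounded degree and $\tau^L e^{-\delta_0\tau}\le C_\delta e^{-\delta\tau}$, which delivers the sought membership in $\mathcal E_\delta^\infty$.

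The engine is the spectral structure of $\mathcal Q$. Being a bounded perturbation of $-\mu\partial_{\theta\theta}I$, which has compact resolvent on $L^2(\T)$, the operator $\mathcal Q$ is Fredholm of index $0$ with $\dim\ker\mathcal Q=2$; a short computation shows that $0$ is a semisimple eigenvalue, so one has the spectral splitting $L^2(\T;\R^2)=\ker\mathcal Q\oplus\mathrm{Range}\,\mathcal Q$ with associated Riesz projector $P$ (range $\ker\mathcal Q$, kernel $\mathrm{Range}\,\mathcal Q$, $P\mathcal Q=\mathcal Q P=0$, $e^{-\tau\mathcal Q}P=P$). The remaining point spectrum $\{\mu n^2:n\in\N^*\}$ may carry Jordan blocks because of the non-selfadjoint coupling $\partial_\theta h$, producing secular factors $\tau e^{-\mu n^2\tau}$ in $e^{-\tau\mathcal Q}$, exactly as in the heuristic computation of paragraph \textbf{a}. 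I would record from this, for every Sobolev index $m$ and every $\delta_1\in\,]\delta_0,\mu[$, the estimate $\|e^{-\tau\mathcal Q}(I-P)f\|_{H^m}\le C_{m,\delta_1}\,e^{-\delta_1\tau}\|f\|_{H^m}$; moreover the term $-\mu\partial_{\theta\theta}$ makes $e^{-\tau\mathcal Q}$ smoothing in $\theta$, and $y$ enters only through multiplication operators, so $H^\infty$-regularity in $(\theta,y)$ is preserved.

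For the fast profiles I would write \eqref{equtaujnl} as $\partial_\tau v_k^f+\mathcal Q v_k^f=G_k(\tau)$, with $G_k$ a sum of (polynomial in $\tau$)$\times$(lower-order profiles and frozen traces), so $G_k$ decays like a polynomial times $e^{-\delta_0\tau}$ and $\int_0^{+\infty}\|G_k(\sigma)\|_{H^m}\,d\sigma<\infty$ for every $m$. From the Duhamel formula $v_k^f(\tau)=e^{-\tau\mathcal Q}v_k^{f0}+\int_0^\tau e^{-(\tau-\sigma)\mathcal Q}G_k(\sigma)\,d\sigma$, applying $P$ gives $Pv_k^f(\tau)=Pv_k^{f0}+\int_0^\tau PG_k(\sigma)\,d\sigma$, which stays bounded as $\tau\to+\infty$ — \emph{a fortiori} decays — if and only if $Pv_k^{f0}=-\int_0^{+\infty}PG_k(\sigma)\,d\sigma$, and then $Pv_k^f(\tau)=-\int_\tau^{+\infty}PG_k$ inherits the exponential decay of $G_k$. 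On the complement, the semigroup estimate shows that $(I-P)v_k^f(\tau)=e^{-\tau\mathcal Q}(I-P)v_k^{f0}+\int_0^\tau e^{-(\tau-\sigma)\mathcal Q}(I-P)G_k$ decays like a polynomial times $e^{-\delta_0\tau}$, hence lies in $\mathcal E_\delta^\infty$, for \emph{any} value of $(I-P)v_k^{f0}$. Thus the constraint $v_k^f\in\mathcal E_\delta^\infty$ pins down exactly $Pv_k^{f0}$ and nothing else.

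For the slow profiles, the order-$k$ equation of \eqref{equtjnl} is $\mathcal Q v_k^s=F_k$ with $F_k$ depending on $\{v_j^s\}_{j\le k-1}$ and $\partial_t v_{k-2}^s$; applying $I-P$ determines $(I-P)v_k^s=\mathcal Q^{-1}(I-P)F_k$ algebraically (hence its trace at $t=0$), while applying $P$ is the Fredholm solvability condition $PF_k=0$. Since $F_k$ contains $\mathcal T v_{k-1}^s$ and $P\mathcal T P=0$ (the no-resonance identity coming from $\int_\T h\,h'\,d\theta=0$, in the spirit of the identity $\widetilde\Pi\circ\mathcal T\circ\widetilde\Pi\equiv0$ of the heuristic), only $P\mathcal T(I-P)v_{k-1}^s$ survives; substituting the algebraic formula for $(I-P)v_{k-1}^s$, the solvability condition becomes precisely the homogenized heat equation \eqref{eqhomogenizationg} for $\Pi v_k^s$ with source $S_k$ depending only on lower profiles, the coefficient $\mu$ being that of the term $\mu\partial_{yy}v_{k-2}^s$ already present in \eqref{equtjnl} and the extra coefficient $\mu^{-1}\Pi\!\left((\partial_\theta^{-1}h)^2\right)$ emerging from $P\,\mathcal T\,\mathcal Q^{-1}(I-P)\,\mathcal T\,P$ together with the identity $\Pi\!\left(h\,\partial_\theta^{-2}h\right)=-\Pi\!\left((\partial_\theta^{-1}h)^2\right)$ — the standard drift--diffusion computation. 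The loop then closes: the fast step fixed $Pv_k^{f0}$, so \eqref{eqiniu0} forces $Pv_k^{s0}=Pv_k^0-Pv_k^{f0}$, the missing Cauchy datum for \eqref{eqhomogenizationg} (a one-dimensional heat equation with $H^\infty$ data and source, globally solvable, giving $v_k^s\in\mathcal H_T^\infty$ for all $T$), while $(I-P)v_k^{f0}=(I-P)v_k^0-(I-P)v_k^s(0)$ is likewise read off \eqref{eqiniu0} and, inserted in the Duhamel formula, yields $v_k^f\in\mathcal E_\delta^\infty$; every choice having been forced, uniqueness follows. The hard parts, I expect, will be the quantitative decay bound for the \emph{non-selfadjoint} semigroup $e^{-\tau\mathcal Q}(I-P)$ uniformly in $m$, and the bookkeeping that the Taylor-coupled nonlinear sources genuinely sit in the weighted spaces so that $\int_0^{+\infty}PG_k$ converges; and, in the critical regime $M=2$ where the nonlinearities reach leading order, checking that they do not spoil the solvability $PF_k=0$ — which is exactly where a transparency/cancellation property of the quadratic terms has to be brought out.
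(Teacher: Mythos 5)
Your construction for $M\geq3$ is correct in outline and is essentially the paper's argument transposed into different coordinates: where you split along the Riesz projector $P$ of $\mathcal{Q}$ (kernel versus range) and argue by Duhamel together with a decay bound for the non-selfadjoint semigroup $e^{-\tau\mathcal{Q}}(I-P)$, the paper splits each component into mean and oscillating parts ($\Pi$, $I-\Pi$) and solves the explicit triangular systems $\mathcal{A}V_k^s=f_k^{nl}$ and $\mathcal{B}V_k^f=g_k^{nl}$ line by line (Lemmas \ref{leminvAk} and \ref{leminvBk}). Your decay constraint $Pv_k^{f0}=-\int_0^{+\infty}PG_k\,d\sigma$ is the projector form of the paper's formulas (\ref{VGf1})--(\ref{VGf2}), your algebraic determination of $(I-P)v_k^s$ corresponds to (\ref{VGs1})--(\ref{VGs2}), and your drift--diffusion computation via $P\,\mathcal{T}\,\mathcal{Q}^{-1}(I-P)\,\mathcal{T}\,P$ together with $\Pi(h\,\partial_{\theta\theta}^{-2}h)=-\Pi((\partial_\theta^{-1}h)^2)$ reproduces Lemma \ref{lemhomogeneisationnonlinear}. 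One bookkeeping point to fix: the homogenized equation for $\Pi v_k^s$ is the solvability condition of the cascade two orders higher (which is why the paper imposes \eqref{equtjnl} up to $k=N+3$), so your indices should be shifted accordingly.

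The genuine gap is the critical case $M=2$, which is part of the statement (the section only assumes $M\geq2$). For $M=2$ one has $\mathcal{J}(2,k+2)=\{i+j=k\}$, which contains the extremal pairs $(0,k)$ and $(k,0)$; hence the quadratic sums entering the solvability condition at order $k+2$ involve $v_k^s$ itself, and your assertion that the source of the homogenized equation for $\Pi v_k^s$ depends only on lower profiles is false a priori — already at $k=0$ the equation for $\Pi v_0^s$ would carry a Burgers-type term. The paper's fix, which you flag but do not supply, is the transparency mechanism: the order-zero equations force $v_{1,0}^{s\perp}\equiv0$ (in your language $F_0=0$, so $(I-P)v_0^s=0$, i.e. $v_0^s\in\ker\mathcal{Q}$), whence $SP(v_0^s)\,v_0^s\equiv0$ (see \eqref{fondacriterion}, \eqref{eqSPK}, \eqref{eliminatevs0}); at every subsequent order the surviving same-order contribution reduces to the linear, non-differential operator $SP_2$ of \eqref{eqSPK2bis} acting on the already-determined oscillating part $v_{1,k}^{s\perp}$, and it is absorbed into the modified operator $\widetilde{\mathcal{A}}$ of \eqref{eqtildeAsansk}, whose triangular structure keeps the line-by-line resolution valid. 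You would need to carry out this cancellation explicitly (and verify it persists for all $k$, not just $k=0$) for your proof to cover $M=2$; the remaining items you defer — the uniform-in-$m$ semigroup bound and the $\tau$-polynomial weight bookkeeping (handled in the paper by $\tau^l f\in\mathcal{E}_{\delta'}^{\infty}$ for $\delta'<\delta$) — are routine.
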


The proof relies on some induction on the size of $N$, based on the
following hypothesis of induction: 
\begin{equation}
\mathcal{HN}(N)\,:\ \text{{\it " The Proposition \ref{propprofilenonlinear} is verified up to the integer \ensuremath{N}".}}\label{recurezrnl}
\end{equation}

To go from $N$ up to $N+1$, we will need a succession of lemmas
which are produced below. Before going into the details of the analysis,
we give below a brief description of what happens depending on the
choice of $M$. 
\begin{description}
\item [{$\quad\ $-}] When $M\geq3$, the non linear terms are rather small,
the construction is somehow linear. 
\item [{$\quad\ $-}] When $M=2$, the non linearity becomes critical and
a few arguments must be added. For instance, if we write the Equation
(\ref{equtjnl:1}) for $k=2$, we can notice a Burgers' term 
\begin{equation}
\partial_{t}v_{1,0}^{s}+h\,\partial_{y}v_{1,1}^{s}+\, v_{1,0}^{s}\,\partial_{\theta}v_{1,0}^{s}=\mu\,\partial_{\theta\theta}v_{1,2}^{s}+\mu\,\partial_{yy}v_{1,0}^{s}+\lambda\partial_{\theta\theta}v_{1,1}^{s}+\lambda\partial_{\theta y}v_{2,0}^{s}\label{burgert}
\end{equation}
 and also two contributions $\partial_{\theta\theta}v_{1,2}^{s}$
and $\partial_{\theta\theta}v_{1,1}^{s}$ to be calculated in function
of $v_{1,0}^{s}$, with apparently a non linear dependence with respect
to $v_{1,0}^{s}$. 
\item [{$\quad\ $-}] When $M=2$ again, another effect of the non linear
interactions is the apparition at the level of (\ref{eqlemnonhomogenizationl})
below together with (\ref{eqSPK}), of a source term $S_{k}^{nl\sslash}$
which can depend on $v_{k}^{s}$. 
\item [{$\quad\ $-}] However, there are \textit{transparency phenomena}
at work which come from the initialization procedure. Indeed, knowing
that $v_{k}^{s}\equiv0$ for $k\in\{-3,-2,-1\}$, the Equation (\ref{equtjnl})
in the case $k=0$ and $M\geq2$ reduces to $\mu\,\partial_{\theta\theta}v_{0}^{s}=0$.
In other words, we have to impose 
\begin{equation}
v_{1,0}^{s\perp}\,\equiv\,(I-\Pi)\, v_{1,0}^{s}\,\equiv\,0\,.\label{fondacriterion}
\end{equation}
 It follows that $v_{1,0}^{s}\,\partial_{\theta}v_{1,0}^{s}\equiv0$.
In the same way, all other apparent non linear contributions will
disappear. Therefore, the remaining term $\Pi v_{1,0}^{s}$ can be
determined apart without seeing any non linear effect. 
\end{description}

\subsubsection{Technical Lemmas}


\paragraph{A consequence of the (point) spectral gap.}

The system (\ref{equtaujnl}) is made of two evolution equations of
parabolic type, based on $\partial_{\tau}-\mu\partial_{\theta\theta}$.
This falls under the following framework. \begin{lemma}{[}Fast decreasing
under a polarization condition{]}\label{lemestsol} Let $m\in\N$
and $\delta\in\,]0,\mu[$. Select $w_{0}\in H^{m+2}(\T\times\R)$
and $S_{0}\in(\mathcal{E}_{\delta}^{m+2})^{\perp}(\T\times\R)$, that
is such that $\Pi S_{0}=0$. Consider the initial value problem: 
\begin{equation}
\,\partial_{\tau}w-\mu\,\partial_{\theta\theta}w=S_{0}\,,\qquad w_{|_{\tau=0}}=w_{0}\,.\label{eqwS}
\end{equation}
 For all $T\in\R_{+}$, there is a unique solution $w\in C^{1}\bigl([0,T];H^{m}(\T\times\R)\bigr)$
to the Cauchy problem~(\ref{eqwS}). Moreover, if the initial data
is well prepared in the sense that $\Pi w_{0}=0$, then $\Pi w=0$
for all $t\in[0,T]$ and $w\in\mathcal{E}_{\delta}^{m}(\T\times\R)$.
\end{lemma}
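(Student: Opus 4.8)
The equation \eqref{eqwS} is the classical heat equation on $\T\times\R$ with a source, so the existence and uniqueness of a solution $w\in C^1\bigl([0,T];H^m(\T\times\R)\bigr)$ for every finite $T$ is standard: one writes Duhamel's formula
\[
w(\tau,\cdot)=e^{\mu\tau\partial_{\theta\theta}}w_0+\int_0^\tau e^{\mu(\tau-\sigma)\partial_{\theta\theta}}S_0(\sigma,\cdot)\,d\sigma,
\]
using that $e^{\mu\tau\partial_{\theta\theta}}$ is a strongly continuous contraction semigroup on each $H^s(\T\times\R)$ (it only smooths in $\theta$ and is an isometry in $y$ by Fourier analysis), and that $S_0\in\mathcal{E}_\delta^{m+2}$ is continuous in time with values in $H^{m+2}$, hence locally integrable; the gain of two $\theta$-derivatives from $w_0\in H^{m+2}$ and $S_0\in H^{m+2}$ is not even needed for mere $C^1([0,T];H^m)$ solvability but is convenient for the decay estimate. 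Uniqueness follows from a standard $L^2$ energy estimate (Gronwall) for the difference of two solutions.

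The substantive point is the decay statement. First I would observe that $\Pi$ commutes with $\partial_\tau-\mu\partial_{\theta\theta}$ (since $\partial_\theta\Pi=0$ and $\Pi$ is $\tau$-independent), so applying $\Pi$ to \eqref{eqwS} gives $\partial_\tau(\Pi w)=\Pi S_0=0$ with $\Pi w|_{\tau=0}=\Pi w_0=0$; hence $\Pi w\equiv 0$ for all $\tau$, i.e.\ $w=(I-\Pi)w$ stays in the orthogonal complement $\K^\perp$ in the $\theta$-variable. On that subspace the operator $-\mu\partial_{\theta\theta}$ is bounded below: in Fourier variables $(n,\xi)\in\Z^*\times\R$ its symbol is $\mu n^2\geq\mu$, which is the (point) spectral gap alluded to in the paragraph heading. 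This is exactly what lets us beat the rate $\delta<\mu$.

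For the quantitative decay I would work frequency by frequency (or equivalently run a weighted energy estimate). Set $W(\tau,\cdot):=e^{\delta\tau}w(\tau,\cdot)$; then $W$ solves $\partial_\tau W-\mu\partial_{\theta\theta}W-\delta W=e^{\delta\tau}S_0$. Taking the $H^m$ inner product with $W$, using $\Pi W=0$ so that $-\langle\mu\partial_{\theta\theta}W,W\rangle_{H^m}\geq\mu\|W\|_{H^m}^2$ (Fourier: each mode $n\neq 0$ contributes $\mu n^2|\widehat W|^2\geq\mu|\widehat W|^2$), one gets
\[
\tfrac12\tfrac{d}{d\tau}\|W\|_{H^m}^2\leq-(\mu-\delta)\|W\|_{H^m}^2+e^{\delta\tau}\|S_0(\tau,\cdot)\|_{H^m}\,\|W\|_{H^m}.
\]
Since $\mu-\delta>0$ and $e^{\delta\tau}\|S_0(\tau,\cdot)\|_{H^m}\leq e^{\delta\tau}\|S_0(\tau,\cdot)\|_{H^{m+2}}$ is bounded by the $\mathcal{E}_\delta^{m+2}$-norm of $S_0$ uniformly in $\tau$, a Gronwall argument yields $\sup_{\tau\geq 0}\|W(\tau,\cdot)\|_{H^m}<\infty$, i.e.\ $w\in\mathcal{E}_\delta^m(\T\times\R)$; the same estimate, run on each finite interval, also upgrades the solution to all $\tau\in\R_+$. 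I expect the only mildly delicate bookkeeping to be the choice of where to spend the extra two derivatives (I would put $w_0\in H^{m+2}$ and $S_0\in\mathcal{E}_\delta^{m+2}$ to absorb, if desired, a gain of regularity or simply to make the parabolic smoothing estimates comfortable); the genuine mechanism — that the spectral gap $\mu$ strictly dominates the prescribed rate $\delta$ on $\K^\perp$ — is what makes everything go through, and there is no real obstacle beyond making that inequality explicit.
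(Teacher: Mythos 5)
Your argument is correct, and it is exactly the standard one the paper has in mind: the author explicitly skips the proof ("very easy"), and your combination of Duhamel's formula, the observation that $\Pi$ commutes with $\partial_{\tau}-\mu\,\partial_{\theta\theta}$ so that $\Pi w\equiv 0$ propagates from $\Pi w_{0}=0$ and $\Pi S_{0}=0$, and the weighted energy estimate for $W=e^{\delta\tau}w$ using the spectral gap $-\mu\,\partial_{\theta\theta}\geq\mu$ on mean-free functions together with $\delta<\mu$, supplies precisely the omitted details. One small correction: the two extra derivatives on the data are not merely "convenient" — since the equation only diffuses in $\theta$ and $\partial_{\tau}w=\mu\,\partial_{\theta\theta}w+S_{0}$ must be continuous with values in $H^{m}$ up to $\tau=0$ (where parabolic smoothing gives nothing), the hypothesis $w_{0}\in H^{m+2}$, $S_{0}\in\mathcal{E}_{\delta}^{m+2}$ is exactly what makes the solution $C^{1}\bigl([0,T];H^{m}\bigr)$ rather than just $C^{0}$.
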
 The proof of Lemma \ref{lemestsol} is very easy. It
will not be detailed.

\paragraph{Interpretation of the system (\ref{equtjnl}).}

The Lemma below is intended to look at the system (\ref{equtjnl})
otherwise. Indeed, there is a difficulty when dealing with (\ref{equtjnl})
since the knowledge of $v_{k-2}^{s}$ seems to require the determination
of $v_{k-1}^{s}$ and $v_{k}^{s}$, that is the identification of
terms $v_{j}^{s}$ with indices $j$ \textit{greater} than $k-2$.
An important remark is that such a dependence can disappear when the
projector $\Pi$ is applied. The possible dependence in the terms
$v_{j}^{s}$ are recorded in the source terms $S_{k}^{nl\sslash}$
(with explicit formulas). For this occasion, the two cases $M\geq3$
and $M=2$ must be distinguished. This fact can be formulated in the
following way.

\begin{lemma}{[}Non-linear homogenization{]}\label{lemhomogeneisationnonlinear}
Assume that the functions $v_{k}^{s}$ with $k\in\llbracket0,N+3\rrbracket$
are solutions of the system (\ref{equtjnl}). Then, for all $k\in\llbracket0,N+1\rrbracket$,
the part $\Pi v_{k}^{s}={}^{t}(\Pi v_{1,k}^{s},\Pi v_{2,k}^{s})$
is a solution of 
\begin{equation}
\left\{ \begin{array}{l}
\partial_{t}\Pi v_{1,k}^{s}-\left(\mu+\frac{1}{\mu}\,\Pi((\partial_{\theta}^{-1}h)^{2})\right)\partial_{yy}\Pi v_{1,k}^{s}=S_{1,k}^{l\sslash}+S_{1,k}^{nl\sslash}+P_{1}^{l\perp}(I-\Pi)v_{1,k}^{s}\,,\\
\partial_{t}\Pi v_{2,k}^{s}-\left(\mu+\frac{1}{\mu}\,\Pi((\partial_{\theta}^{-1}h)^{2})\right)\partial_{yy}\Pi v_{2,k}^{s}=S_{2,k}^{l\sslash}+S_{2,k}^{nl\sslash}+P_{2}^{l\perp}(I-\Pi)v_{1,k}^{s}\\
\hspace{7,15cm}+\, P_{2}^{l\sslash}\Pi v_{1,k}^{s}+Q_{2}^{l\perp}(I-\Pi)v_{2,k}^{s}\,.
\end{array}\right.\label{eqlemnonhomogenizationl}
\end{equation}
 In the above system (\ref{eqlemnonhomogenizationl}), the four operators
$P_{1}^{l\perp}$, $P_{2}^{l\perp}$, $P_{2}^{l\sslash}$ and $Q_{2}^{l\perp}$,
as well as the source term $S_{k}^{l\sslash}={}^{t}(S_{1,k}^{l\sslash},S_{2,k}^{l\sslash})$,
are defined along lines (\ref{eqS1J})-$\,\cdots\,$-(\ref{c'estpos}).
On the other hand, the contribution $S_{k}^{nl\sslash}={}^{t}(S_{1,k}^{nl\sslash},S_{2,k}^{nl\sslash})$
is given by (\ref{eqSPnlpa1})-(\ref{eqSPnlpa2}). \end{lemma}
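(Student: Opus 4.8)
The plan is to derive (\ref{eqlemnonhomogenizationl}) by averaging the cascade (\ref{equtjnl}) over $\theta\in\T$ and then using the cascade itself, read ``downwards'', to express through quantities of index $\le k$ the higher-index unknowns that survive the average.

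First I would fix $k\in\llbracket0,N+1\rrbracket$ and apply $\Pi$ to (\ref{equtjnl:1}) and (\ref{equtjnl:2}) \emph{written at the index $k+2$}, so that the time derivative bears on $v_k^s$. Since $\Pi\partial_\theta\equiv0$, all contributions of the form $\mu\partial_{\theta\theta}v_j^s$, $\lambda\partial_{\theta\theta}v_j^s$, $\lambda\partial_{\theta y}v_j^s$ drop out; and since $\langle h\rangle=0$ one has $\Pi(h\,\partial_y v_j^s)=\partial_y\langle h\,v_j^{s\perp}\rangle$ and $\langle\partial_\theta h\,v_{1,j}^s\rangle=\langle\partial_\theta h\,v_{1,j}^{s\perp}\rangle$. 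What remains is $\partial_t\Pi v_{1,k}^s-\mu\partial_{yy}\Pi v_{1,k}^s=-\partial_y\langle h\,v_{1,k+1}^{s\perp}\rangle+(\text{index}\le k-1\text{ linear terms})+\Pi(\text{nonlinear terms})$, together with the analogous identity for $\Pi v_{2,k}^s$, carrying in addition the coupling term $-\langle\partial_\theta h\,v_{1,k+2}^{s\perp}\rangle$ and the lower-index source $\lambda\partial_{yy}\Pi v_{2,k-1}^s$. So the task is to rewrite $v_{1,k+1}^{s\perp}$, $v_{1,k+2}^{s\perp}$ and $v_{2,k+1}^{s\perp}$ in terms of quantities of index $\le k$.

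For that I would invert $\partial_{\theta\theta}$ on $\K^\perp$ with $\partial_\theta^{-2}:=\partial_\theta^{-1}\circ\partial_\theta^{-1}$: reading (\ref{equtjnl:1}) at index $j$ and applying $I-\Pi$ gives $v_{1,j}^{s\perp}=\tfrac1\mu\,\partial_\theta^{-2}(I-\Pi)\bigl[\partial_t v_{1,j-2}^s+h\,\partial_y v_{1,j-1}^s-\mu\partial_{yy}v_{1,j-2}^s-\lambda\partial_{\theta\theta}v_{1,j-1}^s-\lambda\partial_{\theta y}v_{2,j-2}^s+(\mathrm{NL})_j\bigr]$, and likewise for $v_{2,j}^{s\perp}$, which in addition contains the piece $\tfrac1\mu(\partial_\theta^{-1}h)\,\Pi v_{1,j}^s$ coming from $\partial_\theta h\,v_{1,j}^s$. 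Substituting these for $j=k+1$, and once more — for the $\perp$-parts still present in $v_{1,k+2}^{s\perp}$ and $v_{2,k+1}^{s\perp}$ — for $j=k$ (a recursion of depth at most two), I reach expressions that are linear functionals, plus nonlinear corrections when $M=2$, of: $\Pi v_{1,k}^s$; $(I-\Pi)v_{1,k}^s$ and $(I-\Pi)v_{2,k}^s$; $\Pi v_{1,k+1}^s$; and quantities of index $\le k-1$. The coefficient of $\partial_{yy}\Pi v_{j,k}^s$ produced by $-\partial_y\langle h\,v_{j,k+1}^{s\perp}\rangle$ equals $-\tfrac1\mu\langle h\,\partial_\theta^{-2}h\rangle=\tfrac1\mu\langle(\partial_\theta^{-1}h)^2\rangle$ after one integration by parts, so that together with $\mu\partial_{yy}$ it gives the announced diffusion $\bigl(\mu+\tfrac1\mu\Pi((\partial_\theta^{-1}h)^2)\bigr)\partial_{yy}$, the same for both components; the pieces depending on $(I-\Pi)v_{j,k}^s$ define the operators $P_1^{l\perp}$, $P_2^{l\perp}$, $Q_2^{l\perp}$; the pieces depending on $\Pi v_{1,k}^s$, which pass through the coupling factor $\partial_\theta h$ and hence arise only in the second component, define $P_2^{l\sslash}$; and all index-$\le(k-1)$ remainders are renamed $S_k^{l\sslash}$, the nonlinear ones $S_k^{nl\sslash}$, in the explicit shapes (\ref{eqS1J})--(\ref{c'estpos}) and (\ref{eqSPnlpa1})--(\ref{eqSPnlpa2}).

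The step I expect to be the real obstacle is checking that every contribution proportional to $\Pi v_{k+1}^s$ cancels — were this not so, the homogenized hierarchy (\ref{eqlemnonhomogenizationl}) would fail to be solvable order by order. Such terms occur only in the second component, through $\langle\partial_\theta h\,v_{1,k+2}^{s\perp}\rangle$ and $\langle h\,v_{2,k+1}^{s\perp}\rangle$, and each of them carries a scalar factor equal to $\pm\langle h\,\partial_\theta^{-1}h\rangle=\pm\tfrac12\langle\partial_\theta\bigl((\partial_\theta^{-1}h)^2\bigr)\rangle=0$; this is precisely the algebraic identity $\int_\T h\,h'\,d\theta=0$ already exploited for (\ref{structuraleq}), and the first component never generates such a term. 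Apart from this transparency effect, what is left is the bounded-depth bookkeeping of the substitutions above, which is long but routine; the dichotomy $M\ge3$ versus $M=2$ enters only through the fact that for $M=2$ the nonlinear sums of (\ref{equtjnl}) at index $k+2$ reach the index $k$, so that $S_k^{nl\sslash}$ must be written with a possible $v_k^s$ entry, to be shown harmless later — in the proof of Proposition~\ref{propprofilenonlinear} — thanks to the transparency $v_{1,0}^{s\perp}\equiv0$ of (\ref{fondacriterion}).
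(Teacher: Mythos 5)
Your proposal is correct and follows essentially the same route as the paper: project the cascade (\ref{equtjnl}) at the shifted index $k+2$ onto the mean, invert $\mu\partial_{\theta\theta}$ on $\K^{\perp}$ via the cascade itself (a depth-two substitution) to express $v_{1,k+1}^{s\perp}$, $v_{2,k+1}^{s\perp}$, $v_{1,k+2}^{s\perp}$, extract the diffusion coefficient from $\Pi(h\,\partial_{\theta\theta}^{-2}h)=-\Pi((\partial_{\theta}^{-1}h)^{2})$, and kill the $\Pi v_{k+1}^{s}$ contributions with the identity $\Pi(h\,\partial_{\theta}^{-1}h)=0$. This is exactly the mechanism of the paper's proof, including the localisation of the transparency argument in the second component and the $M=2$ versus $M\geq3$ bookkeeping for $S_{k}^{nl\sslash}$.
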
 \smallskip{}
\begin{proof}[Proof of Lemma \ref{lemhomogeneisationnonlinear}]
The matter is to identify the contributions brought by the non linear
terms. Let us project $(\ref{equtjnl})$ according to $\mathcal{V}^{\sslash}$.
Since $\mathcal{J}(M+1,k+2)\equiv\mathcal{J}(M,k+1)$, this yields
\begin{align*}
\partial_{t}\Pi v_{1,k}^{s} & +\Pi\left(h\,\partial_{y}v_{1,k+1}^{s}\right)+\Pi\left(\sum_{(i,j)\in\mathcal{J}(M,k+2)}v_{1,i}^{s}\,\partial_{\theta}v_{1,j}^{s}\right)\\
 & +\Pi\left(\sum_{(i,j)\in\mathcal{J}(M,k+1)}v_{2,i}^{s}\,\partial_{y}v_{1,j}^{s}\right)=\mu\,\partial_{yy}\Pi v_{1,k}^{s}\,,\\
\partial_{t}\Pi v_{2,k}^{s} & +\Pi\left(h\,\partial_{y}v_{2,k+1}^{s}\right)+\Pi\left(\sum_{(i,j)\in\mathcal{J}(M,k+2)}v_{1,i}^{s}\,\partial_{\theta}v_{2,j}^{s}\right)\\
 & +\Pi\left(\sum_{(i,j)\in\mathcal{J}(M,k+1)}v_{2,i}^{s}\,\partial_{y}v_{2,j}^{s}\right)+\Pi\left(\partial_{\theta}h\, v_{1,k+2}^{s}\right)=\mu\,\partial_{yy}\Pi v_{2,k}^{s}+\lambda\,\partial_{yy}\Pi v_{2,k-1}^{s}\,.
\end{align*}
 In what follows, we will use the system (\ref{equtjnl}) and many
integrations by parts in order to interpret $\Pi(h\,\partial_{y}v_{1,k+1}^{s})$,
$\Pi\left(h\,\partial_{y}v_{2,k+1}^{s}\right)$ and $\Pi\left(\partial_{\theta}h\, v_{1,k+2}^{s}\right)$.
The goal is to show that these quantities can be expressed in terms
of the $v_{j}^{s}$ with $j\in\llbracket0,k\rrbracket$.\\

$\circ$ Study of $\Pi(h\,\partial_{y}v_{1,k+1}^{s})$. Exploiting
(\ref{equtjnl:1}) with $k+1$ in place of $k$, we can deduce 
\begin{align*}
\Pi(h\,\partial_{y}v_{1,k+1}^{s}) & =\Pi\bigl(\partial_{\theta\theta}^{-2}(h)\,\partial_{y}\partial_{\theta\theta}v_{1,k+1}^{s}\bigr)\,,\\
 & =\frac{1}{\mu}\,\Pi\bigl(\partial_{\theta\theta}^{-2}(h)\ \partial_{y}(\partial_{t}v_{1,k-1}^{s}-\mu\partial_{yy}v_{1,k-1}^{s}-\lambda\partial_{\theta y}v_{2,k-1}^{s})\bigr)\\
 & +\frac{1}{\mu}\,\Pi\left(\partial_{\theta\theta}^{-2}(h)\ \partial_{y}\left(\sum_{(i,j)\in\mathcal{J}(M,k+1)}v_{1,i}^{s}\,\partial_{\theta}v_{1,j}^{s}+\sum_{(i,j)\in\mathcal{J}(M,k)}v_{2,i}^{s}\,\partial_{y}v_{1,j}^{s}\right)\right)\\
 & +\frac{1}{\mu}\,\Pi\bigl(h\,\partial_{\theta\theta}^{-2}(h)\,\partial_{yy}v_{1,k}^{s}\bigr)-\frac{\lambda}{\mu}\,\Pi\bigl(\partial_{\theta\theta}^{-2}(h)\,\partial_{y\theta\theta}v_{1,k}^{s}\bigr)\,.
\end{align*}
 Recalling \eqref{convenpre} and \eqref{convensec}, we have to deal
with 
\begin{align*}
\Pi(h\,\partial_{y}v_{1,k+1}^{s})= & -\frac{1}{\mu}\,\Pi\left((\partial_{\theta}^{-1}h)^{2}\right)\partial_{yy}\Pi v_{1,k}^{s}-S_{1,k}^{l\sslash}-P_{1}^{l\perp}(I-\Pi)v_{1,k}^{s}\\
 & +\frac{1}{\mu}\,\Pi\left(\partial_{\theta\theta}^{-2}(h)\ \partial_{y}\left(\sum_{(i,j)\in\mathcal{J}(M,k+1)}v_{1,i}^{s}\,\partial_{\theta}v_{1,j}^{s}+\sum_{(i,j)\in\mathcal{J}(M,k)}v_{2,i}^{s}\,\partial_{y}v_{1,j}^{s}\right)\right)\,,
\end{align*}
 with the conventions, 
\begin{align}
S_{1,k}^{l} & :=-\frac{1}{\mu}\,\partial_{\theta\theta}^{-2}(h)\,\partial_{y}\left(\partial_{t}v_{1,k-1}^{s}-\mu\partial_{yy}v_{1,k-1}^{s}-\lambda\partial_{\theta y}v_{2,k-1}^{s}\right)\,,\label{eqS1J}\\
P_{1}^{l}f & :=\frac{\lambda}{\mu}\,\Pi\left(\partial_{\theta\theta}^{-2}(h)\,\partial_{\theta\theta y}f\right)-\frac{1}{\mu}\,\Pi\left(h\,\partial_{\theta\theta}^{-2}(h)\,\partial_{yy}f\right)\,.\label{eqP1GFD}
\end{align}

\smallskip{}

$\circ$ Study of $\Pi(h\,\partial_{y}v_{2,k+1}^{s})$. Exploiting
(\ref{equtjnl:2}) with $k+1$ in place of $k$, we can derive 
\begin{align*}
\Pi\left(h\,\partial_{y}v_{2,k+1}^{s}\right) & =\Pi\bigl(\partial_{\theta\theta}^{-2}(h)\,\partial_{y}\partial_{\theta\theta}v_{2,k+1}^{s}\bigr)\,,\\
 & =\frac{1}{\mu}\,\Pi\left(\partial_{\theta\theta}^{-2}(h)\,\partial_{y}\left(\partial_{t}v_{2,k-1}^{s}-\mu\partial_{yy}v_{2,k-1}^{s}-\lambda\partial_{\theta y}v_{1,k-1}^{s}-\lambda\partial_{yy}v_{2,k-2}^{s}\right)\right)\\
 & +\frac{1}{\mu}\,\Pi\left(\partial_{\theta\theta}^{-2}(h)\,\partial_{y}\left(\sum_{(i,j)\in\mathcal{J}(M,k+1)}v_{1,i}^{s}\,\partial_{\theta}v_{2,j}^{s}+\sum_{(i,j)\in\mathcal{J}(M,k)}v_{2,i}^{s}\,\partial_{y}v_{2,j}^{s}\right)\right)\\
 & +\frac{1}{\mu}\,\Pi\bigl(\partial_{\theta\theta}^{-2}(h)\ \partial_{y}(h\,\partial_{y}v_{2,k}^{s}+\partial_{\theta}h\, v_{1,k+1}^{s})\bigr)\,.
\end{align*}
 We come back to the Equation (\ref{equtjnl:1}) in order to change
the last term in this sum. This yields 
\begin{align*}
\frac{1}{\mu}\, & \Pi\bigl(\partial_{\theta\theta}^{-2}(h)\,\partial_{y}(\partial_{\theta}h\, v_{1,k+1}^{s})\bigr)=\frac{1}{\mu}\Pi\left(\partial_{\theta}h\,\partial_{\theta\theta}^{-2}(h)\,\partial_{y}(I-\Pi)v_{1,k+1}^{s}\right)\,,\\
 & =\frac{1}{\mu^{2}}\,\Pi\left(\partial_{\theta}h\,\partial_{\theta\theta}^{-2}(h)\,\partial_{y}\partial_{\theta\theta}^{-2}(I-\Pi)\left(\partial_{t}v_{1,k-1}^{s}-\mu\partial_{yy}v_{1,k-1}^{s}-\lambda\partial_{\theta y}v_{2,k-1}^{s}\right)\right)\\
 & +\frac{1}{\mu^{2}}\,\Pi\left(\partial_{\theta}h\,\partial_{\theta\theta}^{-2}(h)\,\partial_{y}\partial_{\theta\theta}^{-2}(I-\Pi)\left(h\,\partial_{y}v_{1,k}^{s}-\lambda\partial_{\theta\theta}v_{1,k}^{s}\right)\right)\\
 & +\frac{1}{\mu^{2}}\,\Pi\left(\partial_{\theta}h\,\partial_{\theta\theta}^{-2}(h)\,\partial_{y}\partial_{\theta\theta}^{-2}(I-\Pi)\left(\sum_{(i,j)\in\mathcal{J}(M,k+1)}v_{1,i}^{s}\,\partial_{\theta}v_{1,j}^{s}+\sum_{(i,j)\in\mathcal{J}(M,k)}v_{2,i}^{s}\,\partial_{y}v_{1,j}^{s}\right)\right)\,.\\
\end{align*}
 This amounts to the same thing as 
\begin{align*}
\Pi & (h\,\partial_{y}v_{2,k+1}^{s})=-\frac{1}{\mu}\,\Pi\left((\partial_{\theta}^{-2}h)^{2}\right)\partial_{yy}\Pi v_{2,k}^{s}-S_{2,k}^{l1\sslash}-P_{2}^{l1\perp}\,(I-\Pi)v_{1,k}^{s}-P_{2}^{l1\sslash}\,\Pi v_{1,k}^{s}\\
 & -Q_{2}^{l1\perp}\,(I-\Pi)v_{2,k}^{s}+\frac{1}{\mu}\,\Pi\left(\partial_{\theta\theta}^{-2}(h)\,\partial_{y}\left(\sum_{(i,j)\in\mathcal{J}(M,k+1)}v_{1,i}^{s}\,\partial_{\theta}v_{1,j}^{s}+\sum_{(i,j)\in\mathcal{J}(M,k)}v_{2,i}^{s}\,\partial_{y}v_{1,j}^{s}\right)\right)\\
 & +\frac{1}{\mu^{2}}\,\Pi\left(\partial_{\theta}h\,\partial_{\theta\theta}^{-2}(h)\,\partial_{y}\partial_{\theta\theta}^{-2}(I-\Pi)\left(\sum_{(i,j)\in\mathcal{J}(M,k+1)}v_{1,i}^{s}\,\partial_{\theta}v_{1,j}^{s}+\sum_{(i,j)\in\mathcal{J}(M,k)}v_{2,i}^{s}\,\partial_{y}v_{1,j}^{s}\right)\right).
\end{align*}
 with the notations 
\begin{align}
S_{2,k}^{l1}:= & -\frac{1}{\mu}\,\partial_{\theta\theta}^{-2}(h)\,\partial_{y}\left(\partial_{t}v_{2,k-1}^{s}-\mu\partial_{yy}v_{2,k-1}^{s}-\lambda\partial_{\theta y}v_{1,k-1}^{s}-\lambda\partial_{yy}v_{2,k-2}^{s}\right)\nonumber \\
 & -\frac{1}{\mu^{2}}\,\partial_{\theta}h\,\partial_{\theta\theta}^{-2}(h)\,\partial_{y}\partial_{\theta\theta}^{-2}(I-\Pi)\left(\partial_{t}v_{1,k-1}^{s}-\mu\partial_{yy}v_{1,k-1}^{s}-\lambda\partial_{\theta y}v_{2,k-1}^{s}\right)\,,\label{eqSdebut}\\
P_{2}^{l1}f:= & -\Pi\left(\frac{1}{\mu^{2}}\,\partial_{\theta}h\,\partial_{\theta\theta}^{-2}(h)\,\partial_{y}\partial_{\theta\theta}^{-2}(I-\Pi)\left(h\partial_{y}f-\lambda\partial_{\theta\theta}f\right)\right)\,,\label{eqSajout}\\
Q_{2}^{l1}f:= & -\frac{1}{\mu}\,\Pi\left(h\,\partial_{\theta\theta}^{-2}(h)\,\partial_{yy}f\right)\,.\label{eqQfin}
\end{align}

$\circ$ There remains to compute $\Pi(\partial_{\theta}h\, v_{1,k+2}^{s})$.
This is again (\ref{equtjnl:1}) with this time $k+2$ in place of~$k$.
\begin{align*}
\Pi\left(\partial_{\theta}h\, v_{1,k+2}^{s}\right) & =\Pi\left(\partial_{\theta}^{-1}(h)\,\partial_{\theta\theta}v_{1,k+2}^{s}\right)\,,\\
 & =\frac{1}{\mu}\,\Pi\left(\partial_{\theta}^{-1}(h)\left(\partial_{t}v_{1,k}^{s}-\mu\partial_{yy}v_{1,k}^{s}-\lambda\partial_{\theta y}(I-\Pi)v_{2,k}^{s}\right)\right)\\
 & +\frac{1}{\mu}\,\Pi\bigl(\partial_{\theta}^{-1}(h)\,(h\,\partial_{y}v_{1,k+1}^{s}-\lambda\partial_{\theta\theta}v_{1,k+1}^{s})\bigr)\\
 & +\frac{1}{\mu}\,\Pi\left(\partial_{\theta}^{-1}(h)\left(\sum_{(i,j)\in\mathcal{J}(M,k+2)}v_{1,i}^{s}\,\partial_{\theta}v_{1,j}^{s}+\sum_{(i,j)\in\mathcal{J}(M,k+1)}v_{2,i}^{s}\,\partial_{y}v_{1,j}^{s}\right)\right).
\end{align*}
 We want to remove the presence of $v_{1,k+1}^{s}$. The relation
$\Pi(h\,\partial_{\theta}^{-1}h)=0$ allows to write 
\[
\Pi\bigl(\partial_{\theta}^{-1}(h)\,(h\,\partial_{y}v_{1,k+1}^{s}-\lambda\partial_{\theta\theta}v_{1,k+1}^{s})\bigr)\,=\,\Pi\bigl(\partial_{\theta}^{-1}(h)\,(h\,\partial_{y}-\lambda\partial_{\theta\theta})\,\partial_{\theta\theta}^{-2}\,(I-\Pi)\,\partial_{\theta\theta}v_{1,k+1}^{s})\bigr)\,.
\]
 The part $\partial_{\theta\theta}v_{1,k+1}^{s}$ can be extracted
from the Equation (\ref{equtjnl:1}) with $k+1$ in place of $k$.
We find 
\begin{align*}
\Pi & \left(\partial_{\theta}h\, v_{1,k+2}^{s}\right)=\frac{1}{\mu}\,\Pi\left(\partial_{\theta}^{-1}(h)\,\left(\partial_{t}v_{1,k}^{s}-\mu\partial_{yy}v_{1,k}^{s}-\lambda\partial_{\theta y}(I-\Pi)v_{2,k}^{s}\right)\right)\\
 & +\frac{1}{\mu^{2}}\,\Pi\left(\partial_{\theta}^{-1}(h)\,(h\,\partial_{y}-\lambda\partial_{\theta\theta})\partial_{\theta\theta}^{-2}(I-\Pi)\left(\partial_{t}v_{1,k-1}^{s}-\mu\partial_{yy}v_{1,k-1}^{s}-\lambda\partial_{\theta y}v_{2,k-1}^{s}\right)\right)\\
 & +\frac{1}{\mu^{2}}\,\Pi\left(\partial_{\theta}^{-1}(h)\,(h\,\partial_{y}-\lambda\partial_{\theta\theta})\partial_{\theta\theta}^{-2}(I-\Pi)\left(h\,\partial_{y}v_{1,k}^{s}-\lambda\partial_{\theta\theta}v_{1,k}^{s}\right)\right)\\
 & +\frac{1}{\mu^{2}}\,\Pi\left(\partial_{\theta}^{-1}(h)(h\,\partial_{y}-\lambda\partial_{\theta\theta})\partial_{\theta\theta}^{-1}(I-\Pi)\left(\sum_{(i,j)\in\mathcal{J}(M,k+1)}\!\! v_{1,i}^{s}\,\partial_{\theta}v_{1,j}^{s}+\sum_{(i,j)\in\mathcal{J}(M,k)}\!\! v_{2,i}^{s}\,\partial_{y}v_{1,j}^{s}\right)\right)\\
 & +\frac{1}{\mu}\,\Pi\left(\partial_{\theta}^{-1}(h)\left(\sum_{(i,j)\in\mathcal{J}(M,k+2)}v_{1,i}^{s}\,\partial_{\theta}v_{1,j}^{s}+\sum_{(i,j)\in\mathcal{J}(M,k+1)}v_{2,i}^{s}\,\partial_{y}v_{1,j}^{s}\right)\right).
\end{align*}
 This yields to 
\begin{align*}
\Pi & \left(\partial_{\theta}h\, v_{1,k+2}^{s}\right)=-S_{2,k}^{l2\sslash}-P_{2}^{l2\perp}\,(I-\Pi)v_{1,k}^{s}-P_{2}^{l2\sslash}\,\Pi v_{1,k}^{s}-Q_{2}^{l2\perp}\,(I-\Pi)v_{2,k}^{s}\\
 & +\frac{1}{\mu^{2}}\,\Pi\left(\partial_{\theta}^{-1}(h)\,(h\,\partial_{y}-\lambda\partial_{\theta\theta})\partial_{\theta\theta}^{-2}(I-\Pi)\left(\sum_{(i,j)\in\mathcal{J}(M,k+1)}v_{1,i}^{s}\,\partial_{\theta}v_{1,j}^{s}+\sum_{(i,j)\in\mathcal{J}(M,k)}v_{2,i}^{s}\,\partial_{y}v_{1,j}^{s}\right)\right)\\
 & +\frac{1}{\mu}\,\Pi\left(\partial_{\theta}^{-1}(h)\left(\sum_{(i,j)\in\mathcal{J}(M,k+2)}v_{1,i}^{s}\,\partial_{\theta}v_{1,j}^{s}+\sum_{(i,j)\in\mathcal{J}(M,k+1)}v_{2,i}^{s}\,\partial_{y}v_{1,j}^{s}\right)\right)\,,
\end{align*}
 with 
\begin{align}
S_{2,k}^{l2}:= & \frac{1}{\mu^{2}}\,\partial_{\theta}^{-1}(h)\left(-h\,\partial_{y}+\lambda\partial_{\theta\theta})\partial_{\theta\theta}^{-2}(I-\Pi)(\partial_{t}v_{1,k-1}^{s}-\mu\partial_{yy}v_{1,k-1}^{s}-\lambda\partial_{\theta y}v_{2,k-1}^{s})\right)\,,\label{eqS2debut}\\
P_{2}^{l2}f:= & \frac{1}{\mu^{2}}\,\Pi\left(\partial_{\theta}^{-1}(h)\,(-h\,\partial_{y}+\lambda\partial_{\theta\theta})\,\partial_{\theta\theta}^{-2}\,(I-\Pi)\,(h\partial_{y}f-\lambda\partial_{\theta\theta}f)\right)\nonumber \\
 & -\frac{1}{\mu}\,\Pi\left(\partial_{\theta}^{-1}(h)(\partial_{t}f-\mu\partial_{yy}f)\right)\,,\label{eqS2ajout}\\
Q_{2}^{l2}f:= & \frac{\lambda}{\mu}\,\Pi\left(\partial_{\theta}^{-1}(h)(\partial_{\theta y}f)\right)\,.\label{c'estpos}
\end{align}

Briefly, for all $k\in\N$, we denote 
\[
S_{2,k}^{l}:=S_{2,k}^{l1}+S_{2,k}^{l2}+\lambda\,\partial_{yy}\Pi v_{2,k-1}^{s}\,,\qquad P_{2}^{l}:=P_{2}^{l1}+P_{2}^{l2}\,,\qquad Q_{2}^{l}:=Q_{2}^{l1}+Q_{2}^{l2}\,.
\]

$\circ$ Conclusion. Combining all informations together, we finally
obtain (\ref{eqlemnonhomogenizationl}) with 
\begin{align}
S_{1,k}^{nl\sslash}:= & -\frac{1}{\mu}\,\Pi\left(\partial_{\theta\theta}^{-2}(h)\ \partial_{y}\left(\sum_{(i,j)\in\mathcal{J}(M,k+1)}v_{1,i}^{s}\,\partial_{\theta}v_{1,j}^{s}+\sum_{(i,j)\in\mathcal{J}(M,k)}v_{2,i}^{s}\,\partial_{y}v_{1,j}^{s}\right)\right)\nonumber \\
 & -\Pi\left(\sum_{(i,j)\in\mathcal{J}(M,k+2)}v_{1,i}^{s}\,\partial_{\theta}v_{1,j}^{s}+\sum_{(i,j)\in\mathcal{J}(M,k+1)}v_{2,i}^{s}\,\partial_{y}v_{1,j}^{s}\right),\label{eqSPnlpa1}
\end{align}
 and 
\begin{align}
 & S_{2,k}^{nl\sslash}=-\frac{1}{\mu}\,\Pi\left(\partial_{\theta\theta}^{-2}(h)\ \partial_{y}\left(\sum_{(i,j)\in\mathcal{J}(M,k+1)}v_{1,i}^{s}\,\partial_{\theta}v_{1,j}^{s}+\sum_{(i,j)\in\mathcal{J}(M,k)}v_{2,i}^{s}\,\partial_{y}v_{1,j}^{s}\right)\right)\nonumber \\
 & -\frac{1}{\mu^{2}}\,\Pi\left(\partial_{\theta}h\ \partial_{\theta\theta}^{-2}(h)\ \partial_{y}\partial_{\theta\theta}^{-2}(I-\Pi)\left(\sum_{(i,j)\in\mathcal{J}(M,k+1)}v_{1,i}^{s}\,\partial_{\theta}v_{1,j}^{s}+\sum_{(i,j)\in\mathcal{J}(M,k)}v_{2,i}^{s}\,\partial_{y}v_{1,j}^{s}\right)\right)\nonumber \\
 & -\frac{1}{\mu^{2}}\,\Pi\left(\partial_{\theta}^{-1}(h)\ (h\,\partial_{y}-\lambda\partial_{\theta\theta})\partial_{\theta\theta}^{-2}(I-\Pi)\left(\sum_{(i,j)\in\mathcal{J}(M,k+1)}v_{1,i}^{s}\,\partial_{\theta}v_{1,j}^{s}+\sum_{(i,j)\in\mathcal{J}(M,k)}v_{2,i}^{s}\,\partial_{y}v_{1,j}^{s}\right)\right)\nonumber \\
 & -\frac{1}{\mu}\,\Pi\left(\partial_{\theta}^{-1}(h)\,\left(\sum_{(i,j)\in\mathcal{J}(M,k+2)}v_{1,i}^{s}\,\partial_{\theta}v_{1,j}^{s}+\sum_{(i,j)\in\mathcal{J}(M,k+1)}v_{2,i}^{s}\,\partial_{y}v_{1,j}^{s}\right)\right)\nonumber \\
 & -\Pi\left(\sum_{(i,j)\in\mathcal{J}(M,k+2)}v_{1,i}^{s}\,\partial_{\theta}v_{2,j}^{s}+\sum_{(i,j)\in\mathcal{J}(M,k+1)}v_{2,i}^{s}\,\partial_{y}v_{2,j}^{s}\right).\label{eqSPnlpa2}
\end{align}
 \end{proof}

The definition of $S_{k}^{nl\sslash}$ involves the sets $\mathcal{J}(M,k)$,
$\mathcal{J}(M,k+1)$ and $\mathcal{J}(M,k+2)$. Note that 
\begin{equation}
\bigl\lbrack\, M\geq3\,,\ \ \tilde{k}\in\{k,k+1,k+2\}\,,\ \ (i,j)\in\mathcal{J}(M,\tilde{k})\,\bigr\rbrack\ \ \Longrightarrow\ \ i\leq k-1\text{ and }j\leq k-1\,.\label{implicationres}
\end{equation}
 Thus, the presence of $v_{k}^{s}$ inside $S_{k}^{nl\sslash}$ is
not allowed as long as $M\geq3$. On the contrary, when $M=2$, it
becomes effective. This remark can be formalized through the following
statement.

\begin{lemma} {[}Refined description of the source term of (\ref{eqlemnonhomogenizationl}){]}
\label{lemslowsource} $\,$

\textit{i)} The expressions $S_{k}^{l\sslash}$ only depend on the
$v_{j}^{s}$ with $j\in\llbracket0,k-1\rrbracket$. More precisely,
we have 
\[
S_{k}^{l\sslash}=f_{k}^{l\sslash}(v_{0}^{s},\ldots,v_{k-1}^{s})\,
\]
 where $f_{k}^{l\sslash}:={}^{t}(f_{1,k}^{l\sslash},f_{2,k}^{l\sslash})$
are homogeneous linear functions of their arguments.

\textit{{ii)}} When $M\geq3$, the expressions $S_{k}^{nl\sslash}$
only depend on the $v_{j}^{s}$ with $j\in\llbracket0,k-1\rrbracket$.
More precisely, we can retain that 
\[
S_{k}^{nl\sslash}=f_{k}^{qnl\sslash}(v_{0}^{s},\ldots,v_{k-1}^{s})
\]
 where $f_{k}^{qnl\sslash}:={}^{t}(f_{1,k}^{qnl\sslash},f_{2,k}^{qnl\sslash})$
are quadratic functions of their arguments.

\medskip{}

\textit{{iii)}} When $M=2$, the expressions $S_{k}^{nl\sslash}$
only depend on the $v_{j}^{s}$ with $j\in\llbracket0,k\rrbracket$.
The influence of $v_{k}^{s}$ can be specified through a decomposition
of the form 
\[
S_{k}^{nl\sslash}=f_{k}^{qnl\sslash}(v_{0}^{s},\ldots,v_{k-1}^{s})+SP(v_{0}^{s})\, v_{k}^{s}\,,\qquad f_{k}^{qnl\sslash}:=(f_{1,k}^{qnl\sslash},f_{2,k}^{qnl\sslash})\,.
\]
 Again $f_{k}^{qnl\sslash}$ is a quadratic function of its arguments
whereas $SP(v_{0}^{s}):={}^{t}\bigl(SP_{1}(v_{0}^{s}),SP_{2}(v_{0}^{s})\bigr)$
is the linear differential operator defined according to 
\begin{equation}
\left\{ \begin{array}{l}
SP_{1}(v_{0}^{s})\, v_{k}^{s}:=-\,\Pi\,(v_{1,k}^{s}\,\partial_{\theta}v_{1,0}^{s}+v_{1,0}^{s}\,\partial_{\theta}v_{1,k}^{s})\,,\\
{\displaystyle SP_{2}(v_{0}^{s})\, v_{k}^{s}:=-\,\Pi(v_{1,0}^{s}\,\partial_{\theta}v_{2,k}^{s}+v_{1,k}^{s}\,\partial_{\theta}v_{2,0}^{s})-\frac{1}{\mu}\,\Pi\,\bigl(\partial_{\theta}^{-1}(h)\,(v_{1,0}^{s}\,\partial_{\theta}v_{1,k}^{s}+v_{1,k}^{s}\,\partial_{\theta}v_{1,0}^{s})\bigr)\,.}
\end{array}\right.\label{eqSPK}
\end{equation}
 \end{lemma}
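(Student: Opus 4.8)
The statement is a bookkeeping result: one reads off the closed-form expressions for the source terms produced during the proof of Lemma~\ref{lemhomogeneisationnonlinear} and tracks which profiles $v_j^s$ they involve. The plan is to inspect (\ref{eqS1J})--(\ref{c'estpos}) for part \textit{i)} and (\ref{eqSPnlpa1})--(\ref{eqSPnlpa2}) for parts \textit{ii)} and \textit{iii)}, combining the latter with the index count (\ref{implicationres}) and its refinement when $M=2$.

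For part \textit{i)}: $S_{1,k}^{l\sslash}$ is given by (\ref{eqS1J}), and $S_{2,k}^{l\sslash}=S_{2,k}^{l1\sslash}+S_{2,k}^{l2\sslash}+\lambda\,\partial_{yy}\Pi v_{2,k-1}^s$ with $S_{2,k}^{l1}$, $S_{2,k}^{l2}$ as in (\ref{eqSdebut}), (\ref{eqS2debut}). Every profile occurring there carries an index in $\{k-2,k-1\}$, and each summand is a differential monomial of degree one in those profiles whose coefficients depend only on $h$ (through $h$, $\partial_\theta^{-1}h$, $\partial_{\theta\theta}^{-2}h$). By the convention $v_j^s\equiv 0$ for $j<0$ this is valid for every $k\geq 0$, so $S_k^{l\sslash}=f_k^{l\sslash}(v_0^s,\ldots,v_{k-1}^s)$ with $f_k^{l\sslash}$ homogeneous linear.

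For parts \textit{ii)} and \textit{iii)}: every profile entering $S_k^{nl\sslash}$ through (\ref{eqSPnlpa1})--(\ref{eqSPnlpa2}) sits inside a sum over some $\mathcal{J}(M,\tilde k)$ with $\tilde k\in\{k,k+1,k+2\}$, and so carries one of the indices $i,j$ with $i+j=\tilde k-M$, $i,j\geq 0$. A factor of index $k$ can therefore occur only if $\tilde k-M\geq k$, i.e.\ $\tilde k\geq k+M$. When $M\geq 3$ this is impossible ($\tilde k\leq k+2<k+M$); hence all indices are $\leq k-1$ — this is exactly (\ref{implicationres}) — and since each summand of (\ref{eqSPnlpa1})--(\ref{eqSPnlpa2}) is a product of one profile with a derivative of a profile, $S_k^{nl\sslash}=f_k^{qnl\sslash}(v_0^s,\ldots,v_{k-1}^s)$ is quadratic, which is \textit{ii)}. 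When $M=2$, the inequality $\tilde k\geq k+2$ forces $\tilde k=k+2$, and then $i+j=k$ with $\max(i,j)=k$ forces $\{i,j\}=\{0,k\}$; so $v_k^s$ enters only through the three sums over $\mathcal{J}(M,k+2)$ in (\ref{eqSPnlpa1})--(\ref{eqSPnlpa2}) — every other sum runs over $\mathcal{J}(M,k+1)$ or $\mathcal{J}(M,k)$ and hence carries only indices $\leq k-1$. Note that these three $\mathcal{J}(M,k+2)$-sums are all coupled to $\partial_\theta$. Extracting from each of them the summand(s) with $\{i,j\}=\{0,k\}$ and adding up produces precisely $SP_1(v_0^s)\,v_k^s$ (from the $\mathcal{J}(M,k+2)$-sum in (\ref{eqSPnlpa1})) and $SP_2(v_0^s)\,v_k^s$ (from the two $\mathcal{J}(M,k+2)$-sums in (\ref{eqSPnlpa2})), in the form written in (\ref{eqSPK}); everything left over carries only indices $\leq k-1$ and is quadratic, i.e.\ it is $f_k^{qnl\sslash}(v_0^s,\ldots,v_{k-1}^s)$. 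This is \textit{iii)}.

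The argument is elementary; the one point that requires care — and where a slip is easy — is the sum-by-sum inventory of (\ref{eqSPnlpa1})--(\ref{eqSPnlpa2}) needed to confirm that every sum except the three over $\mathcal{J}(M,k+2)$ uses only the shifts $k$ or $k+1$, so that it contributes no $v_k^s$ even in the critical case $M=2$. The sole genuine edge case is $k=0$, where the two pairs $(0,k)$ and $(k,0)$ coincide; it is harmless because the transparency condition (\ref{fondacriterion}) makes $v_{1,0}^s$ independent of $\theta$, so that both $SP(v_0^s)\,v_0^s$ and the corresponding $\mathcal{J}(M,k+2)$-contributions to $S_0^{nl\sslash}$ vanish.
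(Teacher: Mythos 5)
Your proof is correct and follows essentially the same route as the paper: read off the explicit formulas (\ref{eqS1J}), (\ref{eqSdebut}), (\ref{eqS2debut}) for the linear part, use the index count behind (\ref{implicationres}) for $M\geq3$, and for $M=2$ isolate the extremal pairs $(0,k)$ and $(k,0)$ in the three $\mathcal{J}(M,k+2)$-sums of (\ref{eqSPnlpa1})--(\ref{eqSPnlpa2}), which is exactly how the paper arrives at (\ref{eqSPK}). Your remark on the $k=0$ coincidence of the two extremal pairs, rendered harmless by (\ref{fondacriterion}), is a welcome precision that the paper only settles implicitly later through (\ref{eliminatevs0}).
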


From now on, we simply note $f_{k}^{nl\sslash}:=f_{k}^{l\sslash}+f_{k}^{qnl\sslash}$.

\begin{proof}[Proof of Lemma \ref{lemslowsource}] As already
mentioned, the statement {\em i)} is a direct consequence of (\ref{implicationres}).
The \textit{linear} aspect of $f_{k}^{l\sslash}$ is a consequence
of the formulas obtained in \eqref{eqSdebut} and \eqref{eqS2debut}.
On the other hand, the \textit{quadratic} aspect of $f_{k}^{qnl\sslash}$
is obvious in view of (\ref{eqSPnlpa1}) and (\ref{eqSPnlpa2}). It
proves {\em{ii)}}.

\medskip{}

There remains to consider the situation {\em iii)} where $M=2$.
Note that $\mathcal{J}(2,k+2)=\mathcal{J}(0,k)$. In view of (\ref{implicationres}),
we have to concentrate on the contribution of 
\begin{align*}
\begin{pmatrix}-\,\Pi\left({\displaystyle \sum_{(i,j)\in\mathcal{J}(0,k)}v_{1,i}^{s}\,\partial_{\theta}v_{1,j}^{s}}\right)\\
{\displaystyle -\,\frac{1}{\mu}\,\Pi\left(\partial_{\theta}^{-1}(h)\left(\sum_{(i,j)\in\mathcal{J}(0,k)}v_{1,i}^{s}\,\partial_{\theta}v_{1,j}^{s}\right)\right)-\Pi\left(\sum_{(i,j)\in\mathcal{J}(0,k)}v_{1,i}^{s}\,\partial_{\theta}v_{2,j}^{s}\right)}
\end{pmatrix}.
\end{align*}
 In the sums above, only the extremal indices $(i,j)=(k,0)$ and $(i,j)=(0,k)$
give a contribution to include in $SP(v_{0}^{s})$, leading to (\ref{eqSPK}).
\end{proof}

\subsubsection{Analysis of the system (\ref{eqlemnonhomogenizationl})}

\paragraph{Projection of system \eqref{eqlemnonhomogenizationl}.}

The system (\ref{eqlemnonhomogenizationl}) is clearly an evolution
equation of parabolic type. As such, it can be completed by initial
data. But to solve it, we also need to identify the extra terms $(I-\Pi)v_{\star,k}^{s}$
with $\star\in\{1,2\}$. To this end, it suffices again to exploit
(\ref{equtjnl}). Recall that $\mathcal{V}\in\left\{ W^{m,p},H^{s},\mathcal{W}_{T}^{m,s},\mathcal{H}_{T}^{s},\mathcal{E}_{\delta}^{s}\right\} $
and define the linear continuous isomorphism 
\[
\begin{array}{rcl}
\Phi:\mathcal{V}(\T\times\R)\times\mathcal{V}(\T\times\R) & \longrightarrow & \mathcal{V}^{tot}:=\mathcal{V}^{\perp}(\T\times\R)\times\mathcal{V}^{\sslash}(\R)\times\mathcal{V}^{\perp}(\T\times\R)\times\mathcal{V}^{\sslash}(\R)\\
(f_{1},f_{2}) & \longmapsto & \bigl((I-\Pi)f_{1},\Pi f_{1},(I-\Pi)f_{2},\Pi f_{2}\bigr)\,.
\end{array}
\]
 By construction, the expression $V_{k}^{s}:={}^{t}\Phi\, v_{k}^{s}$
must be solution of the system 
\begin{equation}
\mathcal{A}\, V_{k}^{s}:=\begin{pmatrix}\mu\partial_{\theta\theta} & 0 & 0 & 0\\
-P_{1}^{l\perp} & P_{y} & 0 & 0\\
-T_{s} & -T_{s} & \mu\partial_{\theta\theta} & 0\\
-P_{2}^{l\perp} & -P_{2}^{l\sslash} & -Q_{2}^{l\perp} & P_{y}
\end{pmatrix}V_{k}^{s}=\begin{pmatrix}f_{1,k}^{nl\perp}\\
f_{1,k}^{nl\sslash}\\
f_{2,k}^{nl\perp}\\
f_{2,k}^{nl\sslash}
\end{pmatrix}=:f_{k}^{nl},\qquad V_{k}^{s}=\begin{pmatrix}v_{1,k}^{s\perp}\\
v_{1,k}^{s\sslash}\\
v_{2,k}^{s\perp}\\
v_{2,k}^{s\sslash}
\end{pmatrix}\label{ilfallaitlefaire}
\end{equation}
 where the term $f_{k}^{nl\sslash}$ is given by Lemma \ref{lemslowsource}
whereas $T_{s}$ and $P_{y}$ are the operators defined by 
\[
T_{s}f:=(I-\Pi)\left(\partial_{\theta}h\, f\right)\,,\qquad P_{y}f:=\partial_{t}f-\Bigl(\mu+\frac{1}{\mu}\Pi\left((\partial_{\theta}^{-1}h)^{2}\right)\Bigr)\,\partial_{yy}f.
\]

\paragraph{Description of $f_{k}^{nl\perp}$.}

As for $f_{k}^{nl\sslash}$ we decompose $f_{k}^{nl\perp}$ into $f_{k}^{nl\perp}:=f_{k}^{l\perp}+f_{k}^{qnl\perp}$
where the linear part $f_{k}^{l\perp}$ is defined by 
\begin{align}
f_{1,k}^{l\perp}:= & (I-\Pi)\left(\partial_{t}v_{1,k-2}^{s}-\mu\partial_{yy}v_{1,k-2}^{s}-\lambda\partial_{\theta y}v_{2,k-2}^{s}+h\,\partial_{y}v_{1,k-1}^{s}-\lambda\partial_{\theta\theta}v_{1,k-1}^{s}\right)\,,\label{sourceperplineaire}\\
f_{2,k}^{l\perp}:= & (I-\Pi)(\partial_{t}v_{2,k-2}^{s}-\mu\partial_{yy}v_{2,k-2}^{s}-\lambda\partial_{\theta y}v_{1,k-2}^{s}+h\,\partial_{y}v_{2,k-1}^{s}-\lambda\partial_{yy}v_{2,k-3}^{s})\,,\label{sourceperplineairebis}
\end{align}
 whereas the quadratic part $f_{k}^{qnl\perp}={}^{t}(f_{1,k}^{qnl\perp},f_{2,k}^{qnl\perp})$
is given by 
\begin{equation}
f_{p,k}^{qnl\perp}:=(I-\Pi)\left(\sum_{(i,j)\in\mathcal{J}(M,k)}v_{1,i}^{s}\,\partial_{\theta}v_{p,j}^{s}+\sum_{(i,j)\in\mathcal{J}(M+1,k)}v_{2,i}^{s}\,\partial_{y}v_{p,j}^{s}\right),\qquad p\in\{1,2\}\,.\label{fffffqqqq}
\end{equation}

\begin{lemma} {[}Description of $f_{k}^{nl\perp}${]} \label{lemslowsourcebiss}
$\,$ The functions $f_{k}^{l\perp}$ and $f_{k}^{qnl\perp}$ only
depend on the $v_{j}^{s}$ with $j\in\llbracket0,k-1\rrbracket$.
They are respectively homogeneous linear and homogeneous quadratic
functions of their arguments $(v_{0}^{s},\ldots,v_{k-1}^{s})$. \end{lemma}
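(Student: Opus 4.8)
The plan is to argue essentially by inspection of the defining formulas (\ref{sourceperplineaire})--(\ref{fffffqqqq}), the only substantive ingredient being an elementary bound on the indices occurring in the sets $\mathcal{J}(M,k)$ and $\mathcal{J}(M+1,k)$, in the spirit of (\ref{implicationres}) but now with $M\ge 2$ only.

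First I would treat the linear contribution. Reading off the right-hand sides of (\ref{sourceperplineaire}) and (\ref{sourceperplineairebis}), the profiles entering $f_{1,k}^{l\perp}$ are $v_{1,k-2}^{s}$, $v_{2,k-2}^{s}$ and $v_{1,k-1}^{s}$, while those entering $f_{2,k}^{l\perp}$ are $v_{2,k-2}^{s}$, $v_{1,k-2}^{s}$, $v_{2,k-1}^{s}$ and $v_{2,k-3}^{s}$; in each case the index is $\le k-1$, and by the convention $v_j^{s}\equiv 0$ for $j\in\{-3,-2,-1\}$ the components of negative index simply vanish, so no constant term is created. Since $(I-\Pi)$, the derivations $\partial_t,\partial_{yy},\partial_{\theta y},\partial_{\theta\theta}$ and multiplication by the fixed function $h$ are linear, every summand is linear in a single $v_j^{s}$ with $0\le j\le k-1$; hence $f_k^{l\perp}$ is a homogeneous linear function of $(v_0^{s},\ldots,v_{k-1}^{s})$, as claimed.

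Next I would treat the quadratic contribution (\ref{fffffqqqq}). If $(i,j)\in\mathcal{J}(M,k)$ then $i+j=k-M\le k-2$ since $M\ge 2$, whence $i,j\le k-2\le k-1$; similarly $(i,j)\in\mathcal{J}(M+1,k)$ forces $i+j=k-M-1\le k-3$, so again $i,j\le k-1$. When $k<M$ both sets are empty and $f_k^{qnl\perp}\equiv 0$, which is vacuously homogeneous quadratic. Each surviving summand is a product of exactly two factors $v_{\cdot,i}^{s}$ and $\partial v_{\cdot,j}^{s}$ post-composed with the linear map $(I-\Pi)$, hence a homogeneous quadratic monomial in $(v_0^{s},\ldots,v_{k-1}^{s})$; a finite sum of such remains homogeneous quadratic. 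Setting $f_k^{nl\perp}=f_k^{l\perp}+f_k^{qnl\perp}$ then yields the lemma.

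I do not expect any genuine difficulty here. In contrast with Lemma \ref{lemslowsource}, no borderline term carrying the index $k$ can intrude, because $f_k^{qnl\perp}$ only involves $\mathcal{J}(M,k)$ and $\mathcal{J}(M+1,k)$ and never $\mathcal{J}(M,k+2)=\mathcal{J}(0,k)$, so even the critical case $M=2$ is covered verbatim. The one point deserving a moment's care is the bookkeeping of the conventions $v_j^{s}\equiv 0$ for $j<0$, which is what guarantees that these formulas define genuinely homogeneous (vanishing at the origin) linear and quadratic maps rather than affine ones.
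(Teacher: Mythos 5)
Your proposal is correct and follows essentially the same route as the paper, which simply reads off (\ref{sourceperplineaire})--(\ref{sourceperplineairebis})--(\ref{fffffqqqq}) and invokes the index bound (\ref{implicationres}). Your one refinement --- checking directly that $M\geq 2$ already forces $i,j\leq k-1$ because only $\mathcal{J}(M,k)$ and $\mathcal{J}(M+1,k)$ (and never $\mathcal{J}(M,k+2)$) occur, in the spirit of (\ref{implicationresidu}) --- is a sound and slightly more careful justification of the critical case $M=2$ than the paper's bare citation of the $M\geq3$ implication.
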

\begin{proof}[Proof of Lemma \ref{lemslowsourcebiss}] Just look
at (\ref{sourceperplineaire})-(\ref{sourceperplineairebis})-(\ref{fffffqqqq})
together with (\ref{implicationres}). \end{proof}

\paragraph{ Analysis of the system (\ref{ilfallaitlefaire}).}

To study the system (\ref{ilfallaitlefaire}), we have to distinguish
the general case $M\geq3$ from the critical case $M=2$.

\medskip{}

$\circ$ When $M\geq3$, According to the Lemmas \ref{lemhomogeneisationnonlinear}
and \ref{lemslowsourcebiss}, the above right hand term $f_{k}^{nl}$
can be viewed as a source term.

\bigskip{}

$\circ$ When $M=2$, $f_{k}^{nl\sslash}$ is no longer a source term.
Now, recall that we can exploit the condition~\eqref{fondacriterion}.
This information is essential. It induces many simplifications when
computing $SP(v_{0}^{s})$. We find that $SP_{1}(v_{0}^{s})\equiv0$
whereas $SP_{2}(v_{0}^{s})$ can be reduced to the following linear
(non differential) operator 
\begin{equation}
SP_{2}(v_{0}^{s})\, v_{k}^{s}=-\,\Pi(v_{1,k}^{s\perp}\,\partial_{\theta}v_{2,0}^{s})+\frac{1}{\mu}\ v_{1,0}^{s\sslash}\ \Pi\,(h\, v_{1,k}^{s\perp})\,.\label{eqSPK2bis}
\end{equation}
 At first sight, the expression $SP_{2}(v_{0}^{s})\, v_{0}^{s}$ depends
in a non linear way on $v_{0}^{s}$. However, we can again exploit
the condition (\ref{fondacriterion}) (which says that $v_{1,0}^{s\perp}\equiv0$)
and then apply (\ref{eqSPK2bis}) with $k=0$ in order to obtain further
cancellations. There remains 
\begin{equation}
SP(v_{0}^{s})\, v_{0}^{s}\,\equiv\,0\,.\label{eliminatevs0}
\end{equation}

From now on, since there is no more ambiguity, we can omit to signal
that $SP(v_{0}^{s})$ depends on $v_{0}^{s}$, and in fact only on
$v_{1,0}^{s\sslash}$. We will most often note $SP(v_{0}^{s})\equiv SP(v_{0}^{s\sslash})\equiv SP={}^{t}(SP_{1},SP_{2})$.
Since $SP_{2}\not\equiv0$, the formulation (\ref{ilfallaitlefaire})
must be changed. This time, we have to deal with 
\begin{equation}
\widetilde{\mathcal{A}}\, V_{k}^{s}={}^{t}(f_{1,k}^{nl\perp},f_{1,k}^{nl\sslash},f_{2,k}^{nl\perp},f_{2,k}^{nl\sslash})\,,\qquad V_{k}^{s}:={}^{t}\Phi\, v_{k}^{s}\label{ilfallaitlefairenlc}
\end{equation}
 where 
\begin{equation}
\widetilde{\mathcal{A}}:=\begin{pmatrix}\mu\partial_{\theta\theta} & 0 & 0 & 0\\
-P_{1}^{l\perp} & P_{y} & 0 & 0\\
-T_{s} & -T_{s} & \mu\partial_{\theta\theta} & 0\\
-P_{2}^{l\perp}-SP_{2} & -P_{2}^{l\perp} & -Q_{2}^{l\perp} & P_{y}
\end{pmatrix}.\label{eqtildeAsansk}
\end{equation}

The similarities between $\mathcal{A}$ and $\widetilde{\mathcal{A}}$
are obvious. These two matrix valued operators have both a triangular
structure. The difference, when passing from $\mathcal{A}$ to $\widetilde{\mathcal{A}}$,
concerns only the perturbation in the bottom-left position $(4,1)$.
This particularity plays a crucial part in the discussion below. To
simplify, we present below the result in a smooth setting.

\begin{lemma}{[}Solving the system \eqref{ilfallaitlefaire} or \eqref{ilfallaitlefairenlc}{]}
\label{leminvAk} We assume that the condition \eqref{fondacriterion}
is verified. Select a function $V_{0}^{\sslash}={}^{t}(V_{0}^{1\sslash},V_{0}^{2\sslash})\in H^{\infty}(\R)^{2}$
and a source term $F={}^{t}(F^{1\perp},F^{1\sslash},F^{2\perp},F^{2\sslash})\in(\mathcal{H}_{T}^{\infty})^{tot}$.
Then, for all $T\in\R_{+}^{*}$, the problem 
\begin{equation}
\bigl\{\,\mathcal{P}\, V=F\,,\qquad V={}^{t}(V^{1\perp},V^{1\sslash},V^{2\perp},V^{2\sslash})\,,\qquad{}^{t}(V^{1\sslash},V^{2\sslash})_{|_{t=0}}={}^{t}(V_{0}^{1\sslash},V_{0}^{2\sslash}),\label{moddsupa}
\end{equation}
 where $\mathcal{P}\in\{\mathcal{A},\widetilde{\mathcal{A}}\}$, has
a unique solution $V$ in $(\mathcal{H}_{T}^{\infty})^{tot}$. \end{lemma}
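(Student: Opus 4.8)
The plan is to resolve the block lower-triangular systems \eqref{ilfallaitlefaire} and \eqref{ilfallaitlefairenlc} one scalar component at a time, using only two elementary inversions. On the space $(I-\Pi)\mathcal{H}_T^\infty$ of functions with zero $\theta$-mean, the operator $\mu\,\partial_{\theta\theta}$ is a continuous isomorphism with inverse $\frac{1}{\mu}\,\partial_{\theta\theta}^{-2}$; being a Fourier multiplier in $\theta$ it commutes with $\partial_t$ and maps $(I-\Pi)\mathcal{H}_T^\infty$ into itself, so the corresponding lines require no initial datum. On the space $\Pi\mathcal{H}_T^\infty$ of $\theta$-independent functions, $P_y=\partial_t-\bigl(\mu+\frac{1}{\mu}\,\Pi((\partial_\theta^{-1}h)^2)\bigr)\partial_{yy}$ is a constant-coefficient heat operator whose diffusion coefficient $\mu+\frac{1}{\mu}\,\Pi((\partial_\theta^{-1}h)^2)$ is $\geq\mu>0$; hence for every source $g\in\Pi\mathcal{H}_T^\infty$ and every datum $u_0\in H^\infty(\R)$ the Cauchy problem $P_y u=g$, $u_{|t=0}=u_0$, has a unique solution $u\in\Pi\mathcal{H}_T^\infty$, the full $H^\infty$ time-regularity being furnished by parabolic smoothing together with the smoothness of the data (the $t=0$ compatibility conditions being automatically satisfied).

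For $\mathcal{P}=\mathcal{A}$ one descends through \eqref{ilfallaitlefaire}. The first line $\mu\,\partial_{\theta\theta}V^{1\perp}=F^{1\perp}$ gives $V^{1\perp}=\frac{1}{\mu}\,\partial_{\theta\theta}^{-2}F^{1\perp}\in(I-\Pi)\mathcal{H}_T^\infty$, uniquely. The second line becomes $P_y V^{1\sslash}=F^{1\sslash}+P_1^{l\perp}V^{1\perp}$, a parabolic Cauchy problem with known source in $\Pi\mathcal{H}_T^\infty$ and datum $V_0^{1\sslash}$, hence a unique $V^{1\sslash}\in\Pi\mathcal{H}_T^\infty$. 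The third line reads $\mu\,\partial_{\theta\theta}V^{2\perp}=F^{2\perp}+T_s V^{1\perp}+T_s V^{1\sslash}$; since $T_s=(I-\Pi)(\partial_\theta h\,\cdot\,)$ produces functions of zero $\theta$-mean, the right-hand side lies in $(I-\Pi)\mathcal{H}_T^\infty$ and $V^{2\perp}$ is again uniquely obtained by inverting $\mu\,\partial_{\theta\theta}$. The fourth line $P_y V^{2\sslash}=F^{2\sslash}+P_2^{l\perp}V^{1\perp}+P_2^{l\sslash}V^{1\sslash}+Q_2^{l\perp}V^{2\perp}$ is a parabolic Cauchy problem with known source and datum $V_0^{2\sslash}$, yielding a unique $V^{2\sslash}\in\Pi\mathcal{H}_T^\infty$ (here and above, some off-diagonal operators involve $\partial_t$, but this is harmless since their arguments have already been constructed in $\mathcal{H}_T^\infty$, hence are smooth in time). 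Assembling the four pieces produces the asserted $V\in(\mathcal{H}_T^\infty)^{tot}$, and uniqueness is immediate because each step pinned down its unknown uniquely.

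It remains to treat $\mathcal{P}=\widetilde{\mathcal{A}}$, where the only change relative to $\mathcal{A}$ is the extra operator $-SP_2$ in the bottom-left slot $(4,1)$. The decisive point --- already contained in the passage from \eqref{eqSPK} to \eqref{eqSPK2bis} under the polarization hypothesis \eqref{fondacriterion} --- is that $SP_1\equiv0$ and that $SP_2$ acts on its argument only through the component $V^{1\perp}$, via the reduced formula $SP_2 V^{1\perp}=-\Pi(V^{1\perp}\,\partial_\theta v_{2,0}^s)+\frac{1}{\mu}\,v_{1,0}^{s\sslash}\,\Pi(h\,V^{1\perp})$. Thus the perturbation never couples the fourth line back to $V^{2\perp}$ or $V^{2\sslash}$: it merely adds the already-computed term $SP_2 V^{1\perp}\in\Pi\mathcal{H}_T^\infty$ to the source of the last parabolic equation. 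The sequential resolution and all regularity conclusions of the previous paragraph therefore carry over verbatim, which proves the lemma. The only delicate points are thus the structural bookkeeping --- that the sources fed into $\mu\,\partial_{\theta\theta}$ on lines two and three are genuinely of zero $\theta$-mean, and that the $SP_2$-contribution in the $\widetilde{\mathcal{A}}$ case does not reintroduce a coupling that would break the triangular scheme --- rather than any analytic obstacle.
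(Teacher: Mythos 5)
Your proposal is correct and follows essentially the same route as the paper: a line-by-line descent through the triangular system, inverting $\mu\,\partial_{\theta\theta}$ on the zero-mean components and solving the heat operator $P_{y}$ with the prescribed data on the mean components, then handling $\widetilde{\mathcal{A}}$ by absorbing the already-computed term $SP_{2}V^{1\perp}$ as an extra source in the fourth line. Your additional remarks (zero-mean character of the sources fed to $\mu\,\partial_{\theta\theta}$, harmlessness of the $\partial_{t}$ appearing in the off-diagonal operators) are consistent with, and slightly more explicit than, the paper's argument.
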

\begin{proof}[Proof of Lemma \ref{leminvAk}] To solve (\ref{moddsupa}),
the strategy is to argue line after line. We start to solve the problem
for the operator $\mathcal{A}$.

\medskip{}

- \textit{First line.} Since the operator $\mu\partial_{\theta\theta}:\left(\mathcal{H}_{T}^{\infty}\right)^{\perp}\longrightarrow\left(\mathcal{H}_{T}^{\infty}\right)^{\perp}$
is invertible, we can define without ambiguity (and with no choice)
\begin{equation}
V^{1\perp}:=\mu^{-1}\partial_{\theta\theta}^{-2}F^{1\perp}\in\left(\mathcal{H}_{T}^{\infty}\right)^{\perp}\,.\label{VGs1}
\end{equation}

- \textit{Second line.} Observe that $P_{1}^{l\perp}:\mathcal{H}_{T}^{\infty}\longrightarrow\left(\mathcal{H}_{T}^{\infty}\right)^{\perp}$.
The next component $V^{1\sslash}$ must be a solution of the heat
equation (in $t$ and $y$) 
\begin{equation}
\bigl\lbrace\, P_{y}V^{1\sslash}=F^{1\sslash}+P_{1}^{l\perp}V^{1\perp}\in(\mathcal{H}_{T}^{\infty})^{\sslash}\,,\qquad(V^{1\sslash})_{|_{t=0}}=V_{0}^{1\sslash}\in(\mathcal{H}_{T}^{\infty})^{\sslash}\,.\label{transVGs}
\end{equation}
 Obviously, there is a unique solution on $[0,T]$ of this initial
value problem. It does not depend on $\theta$. In other words, it
is such that $V(t,\cdot)\in(\mathcal{H}_{T}^{\infty})^{\sslash}$
for all $t\in[0,T]$.

\medskip{}

- \textit{Third line.} Since $T_{s}:\mathcal{H}_{T}^{\infty}\longrightarrow\left(\mathcal{H}_{T}^{\infty}\right)^{\perp}$
the component $V^{2\perp}$ can be obtained through the formula 
\begin{equation}
V^{2\perp}=\mu^{-1}\partial_{\theta\theta}^{-2}\bigl(F^{2\perp}+T_{s}V^{1\perp}+T_{s}V^{1\sslash})\bigr)\in\left(\mathcal{H}_{T}^{\infty}\right)^{\perp}\,.\label{VGs2}
\end{equation}

\medskip{}

- \textit{Fourth line.} We can use the same argument as in the second
line. It suffices to check that by definition $P_{2}^{l}:\mathcal{H}_{T}^{\infty}\longrightarrow\left(\mathcal{H}_{T}^{\infty}\right)^{\perp}$
and $Q_{2}^{l}:\mathcal{H}_{T}^{\infty}\longrightarrow\left(\mathcal{H}_{T}^{\infty}\right)^{\perp}$.
Then, there remains to solve 
\[
\bigl\lbrace\, P_{y}V^{2\sslash}=F^{2\sslash}+P_{2}^{l\perp}V^{1\perp}+P_{2}^{l\sslash}V^{1\sslash}+Q_{2}^{l\perp}V^{2\perp}\in(\mathcal{H}_{T}^{\infty})^{\sslash}\,,\qquad(V^{2\sslash})_{|_{t=0}}=V_{0}^{2\sslash}\in(\mathcal{H}_{T}^{\infty})^{\sslash}\,.
\]
 Note that the triangular structure of $\mathcal{A}$ is crucial in
this procedure. \\

To solve the system \eqref{ilfallaitlefairenlc} that is replacing
the operator $\mathcal{A}$ by $\widetilde{\mathcal{A}}$, the only
change in the above proof is at the level of the fourth line where
the supplementary source term $SP_{2}V^{1\perp}$ must be incorporated.
\end{proof}

\subsubsection{Analysis of the system (\ref{equtaujnl})}

\paragraph{Projection of the system (\ref{equtaujnl}).}

In this paragraph, we consider the parabolic system (\ref{equtaujnl})
which can be associated with some smooth initial data $v_{k}^{f}(0,\cdot)\in\mathcal{H}^{\infty}(\T\times\R)$.
Classical statements (see for instance \cite{MR892024}) say that
the corresponding Cauchy problem has a unique global solution~$v_{k}^{f}$
such that $v_{k}^{f}(t,\cdot)\in\mathcal{H}(\T\times\R)$ for all
$\tau\in\R_{+}$. The difficulty is the following. The variable $\tau$
is aimed to be replaced by $\varepsilon^{-2}\, t$ with $t$ fixed
and $\varepsilon\rightarrow0$ and, since the original equation (\ref{eqrefthetay})
contains nonlinearities, we cannot allow any (uncontrolled) growth
with respect to $\tau$. To get round this problem, we will instead
require a rapid decay when $\tau\rightarrow+\infty$ but this necessitates
$v_{k}^{f}(0,\cdot)$ to be selected conveniently.

\medskip{}

To see how to adjust $v_{k}^{f}(0,\cdot)$, we can interpret (\ref{equtaujnl})
in the form 
\begin{equation}
\mathcal{B}\, V_{k}^{f}:=\begin{pmatrix}P_{\theta} & 0 & 0 & 0\\
0 & \partial_{\tau} & 0 & 0\\
T_{s} & T_{s} & P_{\theta} & 0\\
T_{f} & 0 & 0 & \partial_{\tau}
\end{pmatrix}\begin{pmatrix}g_{1,k}^{nl\perp}\\
g_{1,k}^{nl\sslash}\\
g_{2,k}^{nl\perp}\\
g_{2,k}^{nl\sslash}
\end{pmatrix},\qquad V_{k}^{f}:=\,^{t}\Phi(v_{k}^{f})=\begin{pmatrix}v_{1,k}^{f\perp}\\
v_{1,k}^{f\sslash}\\
v_{2,k}^{f\perp}\\
v_{2,k}^{f\sslash}
\end{pmatrix}\label{ststBrajout}
\end{equation}
 where $T_{f}$ and $P_{\theta}$ are the operators defined by 
\[
T_{f}f:=\Pi(\partial_{\theta}h\, f)\,,\qquad P_{\theta}f:=\partial_{\tau}f-\mu\,\partial_{\theta\theta}f\,.
\]

\paragraph{Description of $g_{k}^{nl\perp}$.}

We decompose $g_{k}^{nl\perp}$ into is linear and quadratic part
$g_{k}^{nl}:=g_{k}^{l}+g_{k}^{qnl}$. The linear part $g_{k}^{l}$
is defined by 
\begin{equation}
\left\{ \begin{array}{l}
g_{1,k}^{l}:=(-h\,\partial_{y}v_{1,k-1}^{f}+\lambda\partial_{\theta\theta}v_{1,k-1}^{f})+(\mu\partial_{yy}v_{1,k-1}^{f}+\lambda\partial_{\theta y}v_{2,k-2}^{f})\,,\\
g_{2,k}^{l}:=-h\,\partial_{y}v_{2,k-1}^{f}+\left(\lambda\partial_{\theta y}v_{1,k-2}^{f}+\mu\partial_{yy}v_{2,k-2}^{f}\right)+\lambda\partial_{yy}v_{2,k-3}^{f}\,,
\end{array}\right.\label{sourcefastlinear}
\end{equation}
 whereas, the quadratic part of $g_{k}^{qnl}={}^{t}(g_{1,k}^{qnl},g_{2,k}^{qnl})$
is given by 
\begin{align*}
g_{1,k}^{qnl}:= & \sum_{(i,j)\in\mathcal{J}(M,k)}v_{1,i}^{f}\,\partial_{\theta}v_{1,j}^{f}+\sum_{(i,j)\in\mathcal{J}(M,k-1)}v_{2,i}^{f}\,\partial_{y}v_{1,j}^{f}\\
 & +\sum_{(i,j,l)\in\mathcal{I}(M,k)}\left(v_{1,i}^{f}\left(\partial_{t}\right)^{l}(\partial_{\theta}v_{1,j}^{s})(0)+\left(\partial_{t}\right)^{l}(v_{1,i}^{s})(0)\,\partial_{\theta}v_{1,j}^{f}\right)\frac{\tau^{l}}{l!}\\
 & +\sum_{(i,j,l)\in\mathcal{I}(M,k-1)}\left(v_{2,i}^{f}\left(\partial_{t}\right)^{l}(\partial_{y}v_{1,j}^{s})(0)+\left(\partial_{t}\right)^{l}(v_{2,i}^{s})(0)\,\partial_{y}v_{1,j}^{f}\right)\frac{\tau^{l}}{l!}\,,
\end{align*}
 
\begin{align*}
g_{2,k}^{qnl}:= & \sum_{(i,j)\in\mathcal{J}(M,k)}v_{1,i}^{f}\,\partial_{\theta}v_{2,j}^{f}+\sum_{(i,j)\in\mathcal{J}(M,k-1)}v_{2,i}^{f}\,\partial_{\theta}v_{2,j}^{f}\\
 & +\sum_{(i,j,l)\in\mathcal{I}(M,k)}\left(v_{1,i}^{f}\left(\partial_{t}\right)^{l}(\partial_{\theta}v_{2,j}^{s})(0)+\left(\partial_{t}\right)^{l}(v_{1,i}^{s})(0)\,\partial_{\theta}v_{2,j}^{f}\right)\frac{\tau^{l}}{l!}\\
 & +\sum_{(i,j,l)\in\mathcal{I}(M,k-1)}\left(v_{2,i}^{f}\left(\partial_{t}\right)^{l}(\partial_{y}v_{2,j}^{s})(0)+\left(\partial_{t}\right)^{l}(v_{2,i}^{s})(0)\,\partial_{y}v_{2,j}^{f}\right)\frac{\tau^{l}}{l!}\,.
\end{align*}
 \begin{lemma}{[}Description of $g_{k}^{nl}${]} \label{lemfastsource}
Assume that $M\geq2$. The functions $g_{k}^{l}$ and $g_{k}^{qnl}$
depend only on the $v_{j}^{f}$ and the $\partial_{t}^{l}v_{j}^{s}(0,\cdot)$
where $j\in\llbracket0,k-1\rrbracket$ and $l\in\bigl\llbracket0,\left\lfloor \frac{k}{2}\right\rfloor \bigr\rrbracket$.
They are respectively homogeneous linear and homogeneous quadratic
functions of their arguments. \end{lemma}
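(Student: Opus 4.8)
The plan is to proceed exactly as in the proof of Lemma~\ref{lemslowsourcebiss}: this is a purely structural bookkeeping statement, so the whole argument reduces to inspecting the closed formulas defining $g_k^l$ and $g_k^{qnl}$ and reading off which profiles can occur, by means of the definitions \eqref{mathcalJ(M,k)} and \eqref{mathcalI(M,k)} of the index sets $\mathcal{J}(M,k)$ and $\mathcal{I}(M,k)$, together with the standing assumption $M\geq 2$ and the conventions $v_k^s\equiv v_k^f\equiv 0$ for $k\in\{-3,-2,-1\}$. No induction and no estimate are needed here; this lemma is only the "support" counterpart, for the fast profiles, of the implication \eqref{implicationres}.

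First I would dispose of the linear part $g_k^l$. By \eqref{sourcefastlinear}, each summand of $g_{1,k}^l$ and $g_{2,k}^l$ is a differential operator in $(\theta,y)$ of order at most two applied to a single fast profile $v_{p,j}^f$ with $p\in\{1,2\}$ and $j\in\{k-1,k-2,k-3\}$; hence every index is $\leq k-1$, the summands with $j<0$ being zero by the above convention, and each summand is linear in the single profile it involves. Therefore $g_k^l$ depends only on $(v_0^f,\dots,v_{k-1}^f)$ and, as a sum of such terms, is a homogeneous linear function of these arguments. Note that, in accordance with the wording of the statement, $g_k^l$ happens not to involve the coefficients $(\partial_t)^l v_j^s(0,\cdot)$ at all.

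Next I would treat the quadratic part $g_k^{qnl}$. The expressions $g_{1,k}^{qnl}$ and $g_{2,k}^{qnl}$ are built from four families of sums: those over $\mathcal{J}(M,k)$ and $\mathcal{J}(M,k-1)$, whose terms are products $v_{p,i}^f\,\partial_{\star}v_{q,j}^f$ of two fast profiles ($\star\in\{\theta,y\}$), and those over $\mathcal{I}(M,k)$ and $\mathcal{I}(M,k-1)$, whose terms are, up to the fixed scalar factor $\tau^l/l!$, products of the type $v_{p,i}^f\,(\partial_t)^l(\partial_{\star}v_{q,j}^s)(0,\cdot)$ or $(\partial_t)^l(v_{p,i}^s)(0,\cdot)\,\partial_{\star}v_{q,j}^f$. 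If $(i,j)\in\mathcal{J}(M,\tilde k)$ with $\tilde k\in\{k-1,k\}$, then \eqref{mathcalJ(M,k)} gives $i+j=\tilde k-M\leq k-2$, so $i\leq k-1$ and $j\leq k-1$ (the set being empty, hence giving no contribution, when $\tilde k<M$). If $(i,j,l)\in\mathcal{I}(M,\tilde k)$ with $\tilde k\in\{k-1,k\}$, then \eqref{mathcalI(M,k)} gives $i+j+2l=\tilde k-M\leq k-2$, so $i\leq k-1$, $j\leq k-1$, and $2l\leq k-2$, i.e.\ $l\leq\lfloor k/2\rfloor$. Since $\partial_\theta$ or $\partial_y$ applied to $(\partial_t)^l v_{q,j}^s(0,\cdot)$ is determined by that datum, every object entering $g_k^{qnl}$ is of the form $v_j^f$ or $(\partial_t)^l v_j^s(0,\cdot)$ with $j\in\llbracket 0,k-1\rrbracket$ and $l\in\llbracket 0,\lfloor k/2\rfloor\rrbracket$, and each summand is a product of exactly two such objects; hence $g_k^{qnl}$ is a homogeneous quadratic function of its arguments, which is the claim.

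The only step requiring a little care is the bound $l\leq\lfloor k/2\rfloor$, which comes precisely from the coefficient $2$ in front of $l$ in the constraint defining $\mathcal{I}(M,k)$ combined with $M\geq 2$; apart from that, the lemma presents no real obstacle — it is a direct transcription of the defining formulas, and for small $k$ the conventions $v_k^s\equiv v_k^f\equiv 0$ ($k<0$) absorb all potentially "negative-index" terms.
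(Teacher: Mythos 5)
Your proof is correct and follows essentially the same route as the paper: one reads off from the definitions \eqref{mathcalJ(M,k)} and \eqref{mathcalI(M,k)} that, under $M\geq 2$, any $(i,j)\in\mathcal{J}(M,\tilde k)$ or $(i,j,l)\in\mathcal{I}(M,\tilde k)$ with $\tilde k\in\{k-1,k\}$ forces $i,j\leq k-1$ and $2l\leq k-2$, hence $l\leq\lfloor k/2\rfloor$, exactly the implications \eqref{implicationresidu}--\eqref{implicationIresidu} used in the paper. Your explicit treatment of the linear part $g_k^l$ via \eqref{sourcefastlinear} and the zero conventions for negative indices is a harmless elaboration of what the paper leaves implicit.
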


\begin{proof}[Proof of Lemma \ref{lemfastsource}] It suffices
to examine the various terms appearing in the sums involved by the
definition of $g_{k}^{qnl}$. 
\begin{description}
\item [{$\quad\ $-}] The sums based on the symbol $\mathcal{J}$ can be
dealt by observing that 
\begin{equation}
\bigl\lbrack\, M\geq2\,,\ \ \tilde{k}\in\{k-1,k\}\,,\ \ (i,j)\in\mathcal{J}(M,\tilde{k})\,\bigr\rbrack\ \ \Longrightarrow\ \ i\leq k-1\text{ and }j\leq k-1\,.\label{implicationresidu}
\end{equation}
 
\item [{$\quad\ $-}] The sums involving the symbol $\mathcal{I}$ are
of the form $\mathcal{I}(M,k)$ or $\mathcal{I}(M,k-1)$. Coming back
to the definition (\ref{mathcalI(M,k)}), we can easily infer that
\begin{equation}
\bigl\lbrack\, M\geq2\,,\ \ \tilde{k}\in\{k-1,k\}\,,\ \ (i,j,l)\in\mathcal{I}(M,\tilde{k})\,\bigr\rbrack\ \ \Longrightarrow\ \ i\leq k-1\text{ and }j\leq k-1\label{implicationIresidu}
\end{equation}
 as well as $l\leq\left\lfloor \frac{k}{2}\right\rfloor $. 
\end{description}
\end{proof}

Due to Lemma \ref{lemfastsource}, the expression $g_{k}^{nl}$ can
be viewed as a source term in system \eqref{ststBrajout}. In order
to guarantee the fast decaying criterion in $\tau$, we can proceed
as described below.

\begin{lemma}{[}Solving \ref{ststBrajout} in the case of a fast
decay when $\tau$ tends to $+\infty${]}\label{leminvBk} Select
functions 
\[
V_{0}^{\perp}={}^{t}(V_{0}^{1\perp},V_{0}^{2\perp})\in H^{\infty}(\T\times\R)^{2}\,,\qquad G={}^{t}(G^{1\perp},G^{1\sslash},G^{2\perp},G^{2\sslash})\in\left(\mathcal{E}_{\delta}^{\infty}\right)^{tot}\,,\qquad\delta\in\,]0,\mu[\,.
\]
 There is a unique expression $V_{0}^{\sslash}={}^{t}(V_{0}^{1\sslash},V_{0}^{2\sslash})\in(H^{\infty})^{\sslash}(\T\times\R;\R)^{2}$
which can be determined in function of $G$ through formulas (\ref{VGf1})
and (\ref{VGf2}) such that the Cauchy problem 
\begin{equation}
\bigl\{\,\mathcal{B}\, V=G\,,\qquad V={}^{t}(V^{1\perp},V^{1\sslash},V^{2\perp},V^{2\sslash})\,,\qquad V_{|_{\tau=0}}={}^{t}(V_{0}^{1\perp},V_{0}^{1\sslash},V_{0}^{2\perp},V_{0}^{2\sslash})\label{moddsupb}
\end{equation}
 has a global solution $V$ belonging to the space $\left(\mathcal{E}_{\delta}^{\infty}\right)^{tot}$.
\end{lemma}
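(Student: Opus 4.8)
The plan is to exploit the lower‑triangular structure of $\mathcal{B}$ and to solve the four rows of \eqref{moddsupb} one after the other, in the same spirit as the proof of Lemma~\ref{leminvAk}. The two ``perpendicular'' rows are heat equations of the form treated in Lemma~\ref{lemestsol}, whereas the two ``parallel'' rows are pure $\partial_\tau$‑equations; the only genuinely new point is that, for the latter, membership of $V$ in $(\mathcal{E}_\delta^\infty)^{tot}$ leaves no freedom in the choice of the corresponding initial mean, and this is exactly what pins down $V_0^\sslash$.

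\emph{Perpendicular rows.} The first row reads $P_\theta V^{1\perp}=G^{1\perp}$, $V^{1\perp}_{|\tau=0}=V_0^{1\perp}$. Since $\Pi V_0^{1\perp}=0$ and $\Pi G^{1\perp}=0$ (the marks $\perp$ signalling vanishing $\theta$‑mean), Lemma~\ref{lemestsol} furnishes a unique $V^{1\perp}\in\mathcal{E}_\delta^\infty$. Assuming $V^{1\sslash}$ already constructed (see below), the third row reads $P_\theta V^{2\perp}=G^{2\perp}-T_s V^{1\perp}-T_s V^{1\sslash}$, $V^{2\perp}_{|\tau=0}=V_0^{2\perp}$; since $T_s=(I-\Pi)(\partial_\theta h\,\cdot\,)$ takes values in the mean‑zero space and, being a $\tau$‑independent bounded operator with smooth coefficients, preserves $\mathcal{E}_\delta^\infty$ with its decay rate, the right‑hand side has vanishing $\theta$‑mean and lies in $(\mathcal{E}_\delta^\infty)^\perp$, so Lemma~\ref{lemestsol} again gives a unique $V^{2\perp}\in\mathcal{E}_\delta^\infty$.

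\emph{Parallel rows.} The second row is $\partial_\tau V^{1\sslash}=G^{1\sslash}$, so every solution has the form $V^{1\sslash}(\tau)=V_0^{1\sslash}+\int_0^\tau G^{1\sslash}(\sigma,\cdot)\,d\sigma$; because $G^{1\sslash}\in(\mathcal{E}_\delta^\infty)^\sslash$, the Bochner integral $\int_0^{+\infty}G^{1\sslash}(\sigma,\cdot)\,d\sigma$ converges in $(H^\infty)^\sslash$, and the requirement $V^{1\sslash}(\tau)\to 0$ (forced by $V^{1\sslash}\in\mathcal{E}_\delta^\infty$) imposes
\begin{equation}
V_0^{1\sslash}:=-\int_0^{+\infty}G^{1\sslash}(\sigma,\cdot)\,d\sigma\,,\qquad V^{1\sslash}(\tau)=-\int_\tau^{+\infty}G^{1\sslash}(\sigma,\cdot)\,d\sigma\,,\label{VGf1}
\end{equation}
which indeed defines an element of $\mathcal{E}_\delta^\infty$ upon integrating the bound $\|G^{1\sslash}(\sigma,\cdot)\|_{H^s}\le C\,e^{-\delta\sigma}$. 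With $V^{1\perp}$ known, the fourth row is $\partial_\tau V^{2\sslash}=G^{2\sslash}-T_f V^{1\perp}$, with $T_f=\Pi(\partial_\theta h\,\cdot\,)$; its right‑hand side is $\theta$‑independent and belongs to $(\mathcal{E}_\delta^\infty)^\sslash$, so the same reasoning yields
\begin{equation}
V_0^{2\sslash}:=-\int_0^{+\infty}\bigl(G^{2\sslash}-T_f V^{1\perp}\bigr)(\sigma,\cdot)\,d\sigma\,,\qquad V^{2\sslash}(\tau)=-\int_\tau^{+\infty}\bigl(G^{2\sslash}-T_f V^{1\perp}\bigr)(\sigma,\cdot)\,d\sigma\,,\label{VGf2}
\end{equation}
again in $\mathcal{E}_\delta^\infty$.

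Collecting the four components gives a solution $V\in(\mathcal{E}_\delta^\infty)^{tot}$ of \eqref{moddsupb} with $V_0^\sslash$ prescribed by \eqref{VGf1}--\eqref{VGf2}. Uniqueness is obtained row by row: Lemma~\ref{lemestsol} gives uniqueness of $V^{1\perp}$ and of $V^{2\perp}$ once the data and sources are fixed, while for the parallel rows the integrated form shows that belonging to $\mathcal{E}_\delta^\infty$ forces both the displayed values of the initial means and the displayed expressions for the solutions. I do not expect a real obstacle here; the only thing to watch is the routine bookkeeping that each successive source lands in the correct invariant subspace — $(\mathcal{E}_\delta^\infty)^\perp$ for the $P_\theta$ rows, $(\mathcal{E}_\delta^\infty)^\sslash$ for the $\partial_\tau$ rows — with the decay rate $\delta\in\,]0,\mu[$ preserved, which is immediate because $T_s$ and $T_f$ are $\tau$‑independent and smooth in $\theta$.
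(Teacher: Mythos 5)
Your proof is correct and follows essentially the same line-by-line strategy as the paper's: Lemma \ref{lemestsol} for the two $P_{\theta}$-rows, and integration back from $\tau=+\infty$ for the two $\partial_{\tau}$-rows, with the fast-decay requirement forcing the values of $V_{0}^{1\sslash}$ and $V_{0}^{2\sslash}$ and giving uniqueness row by row. The only (harmless) discrepancy is that your fourth-row formula uses $T_{f}V^{1\perp}$, whereas the paper's (\ref{VGf2}) is written with $T_{f}G^{1\perp}$; your version is the one dictated by the fourth line of $\mathcal{B}\,V=G$.
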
 \begin{proof}[Proof of Lemma \ref{leminvBk}] The
strategy is again to argue line after line.

\medskip{}

- \textit{First line.} Just apply the end of Lemma \ref{lemestsol}.

\medskip{}

- \textit{Second line.} It suffices to take 
\begin{equation}
V_{0}^{1\sslash}(.)\,:=\,-\int_{0}^{+\infty}\, G^{1\sslash}(s,.)\ ds\in(H^{\infty})^{\sslash}\label{VGf1}
\end{equation}
 in order to recover after integration that $V^{1\sslash}\in(\mathcal{E}_{\delta}^{\infty})^{\sslash}$
with 
\[
V^{1\sslash}(\tau,.)=V_{0}^{1\sslash}(.)+\int_{0}^{\tau}\, G^{1\sslash}(s,.)\ ds=-\int_{\tau}^{+\infty}\, G^{1\sslash}(s,.)\ ds\,.
\]

\smallskip{}

- \textit{Third line.} For all $m\in\N$, the operator $T_{s}:H^{m}\rightarrow(H^{m})^{\perp}$
is continuous. It follows that $T_{s}$ sends the functional space
$\mathcal{E}_{\delta}^{\infty}$ into $(\mathcal{E}_{\delta}^{\infty})^{\perp}$.
Concerning $V^{2\perp}$, the argument is again Lemma~\ref{lemestsol}
applied this time with the source term $G^{2\perp}-T_{s}V^{1\perp}-T_{s}V^{1\sslash}\in(\mathcal{E}_{\delta}^{\infty})^{\perp}$.

\medskip{}

- \textit{Fourth line.} For all $m\in\N$, the operator $T_{f}:H^{m}\rightarrow(H^{m})^{\sslash}$
is continuous. Therefore, we know that $T_{f}:\mathcal{E}_{\delta}^{\infty}\rightarrow(\mathcal{E}_{\delta}^{\infty})^{\sslash}$.
With this in mind, it suffices to select 
\begin{equation}
V_{0}^{2\sslash}(.)\,:=\,-\int_{0}^{+\infty}\,(G^{2\sslash}-T_{f}G^{1\perp})(s,.)\ ds\in(H^{\infty})^{\sslash}\,.\label{VGf2}
\end{equation}
 \end{proof}

\subsubsection{Proof of Proposition \ref{propprofilenonlinear}}

The matter is to show by induction on $K\in\llbracket0,N+1\rrbracket$
that the property given at the level of line (\ref{recurezrnl}) is
verified.

\medskip{}

\paragraph{Verification of $\mathcal{HN}(0)$.}

By convention, we start with $v_{k}^{s}\equiv0$ and $v_{k}^{f}\equiv0$
for $k\in\{-3,-2,-1\}$. Applying Lemmas \ref{lemslowsource}, \ref{lemslowsourcebiss}
and \ref{lemfastsource} with $k=0$ and exploiting the given (linear
or quadratic) homogeneity properties, we find that $f_{0}^{nl}\equiv0$
and $g_{0}^{nl}\equiv0$.

\medskip{}

Recall that $V_{0}^{s}:={}^{t}\Phi\, v_{0}^{s}$ and $V_{0}^{f}:={}^{t}\Phi\, v_{0}^{f}$.
The matter here is to show the existence of functions $V_{0}^{s}\in(\mathcal{H}_{T}^{\infty})^{tot}$
and $V_{0}^{f}\in(\mathcal{E}_{\delta}^{\infty})^{tot}$ such that:

\medskip{}

- For $M\geq3$: 
\begin{equation}
\mathcal{A}\, V_{0}^{s}=0\,,\qquad\mathcal{B}\, V_{0}^{f}=0\,,\qquad(V_{0}^{s}+V_{0}^{f})(0,\cdot)={}^{t}\Phi\, v_{0}^{0}(\cdot)\,.\label{eqABNL03}
\end{equation}
 - For $M=2$: 
\begin{equation}
\widetilde{\mathcal{A}}\, V_{0}^{s}=0\,,\qquad\mathcal{B}\, V_{0}^{f}=0\,,\qquad(V_{0}^{s}+V_{0}^{f})(0,\cdot)={}^{t}\Phi\, v_{0}^{0}(\cdot)\,.\label{eqABNL02}
\end{equation}
 By construction, the two first lines of (\ref{eqABNL03}) and (\ref{eqABNL02})
amount to the same thing as $\mu\,\partial_{\theta\theta}v_{1,0}^{s\perp}\equiv0$.
We recover here (\ref{fondacriterion}). From (\ref{eliminatevs0}),
we can deduce that $\widetilde{\mathcal{A}}\, V_{0}^{s}\equiv{\mathcal{A}}\, V_{0}^{s}$.
Therefore, the discussion concerning (\ref{eqABNL02}) is the same
as the one related to (\ref{eqABNL03}).\\

The above initial condition can be decomposed into 
\begin{align}
^{t}(V_{0}^{s1\perp},V_{0}^{s2\perp})(0,\cdot)+{}^{t}(V_{0}^{f1\perp},V_{0}^{f2\perp})(0,\cdot) & =(I-\Pi)v_{0}^{0}(\cdot)\,,\label{iniy-pi}\\
^{t}(V_{0}^{s1\sslash},V_{0}^{s2\sslash})(0,\cdot)+{}^{t}(V_{0}^{f1\sslash},V_{0}^{f2\sslash})(0,\cdot) & =\Pi v_{0}^{0}(\cdot)\,.\label{iniypi}
\end{align}
 In view of (\ref{VGf1}) and (\ref{VGf2}), we must have $V_{0}^{f1\sslash}(0,\cdot)\equiv V_{0}^{f2\sslash}(0,\cdot)\equiv0$
whatever $V_{0}^{f\perp}(0,\cdot)$ is. It follows that we can identify
$V_{0}^{s1\sslash}(0,\cdot)$ and $V_{0}^{s2\sslash}(0,\cdot)$ through
(\ref{iniypi}). Now, knowing what is $V_{0}^{s1\sslash}(0,\cdot)$,
formulas (\ref{VGs1}) and (\ref{VGs2}) give access to $V_{0}^{s1\perp}(0,\cdot)$
and $V_{0}^{s2\perp}(0,\cdot)$. There remains to use the condition
(\ref{iniy-pi}) in order to further extract $V_{0}^{f1\perp}(0,\cdot)$
and $V_{0}^{f2\perp}(0,\cdot)$.

\medskip{}

We apply Lemma \ref{leminvAk} and Lemma \ref{leminvBk} in the case
of the initial data $V_{0}^{s\sslash}(0,\cdot)$ and $V_{0}^{f\perp}(0,\cdot)$
which have just been computed. Note that, due to the preceding construction,
there is no contradiction between the expressions $V_{0}^{s\perp}(0,\cdot)$
and $V_{0}^{f\sslash}(0,\cdot)$ thus obtained and the compatibility
conditions required at the level of (\ref{iniy-pi}) and (\ref{iniypi}).
By this way, we can recover functions $V_{0}^{s}\in(\mathcal{H}_{T}^{\infty})^{tot}$
and $V_{0}^{f}\in(\mathcal{E}_{\delta}^{\infty})^{tot}$. Then, to
conclude, it suffices to come back to $v_{0}^{s}\in\mathcal{H}_{T}^{\infty}$
and $v_{0}^{f}\in\mathcal{E}_{\delta}^{\infty}$ through the action
of $\Phi^{-1}$.

\bigskip{}

\paragraph{Assume that the condition $\mathcal{HN}(K)$ is true for some $K\in\llbracket0,N\rrbracket$.} Since the criterion (\ref{fondacriterion}) is satisfied, the problem
can be interpreted as before. The matter is to find two functions
$V_{K+1}^{s}:={}^{t}\Phi\, v_{k}^{s}\in(\mathcal{H}_{T}^{\infty})^{tot}$
and $V_{K+1}^{f}:={}^{t}\Phi\, v_{k}^{f}\in\left(\mathcal{E}_{\delta}^{\infty}\right)^{tot}$
such that:

\medskip{}

- For $M\geq3\,$: 
\begin{align*}
\mathcal{A}\, V_{K+1}^{s}={}^{t}\Phi\, f_{k}^{nl},\qquad\mathcal{B}\, V_{K+1}^{f} & ={}^{t}\Phi\, g_{k}^{nl},\qquad(V_{K+1}^{s}+V_{K+1}^{f})(0,\cdot)={}^{t}\Phi\, v_{K+1}^{0}(\cdot)\,.
\end{align*}
 - For $M=2\,$: 
\begin{align*}
\widetilde{\mathcal{A}}\, V_{K+1}^{s}={}^{t}\Phi f_{k}^{nl}\,,\qquad\mathcal{B}\, V_{K+1}^{f} & ={}^{t}\Phi\, g_{k}^{nl},\qquad(V_{K+1}^{s}+V_{K+1}^{f}\ )(0,\cdot)={}^{t}\Phi\, v_{K+1}^{0}(\cdot)\,.
\end{align*}
 The induction hypothesis applied with the index $K$ together with
Lemmas \ref{lemslowsource}, \ref{lemslowsourcebiss} and \ref{lemfastsource}
say that the functions $^{t}\Phi\, f_{k}^{nl}$ and $^{t}\Phi\, g_{k}^{nl}$
are known source terms with the expected $(\mathcal{H}_{T}^{\infty})^{tot}$
and $(\mathcal{E}_{\delta}^{\infty})^{tot}$ regularities. \\

-When $M\geq3$, the initial condition can be decomposed into 
\begin{align}
^{t}(V_{K+1}^{s1\perp},V_{K+1}^{s2\perp})(0,\cdot)+{}^{t}(V_{K+1}^{f1\perp},V_{K+1}^{f2\perp})(0,\cdot) & =(I-\Pi)v_{K+1}^{0}(\cdot)\,,\label{iniyk-pi}\\
^{t}(V_{K+1}^{s1\sslash},V_{K+1}^{s2\sslash})(0,\cdot)+{}^{t}(V_{K+1}^{f1\sslash},V_{K+1}^{f2\sslash})(0,\cdot) & =\Pi v_{K+1}^{0}(\cdot)\,.\label{iniykpi}
\end{align}
 In view of (\ref{VGf1}) and (\ref{VGf2}), as clearly indicated
in the statement of Lemma \ref{leminvBk}, the expression $V_{K+1}^{f\sslash}(0,\cdot)$
depends only on $^{t}\Phi g_{K+1}^{nl}$. We can determine $V_{K+1}^{s\sslash}(0,\cdot)$
through (\ref{iniykpi}). Knowing what is $V_{K+1}^{s\sslash}(0,\cdot)$,
formulas (\ref{VGs1}) and (\ref{VGs2}) give access to $V_{K+1}^{s\perp}(0,\cdot)$.
There remains to use the condition (\ref{iniyk-pi}) in order to deduce
$V_{K+1}^{f1\perp}(0,\cdot)$ and $V_{K+1}^{f2\perp}(0,\cdot)$. Remark
that the initial data $V_{K+1}^{s}(0,\cdot)$ and $V_{K+1}^{f}(0,\cdot)$
thus obtained inherit the expected $\mathcal{H}^{\infty}(\T\times\R)$
smoothness.

\medskip{}

Again, we apply Lemmas \ref{leminvAk} and \ref{leminvBk} in the
case of the initial data $V_{K+1}^{s\sslash}(0,\cdot)$ and $V_{K+1}^{f\perp}(0,\cdot)$
which have just been computed. As before, the preceding choices concerning
$V_{K+1}^{f\sslash}(0,\cdot)$ and $V_{K+1}^{s\perp}(0,\cdot)$ are
sufficient to guarantee (\ref{iniykpi}) and (\ref{iniyk-pi}). We
find that $V_{K+1}^{s}\in(\mathcal{H}_{T}^{\infty})^{tot}$ and $V_{K+1}^{f}\in(\mathcal{E}_{\delta}^{\infty})^{tot}$.
To conclude, it suffices to come back to $v_{K+1}^{s}\in\mathcal{H}_{T}^{\infty}$
and $v_{K+1}^{f}\in\mathcal{E}_{\delta}^{\infty}$ through $\Phi^{-1}$.\\

-When $M=2$, the same types of arguments prevail. This ends the induction.
\vspace{-0.3cm}
 \hfill{}$\square$\\

\bigskip{}

From the preceding construction, we can also deduce the following
information. \begin{corollary} {[}Nonlinear homogenization{]}\label{corhomogeneisationnonlinear}
For all $k\in\llbracket0,N+1\rrbracket$, the expression $\Pi v_{k}^{s}$
can be determined through the following parabolic equation, 
\begin{align}
\partial_{t}\Pi v_{k}^{s}-\left(\mu+\frac{1}{\mu}\Pi\left((\partial_{\theta}^{-1}h)^{2}\right)\right)\partial_{yy}\Pi v_{k}^{s}=S_{k}^{nl}\,,\label{eqcorhomogenizationnl}
\end{align}
 where the source term $S_{k}^{nl}:={}^{t}(S_{1,k}^{nl},S_{2,k}^{nl})$
depends on the index $j$ with $j<k$. This fact may be formulated
by writing $^{t}(S_{1,k}^{nl},S_{2,k}^{nl})={}^{t}(f_{1,k}^{nl},f_{2,k}^{nl})(v_{0}^{s},\ldots,v_{k-1}^{s})$.
\end{corollary}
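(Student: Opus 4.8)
The plan is to derive \eqref{eqcorhomogenizationnl} directly from Lemma~\ref{lemhomogeneisationnonlinear} by substituting, into the right-hand side of \eqref{eqlemnonhomogenizationl}, the explicit expressions for the ``transverse'' unknowns $v_{1,k}^{s\perp}:=(I-\Pi)v_{1,k}^{s}$, $v_{2,k}^{s\perp}:=(I-\Pi)v_{2,k}^{s}$ and for $\Pi v_{1,k}^{s}$ that come out of the triangular system \eqref{ilfallaitlefaire} (or \eqref{ilfallaitlefairenlc} when $M=2$) through Lemma~\ref{leminvAk}, and then checking that the resulting source involves only the profiles of index strictly below $k$. Concretely, from the first and third lines of \eqref{ilfallaitlefaire} one has
\[
v_{1,k}^{s\perp}=\mu^{-1}\partial_{\theta\theta}^{-2}f_{1,k}^{nl\perp},\qquad
v_{2,k}^{s\perp}=\mu^{-1}\partial_{\theta\theta}^{-2}\bigl(f_{2,k}^{nl\perp}+T_{s}v_{1,k}^{s\perp}+T_{s}\,\Pi v_{1,k}^{s}\bigr),
\]
and by Lemma~\ref{lemslowsourcebiss} the source $f_{k}^{nl\perp}=f_{k}^{l\perp}+f_{k}^{qnl\perp}$ is a homogeneous linear plus homogeneous quadratic function of $v_{0}^{s},\ldots,v_{k-1}^{s}$; hence so is $v_{1,k}^{s\perp}$.

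Next I would read the first scalar equation of \eqref{eqlemnonhomogenizationl}, which is $P_{y}\,\Pi v_{1,k}^{s}=S_{1,k}^{l\sslash}+S_{1,k}^{nl\sslash}+P_{1}^{l\perp}v_{1,k}^{s\perp}$. By Lemma~\ref{lemslowsource}(i)--(ii), $S_{1,k}^{l\sslash}+S_{1,k}^{nl\sslash}$ depends only on $v_{0}^{s},\ldots,v_{k-1}^{s}$ when $M\ge3$; when $M=2$ the only extra contribution would be $SP_{1}(v_{0}^{s})\,v_{k}^{s}$, which \emph{vanishes identically} by the first component of \eqref{eqSPK2bis} --- this is exactly where the transparency condition \eqref{fondacriterion} enters --- so the conclusion persists. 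Together with the formula for $v_{1,k}^{s\perp}$, this identifies the first component of the source as a homogeneous linear plus quadratic function of $v_{0}^{s},\ldots,v_{k-1}^{s}$; solving this parabolic Cauchy problem (with the trace fixed in Proposition~\ref{propprofilenonlinear} via \eqref{iniykpi}) then determines $\Pi v_{1,k}^{s}$, and hence also $v_{2,k}^{s\perp}$. For the second scalar equation, $P_{y}\,\Pi v_{2,k}^{s}=S_{2,k}^{l\sslash}+S_{2,k}^{nl\sslash}+P_{2}^{l\perp}v_{1,k}^{s\perp}+P_{2}^{l\sslash}\Pi v_{1,k}^{s}+Q_{2}^{l\perp}v_{2,k}^{s\perp}$, the same bookkeeping applies: when $M\ge3$, Lemma~\ref{lemslowsource}(ii) gives $S_{2,k}^{nl\sslash}=f_{2,k}^{qnl\sslash}(v_{0}^{s},\ldots,v_{k-1}^{s})$ and every remaining term is a fixed linear operator applied to an object already shown to be a function of $v_{0}^{s},\ldots,v_{k-1}^{s}$; when $M=2$ the new piece $SP_{2}(v_{0}^{s})\,v_{k}^{s}=-\,\Pi(v_{1,k}^{s\perp}\,\partial_{\theta}v_{2,0}^{s})+\mu^{-1}v_{1,0}^{s\sslash}\,\Pi(h\,v_{1,k}^{s\perp})$ involves only $v_{1,k}^{s\perp}$ and $v_{0}^{s}$, hence again only profiles of index $<k$. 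Collecting the two right-hand sides into $S_{k}^{nl}={}^{t}(S_{1,k}^{nl},S_{2,k}^{nl})$ yields \eqref{eqcorhomogenizationnl}; and since composing the linear operators $\mu^{-1}\partial_{\theta\theta}^{-2}$, $T_{s}$, $P_{1}^{l\perp}$, $P_{2}^{l\perp}$, $P_{2}^{l\sslash}$, $Q_{2}^{l\perp}$ with the maps of Lemmas~\ref{lemslowsource}--\ref{lemslowsourcebiss} preserves the linear/quadratic homogeneity, one may write $S_{k}^{nl}={}^{t}(f_{1,k}^{nl},f_{2,k}^{nl})(v_{0}^{s},\ldots,v_{k-1}^{s})$.

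The one delicate point, and hence the main obstacle, is the critical case $M=2$: there $S_{k}^{nl\sslash}$ genuinely carries a term $SP(v_{0}^{s})\,v_{k}^{s}$ that a priori couples back to the unknown at order $k$. The resolution rests entirely on the transparency identity \eqref{fondacriterion} ($v_{1,0}^{s\perp}\equiv0$), which forces $SP_{1}\equiv0$ and collapses $SP_{2}$ to the zeroth-order operator \eqref{eqSPK2bis} acting only on $v_{1,k}^{s\perp}$; and $v_{1,k}^{s\perp}$ is in turn produced by the purely elliptic first line of \eqref{ilfallaitlefairenlc} in terms of $v_{0}^{s},\ldots,v_{k-1}^{s}$ alone. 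One must also keep track of the internal ordering $v_{1,k}^{s\perp}\to\Pi v_{1,k}^{s}\to v_{2,k}^{s\perp}\to\Pi v_{2,k}^{s}$ dictated by the triangular structure of $\mathcal{A}$ and $\widetilde{\mathcal{A}}$, so that each substitution only uses quantities already determined and no genuine circularity arises; and, strictly speaking, the traces serving as initial data for $\Pi v_{1,k}^{s}$ also bring in the fast profiles $v_{j}^{f}$ and the Taylor data $\partial_{t}^{l}v_{j}^{s}(0,\cdot)$ with $j\le k-1$, which are equally known at that stage of the induction $\mathcal{HN}$.
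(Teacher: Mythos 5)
Your argument is correct and is essentially the paper's own: the corollary is stated there as a direct consequence of the preceding construction, i.e.\ Lemma \ref{lemhomogeneisationnonlinear} combined with Lemmas \ref{lemslowsource} and \ref{lemslowsourcebiss}, the triangular line-by-line resolution of Lemma \ref{leminvAk} giving $v_{1,k}^{s\perp}$, $\Pi v_{1,k}^{s}$, $v_{2,k}^{s\perp}$ in that order, and the transparency relations \eqref{fondacriterion}, \eqref{eqSPK2bis}, \eqref{eliminatevs0} for the critical case $M=2$, which is exactly what you unwind. The only cosmetic slip is attributing the vanishing of $SP_{1}$ to \eqref{eqSPK2bis} (which defines $SP_{2}$); it in fact follows from \eqref{fondacriterion} applied to \eqref{eqSPK}, as your argument in substance uses.
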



\subsubsection{Approximated solutions}

\label{concerningsolutions} 

In this subsection, we prove estimate \eqref{eqapproxsolv}. We assume
that $m\geq2$ so that $H^{m}(\R^{2})$ is an algebra.\\

We explicitly compute the action of the operator $\mathcal{L}^{a}$
on the approximated solution $v_{\varepsilon}^{a}$ built in the previous
subsection and estimate the remainder $R_{\varepsilon}:=\mathcal{L}^{a}(\varepsilon,v_{\varepsilon}^{a})$
in $H^{m}$-norm.

To this ends, we justify that the slow profiles $v_{\varepsilon}^{as}$
and the fast profile $v_{\varepsilon}^{af}$ are good approximation
of operator $\mathcal{L}^{as}$ and $\mathcal{L}^{af}$.

One aspect of the proof is to compute the difference between $\mathcal{L}^{af}$and
$\mathcal{L}^{aft}$ given by some Taylor formula. To control it,
we provide the following Lemma.

\begin{lemma}\label{lemexpdecay}Let $m\geq2$ be an integer and
$\delta\in]0,\mu[$. Let $f\in\mathcal{E}_{\delta}^{m}(\T\times\R)$,
$g\in\mathcal{H}_{T}^{m,0}(\T\times\R)$. On the strip $[0,T]$, consider
the function $h_{exp}^{\varepsilon}(t,\cdot):=f\left(\varepsilon^{-2}t,\cdot\right)\int_{0}^{t}u^{N}g(u,\cdot)du\,.$
Then the family $\left\{ \varepsilon^{-(N+1)}h_{exp}^{\varepsilon}\right\} _{\varepsilon}$
is bounded in $\mathcal{H}_{T}^{m,0}(\T\times\R)$, \emph{i.e.}: 
\[
\sup_{\varepsilon\in]0,1]}\sup_{t\in[0,T]}\left\Vert \varepsilon^{-(N+1)}h_{exp}^{\varepsilon}(t,\cdot)\right\Vert _{H^{m}(\T\times\R)}<\,+\infty.
\]
 \end{lemma}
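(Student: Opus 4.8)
The plan is to estimate $\|h_{exp}^{\varepsilon}(t,\cdot)\|_{H^{m}}$ pointwise in $t\in[0,T]$ as a product of an exponentially small factor coming from $f$ and a polynomially growing factor coming from the primitive $\int_{0}^{t}u^{N}g(u,\cdot)\,du$, and then to check that multiplication by $\varepsilon^{-(N+1)}$ is exactly absorbed by the exponential. Since $m\geq2$, the space $H^{m}(\T\times\R)$ is a Banach algebra, so there is $C_{alg}$ with $\|ab\|_{H^{m}}\leq C_{alg}\|a\|_{H^{m}}\|b\|_{H^{m}}$, whence
\[
\|h_{exp}^{\varepsilon}(t,\cdot)\|_{H^{m}}\;\leq\;C_{alg}\,\|f(\varepsilon^{-2}t,\cdot)\|_{H^{m}}\;\Bigl\|\int_{0}^{t}u^{N}g(u,\cdot)\,du\Bigr\|_{H^{m}}.
\]

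Next I would bound each factor separately. For the primitive, Minkowski's integral inequality together with $g\in\mathcal{H}_{T}^{m,0}$ gives
\[
\Bigl\|\int_{0}^{t}u^{N}g(u,\cdot)\,du\Bigr\|_{H^{m}}\;\leq\;\int_{0}^{t}u^{N}\,\|g(u,\cdot)\|_{H^{m}}\,du\;\leq\;\frac{t^{N+1}}{N+1}\,\|g\|_{\mathcal{H}_{T}^{m,0}}.
\]
For the other factor, the definition of $\mathcal{E}_{\delta}^{m}$ is precisely that $\|f(\tau,\cdot)\|_{H^{m}}\leq\|f\|_{\mathcal{E}_{\delta}^{m}}e^{-\delta\tau}$ for every $\tau\geq0$, so with $\tau=\varepsilon^{-2}t$ one obtains $\|f(\varepsilon^{-2}t,\cdot)\|_{H^{m}}\leq\|f\|_{\mathcal{E}_{\delta}^{m}}e^{-\delta t/\varepsilon^{2}}$.

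Combining these and multiplying by $\varepsilon^{-(N+1)}$, after the substitution $s:=t/\varepsilon^{2}\geq0$ (so that $\varepsilon^{-(N+1)}t^{N+1}=\varepsilon^{N+1}s^{N+1}$ and $e^{-\delta t/\varepsilon^{2}}=e^{-\delta s}$) I get
\[
\varepsilon^{-(N+1)}\,\|h_{exp}^{\varepsilon}(t,\cdot)\|_{H^{m}}\;\leq\;\frac{C_{alg}\,\|f\|_{\mathcal{E}_{\delta}^{m}}\,\|g\|_{\mathcal{H}_{T}^{m,0}}}{N+1}\;\varepsilon^{N+1}\,s^{N+1}e^{-\delta s}\;\leq\;\frac{C_{alg}\,\|f\|_{\mathcal{E}_{\delta}^{m}}\,\|g\|_{\mathcal{H}_{T}^{m,0}}}{N+1}\,\sup_{s\geq0}\bigl(s^{N+1}e^{-\delta s}\bigr),
\]
where I used $\varepsilon\in\,]0,1]$. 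Since $s\mapsto s^{N+1}e^{-\delta s}$ is bounded on $\R_{+}$, the right-hand side is a finite constant depending only on $N,\delta,m,T$ and the norms of $f$ and $g$; taking $\sup_{t\in[0,T]}\sup_{\varepsilon\in\,]0,1]}$ then yields the claim.

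I do not expect a genuine obstacle here: the argument is a direct combination of the algebra property of $H^{m}$ (for $m\geq2$), Minkowski's inequality, and the elementary boundedness of $s^{N+1}e^{-\delta s}$ on $\R_{+}$. The only point meriting a little care is the power counting — one should notice that the singular weight $\varepsilon^{-(N+1)}$ is not merely tamed but actually over-compensated: the factor $t^{N+1}\leq(\varepsilon^{2}s)^{N+1}$ produces $\varepsilon^{2(N+1)}$, leaving a residual $\varepsilon^{N+1}\leq1$. This confirms the estimate is comfortably non-borderline, which is exactly what makes it usable (with room to spare) when controlling the difference between $\mathcal{L}^{af}$ and $\mathcal{L}^{aft}$ through the Taylor remainder.
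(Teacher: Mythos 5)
Your proof is correct and follows essentially the same route as the paper, which simply invokes a Gagliardo--Nirenberg (product) estimate: your algebra property of $H^{m}(\T\times\R)$ for $m\geq2$ is the same tool, combined with Minkowski's inequality, the exponential decay from $f\in\mathcal{E}_{\delta}^{m}$, and the boundedness of $s\mapsto s^{N+1}e^{-\delta s}$ on $\R_{+}$. The power counting showing $\varepsilon^{-(N+1)}t^{N+1}=\varepsilon^{N+1}s^{N+1}\leq s^{N+1}$ is exactly the mechanism the paper relies on.
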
 The proof is obvious using some Gagliardo-Niremberg's
estimate.

\begin{proof}[Proof of Proposition \ref{eqapproxsolv}] First,
we decompose the action of $\mathcal{L}^{a}$ on $v_{\varepsilon}^{a}$
into 
\begin{equation}
\mathcal{L}^{a}(\varepsilon,v_{\varepsilon}^{a}(t,\cdot))=\mathcal{L}^{as}(\varepsilon,v_{\varepsilon}^{as}(t,\cdot))+\mathcal{L}^{af}(\varepsilon,v_{\varepsilon}^{as}(t,\cdot),v_{\varepsilon}^{af}(t/\varepsilon^{2},\cdot)),\label{eqapproxav}
\end{equation}
 where the operators $\mathcal{L}^{as}$ and $\mathcal{L}^{af}$ are
defined in \eqref{eqlas} and \eqref{eqlaf}.\\

$\circ$ Writing $v_{\varepsilon}^{as}$ as the sum $v_{\varepsilon}^{as}=\sum_{k=0}^{N+1}\varepsilon^{k}v_{k}^{s}$
and using the cascade of equations \eqref{equtjnl}, we obtain: 
\begin{align}\label{eqRs}
\mathcal{L}^{as}(\varepsilon,v_{\varepsilon}^{as}): & =\sum_{k=N}^{N+1}\varepsilon^{k}\partial_{t}v_{k}^{s}+\varepsilon^{N}\partial_{y}v_{N+1}^{s}\nonumber \\
 & +\sum_{k=N}^{2(N+1)+(M-2)}\varepsilon^{k}\sum_{(i,j)\in\mathcal{J}(M,k+2)}v_{1,i}^{s}\,\partial_{\theta}v_{j}^{s}+\sum_{k=N}^{2(N+1)+(M-1)}\varepsilon^{k}\sum_{(i,j)\in\mathcal{J}(M,k+1)}v_{2,i}^{s}\partial_{y}v_{j}^{s}\nonumber \\
 & -\begin{pmatrix}{\displaystyle \mu\,\sum_{k=N}^{N+1}\varepsilon^{k}\partial_{\theta\theta}v_{1,k}^{s}+\lambda\,\varepsilon^{N}\partial_{\theta\theta}v_{1,N+1}^{s}+\lambda\,\sum_{k=N}^{N+1}\varepsilon^{k}\partial_{\theta y}v_{2,k}^{s}}\\
{\displaystyle \mu\,\sum_{k=N}^{N+1}\varepsilon^{k}\partial_{\theta\theta}v_{2,k}^{s}+\lambda\,\sum_{k=N}^{N+1}\varepsilon^{k}\partial_{\theta y}v_{1,k}^{s}+\lambda\sum_{k=N}^{N+2}\varepsilon^{k}\partial_{yy}v_{2,k-1}^{s}.}
\end{pmatrix}.
\end{align}
 First we can factorize by $\varepsilon^{N}$ in the above expression.
Then, since $v_{k}^{s}$ is in $\mathcal{H}_{T}^{\infty}$ for $k\in\,\llbracket0,N+1\rrbracket$,
we estimate \eqref{eqRs} in $H^{m}$-norm and get for all integer
$m\geq2$: 
\[
\sup_{\varepsilon\in]0,1]}\varepsilon^{-N}\sup_{t\in[0,T]}\left\Vert R_{\varepsilon}^{s}(t,\cdot)\right\Vert _{H^{m}(\T\times\R)}<+\infty.
\]
 Thus $v_{\varepsilon}^{s}$ is an approximated solution for the operators
$\mathcal{L}^{as}$ (up to order $N$).\\

$\circ$ We constructed $v_{\varepsilon}^{f}$ so that it approximates
the operator $\mathcal{L}^{aft}$ instead of $\mathcal{L}^{af}$.
To pass from $\mathcal{L}^{af}$ to $\mathcal{L}^{aft}$, we perform
a Taylor formula (with respect to $t$ up to order $N-1$): 
\[
\mathcal{L}^{af}(\varepsilon,v_{\varepsilon}^{f},v_{\varepsilon}^{s})=\mathcal{L}^{aft}(\varepsilon,v_{\varepsilon}^{f})+R_{\varepsilon}^{tay},
\]
 where the remainder is defined as: 
\begin{align*}
R_{\varepsilon}^{tay}(t,.): & =\varepsilon^{M-2}\left(\int_{0}^{t}\frac{u^{N}}{N!}(\partial_{t})^{N-1}v_{\varepsilon}^{1s}(u,.)\, du\ \partial_{\theta}v_{\varepsilon}^{f}(t/\varepsilon^{2},\cdot)\right.\\
 & \hspace{6,5cm}\left.+\varepsilon\int_{0}^{t}\frac{u^{N}}{N!}(\partial_{t})^{N-1}v_{\varepsilon}^{2s}(u,.)\, du\ \partial_{y}v_{\varepsilon}^{f}(t/\varepsilon^{2},\cdot)\right)\\
 & +\varepsilon^{M-2}\left(v_{\varepsilon}^{f}(t/\varepsilon^{2},\cdot)\ \int_{0}^{t}\frac{u^{N}}{N!}(\partial_{t})^{N-1}\partial_{\theta}v_{\varepsilon}^{s}(u,\cdot)du\right.\\
 & \hspace{6,5cm}+\left.v_{\varepsilon}^{f}(t/\varepsilon^{2},\cdot)\ \int_{0}^{t}\frac{u^{N}}{N!}(\partial_{t})^{N-1}\partial_{y}v_{\varepsilon}^{s}(u,\cdot)du\right).
\end{align*}
 According to Proposition~\ref{propconstructionvelocity}, the hypothesis
of Lemma \ref{lemexpdecay} are satisfied. For all integer $m\geq2$
we have: 
\[
\sup_{\varepsilon\in]0,1]}\sup_{t\in[0,T]}\varepsilon^{-N}\left\Vert R_{\varepsilon}^{tay}(t,\cdot)\right\Vert _{H^{m}(\T\times\R)}<+\infty.
\]

$\circ$ Finally, we plug $v_{\varepsilon}^{s}=~\sum_{k=0}^{N+1}\varepsilon^{k}v_{k}^{s}$
and $v_{\varepsilon}^{f}=\sum_{k=0}^{N+1}\varepsilon^{k}v_{k}^{f}$
into $\mathcal{L}^{aft}$ and use the cascade of equations \eqref{equtaujnl}.
We obtain: 
\begin{align*}
\mathcal{L}^{aft}(\varepsilon,v_{\varepsilon}^{f})(t/\varepsilon^{2})=R_{\varepsilon}^{1f}+R_{\varepsilon}^{2f}.
\end{align*}
 The remainder $R_{\varepsilon}^{1f}$ is defined as 
\begin{align*}
R_{\varepsilon}^{1f}: & =\varepsilon^{N}\partial_{y}v_{N+1}^{f}(t/\varepsilon^{2},\cdot)+\sum_{k=N}^{2(N+1)+(M-2)}\varepsilon^{k}\sum_{(i,j)\in\mathcal{J}(M,k+2)}v_{1,i}^{f}(t/\varepsilon^{2},\cdot)\,\partial_{\theta}v_{j}^{f}(t/\varepsilon^{2},\cdot)\\
 & +\sum_{k=N}^{2(N+1)+(M-1)}\varepsilon^{k}\sum_{(i,j)\in\mathcal{J}(M+1,k+2)}v_{2,i}^{f}(t/\varepsilon^{2},\cdot)\partial_{y}v_{j}^{f}(t/\varepsilon^{2},\cdot)\\
 & -\begin{pmatrix}{\displaystyle \mu\,\sum_{k=N}^{N+1}\varepsilon^{k}\partial_{\theta\theta}v_{1,k}^{f}(t/\varepsilon^{2},\cdot)+\lambda\,\varepsilon^{N}\partial_{\theta\theta}v_{1,N+1}^{f}(t/\varepsilon^{2},\cdot)+\lambda\,\sum_{k=N}^{N+1}\varepsilon^{k}\partial_{\theta y}v_{2,k}^{f}(t/\varepsilon^{2},\cdot)}\\
{\displaystyle \mu\,\sum_{k=N}^{N+1}\varepsilon^{k}\partial_{\theta\theta}v_{2,k}^{f}(t/\varepsilon^{2},\cdot)+\lambda\,\sum_{k=N}^{N+1}\varepsilon^{k}\partial_{\theta y}v_{1,k}^{f}(t/\varepsilon^{2},\cdot)+\lambda\sum_{k=N}^{N+2}\varepsilon^{k}\partial_{yy}v_{2,k-1}^{f}(t/\varepsilon^{2},\cdot)}
\end{pmatrix},
\end{align*}
 whereas $R_{\varepsilon}^{2f}$ consists of the terms corresponding
to the Taylor formula: 
\begin{align*}
R_{\varepsilon}^{2f}: & =\sum_{k=N}^{4N+(M-2)}\varepsilon^{k}\left(\sum_{(i,j,l)\in\mathcal{I}(M,k+2)}\frac{t^{l}}{\varepsilon^{2l}l!}\partial_{t}^{l}v_{1,i}^{s}(0,\cdot)\,\partial_{\theta}v_{j}^{f}(t/\varepsilon^{2},\cdot)\right.\\
 & \hspace{7cm}+\left.\sum_{(i,j,l)\in\mathcal{I}(M,k+2)}\frac{t^{l}}{\varepsilon^{2l}l!}v_{1,i}^{f}(t/\varepsilon^{2},\cdot)\partial_{t}^{l}\partial_{\theta}v_{j}^{s}(0,\cdot)\right)\\
 & \hspace{0cm}+\sum_{k=N}^{4N+(M-1)}\varepsilon^{k}\left(\sum_{(i,j,l)\in\mathcal{I}(M,k+1)}\frac{t^{l}}{\varepsilon^{2l}l!}\partial_{t}^{l}v_{2,i}^{s}(0,\cdot)\,\partial_{y}v_{j}^{f}(t/\varepsilon^{2},\cdot)\right.\\
 & \hspace{7cm}+\left.\sum_{(i,j,l)\in\mathcal{I}(M,k+1)}\frac{t^{l}}{\varepsilon^{2l}l!}v_{1,i}^{f}(t/\varepsilon^{2},\cdot)\partial_{t}^{l}\partial_{y}v_{j}^{s}(0,\cdot)\right).
\end{align*}
 -By construction (see Proposition \ref{propconstructionvelocity}),
the profiles $\{v_{k}^{f}\}_{k\in\llbracket0,N+1\rrbracket}$ lie
in $\mathcal{E}_{\delta}^{\infty}$. Furthermore, we can factorize
$R_{\varepsilon}^{1f}$ by $\varepsilon^{N}$. We deduce that for
all integer $m\geq2$: 
\[
\sup_{\varepsilon\in]0,1]}\sup_{t\in[0,T]}\varepsilon^{-N}\left\Vert R_{\varepsilon}^{1f}(t,\cdot)\right\Vert _{H^{m}(\T\times\R)}<+\infty.
\]
 -There remains to estimate the term $R_{\varepsilon}^{2f}$. \textit{A
priori} this term can be dangerous because it contains some polynomials
in the variable $t/\varepsilon^{2}$. Nevertheless we can use the
fast decreasing behaviours of the profile $v_{\varepsilon}^{f}$.
Indeed if $f\in\mathcal{E}_{\delta}^{\infty}$ then for all $l\in\N$
the function $\tau^{l}\, f$ is also quickly decreasing: $\tau^{l}\, f\in\mathcal{E}_{\delta'}^{\infty}$
for some $0<\delta'<\delta$. Furthermore, noticing than we can factorize
by $\varepsilon^{N}$ in $R_{\varepsilon}^{2f}$, we obtain for all
integer~$m\geq2$: 
\[
\sup_{\varepsilon\in]0,1]}\sup_{t\in[0,T]}\varepsilon^{-N}\left\Vert R_{\varepsilon}^{2f}(t,\cdot)\right\Vert _{H^{m}(\T\times\R)}<+\infty.
\]
 \end{proof}

\subsection{The case of the pressure - Consequence}

\label{sectionconstructionpressure}

Here, we still assume that $M\geq2$. First of all we quickly prove
Proposition $\ref{propconstructionpressure}$. Then we take advantage
of the control obtained on $\{q_{\varepsilon}^{a}\}_{\varepsilon}$
to prove that the approximated solution $v_{\varepsilon}^{a}$ is
a good approximation for operator $(\mathcal{L}^{1},\mathcal{L}^{2})$
assuming $\nu$ is large enough (Proposition \ref{corapproxoperateur}).\\

\paragraph{Approximated pressure.}

Consider the approximated velocity $v_{\varepsilon}^{a}$ built according
to Proposition \ref{propconstructionvelocity}. Since the operator
$\mathcal{L}_{0}$ is linear with respect to the pressure variable,
we build the profile $q_{k}^{\varepsilon}$ as the solution of the
following problem: 
\begin{multline}\label{eqpressureqk}
\mathcal{L}_{0}(\varepsilon,q_{k}^{\varepsilon},v_{\varepsilon}^{a})=\partial_{t}q_{k}^{\varepsilon}+\varepsilon^{-1}h\,\partial_{y}q_{k}^{\varepsilon}\\
+\varepsilon^{M-2}(v_{\varepsilon}^{a1}\partial_{\theta}q_{k}^{\varepsilon}+\varepsilon\, v_{\varepsilon}^{2a}\partial_{y}q_{k}^{\varepsilon})+C\,\varepsilon^{M-2}q_{k}^{\varepsilon}(\partial_{\theta}v_{\varepsilon}^{a1}+\varepsilon\partial_{y}v_{\varepsilon}^{2a})=0\,
\end{multline}
 with initial data satisfying $q_{k}^{\varepsilon}(0,\cdot)=q_{k}^{0}(\cdot)$.

At fixed $\varepsilon$, for any positive time $T$, \eqref{eqpressureqk}
has an unique solution $q_{k}^{\varepsilon}$ in $\mathcal{H}_{T}^{m}(\T\times\R)$.
We recover the approximated solution $q_{\varepsilon}^{a}$ (on the
strip $[0,T]$) by summing over all the multi-indices $k\in\llbracket0,N+1\rrbracket$:
\[
q_{\varepsilon}^{a}:=\sum_{k=0}^{N+1}\varepsilon^{k}q_{k}^{\varepsilon}.
\]

However, since the transport is singular the family of solution $\{q_{\varepsilon}^{a}\}_{\varepsilon}$
is not bounded in $\mathcal{H}_{T}^{m}(\T\times\R)$. Yet, it is in
the anisotropic Sobolev spaces. In other words, Inequality \eqref{eqapproxsolq2}
is satisfied.

\paragraph{Approximated solution for the operator $\mathcal{L}$.}

A direct consequence of Inequality \eqref{eqapproxsolq2} is that
$\{(q_{\varepsilon}^{a},v_{\varepsilon}^{a})\}_{\varepsilon}$ is
an approximated solution for the operator $\mathcal{L}$, \textit{i.e.}
Proposition \ref{corapproxoperateur} is satisfied. \\

First, we have $\mathcal{L}_{0}(\varepsilon,q_{\varepsilon}^{a},v_{\varepsilon}^{a})=0$
and: 
\begin{align*}
^{t}\left(\mathcal{L}_{1},\mathcal{L}_{2}\right)(\varepsilon,q_{\varepsilon}^{a},v_{\varepsilon}^{a})=\mathcal{L}^{a}(\varepsilon,v_{\varepsilon}^{a})+C\varepsilon^{2\nu-M-2}\ {}^{t}\left(q_{\varepsilon}^{a}\partial_{\theta}q_{\varepsilon}^{a},\varepsilon\, q_{\varepsilon}^{a}\partial_{y}q_{\varepsilon}^{a}\right)\,.
\end{align*}
 The quantity $\mathcal{L}^{a}(\varepsilon,v_{\varepsilon}^{a})$
can be estimated thanks to \eqref{eqapproxsolv}. The pressure term
$\varepsilon^{2\mathcal{V}-M-2}\,{}^{t}(q_{\varepsilon}^{a}\partial_{\theta}q_{\varepsilon}^{a},\varepsilon\, q_{\varepsilon}^{a}$
$\partial_{y}q_{\varepsilon}^{a})$ can be estimated thanks to the
Gagliardo-Niremberg's Inequality. Let $\alpha\in\N^{2}$, $|\alpha|\leq m$,
then 
\begin{align*}
\left\Vert \partial^{\alpha}\left(q_{\varepsilon}^{a}\partial_{\theta}q_{\varepsilon}^{a}\right)\right\Vert _{L^{2}} & =\left\Vert \partial^{\alpha}\partial_{\theta}\left(q_{\varepsilon}^{a}\right)^{2}\right\Vert _{L^{2}}\leq2C_{g}\left\Vert q_{\varepsilon}^{a}\right\Vert _{L^{\infty}}\left\Vert q_{\varepsilon}^{a}\right\Vert _{\overset{\circ}{H}{}^{|\alpha|+1}}\,.
\end{align*}
 To recover an estimate in the anisotropic version of the Sobolev
spaces, we use \eqref{eqinfininorm} together with the following equivalence
of norms: 
\begin{equation}
\left\Vert \cdot\right\Vert _{H^{m}}\leq\varepsilon^{-m}\left\Vert \cdot\right\Vert _{H_{(1,\varepsilon)}^{m}}.\label{eqrefnorm}
\end{equation}
 Thus, we obtain 
\[
\varepsilon^{-N}\left\Vert \varepsilon^{2\nu-M-2}\,{}^{t}(q_{\varepsilon}^{a}\partial_{\theta}q_{\varepsilon}^{a},\varepsilon\, q_{\varepsilon}^{a}\partial_{y}q_{\varepsilon}^{a})\right\Vert _{H^{m}}\lesssim\varepsilon^{2\nu-M-5/2-(m+1)-N}\left\Vert q_{\varepsilon}^{a}\right\Vert _{H_{(1,\varepsilon)}^{m+1}}^{2}\lesssim\varepsilon^{2\nu-M-5/2-(m+1)-N}\,.
\]
 Assuming \eqref{eqwsN}, it completes the proof. \vspace{-0.3cm}
 \hfill{}$\blacksquare$


\section{Energy estimates}

\label{Energy} 

In this section we prove Theorem \ref{propestimationreste}. The integers
$m,\,\nu,\, M,\, N$ and $R$ satisfy property \eqref{eqws}. As ever
mentioned at the level of the introduction, page \pageref{eqintrolin2},
the main issue is to get a control over the singular term $\varepsilon^{-2}\partial_{\theta}h\, v_{\varepsilon}^{1R}$.
To desingularize it, we consider the new unknowns: 
\begin{equation}
\widetilde{q}_{\varepsilon}^{R}:=q_{\varepsilon}^{R},\qquad\widetilde{v}_{\varepsilon}^{1R}:=v_{\varepsilon}^{1R},\qquad\widetilde{v}_{\varepsilon}^{2R}:=\varepsilon v_{\varepsilon}^{2R}.\label{eqnewunknown}
\end{equation}
 It satisfies a hyperbolic-parabolic system (singular in $\varepsilon$):
\begin{subequations} \label{eqepsilon} 
\begin{flalign}
 & \partial_{t}\widetilde{q}_{\varepsilon}^{R}+\varepsilon^{-1}h\,\partial_{y}\widetilde{q}_{\varepsilon}^{R}+\varepsilon^{M-2}\left(v_{\varepsilon}^{1a}\,\partial_{\theta}\widetilde{q}_{\varepsilon}^{R}+\varepsilon v_{\varepsilon}^{2a}\,\partial_{y}\widetilde{q}_{\varepsilon}^{R}\right)+C\,\varepsilon^{M-2}\widetilde{q}_{\varepsilon}^{R}\left(\partial_{\theta}v_{\varepsilon}^{1a}+\varepsilon\,\partial_{y}v_{\varepsilon}^{2a}\right)\nonumber \\
 & \hspace{1.5cm}+\varepsilon^{M-2}\left(\widetilde{v}_{\varepsilon}^{1R}\,\partial_{\theta}q_{\varepsilon}^{a}+\widetilde{v}_{\varepsilon}^{2R}\,\partial_{y}q_{\varepsilon}^{a}\right)+C\,\varepsilon^{M-2}q_{\varepsilon}^{a}\left(\partial_{\theta}\widetilde{v}_{\varepsilon}^{1R}+\partial_{y}\widetilde{v}_{\varepsilon}^{2R}\right)=\widetilde{S}_{\varepsilon}^{0,R,N},\label{eqqepsilon}\\
 & \partial_{t}\widetilde{v}_{\varepsilon}^{R}+\varepsilon^{-1}h\,\partial_{y}\widetilde{v}_{\varepsilon}^{R}+\fbox{\ensuremath{^{t}\left(0,\varepsilon^{-1}\,\partial_{\theta}h\,\widetilde{v}_{\varepsilon}^{1R}\right)}}+\varepsilon^{M-2}\left(v_{\varepsilon}^{1a}\,\partial_{\theta}\widetilde{v}_{\varepsilon}^{R}+\varepsilon v_{\varepsilon}^{2a}\,\partial_{y}\widetilde{v}_{\varepsilon}^{R}\right)\nonumber \\
 & \hspace{3.5cm}+\varepsilon^{M-2}\left(\widetilde{v}_{\varepsilon}^{1R}\partial_{\theta}+\widetilde{v}_{\varepsilon}^{2R}\partial_{y}\right){}^{t}(v_{\varepsilon}^{1a},\varepsilon\, v_{\varepsilon}^{2a})-\mathcal{Q}_{\varepsilon}(\widetilde{v}_{\varepsilon}^{R})=\widetilde{S}_{\varepsilon}^{R,N},\label{equepsilon}
\end{flalign}
 \end{subequations} together with the initial data $\left(q_{\varepsilon}^{R}(0,\cdot),v_{\varepsilon}^{R}(0,\cdot)\right)\equiv0$.
The right-hand side of Equation \eqref{eqepsilon} is defined as:
\begin{align}
 & \label{source0}\widetilde{S}_{\varepsilon}^{0,R,N}:=-\varepsilon^{N-R}(\varepsilon^{-N}\mathcal{L}_{0}(\varepsilon,q_{\varepsilon}^{a},v_{\varepsilon}^{a}))-\varepsilon^{R+M-2}\left(\widetilde{v}_{\varepsilon}^{1R}\partial_{\theta}\widetilde{q}_{\varepsilon}^{R}+\widetilde{v}_{\varepsilon}^{2R}\partial_{y}\widetilde{q}_{\varepsilon}^{R}\right)\nonumber \\
 & \hspace{8,5cm}-C\varepsilon^{R+M-2}\widetilde{q}_{\varepsilon}^{R}\left(\partial_{\theta}\widetilde{v}_{\varepsilon}^{1R}+\partial_{y}\widetilde{v}_{\varepsilon}^{2R}\right),\\
 & \widetilde{S}_{\varepsilon}^{R,N}:=-\varepsilon^{N-R}\,(\varepsilon^{-N}\,{}^{t}\left(\mathcal{L}_{1}^{a}(\varepsilon,v_{\varepsilon}),\varepsilon\mathcal{L}_{2}^{a}(\varepsilon,v_{\varepsilon})\right))-\varepsilon^{R+M-2}\left(\widetilde{v}_{\varepsilon}^{1R}\partial_{\theta}\widetilde{v}_{\varepsilon}^{R}+\widetilde{v}_{\varepsilon}^{2R}\partial_{y}\widetilde{v}_{\varepsilon}^{R}\right)\nonumber \\
 & \hspace{7cm}-\frac{C\varepsilon^{2\nu-M-R-2}}{2}\,^{t}\left(\partial_{\theta},\varepsilon^{2}\partial_{y}\right)\left(q_{\varepsilon}^{a}+\varepsilon^{R}\widetilde{q}_{\varepsilon}^{R}\right)^{2}.\label{source12}
\end{align}
 This is clearly a non-linear problem and the variable of pressure
$\widetilde{q}_{\varepsilon}^{R}$ and the variables of velocity $\widetilde{v}_{\varepsilon}^{R}$
are coupled. The dissipation is turned into 
\begin{equation}
\mathcal{Q}_{\varepsilon}\widetilde{v}_{\varepsilon}^{R}:=\begin{pmatrix}\mathcal{Q}_{\varepsilon}^{1}\widetilde{v}_{\varepsilon}^{R}\\
\mathcal{Q}_{\varepsilon}^{2}\widetilde{v}_{\varepsilon}^{R}
\end{pmatrix}=\frac{1}{\varepsilon^{2}}\begin{pmatrix}\lambda\left(\partial_{\theta\theta}\widetilde{v}_{\varepsilon}^{1R}+\varepsilon^{2}\partial_{yy}\widetilde{v}_{\varepsilon}^{1R}\right)+\mu{\varepsilon}\left(\partial_{\theta\theta}\widetilde{v}_{\varepsilon}^{1R}+\partial_{y\theta}\widetilde{v}_{\varepsilon}^{2R}\right)\\
\lambda\left(\partial_{\theta\theta}\widetilde{v}_{\varepsilon}^{2R}+\varepsilon^{2}\partial_{yy}\widetilde{v}_{\varepsilon}^{2R}\right)+\varepsilon^{2}\mu{\varepsilon}\left(\partial_{\theta y}\widetilde{v}_{\varepsilon}^{1R}+\partial_{yy}\widetilde{v}_{\varepsilon}^{2R}\right)
\end{pmatrix}\,.\label{eqQdiss}
\end{equation}

We clearly desingularize the hyperbolic part to a cost on the parabolic
part, $\mathcal{Q}_{\varepsilon}$. It can no longer satisfy an estimate
such as \eqref{controldissipation0} necessary to keep control over
singular terms in \eqref{eqepsilon}. One important aspect of the
proof is to obtain Inequality \eqref{controldissipationQ}. If it
is not satisfied, the mechanism of the proof fails.\\

Thus, in this section, we look for accurate energy estimates for the
unknown $(\widetilde{q}_{\varepsilon}^{R},\widetilde{v}_{\varepsilon}^{R})$.
We easily go back to the initial variables according to \eqref{eqnewunknown}.\\

\paragraph{Coupling and nonlinear aspects.}

In Equation \eqref{equepsilon}, the variables of pressure and velocity
are only coupled threw the term: 
\begin{equation}
\frac{C\varepsilon^{2\nu-M-R-2}}{2}\,^{t}(\partial_{\theta},\varepsilon^{2}\,\partial_{y})\left(q_{\varepsilon}^{a}+\varepsilon^{R}\widetilde{q}_{\varepsilon}^{R}\right)^{2}.\label{eqcoupled}
\end{equation}
 If $\nu$ is large enough, the Equation \eqref{equepsilon} is somehow
independent of the pressure. It seems that we can deal with the velocity
and then deal with the pressure. This is reinforced by the fact that
the pressure and the velocity are estimated in different Sobolev Spaces.

Yet, the term \eqref{eqcoupled} has to be estimated carefully. We
have to link the anisotropic Sobolev norms with the classical Sobolev
norms \eqref{eqrefnorm}. It explains the loss of precision on $w_{m}$
with respect to the regularity $m$ (see Equation \eqref{eqws}).

In practice, we perform energy estimates on Equation \eqref{equepsilon}.
Then we plug the estimates obtained on the velocity into Equation
\eqref{eqqepsilon}. We underline here the fact that we need more
regularity on the velocity to estimate the term $\widetilde{S}_{\varepsilon}^{0,R,N}$
(defined by Equation \eqref{source0}). Thus, the \textit{regularization}
of the velocity (thanks to the dissipation) plays again a crucial
role.\\

Finally, the problem is nonlinear (for example term \eqref{eqcoupled}).
Classically, nonlinear terms are estimated thanks to the Gagliardo-Niremberg's
estimate which required a $L^{\infty}$ control over the unknowns.
To get this control, we introduce the characterized time $T_{\varepsilon}^{*}$:
\[
T_{\varepsilon}^{*}:=\text{min}\left(1,\sup_{T\in[0,T_{\varepsilon}]}\left\{ \forall\, t\in[0,T],\ \left\Vert \widetilde{q}_{\varepsilon}^{R}(t,\cdot)\right\Vert _{H_{(1,\varepsilon)}^{m+3}(\T\times\R)}\leq2,\ \left\Vert \widetilde{v}_{\varepsilon}^{R}(t,\cdot)\right\Vert _{H^{m+3}(\T\times\R)}\leq2\right\} \right).
\]
 It provides $L^{\infty}$-estimates on the strip $[0,T_{\varepsilon}^{*}]$,
for all $\varepsilon\in]0,1]$ (see \eqref{eqinfininorm}): 
\begin{equation}
\forall\, t\in[0,T_{\varepsilon}^{*}],\quad\left\Vert \sqrt{\varepsilon}\,\widetilde{q}_{\varepsilon}^{R}(t,\cdot)\right\Vert _{W_{(1,\varepsilon)}^{m+1,\infty}(\T\times\R)}\leq2,\qquad\left\Vert \widetilde{v}_{\varepsilon}^{R}(t,\cdot)\right\Vert _{W^{m+1,\infty}(\T\times\R)}\leq2.\label{eqnormsup}
\end{equation}

\subsubsection{Results and Consequences}

As indicated, we first prove an estimate over the velocity on the
interval $[0,T_{\varepsilon}^{*}]$, thanks to Equation \eqref{equepsilon}:

\begin{proposition}\label{propestu} Let $\widetilde{v}_{\varepsilon}^{R}$
be a solution of (\ref{eqepsilon}) on $[0,T_{\varepsilon}^{*}]$.
There exist a positive constant $\varepsilon_{crit}$ and two positive
constants $K_{m}^{1}$ and $K_{m}^{2}$ (independent of $\varepsilon$)
such that 
\begin{equation}
\forall\,\varepsilon\in]0,\varepsilon_{crit}],\quad\forall\, t\in[0,T_{\varepsilon}^{*}],\qquad\left\Vert \widetilde{v}_{\varepsilon}^{R}(t,\cdot)\right\Vert _{H^{m+3}(\T\times\R)}^{2}\leq\varepsilon^{2w_{m}}\, K_{m}^{1}\left(e^{K_{m}^{2}\, t}-1\right).\label{eqpropestu1}
\end{equation}
 Furthermore, one can prove a \em{regularization} property. Select
a multi-index $\alpha\in\N^{2}$ of length $|\alpha|$ smaller than
$m+4$; then 
\begin{equation}
\forall\,\varepsilon\in]0,\varepsilon_{crit}],\ \forall\, t\in[0,T_{\varepsilon}^{*}],\quad\int_{0}^{t}\left\Vert \varepsilon^{-(1-\delta_{\alpha_{1},0})}\partial^{\alpha}\widetilde{v}_{\varepsilon}^{R}(s,\cdot)\right\Vert _{L^{2}(\T\times\R)}^{2}ds\leq K_{m}^{1}\,\varepsilon^{2w_{m}}t\,,\label{eqpropestu2}
\end{equation}
 where $\delta_{i,j}$ denotes the Kronecker symbol (of
two integers): $\delta_{i,j}=0$ if $i\neq j$ and $\delta_{i,i}=1$.
\end{proposition}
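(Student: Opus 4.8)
\textbf{Plan of proof of Proposition \ref{propestu}.}
The plan is to run a high-order energy estimate for the parabolic-hyperbolic system \eqref{equepsilon} governing $\widetilde v_\varepsilon^R$, where the viscous term $\mathcal Q_\varepsilon$ supplies enough coercivity to absorb the singular hyperbolic contributions. First I would fix a multi-index $\alpha$ with $|\alpha|\le m+3$, apply $\partial^\alpha$ to \eqref{equepsilon}, and take the $L^2(\T\times\R)$ inner product with $\partial^\alpha\widetilde v_\varepsilon^R$; summing over $\alpha$ produces $\tfrac12\tfrac{d}{dt}\|\widetilde v_\varepsilon^R\|_{H^{m+3}}^2$ on the left together with $-\langle \partial^\alpha\mathcal Q_\varepsilon\widetilde v_\varepsilon^R,\partial^\alpha\widetilde v_\varepsilon^R\rangle$. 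The essential analytic input is the coercivity of $\mathcal Q_\varepsilon$: assuming $\lambda<4\mu$ one invokes Lemma \ref{lemestQ} (announced in the introduction, see Inequality \eqref{controldissipationQ}) to get, for each $\alpha$ with $|\alpha|\le m+4$,
\[
-\langle \partial^\alpha\mathcal Q_\varepsilon\widetilde v_\varepsilon^R,\partial^\alpha\widetilde v_\varepsilon^R\rangle \;\gtrsim\; \big\|\varepsilon^{-1}\partial_\theta\partial^\alpha\widetilde v_\varepsilon^R\big\|_{L^2}^2+\big\|\partial_y\partial^\alpha\widetilde v_\varepsilon^R\big\|_{L^2}^2,
\]
and this dissipation term is kept on the left; integrating in time it yields precisely the regularization bound \eqref{eqpropestu2} once \eqref{eqpropestu1} is known.

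Next I would bound all the remaining commutator and source terms on the right, distinguishing singular and regular ones. The dangerous term is $\varepsilon^{-1}\partial_\theta h\,\widetilde v_\varepsilon^{1R}$ (the boxed one in \eqref{equepsilon}): after $\partial^\alpha$ and pairing with $\partial^\alpha\widetilde v_\varepsilon^{2R}$, a Cauchy-Schwarz splitting $\varepsilon^{-1}|\langle\partial_\theta h\,a,b\rangle|\le \tfrac{c}{2}\|\varepsilon^{-1}\partial_\theta a\|_{L^2}^2+\tfrac{1}{2c}\|h'\|_{L^\infty}^2\|b\|_{L^2}^2$ trades it against part of the dissipation budget plus an $O(1)$ term — this is exactly the heuristic computation carried out in the introduction. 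The singular transport term $\varepsilon^{-1}h\,\partial_y\widetilde v_\varepsilon^R$ is skew-symmetric up to commutators $[\partial^\alpha,\varepsilon^{-1}h\partial_y]$, and each $\theta$-derivative falling on $h$ is compensated by an $\varepsilon^{-1}\partial_\theta$ factor available from the dissipation, so these also contribute only $O(1)$ after absorption. The quasilinear terms $\varepsilon^{M-2}(v_\varepsilon^{1a}\partial_\theta+\varepsilon v_\varepsilon^{2a}\partial_y)\widetilde v_\varepsilon^R$ and $\varepsilon^{M-2}(\widetilde v_\varepsilon^{1R}\partial_\theta+\widetilde v_\varepsilon^{2R}\partial_y){}^t(v_\varepsilon^{1a},\varepsilon v_\varepsilon^{2a})$ are handled by Gagliardo-Nirenberg/Moser estimates, using the $L^\infty$ bounds \eqref{eqnormsup} valid on $[0,T_\varepsilon^*]$ and the uniform control of $v_\varepsilon^a$ from Propositions \ref{propconstructionvelocity}-\ref{corapproxoperateur}; since $M\ge 7/2$ the powers of $\varepsilon$ are nonnegative. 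Finally the source $\widetilde S_\varepsilon^{R,N}$ is estimated via \eqref{source12}: the term $\varepsilon^{N-R}(\varepsilon^{-N}{}^t(\mathcal L_1^a,\varepsilon\mathcal L_2^a))$ is $O(\varepsilon^{N-R})$ by Proposition \ref{corapproxoperateur}, the nonlinear remainder $\varepsilon^{R+M-2}(\widetilde v_\varepsilon^{1R}\partial_\theta+\widetilde v_\varepsilon^{2R}\partial_y)\widetilde v_\varepsilon^R$ is $O(\varepsilon^{R+M-2})$ times a polynomial in $\|\widetilde v_\varepsilon^R\|_{H^{m+3}}$ on $[0,T_\varepsilon^*]$, and the coupling term $\tfrac{C\varepsilon^{2\nu-M-R-2}}{2}{}^t(\partial_\theta,\varepsilon^2\partial_y)(q_\varepsilon^a+\varepsilon^R\widetilde q_\varepsilon^R)^2$ is bounded using \eqref{eqinfininorm}, the norm equivalence \eqref{eqrefnorm} and the anisotropic control of $q_\varepsilon^a$ \eqref{eqapproxsolq2}; this last estimate is where the loss $(m+3)$ in $w_m$ comes from and the outcome is $O(\varepsilon^{2\nu-M-R-2-(m+3)})$ plus higher-order-in-$\varepsilon^R$ contributions. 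Collecting everything, writing $y_\varepsilon(t):=\|\widetilde v_\varepsilon^R(t,\cdot)\|_{H^{m+3}}^2$, one arrives at a differential inequality of the form $y_\varepsilon'(t)\le C_1\,\varepsilon^{2w_m}+C_2\,y_\varepsilon(t)$ on $[0,T_\varepsilon^*]$ with $C_1,C_2$ independent of $\varepsilon$ (after choosing $\varepsilon_{crit}$ small enough to absorb the $\varepsilon^{R+M-2}$-nonlinear terms into $C_2 y_\varepsilon$ using \eqref{eqnormsup}), and since $y_\varepsilon(0)=0$ Grönwall's lemma gives $y_\varepsilon(t)\le \tfrac{C_1}{C_2}\varepsilon^{2w_m}(e^{C_2 t}-1)$, which is \eqref{eqpropestu1} with $K_m^1:=C_1/C_2$ and $K_m^2:=C_2$; feeding this back into the time-integrated dissipation inequality yields \eqref{eqpropestu2}.

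The main obstacle I anticipate is the bookkeeping around the coercivity of the transformed dissipation operator $\mathcal Q_\varepsilon$ from \eqref{eqQdiss}: after the singular change of unknowns the roles of $\mu$ and $\lambda$ are partly exchanged and cross terms like $\mu\varepsilon\,\partial_{y\theta}\widetilde v_\varepsilon^{2R}$ appear, so one genuinely needs $\lambda<4\mu$ (and $\varepsilon$ small) to keep $-\langle\mathcal Q_\varepsilon f,f\rangle$ bounded below by a positive multiple of $\|\varepsilon^{-1}\partial_\theta f\|_{L^2}^2+\|\partial_y f\|_{L^2}^2$; this is the content of the announced Lemma \ref{lemestQ} and everything downstream — the absorption of the boxed singular term and of the transport commutators — hinges on having exactly this form with a uniform constant. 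A secondary delicate point is making sure the dissipation budget is simultaneously large enough to absorb the boxed term \emph{and} the $\varepsilon^{-1}$-transport commutators for \emph{all} $|\alpha|\le m+3$ while still leaving a strictly positive share to produce \eqref{eqpropestu2} with $|\alpha|\le m+4$; this forces the higher-regularity index $m+4$ in the regularization statement and requires carrying the full $H^{m+3}$-level dissipation, not just $L^2$.
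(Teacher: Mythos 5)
Your plan is essentially correct and rests on the same pillars as the paper's proof: the energy method applied to \eqref{equepsilon} after $\partial^{\alpha}$-differentiation, the coercivity of $\mathcal{Q}_{\varepsilon}$ under $\lambda<4\mu$ (Lemma \ref{lemestQ}, Inequality \eqref{controldissipationQ}), the trade of the boxed singular term and of the pressure coupling against the dissipation through integrations by parts, the anisotropic norms \eqref{eqrefnorm}--\eqref{eqinfininorm} with the $L^{\infty}$ control \eqref{eqnormsup}, and a final Gr\"onwall argument with vanishing initial data, followed by a second pass keeping the dissipation to get \eqref{eqpropestu2}. The organization, however, differs. You absorb all singular contributions in one shot into the full summed dissipation $\sum_{|\alpha|\leq m+3}\Phi_{\varepsilon}(\nabla,\partial^{\alpha}\widetilde{v}_{\varepsilon}^{R})$; this is viable because every $\varepsilon^{-1}$-weighted leftover of order at most $m+3$ containing a $\theta$-derivative can be rewritten as $\varepsilon^{-1}\partial_{\theta}$ acting on a derivative of order at most $m+2$, hence is dominated, after Young's inequality with finitely many small parameters, by lower-order dissipation terms already present in the sum. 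The paper instead proves a per-level inequality (Lemma \ref{lemall}) and runs an induction on the Sobolev index $J$: the commutators of $\mathcal{H}$ and $\mathcal{C}$ leave a singular remainder $\bigl\Vert\varepsilon^{-(1-\delta_{\alpha_{1},0})}\partial^{\alpha}\widetilde{v}_{\varepsilon}^{R}\bigr\Vert_{L^{2}}^{2}$ with $|\alpha|=J$ (Lemma \ref{lemH}) which cannot be absorbed within the level-$J$ lemma, and which is treated as an $L^{1}$-in-time source controlled by the regularization estimate \eqref{eqpropestu2} obtained at the previous level. Your all-levels-at-once bookkeeping removes the need for this induction, at the price of tuning all absorption parameters simultaneously against the budget $c_{0}$, exactly the delicate point you flag.

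Two steps in your sketch need repair for the scheme to close. First, your displayed inequality for the boxed term is not valid as written: $\varepsilon^{-1}\Vert a\Vert_{L^{2}}$ is not controlled by $\Vert\varepsilon^{-1}\partial_{\theta}a\Vert_{L^{2}}$; one must first integrate by parts in $\theta$ (this is Lemma \ref{lemestC}), after which the weighted derivatives genuinely fall on $\widetilde{v}_{\varepsilon}^{1R}$ and $\widetilde{v}_{\varepsilon}^{2R}$ and can be absorbed. Second, the nonlinear remainder $\varepsilon^{R+M-2}\bigl(\widetilde{v}_{\varepsilon}^{1R}\partial_{\theta}+\widetilde{v}_{\varepsilon}^{2R}\partial_{y}\bigr)\widetilde{v}_{\varepsilon}^{R}$ in \eqref{source12} is not merely ``a polynomial in $\Vert\widetilde{v}_{\varepsilon}^{R}\Vert_{H^{m+3}}$'': after $m+3$ derivatives its top-order part involves $m+4$ derivatives of $\widetilde{v}_{\varepsilon}^{R}$, so it must either be symmetrized using the transport structure together with \eqref{eqnormsup}, or, as in Lemma \ref{controlG}, be given a freely small prefactor and absorbed into the dissipation --- which is precisely why the paper keeps a strictly positive share $\widetilde{c}_{0}$ of the coercivity. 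Finally, a minor point: only $M\geq2$ is used in the velocity estimate; the assumption $M\geq7/2$ becomes relevant later, for the pressure (Proposition \ref{lemestq}).
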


By plugging \eqref{eqpropestu1}-\eqref{eqpropestu2} in \eqref{eqqepsilon}
we obtain:

\begin{proposition}\label{lemestq}Let $\widetilde{q}_{\varepsilon}^{R}$
be a solution of Equation (\ref{eqepsilon}) on $[0,T_{\varepsilon}^{*}]$.
There exist a positive constant $\varepsilon_{crit}$ and a positive
constant $K_{m}^{1}$ (independent of $\varepsilon$) such that 
\[
\forall\,\varepsilon\in]0,\varepsilon_{crit}],\quad\forall\, t\in[0,T_{\varepsilon}^{*}],\qquad\left\Vert \widetilde{q}_{\varepsilon}^{R}(t,\cdot)\right\Vert _{H_{(1,\varepsilon)}^{m+3}(\T\times\R)}^{2}\leq\varepsilon^{2w_{m}}K_{m}^{1}\, t.
\]
 \end{proposition}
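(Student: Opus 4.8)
The plan is to view \eqref{eqqepsilon} as a \emph{linear} singular transport equation for the sole unknown $\widetilde{q}_\varepsilon^{R}$ (the velocity remainder $\widetilde{v}_\varepsilon^{R}$ and the approximate profiles $q_\varepsilon^{a},v_\varepsilon^{a}$ being treated as known data, controlled by Propositions \ref{propestu}, \ref{propconstructionvelocity} and \ref{propconstructionpressure}), and to run a direct energy estimate in the anisotropic space $H^{m+3}_{(1,\varepsilon)}(\T\times\R)$. Concretely, for every multi-index $\alpha=(\alpha_1,\alpha_2)$ with $|\alpha|\le m+3$ I would apply the weighted derivative $D^\alpha:=\varepsilon^{\alpha_1}\partial^\alpha$ to \eqref{eqqepsilon}, pair the result with $D^\alpha\widetilde{q}_\varepsilon^{R}$ in $L^2(\T\times\R)$, integrate by parts and sum over $\alpha$. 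Two structural facts are what make this work (and what forces the use of anisotropic spaces here). First, $h=h(\theta)$ does not depend on $y$, so the top-order contribution of the singular transport $\varepsilon^{-1}h\,\partial_y$ is
\[
\varepsilon^{-1}\!\int_{\T\times\R} h\,\partial_y\,\tfrac12\big(D^\alpha\widetilde{q}_\varepsilon^{R}\big)^{2}\,d\theta\,dy \;=\; -\,\varepsilon^{-1}\!\int_{\T\times\R} (\partial_y h)\,\tfrac12\big(D^\alpha\widetilde{q}_\varepsilon^{R}\big)^{2}\,d\theta\,dy \;=\;0 .
\]
Second, the weights $\varepsilon^{\alpha_1}$ are tuned so that $\varepsilon^{-1}\partial_\theta$ behaves like a first-order operator: when $D^\alpha$ falls on the coefficient $h$ in $\varepsilon^{-1}h\,\partial_y\widetilde{q}_\varepsilon^{R}$, the commutator produces terms $(\partial_\theta^{j}h)\,\varepsilon^{\,j-1}\,D^{(\alpha_1-j,\,\alpha_2+1)}\widetilde{q}_\varepsilon^{R}$ with $j\ge1$, each bounded by $\|h\|_{W^{m+3,\infty}(\T)}\,\|\widetilde{q}_\varepsilon^{R}\|_{H^{m+3}_{(1,\varepsilon)}}$ since $\varepsilon^{\,j-1}\le1$ and $|(\alpha_1-j,\alpha_2+1)|\le|\alpha|$.

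The remaining terms split into three groups. The transport by $v_\varepsilon^{a}$, namely $\varepsilon^{M-2}(v_\varepsilon^{1a}\partial_\theta+\varepsilon\,v_\varepsilon^{2a}\partial_y)\widetilde{q}_\varepsilon^{R}$, and the quadratic transport hidden inside $\widetilde{S}_\varepsilon^{0,R,N}$, namely $\varepsilon^{R+M-2}(\widetilde{v}_\varepsilon^{1R}\partial_\theta+\widetilde{v}_\varepsilon^{2R}\partial_y)\widetilde{q}_\varepsilon^{R}$, are handled the same way: their top-order parts integrate by parts into a quantity involving only the divergence of the transport field against $\big(D^\alpha\widetilde{q}_\varepsilon^{R}\big)^2$, which is dominated by $C_m(1+\varepsilon^{R+M-3})\,\|\widetilde{q}_\varepsilon^{R}\|_{H^{m+3}_{(1,\varepsilon)}}^2$ using the uniform $W^{1,\infty}$ bounds on $v_\varepsilon^{a}$ (Proposition \ref{propconstructionvelocity}) and the $L^\infty$ controls \eqref{eqnormsup} on $\widetilde{v}_\varepsilon^{R}$ valid on $[0,T_\varepsilon^{*}]$; the remaining commutators, together with the zero-order terms $C\varepsilon^{M-2}\widetilde{q}_\varepsilon^{R}(\partial_\theta v_\varepsilon^{1a}+\varepsilon\partial_y v_\varepsilon^{2a})$ and $C\varepsilon^{R+M-2}\widetilde{q}_\varepsilon^{R}(\partial_\theta\widetilde{v}_\varepsilon^{1R}+\partial_y\widetilde{v}_\varepsilon^{2R})$, obey the same bound, the exponent $R+M-3$ being $\ge0$ because $M\ge7/2$. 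Moreover the first summand of $\widetilde{S}_\varepsilon^{0,R,N}$ vanishes identically, since $\mathcal{L}_0(\varepsilon,q_\varepsilon^{a},v_\varepsilon^{a})=0$ by Proposition \ref{propconstructionpressure}. Hence the only genuine forcing left is the coupling with the velocity remainder,
\[
\varepsilon^{M-2}\Big(\widetilde{v}_\varepsilon^{1R}\,\partial_\theta q_\varepsilon^{a}+\widetilde{v}_\varepsilon^{2R}\,\partial_y q_\varepsilon^{a}+C\,q_\varepsilon^{a}\big(\partial_\theta\widetilde{v}_\varepsilon^{1R}+\partial_y\widetilde{v}_\varepsilon^{2R}\big)\Big),
\]
whose contribution I would bound by Cauchy--Schwarz by $\|\cdot\|_{H^{m+3}_{(1,\varepsilon)}}\,\|\widetilde{q}_\varepsilon^{R}\|_{H^{m+3}_{(1,\varepsilon)}}$ and then estimate by the Gagliardo--Nirenberg product rule, using that $q_\varepsilon^{a}$ is bounded in $H^{m+3}_{(1,\varepsilon)}$ (Proposition \ref{propconstructionpressure}) and that $\widetilde{v}_\varepsilon^{R}$ — and, for the term carrying $\partial\widetilde{v}_\varepsilon^{R}$, one extra derivative of it — is controlled with the gain $\varepsilon^{w_m}$ by \eqref{eqpropestu1} and, crucially, by the time-integrated regularization estimate \eqref{eqpropestu2}. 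Passing between the anisotropic norm of $q_\varepsilon^{a}$ and classical norms through \eqref{eqrefnorm} and \eqref{eqinfininorm} costs exactly the powers of $\varepsilon$ recorded in the definition \eqref{eqws} of $w_m$, so that $\int_0^{t}\|\text{forcing}(s,\cdot)\|_{H^{m+3}_{(1,\varepsilon)}}^{2}\,ds\le K_m\,\varepsilon^{2w_m}\,t$ on $[0,T_\varepsilon^{*}]$.

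Collecting everything, one obtains on $[0,T_\varepsilon^{*}]$, for $\varepsilon\le\varepsilon_{crit}$ small enough,
\[
\tfrac{d}{dt}\big\|\widetilde{q}_\varepsilon^{R}(t,\cdot)\big\|_{H^{m+3}_{(1,\varepsilon)}}^{2}\;\le\;C_m\,\big\|\widetilde{q}_\varepsilon^{R}(t,\cdot)\big\|_{H^{m+3}_{(1,\varepsilon)}}^{2}\;+\;C_m\,\varepsilon^{2w_m}\,\eta_\varepsilon(t),\qquad \int_0^{t}\eta_\varepsilon(s)\,ds\le K_m\,t .
\]
Since $\widetilde{q}_\varepsilon^{R}(0,\cdot)\equiv0$, Gronwall's lemma gives $\|\widetilde{q}_\varepsilon^{R}(t,\cdot)\|_{H^{m+3}_{(1,\varepsilon)}}^{2}\le\varepsilon^{2w_m}K_m(e^{C_m t}-1)$, and because $T_\varepsilon^{*}\le1$ one has $e^{C_m t}-1\le(C_m e^{C_m})\,t$ on $[0,T_\varepsilon^{*}]$, which is the stated bound after renaming $K_m^{1}$. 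The main obstacle is precisely this $\varepsilon$-power bookkeeping: one must simultaneously use the structural cancellation from $h=h(\theta)$ (which removes the only $\varepsilon^{-1}$-dangerous contribution of the singular transport), the anisotropic weights (which render the $\theta$-commutators harmless), and the interplay between the anisotropic space carrying the pressure and the classical space carrying the velocity remainder — the conversion between the two, combined with the need for one extra velocity derivative supplied by the regularization estimate \eqref{eqpropestu2}, is exactly what produces the $(m+3)$ loss in \eqref{eqws} and what must be checked in order to close the Gronwall inequality.
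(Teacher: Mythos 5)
Your proposal is correct and follows essentially the paper's own route: an energy estimate on \eqref{eqqepsilon} in the anisotropic space $H^{m+3}_{(1,\varepsilon)}$, integration by parts in $y$ and $\theta$ to exploit $h=h(\theta)$ and to unload derivatives from the pressure onto the velocity, borrowing powers of $\varepsilon$ from $\varepsilon^{M-2}$, the decisive use of the time-integrated $H^{m+4}$ regularization \eqref{eqpropestu2} of $\widetilde{v}_{\varepsilon}^{R}$, and Gronwall from zero initial data --- the paper merely organizes this as an induction on the derivative order (Lemma \ref{lemallq}) with the $L^{1}$-in-time Gronwall coefficient $1+\|\widetilde{v}_{\varepsilon}^{R}\|_{H^{m+4}}^{2}$, whereas you do a single sweep at order $m+3$ and push those factors into an $L^{1}$ source. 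The one bookkeeping inaccuracy is that the terms requiring $\partial_{\theta}\widetilde{q}_{\varepsilon}^{R}$ or derivatives of $q_{\varepsilon}^{a}$ in $L^{\infty}$ cost an additional $\varepsilon^{-1/2}$ through \eqref{eqinfininorm}, so the binding exponents are $M-7/2$ and $R+M-7/2$ (as in Lemmas \ref{lemm0} and \ref{lemsourceq}) rather than $R+M-3$ --- harmless under your standing assumption $M\geq 7/2$, but it is this half power, not the anisotropic weight alone, that forces $M\geq 7/2$.
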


Those two lemmas allow to prove an accurate estimate of the solution
on a strip independent of~$\varepsilon$.

\begin{corollary}\label{propestglobal} Consider integers $m$, $\nu$,
$M$, $N$ and $R$ satisfying the condition~\eqref{eqws}. Then,
there exist two positive constants $\varepsilon_{c}$ and $T_{c}$
(independent of $\varepsilon$), and a positive constant $c_{err}>0$
such that, 
\[
\forall\, t\in[0,T_{c}],\quad\forall\,\varepsilon\in]0,\varepsilon_{c}],\qquad\left\Vert \widetilde{q}_{\varepsilon}^{R}(t,\cdot)\right\Vert _{H_{(1,\varepsilon)}^{m+3}(\T\times\R)}\leq c_{err},\qquad\left\Vert \widetilde{v}_{\varepsilon}^{R}(t,\cdot)\right\Vert _{H^{m+3}(\T\times\R)}\leq c_{err}.
\]
 \end{corollary}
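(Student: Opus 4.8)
The plan is to combine the two \emph{a priori} bounds of Propositions~\ref{propestu} and~\ref{lemestq} with a continuation (bootstrap) argument based on the characterized time $T_\varepsilon^*$ introduced just before these statements. Recall that $T_\varepsilon^*$ is the supremum of times $T\le T_\varepsilon$ on which $\|\widetilde q_\varepsilon^R(t,\cdot)\|_{H^{m+3}_{(1,\varepsilon)}}\le 2$ and $\|\widetilde v_\varepsilon^R(t,\cdot)\|_{H^{m+3}}\le 2$, capped at $1$. Since $(\widetilde q_\varepsilon^R,\widetilde v_\varepsilon^R)(0,\cdot)\equiv 0$ and these quantities are continuous in $t$, one has $T_\varepsilon^*>0$ for every fixed $\varepsilon$; the whole point is to bound $T_\varepsilon^*$ from below \emph{uniformly} in $\varepsilon$.

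First I would fix the constants. Apply Proposition~\ref{propestu} to get $\|\widetilde v_\varepsilon^R(t,\cdot)\|_{H^{m+3}}^2\le \varepsilon^{2w_m}K_m^1(e^{K_m^2 t}-1)$ and Proposition~\ref{lemestq} to get $\|\widetilde q_\varepsilon^R(t,\cdot)\|_{H^{m+3}_{(1,\varepsilon)}}^2\le \varepsilon^{2w_m}K_m^1 t$, both valid for $\varepsilon\in\,]0,\varepsilon_{crit}]$ and $t\in[0,T_\varepsilon^*]$. Because $w_m\ge 0$ by hypothesis~\eqref{eqws}, we have $\varepsilon^{2w_m}\le 1$ for $\varepsilon\le 1$, so these right-hand sides are bounded by $K_m^1(e^{K_m^2 t}-1)$ and $K_m^1 t$ respectively, \emph{independently of $\varepsilon$}. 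Now choose $T_c\in\,]0,1]$ small enough that $K_m^1(e^{K_m^2 T_c}-1)\le 1$ and $K_m^1 T_c\le 1$; then set $c_{err}:=1$ (or, if one wants the strict inequalities that make the bootstrap close cleanly, choose $T_c$ so that both quantities are $\le 1/2$ and take $c_{err}=1$). Finally put $\varepsilon_c:=\varepsilon_{crit}$.

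The core step is the bootstrap: I claim $T_\varepsilon^*\ge T_c$ for all $\varepsilon\in\,]0,\varepsilon_c]$. Suppose not, so $T_\varepsilon^*<T_c\le 1$ for some such $\varepsilon$; then on $[0,T_\varepsilon^*]$ the two estimates above give $\|\widetilde q_\varepsilon^R(t,\cdot)\|_{H^{m+3}_{(1,\varepsilon)}}\le \sqrt{K_m^1 T_\varepsilon^*}\le \sqrt{K_m^1 T_c}\le 1 <2$ and likewise $\|\widetilde v_\varepsilon^R(t,\cdot)\|_{H^{m+3}}\le 1<2$ for all $t\in[0,T_\varepsilon^*]$; by continuity these strict inequalities persist slightly beyond $T_\varepsilon^*$, and since $T_\varepsilon^*<1$ the cap in the definition of $T_\varepsilon^*$ is not active, contradicting the maximality of $T_\varepsilon^*$. (One must also check $T_\varepsilon^*<T_\varepsilon$ in this scenario, i.e. that the solution itself does not cease to exist before $T_c$: but the uniform $H^{m+3}$-bound just obtained, fed into the standard local existence theory for the quasilinear parabolic-hyperbolic system~\eqref{eqepsilon} at fixed $\varepsilon$, prevents blow-up of the relevant norm, so the lifespan $T_\varepsilon$ of the exact solution of~\eqref{eqref}-\eqref{eqrefdatas} indeed satisfies $T_\varepsilon\ge T_c$.) Hence $T_\varepsilon^*=\min(1,\ldots)=T_c$ (or more) for every $\varepsilon\le\varepsilon_c$, and on $[0,T_c]$ the stated bounds with $c_{err}$ hold.

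The main obstacle, I expect, is not the bootstrap bookkeeping — which is routine once Propositions~\ref{propestu} and~\ref{lemestq} are in hand — but making precise the interplay between $T_\varepsilon^*$ and the genuine lifespan $T_\varepsilon$: one needs that the \emph{a priori} control on $\|(\widetilde q_\varepsilon^R,\widetilde v_\varepsilon^R)\|_{H^{m+3}_{(1,\varepsilon)}\times H^{m+3}}$ is strong enough to continue the solution past any time $<T_c$, which requires $m$ large enough for $H^{m+3}$ (and its anisotropic variant) to control the nonlinear terms in~\eqref{eqepsilon} via the algebra property and the embeddings~\eqref{eqinfininorm}; this is exactly what the condition $m\ge 2$ and the structure of $w_m$ in~\eqref{eqws} are designed to guarantee. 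Once Corollary~\ref{propestglobal} is established, the assertions $\text{i)}$ and $\text{ii)}$ of Theorem~\ref{propestimationreste} follow immediately by unwinding the change of unknowns~\eqref{eqnewunknown}: $\widetilde v_\varepsilon^{1R}=v_\varepsilon^{1R}$ and $\widetilde v_\varepsilon^{2R}=\varepsilon v_\varepsilon^{2R}$ give~\eqref{eqestimationv2R}, and $\widetilde q_\varepsilon^R=q_\varepsilon^R$ gives~\eqref{ineqtime}, with $T_\varepsilon\ge T_c$ coming from the lifespan discussion above.
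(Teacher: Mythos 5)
Your proposal is correct and follows essentially the same route as the paper: feed the a priori bounds of Propositions \ref{propestu} and \ref{lemestq} (with $w_m\geq0$ so the $\varepsilon^{2w_m}$ factor is harmless), choose $T_c$ small enough that both norms stay strictly below the threshold $2$ in the definition of $T_\varepsilon^*$, and conclude by continuity plus the continuation criterion that $T_\varepsilon^*$ (and hence $T_\varepsilon$) cannot fall below $T_c$. The paper merely phrases this bootstrap as a global contradiction over all pairs $(\widetilde\varepsilon,T)$ rather than per fixed $\varepsilon$, which is only a cosmetic difference.
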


\begin{proof}[Proof of Corollary \ref{propestglobal}] We argue
by contradiction: 
\[
\forall\,(\widetilde{\varepsilon},T)\in]0,1]\times[0,1],\quad\exists\,\varepsilon\in]0,\widetilde{\varepsilon}],\ T_{\varepsilon}^{*}<T.
\]

We recall that there exist $\varepsilon_{d}$ and $C_{m}$ positive
constants such that, for all $\varepsilon\in]0,\varepsilon_{d}]$
and for all time $t\in[0,T_{\varepsilon}^{*}]$: 
\[
\left\Vert \widetilde{q}_{\varepsilon}^{R}(t,\cdot)\right\Vert _{H_{(1,\varepsilon)}^{m+3}(\T\times\R)}^{2}\leq\varepsilon^{2w_{m}}\, C_{m}\, t,\qquad\left\Vert \widetilde{v}_{\varepsilon}^{R}(t,\cdot)\right\Vert _{H^{m+3}(\T\times\R)}^{2}\leq\varepsilon^{2w_{m}}C_{m}\, t\,.
\]
 We choose for instance $T=\min(\frac{1}{2\, C_{m}},\frac{1}{2})$
and $\widetilde{\varepsilon}=\varepsilon_{d}<1$. In particular, $T<1$.
From assumption, there exists $\varepsilon_{0}\in]0,\varepsilon_{d}]$
such that $T_{\varepsilon_{0}}^{*}<T$. Furthermore, since $w_{m}\geq0$
from condition \eqref{eqws} we get: 
\[
\forall\, t\in[0,T_{\varepsilon_{0}}^{*}],\qquad\left\Vert \widetilde{q}_{\varepsilon_{0}}^{R}(t,\cdot)\right\Vert _{H_{(1,\varepsilon_{0})}^{m+3}(\T\times\R)}\leq\frac{1}{2}<2,\qquad\left\Vert \widetilde{v}_{\varepsilon_{0}}^{R}(t,\cdot)\right\Vert _{H^{s+3}(\T\times\R)}\leq\frac{1}{2}<2.
\]

Now consider the applications 
\[
t\in[0,T_{\varepsilon_{0}}^{*}[\longmapsto\left\Vert \widetilde{q}_{\varepsilon_{0}}^{R}(t,\cdot)\right\Vert _{H_{(1,\varepsilon_{0})}^{m+3}(\T\times\R)}\quad\text{and}\quad t\in[0,T_{\varepsilon_{0}}^{*}[\longmapsto\left\Vert \widetilde{v}_{\varepsilon_{0}}^{R}(t,\cdot)\right\Vert _{H^{m+3}(\T\times\R)}.
\]
 They are continuous. $\widetilde{q}_{\varepsilon_{0}}^{R}$ (respectively
$\widetilde{v}_{\varepsilon_{0}}^{R}$) can be extended in time as
long as the quantity~$\left\Vert \widetilde{q}_{\varepsilon_{0}}^{R}(t,\cdot)\right\Vert _{H_{(1,\varepsilon_{0})}^{m+3}}$
( respectively $\left\Vert \widetilde{v}_{\varepsilon_{0}}^{R}(t,\cdot)\right\Vert _{H^{m+3}}$)
remains bounded.

It follows that we can find $T\in]T_{\varepsilon_{0}}^{*},T_{\varepsilon_{0}}[$
such that for all time $t\in[0,T]$, $\left\Vert \widetilde{q}_{\varepsilon_{0}}^{R}(t,.)\right\Vert _{H_{(1,\varepsilon_{0})}^{m+3}}<2$
(respectively $\left\Vert \widetilde{v}_{\varepsilon_{0}}^{R}(t,.)\right\Vert _{H^{m+3}}<2$).
This is in contradiction with the definition of $T_{\varepsilon_{0}}^{*}$.
\end{proof}

\bigskip{}

\medskip{}

Assuming Propositions \ref{propestu} and \ref{lemestq} are satisfied,
Corollary \ref{propestglobal} holds. We go back to the initial unknowns
thanks to the Equation \eqref{eqnewunknown} and obtain Theorem \ref{propestimationreste}.

Thus in Subsection \ref{estvelocity}, we prove Proposition \ref{propestu}.
We take advantage of Inequalities \eqref{eqpropestu1} and~\eqref{eqpropestu2}
to obtain Proposition \ref{lemestq}.

\subsubsection{Notations}

Here we introduce some notations required for the proof of Propositions
\ref{propestu} and \ref{lemestq}.\\

Let $(q_{\varepsilon}^{a},v_{\varepsilon}^{a})$ an approximated solution
of order $N$ constructed on the interval $[0,1]$ according to Proposition
\ref{propconstructionvelocity} and Proposition \ref{propconstructionpressure}.
The family $\{(q_{\varepsilon}^{a},v_{\varepsilon}^{a})\}_{\varepsilon}$
lies in $\mathcal{H}_{1,(1,\varepsilon)}^{m+6,0}\times\mathcal{H}_{1}^{m+6,0}$
and satisfies Inequalities (\ref{eqapproxsolv}) and (\ref{eqapproxsolq2}).
We denote by $C_{a}$, $C_{\mathcal{L}^{a}}$ and $C_{\mathcal{L}}$
positive constants such that: 
\begin{equation}
\sup_{\varepsilon\in]0,1]}\quad\sup_{t\in[0,1]}\left\Vert v_{\varepsilon}^{a}(t,\cdot)\right\Vert _{H^{m+6}}<C_{a},\qquad\sup_{\varepsilon\in]0,1]}\quad\sup_{t\in[0,1]}\left\Vert q_{\varepsilon}^{a}(t,\cdot)\right\Vert _{H_{(1,\varepsilon)}^{m+6}}<C_{a},\label{eqhupvq}
\end{equation}
 and 
\begin{align}
\sup_{\varepsilon\in\,]0,1]}\sup_{t\in[0,1]}\quad\left\Vert \varepsilon^{-N}\,\mathcal{L}^{a}(\varepsilon,v_{\varepsilon}^{a})\right\Vert _{H^{m+6}(\T\times\R)}<C_{\mathcal{L}^{a}}\,,\label{eqlah}\\
\sup_{\varepsilon\in\,]0,1]}\sup_{t\in[0,1]}\quad\left\Vert \varepsilon^{-N}\,\mathcal{L}_{0}(\varepsilon,q_{\varepsilon}^{a},v_{\varepsilon}^{a})\right\Vert _{H_{(1,\varepsilon)}^{m+6}(\T\times\R)}<C_{\mathcal{L}}\,.\label{eql0h}
\end{align}

In what follows we adopt the conventions: 
\[
\forall\, m\in\N^{*},\qquad\left\Vert f\right\Vert _{H^{-m}}\equiv0,
\]
 to simplify all the statement.


\subsection{Energy estimates for the velocity}

\label{estvelocity}

In this section, we prove Proposition \ref{propestu} by induction
on the size of $m$ settings: 
\begin{equation}
\mathcal{P}(m)\,:\ \text{{\it " Proposition \ref{propestu} holds up to the integer \ensuremath{m}".}}\label{recurezrnl}
\end{equation}
 To go from $m$ to $m+1$, we prove an \textit{energy inequality}
for the velocity performing an energy method on Equation (\ref{equepsilon})
in the homogeneous Sobolev space $\overset{\circ}{H}{}^{m}$ (defined
page \pageref{asymplty}). \begin{lemma}\label{lemall}There exist
$c_{1}$ and $\varepsilon_{d}$ two positive constants (which only
depend on $m$) such that for any $J\in\llbracket0,m+3\rrbracket$,
there exist four positive constants $C_{p}$, $C_{J}^{1}$, $C_{J}^{2}$
and $C_{J}^{3}$ such that for all $\varepsilon\in]0,\varepsilon_{d}]$,
and for all time $t\in[0,T_{\varepsilon}^{*}]$, 
\begin{multline*}
\frac{1}{2}\partial_{t}\left\Vert \widetilde{v}_{\varepsilon}^{R}(t,\cdot)\right\Vert _{\overset{\circ}{H}{}^{J}(\T\times\R)}^{2}+c_{1}\sum_{|\alpha|=J}\Phi_{\varepsilon}(\nabla,\partial^{\alpha}\widetilde{v}_{\varepsilon}^{R})(t)\leq C_{J}^{1}\left\Vert \widetilde{v}_{\varepsilon}^{R}(t,\cdot)\right\Vert _{\overset{\circ}{H}{}^{J}(\T\times\R)}^{2}+(J+1)C_{p}\varepsilon^{2w_{m}}\\
+C_{J}^{2}\left\Vert \widetilde{v}_{\varepsilon}^{R}(t,\cdot)\right\Vert _{H^{J-1}(\T\times\R)}^{2}+C_{J}^{3}\sum_{|\alpha|=J}\left\Vert \varepsilon^{-(1-\delta_{\alpha_{1},0})}\partial^{\alpha}\widetilde{v}_{\varepsilon}^{R}(t,\cdot)\right\Vert _{L^{2}(\T\times\R)}^{2}.
\end{multline*}
 \end{lemma}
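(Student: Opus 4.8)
The plan is to run a standard energy method on Equation~\eqref{equepsilon} in the homogeneous space $\overset{\circ}{H}{}^{J}(\T\times\R)$: for each multi-index $\alpha$ with $|\alpha|=J$ apply $\partial^{\alpha}$, take the $L^{2}(\T\times\R)$ scalar product with $\partial^{\alpha}\widetilde{v}_{\varepsilon}^{R}$, and sum over $|\alpha|=J$. The term $\partial_{t}\widetilde{v}_{\varepsilon}^{R}$ gives $\tfrac12\partial_{t}\|\widetilde{v}_{\varepsilon}^{R}\|_{\overset{\circ}{H}{}^{J}}^{2}$. Since $\mathcal{Q}_{\varepsilon}$ has constant coefficients it commutes with $\partial^{\alpha}$, so the dissipative term contributes exactly $-\sum_{|\alpha|=J}\langle\mathcal{Q}_{\varepsilon}\partial^{\alpha}\widetilde{v}_{\varepsilon}^{R},\partial^{\alpha}\widetilde{v}_{\varepsilon}^{R}\rangle$, which by the coercivity estimate for $\mathcal{Q}_{\varepsilon}$ (Lemma~\ref{lemestQ}, Inequality~\eqref{controldissipationQ}, where the hypothesis $\lambda<4\mu$ enters) is bounded below by $2c_{1}\sum_{|\alpha|=J}\Phi_{\varepsilon}(\nabla,\partial^{\alpha}\widetilde{v}_{\varepsilon}^{R})$ for some $c_{1}>0$ depending only on $m$; the extra factor $2$ leaves room to absorb small contributions. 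Every other term must be thrown into the right-hand side, and the whole point of the bookkeeping is to charge each of them to exactly one of the boxes $C_{J}^{1}\|\widetilde{v}_{\varepsilon}^{R}\|_{\overset{\circ}{H}{}^{J}}^{2}$, $C_{J}^{2}\|\widetilde{v}_{\varepsilon}^{R}\|_{H^{J-1}}^{2}$, the order-$J$ singular box $C_{J}^{3}\sum_{|\alpha|=J}\|\varepsilon^{-(1-\delta_{\alpha_{1},0})}\partial^{\alpha}\widetilde{v}_{\varepsilon}^{R}\|_{L^{2}}^{2}$, the source box $(J+1)C_{p}\varepsilon^{2w_{m}}$, or the order-$(J+1)$ box $c_{1}\sum_{|\alpha|=J}\Phi_{\varepsilon}(\nabla,\partial^{\alpha}\widetilde{v}_{\varepsilon}^{R})$, which is then absorbed back into the left-hand side once $\varepsilon_{d}$ and the Young parameters are chosen small.

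For the singular transport $\varepsilon^{-1}h\,\partial_{y}\widetilde{v}_{\varepsilon}^{R}$, as $h=h(\theta)$ the leading piece $\varepsilon^{-1}\langle h\,\partial_{y}\partial^{\alpha}\widetilde{v}_{\varepsilon}^{R},\partial^{\alpha}\widetilde{v}_{\varepsilon}^{R}\rangle$ vanishes by integration by parts in $y$; in the commutator $[\partial^{\alpha},\varepsilon^{-1}h\,\partial_{y}]\widetilde{v}_{\varepsilon}^{R}$ each term loses a $\theta$-derivative off $\widetilde{v}_{\varepsilon}^{R}$, so such terms occur only when $\alpha_{1}\ge1$, and then the factor $\partial^{\alpha}\widetilde{v}_{\varepsilon}^{R}$ it is paired with carries at least one $\theta$-derivative, so that $\varepsilon^{-1}\|\partial^{\alpha}\widetilde{v}_{\varepsilon}^{R}\|_{L^{2}}$ sits in the $C_{J}^{3}$ box while the remaining factor (of order $\le J$) goes into $C_{J}^{1}$ or $C_{J}^{2}$ after a Young inequality. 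The genuinely delicate term is the coupling $\varepsilon^{-1}\partial_{\theta}h\,\widetilde{v}_{\varepsilon}^{1R}$ in the second component. Its leading contribution $\varepsilon^{-1}\langle(\partial_{\theta}h)\partial^{\alpha}\widetilde{v}_{\varepsilon}^{1R},\partial^{\alpha}\widetilde{v}_{\varepsilon}^{2R}\rangle$ is handled in two cases: if $\alpha_{1}\ge1$ the $\varepsilon^{-1}$ is charged to one of the (equally $\theta$-differentiated) factors, giving a $C_{J}^{3}$ term and a $C_{J}^{1}$ term; if $\alpha_{1}=0$ one integrates by parts in $\theta$ (writing $\partial_{\theta}h=\partial_{\theta}(h)$) to move a $\theta$-derivative onto the velocity, so that $\varepsilon^{-1}\partial_{\theta}\partial^{\alpha}\widetilde{v}_{\varepsilon}^{jR}$ becomes an order-$(J+1)$ quantity controlled by $\Phi_{\varepsilon}(\nabla,\partial^{\alpha}\widetilde{v}_{\varepsilon}^{jR})$, absorbed into $c_{1}\Phi_{\varepsilon}$, with an order-$J$ remainder in $C_{J}^{1}$. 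The commutator terms of $\varepsilon^{-1}\partial_{\theta}h\,\widetilde{v}_{\varepsilon}^{1R}$ again occur only for $\alpha_{1}\ge1$, so $\varepsilon^{-1}\|\partial^{\alpha}\widetilde{v}_{\varepsilon}^{2R}\|$ lands in $C_{J}^{3}$ and the lower-order factor (bounded in $L^{2}$, with $L^{\infty}$ control of $\widetilde{v}_{\varepsilon}^{1R}$ from \eqref{eqnormsup} where needed) in $C_{J}^{2}$.

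The quasilinear terms $\varepsilon^{M-2}(v_{\varepsilon}^{1a}\partial_{\theta}\widetilde{v}_{\varepsilon}^{R}+\varepsilon v_{\varepsilon}^{2a}\partial_{y}\widetilde{v}_{\varepsilon}^{R})$ and $\varepsilon^{M-2}(\widetilde{v}_{\varepsilon}^{1R}\partial_{\theta}+\widetilde{v}_{\varepsilon}^{2R}\partial_{y}){}^{t}(v_{\varepsilon}^{1a},\varepsilon v_{\varepsilon}^{2a})$ are treated by integrating by parts in the leading transport and using Moser/Gagliardo--Nirenberg estimates with the uniform bounds \eqref{eqhupvq} on $v_{\varepsilon}^{a}$ and $H^{J}\hookrightarrow L^{\infty}$ for $J\ge2$ (the finitely many small values $J\in\{0,1\}$ via $L^{4}$ interpolation); since $M-2>0$, any order-$(J+1)$ $\theta$-derivative of $\widetilde{v}_{\varepsilon}^{R}$ appears with $\varepsilon^{M-1}$ and is absorbed into $c_{1}\Phi_{\varepsilon}$ for $\varepsilon\le\varepsilon_{d}$, an order-$J$ one (after extracting $\varepsilon^{-1}$, with spare $\varepsilon^{M-1}$) goes into $C_{J}^{3}$, lower orders into $C_{J}^{1},C_{J}^{2}$. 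The same arguments handle the first two pieces of $\widetilde{S}_{\varepsilon}^{R,N}$ in \eqref{source12}: the quadratic term $-\varepsilon^{R+M-2}(\widetilde{v}_{\varepsilon}^{1R}\partial_{\theta}\widetilde{v}_{\varepsilon}^{R}+\widetilde{v}_{\varepsilon}^{2R}\partial_{y}\widetilde{v}_{\varepsilon}^{R})$ is absorbed using \eqref{eqnormsup} and the spare factor $\varepsilon^{R+M-2}$, and the consistency term $-\varepsilon^{N-R}(\varepsilon^{-N}\,{}^{t}(\mathcal{L}_{1}^{a},\varepsilon\mathcal{L}_{2}^{a})(\varepsilon,v_{\varepsilon}^{a}))$ is $O(\varepsilon^{N-R})$ in every $H^{J}$, $J\le m+6$, by \eqref{eqlah}; as $w_{m}\le N-R$ a Young inequality splits it between $(J+1)C_{p}\varepsilon^{2w_{m}}$ and $C_{J}^{1}$.

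The last and subtlest piece, for the counting of powers of $\varepsilon$, is the coupling contribution $-\tfrac{C}{2}\varepsilon^{2\nu-M-R-2}\,{}^{t}(\partial_{\theta},\varepsilon^{2}\partial_{y})(q_{\varepsilon}^{a}+\varepsilon^{R}\widetilde{q}_{\varepsilon}^{R})^{2}$ inside $\widetilde{S}_{\varepsilon}^{R,N}$. After applying $\partial^{\alpha}$ one integrates by parts (in $\theta$ for the first component, in $y$ for the second) to move the derivative onto $\partial^{\alpha}\widetilde{v}_{\varepsilon}^{R}$; the velocity factor is then of order $J+1$ with a $\theta$- (resp.\ $y$-) derivative and is controlled by $\Phi_{\varepsilon}(\nabla,\partial^{\alpha}\widetilde{v}_{\varepsilon}^{R})$, while the pressure factor $\partial^{\alpha}(q_{\varepsilon}^{a}+\varepsilon^{R}\widetilde{q}_{\varepsilon}^{R})^{2}$ is estimated in $L^{2}$ only through anisotropic norms, using \eqref{eqhupvq}, the bound $\|\widetilde{q}_{\varepsilon}^{R}\|_{H^{m+3}_{(1,\varepsilon)}}\le2$ on $[0,T_{\varepsilon}^{*}]$, the $L^{\infty}$-estimate \eqref{eqinfininorm}, and the conversion \eqref{eqrefnorm}, which costs $\varepsilon^{-J}$. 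A Young inequality then absorbs the velocity part into $c_{1}\Phi_{\varepsilon}$ and leaves powers $\varepsilon^{4\nu-2M-2R-2J-3}$ (from $(q_{\varepsilon}^{a})^{2}$), $\varepsilon^{4\nu-2M-2J-3}$ (from $q_{\varepsilon}^{a}\widetilde{q}_{\varepsilon}^{R}$, the gain being $\varepsilon^{2R}$) and a still larger one (from $(\widetilde{q}_{\varepsilon}^{R})^{2}$); condition \eqref{eqws} — precisely the "$m+3$" and the "$5/2$" in $w_{m}$ — is exactly what makes each of these $\le\varepsilon^{2w_{m}}$, so the coupling piece lands in $(J+1)C_{p}\varepsilon^{2w_{m}}$. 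Collecting all estimates, choosing the Young parameters small relative to $c_{1}$ and $\varepsilon_{d}$ small enough that all extracted positive powers of $\varepsilon$ are $\le1$, yields the claim. The main obstacle throughout is to keep every $\varepsilon^{-1}$ attached to a $\theta$-differentiated factor — so that it is charged to the $C_{J}^{3}$ box, or, after one integration by parts, to the dissipation $\Phi_{\varepsilon}$ — rather than to a pure $\partial_{y}$ factor; and underneath this lies the fact, encoded in \eqref{controldissipationQ}, that the post-change-of-variables operator $\mathcal{Q}_{\varepsilon}$ still enjoys the coercivity \eqref{controldissipation0} when $\lambda<4\mu$.
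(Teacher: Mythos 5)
Your proposal is correct and follows essentially the same route as the paper's proof: an $\overset{\circ}{H}{}^{J}$ energy estimate on \eqref{equepsilon}, coercivity of $\mathcal{Q}_{\varepsilon}$ under $\lambda<4\mu$ (Lemma \ref{lemestQ}), antisymmetry plus commutator bookkeeping for the singular transport and for $\varepsilon^{-1}\partial_{\theta}h\,\widetilde{v}_{\varepsilon}^{1R}$ (the paper's Lemmas \ref{lemH} and \ref{lemestC}), the three-way splitting of $\widetilde{S}_{\varepsilon}^{R,N}$ with the pressure coupling handled by integration by parts and the anisotropic-to-classical norm conversion \eqref{eqrefnorm} under condition \eqref{eqws} (Lemma \ref{controlG}), and final absorption into $\sum_{|\alpha|=J}\Phi_{\varepsilon}$ after summing over $|\alpha|=J$. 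The only differences (the case split on $\alpha_{1}$ for the leading $\mathcal{C}$ term instead of a uniform integration by parts in $\theta$, and keeping an extra $\varepsilon$ with the velocity factor in the coupling term) are cosmetic.
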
 Subsections \ref{substepone}-$\ldots$-\ref{sublaststep}
are dedicated to the proof of Lemma \ref{lemall}. Then in Subsection
\ref{subsectionpropestu}, we finally prove Proposition \ref{propestu}.

First, consider a multi-index $\alpha\in\N^{2}$ and differentiate $\alpha$
times Equation $(\ref{equepsilon})$. Then we multiply by $\partial^{\alpha}\widetilde{v}_{\varepsilon}^{R}$
and integrate (with respect to the space variables $\theta$ and $y$),
\begin{multline}\label{eqenergyplouche}
\frac{1}{2}\partial_{t}\left\Vert \partial^{\alpha}\widetilde{v}_{\varepsilon}^{R}\right\Vert _{L^{2}}^{2}-\left\langle \partial^{\alpha}(\mathcal{Q}_{\varepsilon}\widetilde{v}_{\varepsilon}^{R}),\partial^{\alpha}\widetilde{v}_{\varepsilon}^{R}\right\rangle =\left\langle \partial^{\alpha}S_{\varepsilon}^{R,N},\partial^{\alpha}\widetilde{v}_{\varepsilon}^{R}\right\rangle -\left\langle \partial^{\alpha}\,(\mathcal{A}\,\widetilde{v}_{\varepsilon}^{R}),\partial^{\alpha}\widetilde{v}_{\varepsilon}^{R}\right\rangle \\
-\left\langle \partial^{\alpha}\,(\mathcal{B}\widetilde{v}_{\varepsilon}^{R}),\partial^{\alpha}\widetilde{v}_{\varepsilon}^{R}\right\rangle -\left\langle \partial^{\alpha}\,(\mathcal{C}\widetilde{v}_{\varepsilon}^{R}),\partial^{\alpha}\widetilde{v}_{\varepsilon}^{R}\right\rangle -\left\langle \partial^{\alpha}\,(\mathcal{H}\widetilde{v}_{\varepsilon}^{R}),\partial^{\alpha}\widetilde{v}_{\varepsilon}^{R}\right\rangle ,
\end{multline}
 where the operators $\mathcal{H}$, $\mathcal{A}$, $\mathcal{B}$
and $\mathcal{C}$ are defined as follows: 
\begin{align*}
 & \mathcal{H}\,\widetilde{v}_{\varepsilon}^{R}:=\varepsilon^{-1}h\,\partial_{y}\widetilde{v}_{\varepsilon}^{R},\qquad\quad\ \mathcal{A}\,\widetilde{v}_{\varepsilon}^{R}:=\varepsilon^{M-2}\left(\widetilde{v}_{\varepsilon}^{1R}\partial_{\theta}+\widetilde{v}_{\varepsilon}^{2R}\partial_{y}\right)\,{}^{t}(v_{\varepsilon}^{1a},\varepsilon\, v_{\varepsilon}^{2a}),\\
 & \mathcal{C}\,\widetilde{v}_{\varepsilon}^{R}:=^{t}\left(0,\varepsilon^{-1}\partial_{\theta}h\,\widetilde{v}_{\varepsilon}^{1R}\right),\quad\mathcal{B}\,\widetilde{v}_{\varepsilon}^{R}:=\varepsilon^{M-2}\left(v_{\varepsilon}^{1a}\partial_{\theta}\widetilde{v}_{\varepsilon}^{R}+\varepsilon v_{\varepsilon}^{2a}\partial_{y}\widetilde{v}_{\varepsilon}^{R}\right).
\end{align*}

The strategy of the estimates is the following. We first prove the
coercive estimate over $\mathcal{Q}_{\varepsilon}$ (see Lemma \ref{lemestQ}
in Subsection \ref{substepone}). Then we compute all the singular
contribution (with respect to the regularity and to $\varepsilon$)
in Lemmas \ref{controlG}-$\ldots$-\ref{lemestC} that we absorb
thanks to $\mathcal{Q}_{\varepsilon}$ (see Subsection \ref{substeptwo}).
Finally, we simply bound the remaining terms thanks to Lemma \ref{lemA}
in Subsection \ref{sublaststep}.

\subsubsection{Step one : Coercive estimate for the dissipation}

\label{substepone}

We start by estimating the term involving the dissipation $\mathcal{Q}_{\varepsilon}$.
\begin{lemma} \label{lemestQ} We recall that $\mu>0$. Select constant
$\lambda_{}$ and $\mu$ which satisfy 
\begin{equation}
\lambda_{}<4\,\mu\,.\label{eqmueps}
\end{equation}
 There exists $c_{0}>0$, there exists $\varepsilon_{d}\in]0,1]$,
such that for any function $f\in H^{1}(\T\times\R,\R^{2})$: 
\begin{equation}
\forall\,\varepsilon\in]0,\varepsilon_{d}],\qquad-\left\langle \mathcal{Q}_{\varepsilon}f,f\right\rangle \geq c_{0}\ \Phi_{\varepsilon}(\nabla,f)\,.\label{controldissipationQ}
\end{equation}
 where $\Phi_{\varepsilon}$ is defined in Equation \eqref{controldissipation0}.
\end{lemma}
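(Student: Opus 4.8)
The plan is to expand $-\langle\mathcal{Q}_\varepsilon f,f\rangle$ by integrating by parts in $\theta$ and in $y$, so as to split it into a manifestly coercive diagonal contribution and a set of off-diagonal cross terms that then have to be absorbed by the dissipation itself. Writing $f={}^t(f^1,f^2)$ and using $\int_{\T\times\R}$, the diagonal second-order part of $\mathcal{Q}_\varepsilon$ (the $\varepsilon^{-2}\partial_{\theta\theta}$ piece together with $\partial_{yy}$), which carries the coefficient $\mu$, produces after one integration by parts in each variable exactly a positive multiple of $\Phi_\varepsilon(\nabla,f)=\|\varepsilon^{-1}\partial_\theta f\|_{L^2}^2+\|\partial_y f\|_{L^2}^2$, plus a further non-negative term of order $\varepsilon$. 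The delicate part is the coupling between $f^1$ and $f^2$: the $(1,2)$ and $(2,1)$ entries of $\mathcal{Q}_\varepsilon$ carry \emph{different} powers of $\varepsilon$ (this is precisely the non-self-adjoint feature of $\mathcal{Q}_\varepsilon$ stressed in the introduction, inherited from the singular change of unknowns applied to $\widetilde{\mathcal P}_\varepsilon$), so after moving the $\theta$- and $y$-derivatives onto $f^1$ and $f^2$ they contribute a term of schematic form $\lambda\,c(\varepsilon)\,\langle\varepsilon^{-1}\partial_\theta f^1,\partial_y f^2\rangle$ with $c(\varepsilon)$ bounded as $\varepsilon\to0$, and this term has no sign.

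First I would perform the integrations by parts carefully and gather everything into an inequality of the form $-\langle\mathcal{Q}_\varepsilon f,f\rangle\ge \mu\,\Phi_\varepsilon(\nabla,f)-\lambda(1+O(\varepsilon))\,\|\varepsilon^{-1}\partial_\theta f^1\|_{L^2}\,\|\partial_y f^2\|_{L^2}$, the remaining genuinely $O(\varepsilon^2)$ contributions being discarded once $\varepsilon$ is small. Then I would apply Young's inequality to the cross term with a weight adapted to the anisotropy, $2ab\le s a^2+s^{-1}b^2$, with $s$ ranging in an interval depending only on $\mu$ and $\lambda$: this is exactly where the hypothesis $\lambda<4\mu$ of \eqref{eqmueps} enters, since it is the condition under which the resulting $2\times2$ quadratic form in $(\|\varepsilon^{-1}\partial_\theta f^1\|_{L^2},\|\partial_y f^2\|_{L^2})$ is positive definite, uniformly for $\varepsilon\le\varepsilon_d$ once $\varepsilon_d\in\,]0,1]$ is chosen small enough that the $O(\varepsilon)$ and $O(\varepsilon^2)$ corrections cannot spoil this positivity. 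Keeping track of the constants then produces $c_0>0$ and $\varepsilon_d$ such that \eqref{controldissipationQ} holds for all $\varepsilon\in\,]0,\varepsilon_d]$ and all $f\in H^1(\T\times\R;\R^2)$; a density argument passes from smooth $f$ to $H^1$.

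The main obstacle is precisely this cross term. For a self-adjoint dissipation one would get coercivity for free, but here the mismatch of $\varepsilon$-powers between the two off-diagonal entries makes the coupling, after integration by parts, \emph{a priori} as strong (of order $O(1)$) as the coercive diagonal contribution it must be dominated by, so only the quantitative gap $\lambda<4\mu$ allows the anisotropic Young inequality to close the estimate. If this gap fails, there is no way by this argument to keep the coupling from overwhelming the degenerate dissipation $\mu\varepsilon^{-2}\partial_{\theta\theta}$, and—as noted right after the statement—the whole energy-estimate mechanism would collapse. A secondary, purely routine point is to check that all $O(\varepsilon)$ and $O(\varepsilon^2)$ remainders are absorbed simply by shrinking $\varepsilon_d$, which requires no new idea.
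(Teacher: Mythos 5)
There is a genuine quantitative gap: the step where you invoke the weighted Young inequality does not give the stated threshold $\lambda<4\mu$, but only $\lambda<2\mu$. Concretely, after the integrations by parts the coupling contribution is
\begin{equation*}
\lambda\Bigl(\tfrac{1}{\varepsilon}+\varepsilon\Bigr)\int_{\T\times\R}\partial_\theta f^1\,\partial_y f^2\,d\theta\,dy\,,
\end{equation*}
which you bound by $\lambda(1+\varepsilon^2)\,a\,b$ with $a=\|\varepsilon^{-1}\partial_\theta f^1\|_{L^2}$, $b=\|\partial_y f^2\|_{L^2}$, and you propose to absorb it into the diagonal terms in these \emph{two} norms, whose coefficient is $\mu+O(\varepsilon)$. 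But a form $\mu a^2+\mu b^2-\lambda(1+\varepsilon^2)ab$ is nonnegative iff $\lambda(1+\varepsilon^2)\le 2\mu$, and no choice of weight $s$ in $2ab\le sa^2+s^{-1}b^2$ can do better, since absorbing both halves forces $\tfrac{\lambda(1+\varepsilon^2)}{2}s\le\mu$ and $\tfrac{\lambda(1+\varepsilon^2)}{2}s^{-1}\le\mu$, hence $\lambda(1+\varepsilon^2)\le2\mu$. So as written your argument proves the lemma only under the stronger hypothesis $\lambda<2\mu$ and therefore does not establish the statement. The missing idea is that the cross term must be shared among \emph{all four} weighted gradient norms entering $\Phi_\varepsilon$, not just the pair $(a,b)$: this is exactly where the extra factor $2$, hence the sharp constant $4\mu$, comes from.

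Two ways to close the gap. The paper works on the Fourier side: by Parseval, $-\langle\mathcal{Q}_\varepsilon f,f\rangle$ is the quadratic form of a real symmetric symbol $Q_\varepsilon(k,\xi)$; its smallest eigenvalue is homogeneous of degree two in $(\varepsilon^{-1}k,\xi)$ and converges uniformly on the unit sphere to $\mu(x^2+y^2)-\tfrac{\lambda}{2}|x||y|$, which is positive definite precisely when $\tfrac{\lambda}{2}<2\mu$, i.e. $\lambda<4\mu$; shrinking $\varepsilon_d$ then absorbs the $O(\varepsilon)$ corrections. Alternatively, your physical-space route can be repaired by symmetrizing the cross term before estimating: since $\int\partial_\theta f^1\,\partial_y f^2=\int\partial_y f^1\,\partial_\theta f^2$ (by density, two integrations by parts), one can write it as the half-sum of these two expressions and bound it by
\begin{equation*}
\frac{\lambda(1+\varepsilon^2)}{4}\Bigl(\|\varepsilon^{-1}\partial_\theta f^1\|_{L^2}^2+\|\partial_y f^2\|_{L^2}^2+\|\partial_y f^1\|_{L^2}^2+\|\varepsilon^{-1}\partial_\theta f^2\|_{L^2}^2\Bigr)=\frac{\lambda(1+\varepsilon^2)}{4}\,\Phi_\varepsilon(\nabla,f)\,,
\end{equation*}
which the diagonal term $\mu\,\Phi_\varepsilon(\nabla,f)$ absorbs, for $\varepsilon_d$ small, exactly when $\lambda<4\mu$. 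With that modification (and the routine density argument you already mention) your proof becomes correct; without it, the decisive claim that $\lambda<4\mu$ makes your $2\times2$ form positive definite is false.
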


\begin{proof}[Proof of Lemma \ref{lemestQ}] Let $f\in\, H^{1}(\T\times\R,\R^{2})$.
We decompose $f$ in Fourier series (in $\theta$) into: 
\[
f(\theta,y):=\sum_{k\in\Z}f_{k}(y)e^{ik\theta}\,,
\]
 then we use a Fourier transform in the variable~$y$. By the Parseval
equality, up to a constant (that we forget here), 
\[
-\left\langle \mathcal{Q}_{\varepsilon}f,f\right\rangle =\sum_{k\in\Z}\int_{\R}\widehat{f_{k}}(\xi)Q_{\varepsilon}(k,\xi)\widehat{f_{k}}(\xi)d\xi\,,\qquad\widehat{f_{k}}(\xi):=\int_{\R}e^{i\, y.\xi}f_{k}(y)\, dy\,,
\]
 where ${Q}_{\varepsilon}$ is defined as: \vspace{-0.2cm}
 
\[
{Q}_{\varepsilon}(k,\xi):=\begin{pmatrix}(\mu+\lambda{\varepsilon})\frac{k^{2}}{\varepsilon^{2}}+\xi^{2} & \frac{\lambda{\varepsilon}}{2}\left(1+\frac{1}{\varepsilon^{2}}\right)k\,\xi\\
\frac{\lambda{\varepsilon}}{2}\left(1+\frac{1}{\varepsilon^{2}}\right)k\,\xi & \mu\frac{k^{2}}{\varepsilon^{2}}+(\mu+\lambda{\varepsilon})\xi^{2}
\end{pmatrix},\quad k\in\Z,\quad\xi\in\R.
\]
 We interpret $\left\langle \mathcal{Q}_{\varepsilon}f,f\right\rangle $
as a quadratic form in the variables $\widehat{f}_{k}^{1}$ and $\widehat{f}_{k}^{2}$.
In this way, to prove Inequality (\ref{controldissipationQ}) we show
that there exist $\varepsilon_{d}$ and $c_{0}$ two positive constants
such that for all $g\in H^{1}(\R,\R^{2})$: 
\begin{equation}
\forall\, k\in\Z,\quad\forall\,\xi\in\R,\qquad\widehat{g}(\xi){Q}_{\varepsilon}(k,\xi)\widehat{g}(\xi)\geq c_{0}\left(\varepsilon^{-2}\, k^{2}+\xi^{2}\right)\ \left(\widehat{g}_{1}^{2}(\xi)+\widehat{g}_{2}^{2}(\xi)\right)\,.\label{inegQkxi}
\end{equation}
 At fixed $(k,\xi)\in\Z\times\R$, $Q_{\varepsilon}(k,\xi)$ is a
diagonalizable matrix since real and symmetric. We compute its eigenvalues:
\begin{align*}
\mu_{\varepsilon}^{1}(k,\xi):=\frac{2\mu+\lambda{\varepsilon}}{2}((\varepsilon^{-1}k)^{2}+\xi^{2})+\frac{\lambda{\varepsilon}}{2}\sqrt{((\varepsilon^{-1}k)^{4}+\xi^{4})+(\varepsilon(\varepsilon^{-1}k)\xi)^{2}+\varepsilon^{-2}\left(\varepsilon^{-1}k\xi\right)^{2}},\\
\mu_{\varepsilon}^{2}(k,\xi):=\frac{2\mu+\lambda{\varepsilon}}{2}((\varepsilon^{-1}k)^{2}+\xi^{2})-\frac{\lambda{\varepsilon}}{2}\sqrt{((\varepsilon^{-1}k)^{4}+\xi^{4})+(\varepsilon(\varepsilon^{-1}k)\xi)^{2}+\varepsilon^{-2}\left(\varepsilon^{-1}k\xi\right)^{2}}\,.
\end{align*}
 In the end of the proof, we show that for any $(k,\xi)\in\Z\times\R$
we have 
\[
\mu_{\varepsilon}^{2}(k,\xi)\geq c_{0}\left(\varepsilon^{-2}\, k^{2}+\xi^{2}\right)\ \left(\widehat{g}_{1}^{2}(\xi)+\widehat{g}_{2}^{2}(\xi)\right).
\]
 Since $\mu_{\varepsilon}^{1}\geq\mu_{\varepsilon}^{2}$, we clearly
obtain Inequality \eqref{inegQkxi}.\\

$\circ$ We define the function $\mu_{\varepsilon}:\R^{2}\longrightarrow\R$
by: 
\[
\mu_{\varepsilon}(x,y):=\frac{2\mu+\lambda{\varepsilon}}{2}(x^{2}+y^{2})-\frac{\lambda{\varepsilon}}{2}\sqrt{(x^{4}+y^{4})+(\varepsilon\, x\, y)^{2}+\varepsilon^{-2}\left(x\, y\right)^{2}}.
\]
 Then there exist $\varepsilon_{d}$ and $c_{0}$ (independent of
$\varepsilon$) such that 
\begin{equation}
\forall(x,y)\in\R^{2},\qquad\mu_{\varepsilon}(x,y)\geq c_{0}(x^{2}+y^{2}).\label{eqmachin}
\end{equation}

-First of all the function $\mu_{\varepsilon}$ is homogeneous of
order 2 in the sense that it satisfies for all $\alpha\in\R$: 
\[
\forall\,(x,y)\in\R\times\R,\qquad\mu_{\varepsilon}(\alpha\, x,\alpha\, y)=\alpha^{2}\mu_{\varepsilon}(x,y)\,.
\]
 Thus, we prove \eqref{eqmachin} on the restricted set $(x,y)\in\Sp^{1}$
the sphere of center $0$ and radius $1$.

-We expand $\mu_{\varepsilon}$ as $\varepsilon$ goes to $0^{+}$:
\[
\mu_{\varepsilon}(x,y)=(x^{2}+y^{2})-\frac{\lambda_{}}{2}|x||y|+O(\varepsilon),
\]
 where $O(\varepsilon)$ is uniform in $(x,y)\in\Sp^{1}$. Let us
recall that we have: 
\[
\forall(x,y)\in\R^{2},\qquad(x^{2}+y^{2})-c|x||y|\geq0\qquad\Longleftrightarrow\qquad c<2.
\]
 That is the case if and only if $\lambda_{}<4\mu$, \textit{i.e.}
assumption \eqref{eqmueps} is satisfied. We get an uniform bound
of $\mu_{\varepsilon}$ (in $\varepsilon$). Finally, there exist
$\varepsilon_{d}$ and $c_{0}$ two positive constants such that Inequality
(\ref{eqmachin}) is satisfied.

$\circ$ Plugging $x=\varepsilon^{-1}k$ and $y=\xi$ in Inequality
(\ref{eqmachin}), we obtain for all $\varepsilon\in]0,\varepsilon_{d}]$:
\[
\forall\,(k,\xi)\in\Z\times\R\,,\qquad\mu_{\varepsilon}^{2}(k,\xi)\geq c_{0}\,((\varepsilon^{-1}k)^{2}+\xi^{2}).
\]
 We deduce that 
\[
\widehat{f_{k}}(\xi)Q_{\varepsilon}(k,\xi)\widehat{f_{k}}(\xi)\geq c_{0}\,((\varepsilon^{-1}k)^{2}+\xi^{2})\|\widehat{f_{k}}(\xi)\|^{2}.
\]
 Finally, summing over $k\in\Z$ and integrating with respect to $\xi$,
we have: 
\begin{align*}
-\left\langle \mathcal{Q}_{\varepsilon}f,f\right\rangle \geq c_{0}\sum_{k\in\Z}\int_{\R}((\varepsilon^{-1}k)^{2}+\xi^{2})\|\widehat{f_{k}}(\xi)\|^{2}d\xi=c_{0}\,\Phi_{\varepsilon}(\nabla,f).
\end{align*}
 \end{proof}

\bigskip{}

\begin{remark} The singular change of unknowns desingularizes the
hyperbolic part of system \eqref{eqepsilon} to a cost on the parabolic
part $\mathcal{P}_{\varepsilon}$. $\mathcal{Q}_{\varepsilon}$ is
dissipative enough assuming the $\Delta$-part of the dissipation
is strong enough with respect to the $\nabla\, div$-part ($\lambda<4\mu$).
\end{remark} \begin{remark} The Inequality \eqref{controldissipationQ}
can be proved for a more general family of dissipation. We can replace
$\varepsilon\lambda$ in the dissipation $\mathcal{Q}_{\varepsilon}$
by some coefficient $\lambda_{\varepsilon}$ going to $0$ when $\varepsilon$
approaches $0$. Then replacing assumption \eqref{eqmueps} by: 
\[
\limsup_{\varepsilon\rightarrow0^{+}}\frac{\lambda_{\varepsilon}}{\varepsilon}<4\mu.
\]
 Inequality \eqref{controldissipationQ} still holds. \end{remark}
From Equation \eqref{eqenergyplouche} and the preceding lemma we
deduce that for all $\varepsilon\in]0,\varepsilon_{d}]$ and for all
time $t\in[0,T_{\varepsilon}^{*}]$: 
\begin{multline}\label{eqenergyplouche2}
\frac{1}{2}\partial_{t}\left\Vert \partial^{\alpha}\widetilde{v}_{\varepsilon}^{R}\right\Vert _{L^{2}}^{2}+c_{0}\Phi_{\varepsilon}(\nabla,\widetilde{v}_{\varepsilon}^{R})\leq\left\langle \partial^{\alpha}S_{\varepsilon}^{R,N},\partial^{\alpha}\widetilde{v}_{\varepsilon}^{R}\right\rangle -\left\langle \partial^{\alpha}\,(\mathcal{A}\,\widetilde{v}_{\varepsilon}^{R}),\partial^{\alpha}\widetilde{v}_{\varepsilon}^{R}\right\rangle \\
-\left\langle \partial^{\alpha}\,(\mathcal{B}\widetilde{v}_{\varepsilon}^{R}),\partial^{\alpha}\widetilde{v}_{\varepsilon}^{R}\right\rangle -\left\langle \partial^{\alpha}\,(\mathcal{C}\widetilde{v}_{\varepsilon}^{R}),\partial^{\alpha}\widetilde{v}_{\varepsilon}^{R}\right\rangle -\left\langle \partial^{\alpha}\,(\mathcal{H}\widetilde{v}_{\varepsilon}^{R}),\partial^{\alpha}\widetilde{v}_{\varepsilon}^{R}\right\rangle .
\end{multline}

\subsubsection{Step two : estimates over the singular terms}

\label{substeptwo}

We now try to absorb the singular terms in \eqref{eqenergyplouche},
thanks to $\Phi_{\varepsilon}$. Singular terms are of two types: 
\begin{description}
\item [{\quad{}}] -the operator only singular with respect to the regularity,
$S_{\varepsilon}^{R,N}$, 
\item [{\quad{}}] -the operators at least singular with respect of $\varepsilon$,
$\mathcal{H}$ and $\mathcal{C}$. 
\end{description}

\paragraph{Contribution of $S_{\varepsilon}^{R,N}$.}

We start by estimating the term $S_{\varepsilon}^{R,N}$. It contains
nonlinear terms together with the coupling terms \eqref{eqcoupled}.
What can be underlined is that this term \eqref{eqcoupled} is nonlinear
(only) singular with respect to the number of derivatives acting on
the pressure. This is a problem since the dissipation only regularizes
the velocity. When possible, we pass those extra-derivatives onto
the velocity (thanks to an integration by parts). One can prove:

\begin{lemma}\label{controlG}{[}Control over $S_{\varepsilon}^{R,N}${]}
Select a multi-index $\alpha\in\N^{2}$ with length smaller than~$m+~3$
and a positive constant $C_{S}$. There exist $C_{S}^{1},\ C_{p}$
two positive constants such that for any~$\varepsilon\in~]0,1]$,
for any time $t\in~[0,T_{\varepsilon}^{*}]$, 
\begin{align*}
\left|\left\langle \partial^{\alpha}S_{\varepsilon}^{R,N},\partial^{\alpha}v_{\varepsilon}^{R}\right\rangle \right|(t,\cdot)\leq C_{S}\left\Vert \widetilde{v}_{\varepsilon}^{R}(t,\cdot)\right\Vert _{\overset{\circ}{H}{}^{|\alpha|+1}(\T\times\R)}^{2}+C_{S}^{1}\left\Vert \widetilde{v}_{\varepsilon}^{R}(t,\cdot)\right\Vert _{\overset{\circ}{H}{}^{|\alpha|}(\T\times\R)}^{2}+C_{p}\ \varepsilon^{2w_{m}}\,.
\end{align*}
 \end{lemma}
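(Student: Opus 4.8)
The plan is to split $\widetilde S_\varepsilon^{R,N}$ as given in \eqref{source12} into its three natural pieces,
\[
\widetilde S_\varepsilon^{R,N}=S_\varepsilon^{(1)}+S_\varepsilon^{(2)}+S_\varepsilon^{(3)},
\]
where $S_\varepsilon^{(1)}:=-\varepsilon^{N-R}\bigl(\varepsilon^{-N}\,{}^t(\mathcal{L}_1^a(\varepsilon,v_\varepsilon^a),\varepsilon\,\mathcal{L}_2^a(\varepsilon,v_\varepsilon^a))\bigr)$ is the consistency error, $S_\varepsilon^{(2)}:=-\varepsilon^{R+M-2}\bigl(\widetilde v_\varepsilon^{1R}\partial_\theta\widetilde v_\varepsilon^R+\widetilde v_\varepsilon^{2R}\partial_y\widetilde v_\varepsilon^R\bigr)$ is the quadratic self-interaction of the remainder, and $S_\varepsilon^{(3)}:=-\tfrac{C}{2}\varepsilon^{2\nu-M-R-2}\,{}^t(\partial_\theta,\varepsilon^2\partial_y)(q_\varepsilon^a+\varepsilon^R\widetilde q_\varepsilon^R)^2$ is the pressure coupling term \eqref{eqcoupled}. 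For each $i$ I would bound $|\langle\partial^\alpha S_\varepsilon^{(i)},\partial^\alpha\widetilde v_\varepsilon^R\rangle|$ by Cauchy--Schwarz followed by Young's inequality, distributing the outcome among the three terms on the right-hand side of the claimed bound. In every case the $C_p\,\varepsilon^{2w_m}$ contribution comes out of Young as a pure power $\varepsilon^{2\kappa}$ with $\kappa\ge w_m$, the inequality $\kappa\ge w_m$ being exactly one of the two bounds packed into the definition \eqref{eqws} of $w_m$; the validity of the various $L^\infty$-controls used below rests on the fact that we work on $[0,T_\varepsilon^*]$, where \eqref{eqnormsup} holds.

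For $S_\varepsilon^{(1)}$: since $|\alpha|\le m+3$ and $\varepsilon^{-N}\mathcal{L}^a(\varepsilon,v_\varepsilon^a)$ is bounded in $H^{m+6}$ by $C_{\mathcal{L}^a}$ according to \eqref{eqlah}, Cauchy--Schwarz gives $|\langle\partial^\alpha S_\varepsilon^{(1)},\partial^\alpha\widetilde v_\varepsilon^R\rangle|\le\varepsilon^{N-R}C_{\mathcal{L}^a}\|\widetilde v_\varepsilon^R\|_{\overset{\circ}{H}{}^{|\alpha|}}$, and Young yields $\tfrac12\varepsilon^{2(N-R)}+\tfrac12 C_{\mathcal{L}^a}^2\|\widetilde v_\varepsilon^R\|_{\overset{\circ}{H}{}^{|\alpha|}}^2$ with $2(N-R)\ge 2w_m$. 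For $S_\varepsilon^{(2)}$: since $M\ge7/2$ the prefactor $\varepsilon^{R+M-2}$ is $\le 1$, and on $[0,T_\varepsilon^*]$ one has $\|\widetilde v_\varepsilon^R\|_{W^{m+1,\infty}}\le2$ by \eqref{eqnormsup}. I would use the commutator trick: write $\langle\partial^\alpha(\widetilde v_\varepsilon^{1R}\partial_\theta\widetilde v_\varepsilon^R),\partial^\alpha\widetilde v_\varepsilon^R\rangle=\langle\widetilde v_\varepsilon^{1R}\partial_\theta\partial^\alpha\widetilde v_\varepsilon^R,\partial^\alpha\widetilde v_\varepsilon^R\rangle+\langle[\partial^\alpha,\widetilde v_\varepsilon^{1R}]\partial_\theta\widetilde v_\varepsilon^R,\partial^\alpha\widetilde v_\varepsilon^R\rangle$; an integration by parts in $\theta$ turns the first bracket into $-\tfrac12\langle\partial_\theta\widetilde v_\varepsilon^{1R},|\partial^\alpha\widetilde v_\varepsilon^R|^2\rangle$, which is $\le\|\widetilde v_\varepsilon^R\|_{\overset{\circ}{H}{}^{|\alpha|}}^2$, while the homogeneous Moser commutator estimate gives $\|[\partial^\alpha,\widetilde v_\varepsilon^{1R}]\partial_\theta\widetilde v_\varepsilon^R\|_{L^2}\lesssim\|\nabla\widetilde v_\varepsilon^{1R}\|_{L^\infty}\|\partial_\theta\widetilde v_\varepsilon^R\|_{\overset{\circ}{H}{}^{|\alpha|-1}}+\|\widetilde v_\varepsilon^{1R}\|_{\overset{\circ}{H}{}^{|\alpha|}}\|\partial_\theta\widetilde v_\varepsilon^R\|_{L^\infty}\lesssim\|\widetilde v_\varepsilon^R\|_{\overset{\circ}{H}{}^{|\alpha|}}$ (the $L^\infty$ factors being $\le2$). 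So $S_\varepsilon^{(2)}$ feeds only the $\overset{\circ}{H}{}^{|\alpha|}$ term; the $\partial_y$ half is identical.

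The main obstacle is $S_\varepsilon^{(3)}$, which is singular in the number of $\theta$-derivatives landing on the pressure, a quantity controlled only in the anisotropic spaces. The remedy is to integrate by parts so that $\partial_\theta$ (resp. $\partial_y$) is carried onto $\widetilde v_\varepsilon^R$: for the first component,
\[
\langle\partial^\alpha S_\varepsilon^{(3)},\partial^\alpha\widetilde v_\varepsilon^{1R}\rangle=\tfrac C2\varepsilon^{2\nu-M-R-2}\langle\partial^\alpha(q_\varepsilon^a+\varepsilon^R\widetilde q_\varepsilon^R)^2,\partial_\theta\partial^\alpha\widetilde v_\varepsilon^{1R}\rangle,
\]
and $\|\partial_\theta\partial^\alpha\widetilde v_\varepsilon^{1R}\|_{L^2}\le\|\widetilde v_\varepsilon^R\|_{\overset{\circ}{H}{}^{|\alpha|+1}}$ — this is precisely the $\overset{\circ}{H}{}^{|\alpha|+1}$ term, later absorbed by the coercive dissipation $c_1\Phi_\varepsilon$ of Lemma \ref{lemall} once $C_S<c_1$. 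It remains to control $\|\partial^\alpha(q_\varepsilon^a+\varepsilon^R\widetilde q_\varepsilon^R)^2\|_{L^2}\lesssim\|q_\varepsilon^a+\varepsilon^R\widetilde q_\varepsilon^R\|_{L^\infty}\|q_\varepsilon^a+\varepsilon^R\widetilde q_\varepsilon^R\|_{\overset{\circ}{H}{}^{|\alpha|}}$ by Gagliardo--Nirenberg and then to pass to anisotropic norms: \eqref{eqinfininorm} together with \eqref{eqhupvq} and \eqref{eqnormsup} give $\|q_\varepsilon^a+\varepsilon^R\widetilde q_\varepsilon^R\|_{L^\infty}\lesssim\varepsilon^{-1/2}$, while \eqref{eqrefnorm} gives $\|q_\varepsilon^a+\varepsilon^R\widetilde q_\varepsilon^R\|_{\overset{\circ}{H}{}^{|\alpha|}}\le\varepsilon^{-(m+3)}\bigl(\|q_\varepsilon^a\|_{H^{m+3}_{(1,\varepsilon)}}+\varepsilon^R\|\widetilde q_\varepsilon^R\|_{H^{m+3}_{(1,\varepsilon)}}\bigr)\lesssim\varepsilon^{-(m+3)}$. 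Hence $|\langle\partial^\alpha S_\varepsilon^{(3)},\partial^\alpha\widetilde v_\varepsilon^R\rangle|\lesssim\varepsilon^{2\nu-M-R-5/2-(m+3)}\|\widetilde v_\varepsilon^R\|_{\overset{\circ}{H}{}^{|\alpha|+1}}$, and Young with weight $C_S$ produces $C_S\|\widetilde v_\varepsilon^R\|_{\overset{\circ}{H}{}^{|\alpha|+1}}^2+C_p\,\varepsilon^{2(2\nu-M-R-5/2-(m+3))}$; finally $2\nu-M-R-5/2-(m+3)\ge w_m$ by \eqref{eqws}, so the remainder is $\le C_p\,\varepsilon^{2w_m}$. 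The second component carries an extra $\varepsilon^2$ and is strictly better. Adding the three contributions and renaming constants gives the Lemma; the only genuine difficulty is, as indicated, the $\varepsilon$-bookkeeping in $S_\varepsilon^{(3)}$, where the gap $\nu\gg M$ built into \eqref{eqws} must exactly offset the $\varepsilon^{-(m+3)-1/2}$ loss incurred when returning to isotropic Sobolev norms for the pressure.
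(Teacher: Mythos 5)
Your proposal is correct and follows the same overall architecture as the paper: the same three-way splitting of $\widetilde{S}_{\varepsilon}^{R,N}$, the same Cauchy--Schwarz/Young treatment of the consistency error via \eqref{eqlah} with $2(N-R)\geq 2w_{m}$, and the same handling of the pressure coupling term (integration by parts onto the velocity, Gagliardo--Nirenberg, the $\sqrt{\varepsilon}$-weighted $L^{\infty}$ controls from \eqref{eqnormsup}--\eqref{eqhupvq}, conversion to anisotropic norms by \eqref{eqrefnorm}, and the exponent count $2\nu-M-R-5/2-(m+3)\geq w_{m}$ from \eqref{eqws}; your use of $\varepsilon^{-(m+3)}$ instead of the paper's $\varepsilon^{-|\alpha|}$ is coarser but harmless). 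The one genuine divergence is the quadratic self-interaction term: the paper applies Leibniz/Gagliardo--Nirenberg to the full product $\partial^{\alpha}(\widetilde{v}_{\varepsilon}^{1R}\partial_{\theta}\widetilde{v}_{\varepsilon}^{R})$ and then Young with a small adjustable weight $c_{1}$, so this piece also contributes a $c_{1}$-weighted $\Vert\widetilde{v}_{\varepsilon}^{R}\Vert_{\overset{\circ}{H}{}^{|\alpha|+1}}^{2}$ that must share the $C_{S}$ budget with the pressure term; you instead symmetrize the top-order transport part by an integration by parts (yielding $-\tfrac12\langle\partial_{\theta}\widetilde{v}_{\varepsilon}^{1R},|\partial^{\alpha}\widetilde{v}_{\varepsilon}^{R}|^{2}\rangle$) and control the rest by a Moser-type commutator estimate, which confines this contribution entirely to the $\overset{\circ}{H}{}^{|\alpha|}$ level. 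Your variant is marginally sharper (no derivative loss on that term, so the whole $C_{S}$ allowance is available for the pressure coupling), at the price of invoking the commutator estimate, whereas the paper's route uses only the product estimate already needed elsewhere; both yield the stated bound with the constants playing the same roles.
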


\begin{proof}[Proof of Lemma \ref{controlG}] Consider $\alpha\in\N^{2}$
satisfying $|\alpha|\leq m+3$. We decompose the source into 
\begin{multline*}
\left\langle \partial^{\alpha}S_{\varepsilon}^{R,N},\partial^{\alpha}v_{\varepsilon}^{R}\right\rangle =-\varepsilon^{N-R}\left\langle \partial^{\alpha}\left(\varepsilon^{-N}\mathcal{L}^{a}(\varepsilon,v_{\varepsilon}^{a})\right),\partial^{\alpha}\widetilde{v}_{\varepsilon}^{R}\right\rangle \\
-\varepsilon^{R+M-2}\left\langle \partial^{\alpha}(\widetilde{v}_{\varepsilon}^{1R}\,\partial_{\theta}\widetilde{v}_{\varepsilon}^{R}+\varepsilon\,\widetilde{v}_{\varepsilon}^{2R}\,\partial_{y}\widetilde{v}_{\varepsilon}^{R}),\partial^{\alpha}\widetilde{v}_{\varepsilon}^{R}\right\rangle \\
-C\,2^{-1}\varepsilon^{2\nu-M-R-2}\ \left\langle ^{t}(\partial_{\theta},\varepsilon\,\partial_{y})\partial^{\alpha}\left(q_{\varepsilon}^{a}+\varepsilon^{R}\,\widetilde{q}_{\varepsilon}^{R}\right)^{2},\partial^{\alpha}\widetilde{v}_{\varepsilon}^{R}\right\rangle .
\end{multline*}
 We estimate the three above contributions.

$\circ$ \textit{The first term: contribution of the approximated
solution.} $v_{\varepsilon}^{a}$ is an approximated solution for
the operator $\mathcal{L}^{a}$. According to Proposition \ref{propconstructionvelocity},
it satisfies Inequality (\ref{eqapproxsolv}). That is to say, Inequality~\eqref{eqlah}
holds and: 
\begin{align}\label{eqsourcefine}
\left|\varepsilon^{N-R}\left\langle \partial^{\alpha}\left(\varepsilon^{-N}\mathcal{L}^{a}(\varepsilon,v_{\varepsilon}^{a})\right),\partial^{\alpha}\widetilde{v}_{\varepsilon}^{R}\right\rangle \right| & \leq\frac{1}{2}\left(\varepsilon^{2(N-R)}\left\Vert \partial^{\alpha}\left(\varepsilon^{-N}\mathcal{L}^{a}(\varepsilon,v_{\varepsilon}^{a})\right)\right\Vert _{L^{2}}^{2}+\left\Vert \partial^{\alpha}v_{\varepsilon}^{R}\right\Vert _{L^{2}}^{2}\right)\,,\nonumber \\
 & \leq\frac{1}{2}\left(C_{\mathcal{L}^{a}}\,\varepsilon^{2(N-R)}+\left\Vert v_{\varepsilon}^{R}\right\Vert _{\overset{\circ}{H}{}^{|\alpha|}}^{2}\right)\,.
\end{align}

$\circ$ \textit{The second term} is the contribution corresponding
to the non linear part \textquotedbl{}$u\cdot\nabla u$\textquotedbl{}
(in Navier-Stokes Equations). Select a positive constant~$c_{1}$.
Further, we choose it so that the contribution of~$\nabla\widetilde{v}_{\varepsilon}^{R}$
in $H^{m}$-norm is small with respect to $\Phi_{\varepsilon}$. 
\begin{align}
 & \varepsilon^{R+M-2}\left|\left\langle \partial^{\alpha}(\widetilde{v}_{\varepsilon}^{1R}\partial_{\theta}\widetilde{v}_{\varepsilon}^{R}+\varepsilon\,\widetilde{v}_{\varepsilon}^{2R}\,\partial_{y}\widetilde{v}_{\varepsilon}^{R}),\partial^{\alpha}\widetilde{v}_{\varepsilon}^{R}\right\rangle \right|\nonumber \\
 & \hspace{1.5cm}\leq\frac{1}{2}\left(c_{1}\varepsilon^{2(R+M-2)}\left\Vert \partial^{\alpha}\left(\widetilde{v}_{\varepsilon}^{1R}\partial_{\theta}\widetilde{v}_{\varepsilon}^{R}+\widetilde{v}_{\varepsilon}^{2R}\partial_{y}\widetilde{v}_{\varepsilon}^{R}\right)\right\Vert _{L^{2}}^{2}+\frac{1}{c_{1}}\left\Vert \partial^{\alpha}v_{\varepsilon}^{R}\right\Vert _{L^{2}}^{2}\right)\,,\nonumber \\
 & \hspace{1.5cm}\leq c_{1}\varepsilon^{2(R+M-2)}\left(\left\Vert \partial^{\alpha}\left(\widetilde{v}_{\varepsilon}^{1R}\partial_{\theta}\widetilde{v}_{\varepsilon}^{R}\right)\right\Vert _{L^{2}}^{2}+\left\Vert \partial^{\alpha}\left(\widetilde{v}_{\varepsilon}^{2R}\partial_{y}\widetilde{v}_{\varepsilon}^{R}\right)\right\Vert _{L^{2}}^{2}\right)+\frac{1}{2c_{1}}\left\Vert \partial^{\alpha}\widetilde{v}_{\varepsilon}^{1R}\right\Vert _{L^{2}}^{2}.
\end{align}
 We deal with the nonlinear term $\partial^{\alpha}(\widetilde{v}_{\varepsilon}^{1R}\partial_{\theta}\widetilde{v}_{\varepsilon}^{R})$
applying the Gagliardo-Niremberg's estimate together with \eqref{eqnormsup}.
There exists a positive constant $C_{g}$ which only depends on $m$
such that for all $\varepsilon\in]0,1]$ and for all time $t\in[0,T_{\varepsilon}^{*}]$:
\begin{align*}
\left\Vert \partial^{\alpha}\left(\widetilde{v}_{\varepsilon}^{1R}\,\partial_{\theta}\widetilde{v}_{\varepsilon}^{R}\right)\right\Vert _{L^{2}}^{2} & \leq C_{g}^{2}\left(\left\Vert \widetilde{v}_{\varepsilon}^{1R}\right\Vert _{L^{\infty}}\left\Vert \partial_{\theta}\widetilde{v}_{\varepsilon}^{R}\right\Vert _{\overset{\circ}{H}{}^{|\alpha|}}+\left\Vert \widetilde{v}_{\varepsilon}^{1R}\right\Vert _{\overset{\circ}{H}{}^{|\alpha|}}\left\Vert \partial_{\theta}\widetilde{v}_{\varepsilon}^{R}\right\Vert _{L^{\infty}}\right)^{2}\,,\\
 & \leq4\, C_{g}^{2}\left(\left\Vert v_{\varepsilon}^{R}\right\Vert _{\overset{\circ}{H}{}^{|\alpha|+1}}+\left\Vert \widetilde{v}_{\varepsilon}^{R}\right\Vert _{\overset{\circ}{H}{}^{|\alpha|}}\right)^{2}\leq8\, C_{g}^{2}\left(\left\Vert \widetilde{v}_{\varepsilon}^{R}\right\Vert _{\overset{\circ}{H}{}^{|\alpha|+1}}^{2}+\left\Vert \widetilde{v}_{\varepsilon}^{R}\right\Vert _{\overset{\circ}{H}{}^{|\alpha|}}^{2}\right).
\end{align*}
 The same holds for $\widetilde{v}_{\varepsilon}^{2R}\,\partial_{y}\widetilde{v}_{\varepsilon}^{R}$.
Finally, we get that for all $\varepsilon\in]0,1]$ and for all time
$t\in[0,T_{\varepsilon}^{*}]$ 
\begin{multline}\label{eqvelocityfine}
\left|\varepsilon^{R+M-2}\left\langle \partial^{\alpha}(\widetilde{v}_{\varepsilon}^{1R}\,\partial_{\theta}\widetilde{v}_{\varepsilon}^{R}+\varepsilon\,\widetilde{v}_{\varepsilon}^{2R}\,\partial_{y}\widetilde{v}_{\varepsilon}^{R}),\partial^{\alpha}\widetilde{v}_{\varepsilon}^{R}\right\rangle \right|\\
\leq16\, c_{1}\, C_{g}^{2}\left\Vert \widetilde{v}_{\varepsilon}^{R}\right\Vert _{\overset{\circ}{H}{}^{|\alpha|+1}}^{2}+(16\, c_{1}\, C_{g}^{2}+\frac{1}{2c_{1}})\left\Vert \widetilde{v}_{\varepsilon}^{R}\right\Vert _{\overset{\circ}{H}{}^{|\alpha|}}^{2}.
\end{multline}

$\circ$ \textit{The last term.} This term has to be studied with
care. As mentioned, two obstacles have to be overcome: 
\begin{description}
\item [{\quad{}}] -It is singular with respect to the number of derivatives
acting on $q$. However the term can formally be written under the
form: 
\[
\frac{1}{2}\left\langle \nabla(q^{2}),v\right\rangle .
\]
 A derivative can be passed onto the velocity with an integration
by parts. 
\item [{\quad{}}] -The pressure is only estimated in anisotropic Sobolev
spaces $H_{(1,\varepsilon)}^{m}$. We pass to the anisotropic Sobolev
spaces $H_{(1,\varepsilon)}^{m}$ thanks to Inequality \eqref{eqrefnorm}
at a cost on the precision $w_{m}$ (see Equation~\eqref{eqws}). 
\end{description}
First we integrate by parts: 
\begin{multline*}
\left|-C\,2^{-1}\varepsilon^{2\nu-M-R-2}\ \left\langle ^{t}(\partial_{\theta},\varepsilon\,\partial_{y})\partial^{\alpha}\left(q_{\varepsilon}^{a}+\varepsilon^{R}\,\widetilde{q}_{\varepsilon}^{R}\right)^{2},\partial^{\alpha}\widetilde{v}_{\varepsilon}^{R}\right\rangle \right|=\\
\left|C\,2^{-1}\varepsilon^{2\nu-M-R-2}\ \left\langle \partial^{\alpha}\left(q_{\varepsilon}^{a}+\varepsilon^{R}\,\widetilde{q}_{\varepsilon}^{R}\right)^{2},\partial^{\alpha}\ {}^{t}(\partial_{\theta},\varepsilon\,\partial_{y})\widetilde{v}_{\varepsilon}^{R}\right\rangle \right|\,.
\end{multline*}
 We select a positive constant $c_{2}$, then 
\begin{multline}\label{eqpressurebiz}
\left|-C\,2^{-1}\varepsilon^{2\nu-M-R-2}\ \left\langle ^{t}(\partial_{\theta},\varepsilon\,\partial_{y})\partial^{\alpha}\left(q_{\varepsilon}^{a}+\varepsilon^{R}\,\widetilde{q}_{\varepsilon}^{R}\right)^{2},\partial^{\alpha}\widetilde{v}_{\varepsilon}^{R}\right\rangle \right|\\
\leq\frac{C^{2}\varepsilon^{2(2\nu-R-M-2)}}{8c_{2}}\left\Vert \partial^{\alpha}\left(q_{\varepsilon}^{a}+\varepsilon^{R}\widetilde{q}_{\varepsilon}^{R}\right)^{2}\right\Vert _{L^{2}}^{2}+\frac{c_{2}}{2}\left\Vert \partial^{\alpha}\ {}^{t}(\partial_{\theta},\varepsilon\,\partial_{y})v_{\varepsilon}^{R}\right\Vert _{L^{2}}^{2}.
\end{multline}
 We apply the Gagliardo-Niremberg inequality together with the equivalence
of norms (\ref{eqrefnorm}), 
\begin{align*}
\left\Vert \partial^{\alpha}\left(q_{\varepsilon}^{a}+\varepsilon^{R}\widetilde{q}_{\varepsilon}^{R}\right)^{2}\right\Vert _{L^{2}} & \leq2C_{g}\left\Vert q_{\varepsilon}^{a}+\varepsilon^{R}\widetilde{q}_{\varepsilon}^{R}\right\Vert _{L^{\infty}}\left\Vert q_{\varepsilon}^{a}+\varepsilon^{R}\widetilde{q}_{\varepsilon}^{R}\right\Vert _{\overset{\circ}{H}{}^{|\alpha|}}\,,\\
 & \leq2C_{g}\varepsilon^{-|\alpha|}\left\Vert q_{\varepsilon}^{a}+\varepsilon^{R}\widetilde{q}_{\varepsilon}^{R}\right\Vert _{L^{\infty}}\left\Vert q_{\varepsilon}^{a}+\varepsilon^{R}\widetilde{q}_{\varepsilon}^{R}\right\Vert _{\overset{\circ}{H}_{(1,\varepsilon)}^{|\alpha|}}.
\end{align*}
 With regards to the construction of the characterized time $T_{\varepsilon}^{*}$,
\eqref{eqnormsup} and \eqref{eqhupvq} holds. It results in: 
\[
\forall\,\varepsilon\in]0,1],\qquad\forall\, t\in[0,T_{\varepsilon}^{*}],\qquad\left\Vert \sqrt{\varepsilon}\,\widetilde{q}_{\varepsilon}^{R}(t,\cdot)\right\Vert _{L^{\infty}(\T\times\R)}\leq2,\quad\left\Vert \sqrt{\varepsilon}\, q_{\varepsilon}^{a}(t,\cdot)\right\Vert _{L^{\infty}(\T\times\R)}\leq C_{a}\,.
\]
 We obtain 
\begin{equation}
\left\Vert \partial^{\alpha}\left(q_{\varepsilon}^{a}+\varepsilon^{R}\widetilde{q}_{\varepsilon}^{R}\right)^{2}\right\Vert _{L^{2}}\leq\,2\, C_{g}\,\varepsilon^{-1/2-|\alpha|}(C_{a}+2)^{2}.\label{ineqpressurereste}
\end{equation}
 Finally plugging Inequality \eqref{ineqpressurereste} in \eqref{eqpressurebiz},
we deduce: 
\begin{multline}\label{eqpressurefine}
\left|-C\,2^{-1}\varepsilon^{2\nu-M-R-2}\ \left\langle ^{t}(\partial_{\theta},\varepsilon\,\partial_{y})\partial^{\alpha}\left(q_{\varepsilon}^{a}+\varepsilon^{R}\,\widetilde{q}_{\varepsilon}^{R}\right)^{2},\partial^{\alpha}\widetilde{v}_{\varepsilon}^{R}\right\rangle \right|\\
\leq\frac{C^{2}\, C_{g}^{2}\,\varepsilon^{2(2\nu-R-M-5/2-|\alpha|)}}{2c_{2}}(C_{a}+2)^{4}+\frac{c_{2}}{2}\left\Vert v_{\varepsilon}^{R}\right\Vert _{\overset{\circ}{H}{}^{|\alpha|+1}}^{2}.
\end{multline}

$\circ$ \textit{To finish}, we put estimates (\ref{eqsourcefine}),
(\ref{eqvelocityfine}) and (\ref{eqpressurefine}) together. Let
$c_{1}$ and $c_{2}$ be two positive constants for all $\varepsilon\in]0,1]$
and for all time $t\in[0,T_{\varepsilon}^{*}]$: 
\begin{align*}
\left|\left\langle \partial^{\alpha}S_{\varepsilon}^{R,N},\partial^{\alpha}v_{\varepsilon}^{R}\right\rangle \right| & \leq\left(\frac{c_{2}}{2}+16\, c_{1}\, C_{g}^{2}\right)\left\Vert v_{\varepsilon}^{R}\right\Vert _{\overset{\circ}{H}^{|\alpha|+1}}^{2}+\left(\frac{1}{2}+16\, c_{1}\, C_{g}^{2}+\frac{1}{2c_{1}}\right)\left\Vert v_{\varepsilon}^{R}\right\Vert _{\overset{\circ}{H}{}^{|\alpha|}}^{2}\\
 & \hspace{3,1cm}+\left(C_{\mathcal{L}^{a}}\,\varepsilon^{2(N-R)}+\frac{C^{2}\, C_{g}^{2}}{2c_{2}}(C_{a}+2)^{4}\,\varepsilon^{2(2\mathcal{V}-R-M-5/2-|\alpha|)}\right)\,,\\
 & \leq\left(\frac{c_{2}}{2}+16\, c_{1}\, C_{g}^{2}\right)\left\Vert v_{\varepsilon}^{R}\right\Vert _{\overset{\circ}{H}{}^{|\alpha|+1}}^{2}+\left(\frac{1}{2}+16\, c_{1}\, C_{g}^{2}+\frac{1}{2c_{1}}\right)\left\Vert v_{\varepsilon}^{R}\right\Vert _{\overset{\circ}{H}{}^{|\alpha|}}^{2}\\
 & \hspace{6,9cm}+\left(C_{\mathcal{L}^{a}}+\frac{C^{2}\, C_{g}^{2}}{2c_{2}}(C_{a}+2)^{4}\right)\varepsilon^{w_{m}}\,.
\end{align*}
 Select $C_{S}$ a positive constant. We choose $c_{1}$ and $c_{2}$
two positive constants such that $C_{S}:=\frac{c_{2}}{2}+16\, c_{1}\, C_{g}^{2}$.
Then, it requires $C_{S}^{1}=\frac{1}{2}+16\, c_{1}\, C_{g}^{2}+\frac{1}{2c_{1}}$
and $C_{p}=C_{\mathcal{L}^{a}}+\frac{C\, C_{g}^{2}}{2\, c_{2}}(C_{a}+2)^{4}$.
\end{proof}

\paragraph{Contribution of operators $\mathcal{H}$ and $\mathcal{C}$.}

Presently, we study the singular operators $\mathcal{H}$ and $\mathcal{C}$.
We decompose their action into: 
\begin{align*}
\left\langle \partial^{\alpha}\left(\mathcal{H}\widetilde{v}_{\varepsilon}^{R}\right),\partial^{\alpha}\widetilde{v}_{\varepsilon}^{R}\right\rangle  & =\left\langle \mathcal{H}\,(\partial^{\alpha}\widetilde{v}_{\varepsilon}^{R}),\widetilde{v}_{\varepsilon}^{R}\right\rangle +\left\langle [\partial^{\alpha},\mathcal{H}]\widetilde{v}_{\varepsilon}^{R},\partial^{\alpha}\widetilde{v}_{\varepsilon}^{R}\right\rangle \,,\\
\left\langle \partial^{\alpha}\left(\mathcal{C}\widetilde{v}_{\varepsilon}^{R}\right),\partial^{\alpha}\widetilde{v}_{\varepsilon}^{R}\right\rangle  & =\left\langle \mathcal{C}\,(\partial^{\alpha}\widetilde{v}_{\varepsilon}^{R}),\widetilde{v}_{\varepsilon}^{R}\right\rangle +\left\langle [\partial^{\alpha},\mathcal{C}]\widetilde{v}_{\varepsilon}^{R},\partial^{\alpha}\widetilde{v}_{\varepsilon}^{R}\right\rangle \,.
\end{align*}
 by making some commutators appear. Since the commutator of two operators
respectively of order $m$ and $n$ is of order $m+n-1$, the contribution
of the commutators are less singular than expected: \begin{lemma}\label{lemH}Select
a multi-index $\alpha\in\N^{2}$ with length satisfying $1\leq|\alpha|\leq m+3$.
Select $\mathcal{V}\in\{\mathcal{H},\mathcal{C}\}$.There exist $C_{\mathcal{V}}^{1}$,
$C_{\mathcal{V}}^{2}$ and $C_{\mathcal{V}}^{3}$ three positive constants
such that for all $\varepsilon\in]0,1]$, for all time $t\in[0,T_{\varepsilon}^{*}]$,
\begin{multline*}
\left|\left\langle \left[\partial^{\alpha},\mathcal{V}\right]\widetilde{v}_{\varepsilon}^{R},\partial^{\alpha}\widetilde{v}_{\varepsilon}^{R}\right\rangle \right|(t)\leq C_{\mathcal{V}}^{1}\left\Vert \widetilde{v}_{\varepsilon}^{R}(t,\cdot)\right\Vert _{\overset{\circ}{H}{}^{|\alpha|}(\T\times\R)}^{2}+C_{\mathcal{V}}^{2}\left\Vert \widetilde{v}_{\varepsilon}^{R}(t,\cdot)\right\Vert _{H^{|\alpha|-1}(\T\times\R)}^{2}\\
+C_{\mathcal{V}}^{3}\left\Vert \varepsilon^{-(1-\delta_{\alpha_{1},0})}\partial^{\alpha}\widetilde{v}_{\varepsilon}^{R}(t,\cdot)\right\Vert _{L^{2}(\T\times\R)}^{2}.
\end{multline*}
 \end{lemma}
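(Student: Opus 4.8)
The plan is to estimate, for each of the two operators $\mathcal V\in\{\mathcal H,\mathcal C\}$, the commutator term $\langle [\partial^\alpha,\mathcal V]\widetilde v_\varepsilon^R,\partial^\alpha\widetilde v_\varepsilon^R\rangle$ by expanding $[\partial^\alpha,\mathcal V]$ via the Leibniz rule and tracking the single power of $\varepsilon^{-1}$ together with the one derivative that is lost in the commutator. Since $\mathcal H\widetilde v_\varepsilon^R=\varepsilon^{-1}h\,\partial_y\widetilde v_\varepsilon^R$ and $\mathcal C\widetilde v_\varepsilon^R={}^t(0,\varepsilon^{-1}\partial_\theta h\,\widetilde v_\varepsilon^{1R})$, in both cases the operator is $\varepsilon^{-1}$ times multiplication by a fixed smooth function of $\theta$ (namely $h$ or $\partial_\theta h$) composed with at most one derivative; write $\mathcal H=\varepsilon^{-1}h\,\partial_y$ and $\mathcal C=\varepsilon^{-1}(\partial_\theta h)\,\Pi_1$ where $\Pi_1$ selects the first component into the second. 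The Leibniz expansion gives
\[
[\partial^\alpha,\mathcal H]\widetilde v_\varepsilon^R
=\varepsilon^{-1}\sum_{0\neq\beta\le\alpha}\binom{\alpha}{\beta}(\partial^\beta h)\,\partial^{\alpha-\beta}\partial_y\widetilde v_\varepsilon^R,
\]
and since $h=h(\theta)$ only, $\partial^\beta h=0$ unless $\beta=(\beta_1,0)$ with $\beta_1\ge1$; thus every surviving term carries at least one $\partial_\theta$ that hits $h$, so the remaining derivatives on $\widetilde v_\varepsilon^R$ number at most $|\alpha|-1$ plus the extra $\partial_y$, i.e. total order $\le|\alpha|$. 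The analogous expansion for $\mathcal C$ is identical with $h$ replaced by $\partial_\theta h$ and $\partial_y$ absent.

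The key step is then to distinguish, inside the sum, the single \emph{top-order} contribution $\beta=(1,0)$ from the strictly lower ones. For $\beta=(1,0)$ in the $\mathcal H$ case the term is $\varepsilon^{-1}(\partial_\theta h)\,\partial^{\alpha-(1,0)}\partial_y\widetilde v_\varepsilon^R$; the derivative $\partial^{\alpha-(1,0)}\partial_y$ has $\alpha_1-1$ factors of $\partial_\theta$, so I can factor $\varepsilon^{-1}\partial_\theta$ as $\varepsilon^{-(1-\delta_{\alpha_1,0})}\partial_\theta$ times a benign prefactor, pairing against $\partial^\alpha\widetilde v_\varepsilon^R$ and absorbing it into the term $C_{\mathcal V}^3\|\varepsilon^{-(1-\delta_{\alpha_1,0})}\partial^\alpha\widetilde v_\varepsilon^R\|_{L^2}^2$ via Cauchy--Schwarz (here $\alpha_1\ge1$ because $\beta=(1,0)\le\alpha$, so the Kronecker factor is active, killing the $\varepsilon^{-1}$). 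When $\alpha=(0,\alpha_2)$ there is no $\beta=(1,0)$ term at all, consistent with the Kronecker convention. All contributions with $|\beta|\ge2$ (for $\mathcal H$) or $|\beta|\ge2$ (for $\mathcal C$) have at most $|\alpha|-1$ derivatives on $\widetilde v_\varepsilon^R$ after the derivative count, hence land in $C_{\mathcal V}^2\|\widetilde v_\varepsilon^R\|_{H^{|\alpha|-1}}^2$; the $\varepsilon^{-1}$ in front of these is multiplied by a derivative gap of at least two, so one of those derivatives can again be rewritten as $\varepsilon^{-1}\partial_\theta\cdot(\text{order }|\alpha|)$, but more simply, since we only need an $\varepsilon$-independent bound of the stated form, I note that for $\varepsilon\in]0,1]$ these lower-order terms can be dominated by $C_{\mathcal V}^1\|\widetilde v_\varepsilon^R\|_{\overset{\circ}{H}{}^{|\alpha|}}^2$ after one more application of the $\varepsilon^{-1}\partial_\theta$ trick on a spare $\partial_\theta$ (which exists precisely when $|\beta|\ge2$ forces $\beta_1\ge2$). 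For $\mathcal C$, $\partial_\theta h$ is smooth in $\theta$ and the structure is the same, with no $\partial_y$ to shuffle.

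The main obstacle I anticipate is the careful bookkeeping of the $\varepsilon$-powers: one must verify that in \emph{every} surviving commutator term the number of $\partial_\theta$'s available to pair with the single $\varepsilon^{-1}$ is at least one (so as to form an $\varepsilon^{-(1-\delta_{\alpha_1,0})}\partial_\theta$ and never a genuine $\varepsilon^{-1}$ acting on an $L^\infty$-bounded quantity), and that when $\alpha_1=0$ no dangerous term appears. This is exactly the point of the Kronecker factor $\delta_{\alpha_1,0}$ in the statement. Once this structural check is done, the three resulting pieces are collected into the three asserted right-hand terms by Cauchy--Schwarz with the obvious choice of constants (depending only on $m$, on $\|h\|_{W^{m+4,\infty}(\T)}$, and on the combinatorial Leibniz coefficients), and the proof concludes. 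I would remark in passing that the $t$-dependence is carried transparently since everything is a fixed-time estimate, and that the bound is uniform in $\varepsilon\in]0,1]$ because all $\varepsilon$-powers appearing are nonnegative after the rewriting.
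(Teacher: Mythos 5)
Your overall strategy is the natural one (the paper itself omits the proof as ``rather easy''): expand $[\partial^{\alpha},\mathcal{V}]$ by the Leibniz rule, observe that since $h$ depends only on $\theta$ the only surviving terms correspond to $\beta=(\beta_{1},0)$ with $\beta_{1}\geq1$ --- so the commutator vanishes identically when $\alpha_{1}=0$, consistently with the Kronecker convention --- and, when $\alpha_{1}\geq1$, pair the single factor $\varepsilon^{-1}$ with the test function $\partial^{\alpha}\widetilde{v}_{\varepsilon}^{R}$ by Cauchy--Schwarz, which is exactly what the third right-hand term $\|\varepsilon^{-(1-\delta_{\alpha_{1},0})}\partial^{\alpha}\widetilde{v}_{\varepsilon}^{R}\|_{L^{2}}^{2}$ is there to receive; the companion factor has order $|\alpha|$ only for the $\beta_{1}=1$ term of $\mathcal{H}$ (because of the extra $\partial_{y}$), giving the $C_{\mathcal{V}}^{1}\|\widetilde{v}_{\varepsilon}^{R}\|_{\overset{\circ}{H}{}^{|\alpha|}}^{2}$ contribution, and order at most $|\alpha|-1$ in all other terms (and in all terms of $\mathcal{C}$), giving the $C_{\mathcal{V}}^{2}$ contribution. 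Up to that point your argument is correct.

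The gap is in your treatment of the $\beta_{1}\geq2$ terms. You claim they ``can be dominated by $C_{\mathcal{V}}^{1}\|\widetilde{v}_{\varepsilon}^{R}\|_{\overset{\circ}{H}{}^{|\alpha|}}^{2}$ after one more application of the $\varepsilon^{-1}\partial_{\theta}$ trick on a spare $\partial_{\theta}$'', the spare derivative being the one that falls on $h$. That does not work: the weighted norms in the statement (and the quadratic form $\Phi_{\varepsilon}$ used later for absorption) attach the weight $\varepsilon^{-1}$ to a $\partial_{\theta}$ acting on the unknown $\widetilde{v}_{\varepsilon}^{R}$, not on the coefficient; extra derivatives on $h$ only change the harmless prefactor $\|\partial_{\theta}^{\beta_{1}}h\|_{L^{\infty}}$ and leave the $\varepsilon^{-1}$ untouched. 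In particular no $\varepsilon$-independent constant times $\|\widetilde{v}_{\varepsilon}^{R}\|_{\overset{\circ}{H}{}^{|\alpha|}}^{2}$ can dominate $\varepsilon^{-1}\,\bigl|\bigl\langle(\partial_{\theta}^{\beta_{1}}h)\,\partial^{\alpha-\beta}\partial_{y}\widetilde{v}_{\varepsilon}^{R},\partial^{\alpha}\widetilde{v}_{\varepsilon}^{R}\bigr\rangle\bigr|$ (fix $\widetilde{v}_{\varepsilon}^{R}$ and let $\varepsilon\to0$). The fix is immediate and is the same mechanism you already used for $\beta_{1}=1$: Cauchy--Schwarz these lower-order terms against $\varepsilon^{-1}\partial^{\alpha}\widetilde{v}_{\varepsilon}^{R}$ as well (legitimate since $\alpha_{1}\geq\beta_{1}\geq1$ forces $\delta_{\alpha_{1},0}=0$), bounding the other factor by $\|h\|_{W^{|\alpha|+1,\infty}(\T)}\,\|\widetilde{v}_{\varepsilon}^{R}\|_{H^{|\alpha|-1}}$, so that they land in the $C_{\mathcal{V}}^{2}$ and $C_{\mathcal{V}}^{3}$ terms; with this correction the lemma follows with constants depending only on $m$ and on $\|h\|_{W^{m+4,\infty}(\T)}$.
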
 The proof is rather easy. We do not write it. Then,
since $\left\langle \mathcal{H}\,\partial^{\alpha}\widetilde{v}_{\varepsilon}^{R},\widetilde{v}_{\varepsilon}^{R}\right\rangle =0$,
there remains to deal with $\left\langle \mathcal{C}(\partial^{\alpha}\widetilde{v}_{\varepsilon}^{R}),\partial^{\alpha}\widetilde{v}_{\varepsilon}^{R}\right\rangle $.

\begin{lemma}{[}Absorption of $\mathcal{C}${]} \label{lemestC}
Select a positive constant $c_{1}$ and a multi-index $\alpha\in\N^{2}$
of length less than~$m+3$. Then for all $\varepsilon\in]0,1]$ for
all time $t\in[0,T_{\varepsilon}^{*}]$: 
\[
\langle\mathcal{C}\partial^{\alpha}\widetilde{v}_{\varepsilon}^{R}(t,\cdot),\partial^{\alpha}\widetilde{v}_{\varepsilon}^{R}(t,\cdot)\rangle\leq\frac{\left\Vert h\right\Vert _{L^{\infty}(\T)}}{2c_{1}}\|\partial^{\alpha}\widetilde{v}_{\varepsilon}^{R}(t,\cdot)\|_{L^{2}(\T\times\R)}^{2}+\frac{c_{1}}{2}\left\Vert h\right\Vert _{L^{\infty}}\left\Vert \varepsilon^{-1}\partial_{\theta}\partial^{\alpha}\widetilde{v}_{\varepsilon}^{R}\right\Vert _{L^{2}}^{2}.
\]
 \end{lemma}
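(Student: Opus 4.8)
The proof is a one-step integration by parts in $\theta$ followed by Young's inequality; the content is simply to verify that the change of unknowns \eqref{eqnewunknown} has left a factor $\varepsilon^{-1}$ sitting on a single $\theta$-derivative, hence absorbable by the coercivity of $\mathcal{Q}_{\varepsilon}$ from Lemma~\ref{lemestQ}.

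First I would unwind the definition of $\mathcal{C}$. Since $\mathcal{C}\,\widetilde v_{\varepsilon}^{R}={}^{t}(0,\varepsilon^{-1}\partial_{\theta}h\,\widetilde v_{\varepsilon}^{1R})$, applying $\mathcal{C}$ to $\partial^{\alpha}\widetilde v_{\varepsilon}^{R}$ and pairing with $\partial^{\alpha}\widetilde v_{\varepsilon}^{R}$ in $L^{2}(\T\times\R)$ reduces to the single scalar integral $\langle\mathcal{C}\partial^{\alpha}\widetilde v_{\varepsilon}^{R},\partial^{\alpha}\widetilde v_{\varepsilon}^{R}\rangle=\varepsilon^{-1}\int_{\T\times\R}\partial_{\theta}h\;\partial^{\alpha}\widetilde v_{\varepsilon}^{1R}\;\partial^{\alpha}\widetilde v_{\varepsilon}^{2R}\,d\theta\,dy$. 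Writing $F:=\partial^{\alpha}\widetilde v_{\varepsilon}^{1R}$, $G:=\partial^{\alpha}\widetilde v_{\varepsilon}^{2R}$ and using that $\partial_{\theta}h$ is an exact $\theta$-derivative while $\T$ carries no boundary, integration by parts in $\theta$ gives $\int_{\T}\partial_{\theta}h\;FG\,d\theta=-\int_{\T}h\,(\partial_{\theta}F\,G+F\,\partial_{\theta}G)\,d\theta$. This manipulation is legitimate because $h\in C^{\infty}(\T)$ and, at the times at which the energy estimate is run, $\widetilde v_{\varepsilon}^{R}$ carries the Sobolev regularity supplied by the parabolic smoothing of $\mathcal{Q}_{\varepsilon}$.

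Next I would keep the prefactor $\varepsilon^{-1}$ glued to the surviving $\theta$-derivative, i.e. $\varepsilon^{-1}\int_{\T\times\R}h\,\partial_{\theta}F\,G=\int_{\T\times\R}h\,(\varepsilon^{-1}\partial_{\theta}F)\,G$ and likewise for the $\partial_{\theta}G$ term. Bounding $h$ by $\|h\|_{L^{\infty}(\T)}$, applying Cauchy--Schwarz and then Young's inequality $|ab|\le\frac{c_{1}}{2}a^{2}+\frac{1}{2c_{1}}b^{2}$ with the dissipation-controlled factors $\varepsilon^{-1}\partial_{\theta}F$, $\varepsilon^{-1}\partial_{\theta}G$ assigned the weight $c_{1}$ and the factors $F$, $G$ assigned $\frac{1}{c_{1}}$, one obtains $\varepsilon^{-1}|\int h\,\partial_{\theta}F\,G|\le\frac{c_{1}}{2}\|h\|_{L^{\infty}}\|\varepsilon^{-1}\partial_{\theta}F\|_{L^{2}}^{2}+\frac{\|h\|_{L^{\infty}}}{2c_{1}}\|G\|_{L^{2}}^{2}$, together with the symmetric bound for the $\partial_{\theta}G$ term. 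Summing the two contributions and recombining $F=\partial^{\alpha}\widetilde v_{\varepsilon}^{1R}$, $G=\partial^{\alpha}\widetilde v_{\varepsilon}^{2R}$ into the vector norms $\|\partial^{\alpha}\widetilde v_{\varepsilon}^{R}\|_{L^{2}}^{2}$ and $\|\varepsilon^{-1}\partial_{\theta}\partial^{\alpha}\widetilde v_{\varepsilon}^{R}\|_{L^{2}}^{2}$ yields exactly the stated inequality, uniformly in $\varepsilon\in]0,1]$ and $t\in[0,T_{\varepsilon}^{*}]$.

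There is essentially no real obstacle in this lemma. The only points deserving attention are that the integration by parts exploits the structural fact that $\mathcal{C}$ involves $\partial_{\theta}h$ rather than $h$ itself (so no boundary term and no extra power of $\varepsilon$ is incurred), and the $\varepsilon$-bookkeeping: after the singular change of unknowns the operator $\mathcal{C}$ is only $O(\varepsilon^{-1})$ instead of the original $O(\varepsilon^{-2})$, so the surviving $\varepsilon^{-1}$ lands on a single $\theta$-derivative and is compatible with the coercive form $\Phi_{\varepsilon}$ on the left of \eqref{eqenergyplouche2}. Choosing $c_{1}$ small at the later stage (in the proof of Lemma~\ref{lemall}) is then what lets the $\frac{c_{1}}{2}\|h\|_{L^{\infty}}\|\varepsilon^{-1}\partial_{\theta}\partial^{\alpha}\widetilde v_{\varepsilon}^{R}\|_{L^{2}}^{2}$ term be swallowed by $c_{0}\,\Phi_{\varepsilon}(\nabla,\widetilde v_{\varepsilon}^{R})$.
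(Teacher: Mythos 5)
Your proof is correct and is essentially identical to the paper's: the same integration by parts in $\theta$ transferring the derivative from $h$ onto the velocity so that the singular factor becomes $\varepsilon^{-1}\partial_{\theta}\widetilde{v}_{\varepsilon}^{R}$, followed by Young's inequality with the weight $c_{1}$ on the weighted derivative. The only cosmetic difference is that the paper states the computation for $\alpha=(0,0)$ and then substitutes $\partial^{\alpha}\widetilde{v}_{\varepsilon}^{R}$, whereas you carry the general $\alpha$ throughout.
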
 \begin{proof}[Proof of Lemma \ref{lemestC}] We
prove the result for $\alpha=(0,0)$. Replacing $\widetilde{v}_{\varepsilon}^{R}$
by $\partial^{\alpha}\widetilde{v}_{\varepsilon}^{R}$ in the above
estimates proves the result. We select $c_{1}$ a positive constant.
The idea is to integrate by parts with respect to the variable $\theta$
to make the weighted derivative of the velocity $\varepsilon^{-1}\partial_{\theta}\widetilde{v}_{\varepsilon}^{R}$
appear. 
\begin{align*}
|\langle\mathcal{C}\widetilde{v}_{\varepsilon}^{R},\widetilde{v}_{\varepsilon}^{R}\rangle|= & \left|\int h\,\varepsilon^{-1}\partial_{\theta}\widetilde{v}_{\varepsilon}^{1R}\,\widetilde{v}_{\varepsilon}^{2R}d\theta\, dy+\int h\,\widetilde{v}_{\varepsilon}^{1R}\,\varepsilon^{-1}\partial_{\theta}\widetilde{v}_{\varepsilon}^{2R}d\theta\, dy\right|\,,\\
\leq & \frac{\left\Vert h\right\Vert _{L^{\infty}}}{2}\left(\frac{1}{c_{1}}\left\Vert \widetilde{v}_{\varepsilon}^{R}\right\Vert _{L^{2}}^{2}+{c_{1}}{}\left\Vert \varepsilon^{-1}\partial_{\theta}\widetilde{v}_{\varepsilon}^{R}\right\Vert _{L^{2}}^{2}\right).
\end{align*}
 \end{proof}

\paragraph{Absorption of singular terms.}

Select $c_{1}$ and $C_{s}$ two positive constants to be chosen (small)
later. We get a bound for the right-hand-side of \eqref{eqenergyplouche2}
applying Lemmas \ref{controlG}-$\ldots$-\ref{lemestC}. We obtain
that for all $\varepsilon\in]0,\varepsilon_{d}]$ and for all time
$t\in[0,T_{\varepsilon}^{*}]$: 
\begin{multline}
\frac{1}{2}\partial_{t}\left\Vert \partial^{\alpha}\widetilde{v}_{\varepsilon}^{R}(t,\cdot)\right\Vert _{L^{2}}^{2}+\fbox{\textcolor{marin}{\ensuremath{c_{0}\,\Phi_{\varepsilon}\left(\nabla,\partial^{\alpha}\widetilde{v}_{\varepsilon}^{R}\right)-\frac{c_{1}}{2}\left\Vert h\right\Vert _{L^{\infty}}\left\Vert \varepsilon^{-1}\partial_{\theta}\partial^{\alpha}\widetilde{v}_{\varepsilon}^{R}\right\Vert _{L^{2}}^{2}-C_{S}\left\Vert \widetilde{v}_{\varepsilon}^{R}(t,\cdot)\right\Vert _{\overset{\circ}{H}{}^{J+1}(\T\times\R)}^{2}}}}\\
\leq\left(\frac{\left\Vert h\right\Vert _{L^{\infty}}}{2c_{1}}+C_{S}^{1}+C_{\mathcal{C}}^{1}+C_{\mathcal{H}}^{1}\right)\left\Vert \widetilde{v}_{\varepsilon}^{R}(t,\cdot)\right\Vert _{\overset{\circ}{H}{}^{J}(\T\times\R)}^{2}+C_{p}\,\varepsilon^{2w_{m}}\\
+(C_{\mathcal{C}}^{2}+C_{\mathcal{H}}^{2})\left\Vert \widetilde{v}_{\varepsilon}^{R}(t,\cdot)\right\Vert _{H^{J-1}(\T\times\R)}^{2}+(C_{\mathcal{C}}^{3}+C_{\mathcal{H}}^{3})\left\Vert \varepsilon^{-(1-\delta_{\alpha_{1},0})}\partial^{\alpha}\widetilde{v}_{\varepsilon}^{R}(t,\cdot)\right\Vert _{L^{2}(\T\times\R)}^{2}\\
+\left|\left\langle \partial^{\alpha}\left(\mathcal{A}\widetilde{v}_{\varepsilon}^{R}\right),\partial^{\alpha}\widetilde{v}_{\varepsilon}^{R}\right\rangle \right|+\left|\left\langle \partial^{\alpha}\left(\mathcal{B}\widetilde{v}_{\varepsilon}^{R}\right),\partial^{\alpha}\widetilde{v}_{\varepsilon}^{R}\right\rangle \right|.
\end{multline}
 At this stage, the quadratic form $\Phi_{\varepsilon}$ can not absorb
the norm $\left\Vert \widetilde{v}_{\varepsilon}^{R}(t,\cdot)\right\Vert _{\overset{\circ}{H}{}^{J+1}(\T\times\R)}^{2}$.
However summing over all $\alpha\in\N^{2}$ of length $J$ the quadratic
form $\sum_{|\alpha|=J}\Phi_{\varepsilon}(\nabla,\partial^{\alpha}\widetilde{v}_{\varepsilon}^{R})$
does.

Choosing $c_{1}$ and $C_{S}^{1}$ such that the quantity $\widetilde{c}_{0}:=c_{0}-\frac{c_{1}}{2}\left\Vert h\right\Vert _{L^{\infty}}-(m+4)C_{S}^{1}$
is positive, we have: 
\begin{align*}
 & \sum_{|\alpha|=J}\left(c_{0}\,\Phi_{\varepsilon}\left(\nabla,\partial^{\alpha}\widetilde{v}_{\varepsilon}^{R}\right)-\frac{c_{1}}{2}\left\Vert h\right\Vert _{L^{\infty}}\left\Vert \varepsilon^{-1}\partial_{\theta}\partial^{\alpha}\widetilde{v}_{\varepsilon}^{R}\right\Vert _{L^{2}}^{2}-C_{S}\left\Vert \widetilde{v}_{\varepsilon}^{R}(t,\cdot)\right\Vert _{\overset{\circ}{H}{}^{J+1}(\T\times\R)}^{2}\right)\\
 & \hspace{1cm}\geq c_{0}\sum_{|\alpha|=J}\Phi_{\varepsilon}(\nabla,\partial^{\alpha}\widetilde{v}_{\varepsilon}^{R})-\frac{c_{1}}{2}\left\Vert h\right\Vert _{L^{\infty}}\sum_{|\alpha|=J}\left\Vert \varepsilon^{-1}\partial_{\theta}\partial^{\alpha}\widetilde{v}_{\varepsilon}^{R}\right\Vert _{L^{2}}^{2}-(J+1)C_{S}\left\Vert \widetilde{v}_{\varepsilon}^{R}(t,\cdot)\right\Vert _{\overset{\circ}{H}{}^{J+1}(\T\times\R)}^{2}\\
 & \hspace{1cm}\geq\left(c_{0}-\frac{c_{1}}{2}\left\Vert h\right\Vert _{L^{\infty}}-(m+4)C_{S}\right)\sum_{|\alpha|=J}\Phi_{\varepsilon}(\nabla,\partial^{\alpha}\widetilde{v}_{\varepsilon}^{R})=\widetilde{c}_{0}\sum_{|\alpha|=J}\Phi_{\varepsilon}(\nabla,\partial^{\alpha}\widetilde{v}_{\varepsilon}^{R}).
\end{align*}
 Hence, we obtain for all $\varepsilon\in]0,\varepsilon_{d}]$ and
for all time $t\in[0,T_{\varepsilon}^{*}]$: 
\begin{multline}\label{eqenergyplouche3}
\frac{1}{2}\partial_{t}\left\Vert \widetilde{v}_{\varepsilon}^{R}(t,\cdot)\right\Vert _{\overset{\circ}{H}{}^{J}(\T\times\R)}^{2}+\textcolor{marin}{ \widetilde{c}_{0}\, \sum_{|\alpha|=J} \Phi_{\varepsilon}\left(\nabla,\partial^{\alpha}\widetilde{v}_{\varepsilon}^{R}\right)}\\
\leq(J+1)\left(\frac{\left\Vert h\right\Vert _{L^{\infty}}}{2c_{1}}+C_{S}^{1}+C_{\mathcal{C}}^{1}+C_{\mathcal{H}}^{1}\right)\left\Vert \widetilde{v}_{\varepsilon}^{R}(t,\cdot)\right\Vert _{\overset{\circ}{H}{}^{J}(\T\times\R)}^{2}+C_{p}(J+1)\,\varepsilon^{2w_{m}}\\
+(J+1)(C_{\mathcal{C}}^{2}+C_{\mathcal{H}}^{2})\left\Vert \widetilde{v}_{\varepsilon}^{R}(t,\cdot)\right\Vert _{H^{J-1}(\T\times\R)}^{2}+(C_{\mathcal{C}}^{3}+C_{\mathcal{H}}^{3})\sum_{|\alpha|=J}\left\Vert \varepsilon^{-(1-\delta_{\alpha_{1},0})}\partial^{\alpha}\widetilde{v}_{\varepsilon}^{R}(t,\cdot)\right\Vert _{L^{2}(\T\times\R)}^{2}\\
+\sum_{|\alpha|=J}\left|\left\langle \partial^{\alpha}\left(\mathcal{A}\widetilde{v}_{\varepsilon}^{R}\right),\partial^{\alpha}\widetilde{v}_{\varepsilon}^{R}\right\rangle \right|+\sum_{|\alpha|=J}\left|\left\langle \partial^{\alpha}\left(\mathcal{B}\widetilde{v}_{\varepsilon}^{R}\right),\partial^{\alpha}\widetilde{v}_{\varepsilon}^{R}\right\rangle \right|.
\end{multline}
 \begin{remark} The term $\sum_{|\alpha|=J}\left\Vert \varepsilon^{-(1-\delta_{\alpha_{1},0})}\partial^{\alpha}\widetilde{v}_{\varepsilon}^{R}(t,\cdot)\right\Vert _{L^{2}(\T\times\R)}^{2}$
seems to be singular with respect to $\varepsilon$. With regards
to the Inequality \eqref{eqpropestu2}, we will interpret it as a
non-degenerate source term in the proof of Proposition \eqref{propestu}.
\end{remark}

\subsubsection{Last step: estimates over non-singular terms}

\label{sublaststep}

The two last terms (contribution of operator $\mathcal{A}$ and operator
$\mathcal{B}$) can be easily estimated. They satisfy:

\begin{lemma}\label{lemA}Select a multi-index $\alpha\in\N^{2}$
of length smaller than $m+3$. Select $\mathcal{V}\in\{\mathcal{A},\mathcal{B}\}$.
There exist $C_{\mathcal{V}}^{1}$ and $C_{\mathcal{V}}^{2}$ two
positive constants such that for all $\varepsilon\in]0,1]$ and for
all time $t\in[0,T_{\varepsilon}^{*}]$: 
\begin{equation}
\left|\left\langle \partial^{\alpha}\left(\mathcal{V}\widetilde{v}_{\varepsilon}^{R}\right),\partial^{\alpha}\widetilde{v}_{\varepsilon}^{R}\right\rangle \right|(t)\leq C_{\mathcal{V}}^{1}\left\Vert \widetilde{v}_{\varepsilon}^{R}(t,\cdot)\right\Vert _{\overset{\circ}{H}{}^{|\alpha|}(\T\times\R)}^{2}+C_{\mathcal{V}}^{2}\left\Vert \widetilde{v}_{\varepsilon}^{R}(t,\cdot)\right\Vert _{H^{|\alpha|-1}(\T\times\R)}^{2}.\label{eqestimvapprox}
\end{equation}
 \end{lemma}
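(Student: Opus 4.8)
\textbf{Plan of proof of Lemma \ref{lemA}.}
The operators $\mathcal{A}$ and $\mathcal{B}$ are both of the form $\varepsilon^{M-2}$ times a transport/multiplication operator with coefficients built from the approximated velocity $v_{\varepsilon}^{a}$, and by \eqref{eqhupvq} the family $\{v_{\varepsilon}^{a}\}_{\varepsilon}$ is bounded in $\mathcal{H}_{1}^{m+6,0}$, in particular in $W^{m+4,\infty}$ by the classical embedding (here $m\geq2$, so $H^{m}(\R^{2})$ is an algebra and $H^{m+1}\hookrightarrow L^{\infty}$). Since $M\geq7/2>2$, the prefactor $\varepsilon^{M-2}$ is bounded (indeed $\leq1$) for $\varepsilon\in\,]0,1]$, so it plays no singular role; the whole point is that neither $\mathcal{A}$ nor $\mathcal{B}$ carries a negative power of $\varepsilon$. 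The plan is simply to apply $\partial^{\alpha}$, expand by the Leibniz rule, and bound each resulting term by $H^{|\alpha|}$-type norms of $\widetilde{v}_{\varepsilon}^{R}$ with constants depending on $\|v_{\varepsilon}^{a}\|_{W^{m+4,\infty}}\leq C(C_{a})$, then conclude with Cauchy--Schwarz.

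First I would treat $\mathcal{A}\widetilde{v}_{\varepsilon}^{R}=\varepsilon^{M-2}(\widetilde{v}_{\varepsilon}^{1R}\partial_{\theta}+\widetilde{v}_{\varepsilon}^{2R}\partial_{y}){}^{t}(v_{\varepsilon}^{1a},\varepsilon v_{\varepsilon}^{2a})$. Here no derivative ever falls on $\widetilde{v}_{\varepsilon}^{R}$ beyond order $|\alpha|$: writing $\partial^{\alpha}\mathcal{A}\widetilde{v}_{\varepsilon}^{R}$ and distributing, every term is a product of a derivative of $\widetilde{v}_{\varepsilon}^{R}$ of order $\leq|\alpha|$ with a derivative of $v_{\varepsilon}^{a}$ of order $\leq|\alpha|+1\leq m+4$. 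Using the Gagliardo--Nirenberg (Moser) product estimate $\|\partial^{\alpha}(fg)\|_{L^{2}}\lesssim \|f\|_{L^{\infty}}\|g\|_{\overset{\circ}{H}{}^{|\alpha|}}+\|f\|_{\overset{\circ}{H}{}^{|\alpha|}}\|g\|_{L^{\infty}}$ with $f=\widetilde{v}_{\varepsilon}^{\cdot R}$ and $g$ a first derivative of $v_{\varepsilon}^{a}$, one gets $\|\partial^{\alpha}(\mathcal{A}\widetilde{v}_{\varepsilon}^{R})\|_{L^{2}}\lesssim C_{a}\,\|\widetilde{v}_{\varepsilon}^{R}\|_{H^{|\alpha|}}$, and pairing against $\partial^{\alpha}\widetilde{v}_{\varepsilon}^{R}$ by Cauchy--Schwarz yields \eqref{eqestimvapprox} with only the $\overset{\circ}{H}{}^{|\alpha|}$ norm on the right (so the $H^{|\alpha|-1}$ term is not even needed for $\mathcal{A}$; I include it only to match the statement). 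For $\mathcal{B}\widetilde{v}_{\varepsilon}^{R}=\varepsilon^{M-2}(v_{\varepsilon}^{1a}\partial_{\theta}\widetilde{v}_{\varepsilon}^{R}+\varepsilon v_{\varepsilon}^{2a}\partial_{y}\widetilde{v}_{\varepsilon}^{R})$ one derivative \emph{does} land on $\widetilde{v}_{\varepsilon}^{R}$, so the naive estimate produces $\|\widetilde{v}_{\varepsilon}^{R}\|_{\overset{\circ}{H}{}^{|\alpha|+1}}$; the standard remedy is the commutator trick: write $\langle\partial^{\alpha}(\mathcal{B}\widetilde{v}_{\varepsilon}^{R}),\partial^{\alpha}\widetilde{v}_{\varepsilon}^{R}\rangle=\langle[\partial^{\alpha},\varepsilon^{M-2}v_{\varepsilon}^{a}\cdot\nabla_{(\theta,\varepsilon y)}]\widetilde{v}_{\varepsilon}^{R},\partial^{\alpha}\widetilde{v}_{\varepsilon}^{R}\rangle+\langle \varepsilon^{M-2}v_{\varepsilon}^{a}\cdot\nabla_{(\theta,\varepsilon y)}\partial^{\alpha}\widetilde{v}_{\varepsilon}^{R},\partial^{\alpha}\widetilde{v}_{\varepsilon}^{R}\rangle$. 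The second (transport) term is integrated by parts, losing a derivative on the smooth coefficient: it equals $-\tfrac{1}{2}\langle \varepsilon^{M-2}\,\mathrm{div}_{(\theta,\varepsilon y)}(v_{\varepsilon}^{a})\,\partial^{\alpha}\widetilde{v}_{\varepsilon}^{R},\partial^{\alpha}\widetilde{v}_{\varepsilon}^{R}\rangle$, hence $\lesssim C_{a}\|\widetilde{v}_{\varepsilon}^{R}\|_{\overset{\circ}{H}{}^{|\alpha|}}^{2}$. The commutator $[\partial^{\alpha},v_{\varepsilon}^{a}\cdot\nabla]$ has order $|\alpha|$ (one order below the product of orders $|\alpha|$ and $1$), so the Moser commutator estimate gives $\|[\partial^{\alpha},v_{\varepsilon}^{a}\cdot\nabla]\widetilde{v}_{\varepsilon}^{R}\|_{L^{2}}\lesssim C_{a}\|\widetilde{v}_{\varepsilon}^{R}\|_{\overset{\circ}{H}{}^{|\alpha|}}$ (with a harmless lower-order piece controlled by $\|\widetilde{v}_{\varepsilon}^{R}\|_{H^{|\alpha|-1}}$ when $|\alpha|\geq1$, and trivially zero when $\alpha=(0,0)$), and Cauchy--Schwarz closes the bound.

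Collecting both contributions and absorbing all the constants (which depend on $m$ through the number of Leibniz terms and on $C_{a}$ through the coefficient norms, but never on $\varepsilon$ since $\varepsilon^{M-2}\leq1$) yields \eqref{eqestimvapprox} with $C_{\mathcal{V}}^{1},C_{\mathcal{V}}^{2}$ depending only on $m$ and $C_{a}$. There is no genuine obstacle here: this is the routine ``all non-singular terms are harmless'' step, and the only mildly delicate point is the commutator/integration-by-parts bookkeeping for $\mathcal{B}$, which is exactly the classical quasilinear energy-estimate manipulation and is the reason the inequality is stated with the extra $\overset{\circ}{H}{}^{|\alpha|}$ and $H^{|\alpha|-1}$ terms on the right rather than $\overset{\circ}{H}{}^{|\alpha|+1}$. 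I would therefore keep the write-up short, citing the Moser product and commutator estimates and noting that $M\geq7/2$ makes $\varepsilon^{M-2}$ bounded, exactly as the paper's phrasing ``The proof is rather easy'' suggests for the companion Lemmas.
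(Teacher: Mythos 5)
Your argument is correct and is exactly the classical quasilinear energy-estimate manipulation that the paper leaves unwritten: Leibniz expansion with the coefficients $v_{\varepsilon}^{a}$ bounded in $W^{m+4,\infty}$ (from \eqref{eqhupvq} and Sobolev embedding) for $\mathcal{A}$, and the commutator plus integration-by-parts trick for the transport operator $\mathcal{B}$, with $\varepsilon^{M-2}\leq1$ rendering the prefactors harmless. One small caution: when estimating the products, always put the $L^{\infty}$ norm on the factors coming from $v_{\varepsilon}^{a}$ and the $L^{2}$ norm on the factors coming from $\widetilde{v}_{\varepsilon}^{R}$ (direct Leibniz, rather than the symmetric Moser form with $\|\widetilde{v}_{\varepsilon}^{R}\|_{L^{\infty}}$), since otherwise a constant term $\|\widetilde{v}_{\varepsilon}^{R}\|_{L^{\infty}}\|v_{\varepsilon}^{a}\|_{H^{|\alpha|+1}}$ appears which is not of the form allowed on the right-hand side of \eqref{eqestimvapprox}.
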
 The proof is very classical. We do not write it. Applying
Lemma \ref{lemA}, we obtain from Inequality~\eqref{eqenergyplouche3}
that for all $\varepsilon\in]0,\varepsilon_{d}]$ and for all time
$t\in[0,T_{\varepsilon}^{*}]$: 
\begin{multline}
\frac{1}{2}\partial_{t}\left\Vert \widetilde{v}_{\varepsilon}^{R}(t,\cdot)\right\Vert _{\overset{\circ}{H}{}^{J}(\T\times\R)}^{2}+\widetilde{c}_{0}\,\sum_{|\alpha|=J}\Phi_{\varepsilon}\left(\nabla,\partial^{\alpha}\widetilde{v}_{\varepsilon}^{R}\right)\leq C_{J}^{1}\left\Vert \widetilde{v}_{\varepsilon}^{R}(t,\cdot)\right\Vert _{\overset{\circ}{H}{}^{J}(\T\times\R)}^{2}+C_{p}(J+1)\,\varepsilon^{2w_{m}}\\
+C_{J}^{2}\left\Vert \widetilde{v}_{\varepsilon}^{R}(t,\cdot)\right\Vert _{H^{J-1}(\T\times\R)}^{2}+C_{J}^{3}\sum_{|\alpha|=J}\left\Vert \varepsilon^{-(1-\delta_{\alpha_{1},0})}\partial^{\alpha}\widetilde{v}_{\varepsilon}^{R}(t,\cdot)\right\Vert _{L^{2}(\T\times\R)}^{2},
\end{multline}
 with $C_{J}^{1}:=(J+1)\left(\frac{\left\Vert h\right\Vert _{L^{\infty}}}{2c_{1}}+C_{S}^{1}+C_{\mathcal{A}}^{1}+C_{\mathcal{B}}^{1}+C_{\mathcal{C}}^{1}+C_{\mathcal{H}}^{1}\right)$,
$C_{J}^{2}:=(J+1)(C_{\mathcal{A}}^{2}+C_{\mathcal{B}}^{2}+C_{\mathcal{C}}^{2}+C_{\mathcal{H}}^{2})$
and $C_{J}^{3}:=(C_{\mathcal{C}}^{3}+C_{\mathcal{H}}^{3})$.\\

That achieves the proof of Lemma \ref{lemall}.

\vspace{-0.5cm}
 \hfill{}$\square$\\

\subsubsection{Proof of Proposition \ref{propestu}}

\label{subsectionpropestu} This subsection is dedicated to the proof
of Proposition \ref{propestu}. We prove by induction that property~$\mathcal{P}(J)$,
defined at the level of (\ref{recurezrnl}), is satisfied for $J\in\llbracket0,m+3\rrbracket$.
We proceed in two steps. First we obtain an estimate for the velocity
(Inequality \eqref{eqpropestu1}). Then, we take advantage of the
quadratic form $\sum_{|\alpha|=J}\Phi_{\varepsilon}\left(\nabla,\partial^{\alpha}\widetilde{v}_{\varepsilon}^{R}\right)$
to obtain the regularization Inequality \eqref{eqpropestu2}.\\

Since the proof is exactly the same for the case $J=0$ and the increment
of the induction, we do not write the details to prove that property
$\mathcal{P}(0)$ is true.

\bigskip{}

\paragraph{We assume that $\mathcal{P}(J)$ is true for $J\in\llbracket0,m+2\rrbracket$.} We apply Lemma~$\ref{lemall}$
in the case $J+1\ (\geq1)$. There exist $\varepsilon_{d}$ and~$c_{1}$
two positive constants, there exist $C_{J+1}^{1}$, $C_{J+1}^{2}$,
$C_{J+1}^{3}$ and $C_{p}$ four positive constants such that for
all $\varepsilon\in]0,\varepsilon_{d}]$ and for all time $t\in[0,T_{\varepsilon}^{*}]$:
\begin{multline}\label{inegenergyJ+1}
\frac{1}{2}\partial_{t}\left\Vert \widetilde{v}_{\varepsilon}^{R}(t,\cdot)\right\Vert _{\overset{\circ}{H}{}^{J+1}(\T\times\R)}^{2}+c_{1}\textcolor{marin}{\sum_{|\alpha|=J+1}\Phi_{\varepsilon}(\nabla,\partial^{\alpha}\widetilde{v}_{\varepsilon}^{R})(t)}\\
\leq C_{J+1}^{1}\left\Vert \widetilde{v}_{\varepsilon}^{R}(t,\cdot)\right\Vert _{H^{J}(\T\times\R)}^{2}+C_{J+1}^{2}\left\Vert \widetilde{v}_{\varepsilon}^{R}(t,\cdot)\right\Vert _{\overset{\circ}{H}{}^{J+1}(\T\times\R)}^{2}\\
+C_{J+1}^{3}\sum_{|\alpha|=J+1}\left\Vert \varepsilon^{-(1-\delta_{\alpha_{1},0})}\partial^{\alpha}\widetilde{v}_{\varepsilon}^{R}(t,\cdot)\right\Vert _{L^{2}(\T\times\R)}^{2}+(J+2)\, C_{p}\,\varepsilon^{2w_{m}}.
\end{multline}

$\circ$ \textit{First}, we look for the $H^{J+1}$-estimate. We neglect
$\sum_{|\alpha|=J+1}\Phi_{\varepsilon}(\nabla,\partial^{\alpha}\widetilde{v}_{\varepsilon}^{R})$
since it is positive as a sum of positive quadratic forms. Hence,
for all $\varepsilon\in]0,\varepsilon_{d}]$ and for all time $t\in[0,T_{\varepsilon}^{*}]$:
\[
\frac{1}{2}\partial_{t}\left\Vert \widetilde{v}_{\varepsilon}^{R}(t,\cdot)\right\Vert _{\overset{\circ}{H}{}^{J+1}(\T\times\R)}^{2}\leq C_{J+1}^{2}\left\Vert \widetilde{v}_{\varepsilon}^{R}(t,\cdot)\right\Vert _{\overset{\circ}{H}{}^{J+1}(\T\times\R)}^{2}+S_{J+1}(t)\,,
\]
 where $S_{J+1}$ is defined as: 
\begin{multline*}
S_{J+1}(t):=C_{J+1}^{1}\left\Vert \widetilde{v}_{\varepsilon}^{R}(t,\cdot)\right\Vert _{H^{J}(\T\times\R)}^{2}\\
+C_{J+1}^{3}\sum_{|\alpha|=J+1}\left\Vert \varepsilon^{-(1-\delta_{\alpha_{1},0})}\partial^{\alpha}\widetilde{v}_{\varepsilon}^{R}(t,\cdot)\right\Vert _{L^{2}(\T\times\R)}^{2}+(J+2)\, C_{p}\,\varepsilon^{2w_{m}}.
\end{multline*}
 From the assumption $\mathcal{P}(J)$, Inequalities \eqref{eqpropestu1}
and \eqref{eqpropestu2} hold. The functions $t\longmapsto\left\Vert \widetilde{v}_{\varepsilon}^{R}(t,\cdot)\right\Vert _{H^{j}}^{2}$
and ${\displaystyle t\longmapsto\sum_{|\alpha|=J+1}\left\Vert \varepsilon^{-(1-\delta_{\alpha_{1},0})}\partial^{\alpha}\widetilde{v}_{\varepsilon}^{R}(t,\cdot)\right\Vert _{L^{2}(\T\times\R)}^{2}}$
are in~$L^{1}([0,T_{\varepsilon}^{*}])$. In addition there exists
$M_{m}^{1}$ a positive constant such that for all $\varepsilon\in]0,\varepsilon_{d}]$
and for all time $t\in[0,T_{\varepsilon}^{*}]$: 
\[
\left\Vert \widetilde{v}_{\varepsilon}^{R}(t,\cdot)\right\Vert _{H^{j}}^{2}\leq M_{m}^{1}\,\varepsilon^{2w_{m}}\qquad\text{and}\qquad\sum_{|\alpha|=J+1}\left\Vert \varepsilon^{-(1-\delta_{\alpha_{1},0})}\partial^{\alpha}\widetilde{v}_{\varepsilon}^{R}(t,\cdot)\right\Vert _{L^{2}(\T\times\R)}^{2}\leq M_{m}^{1}\,\varepsilon^{2w_{m}}.
\]
 Thus, the function $t\longmapsto S_{J+1}(t)$ is in $L^{1}([0,T_{\varepsilon}^{*}])$
and satisfies for all $\varepsilon\in]0,\varepsilon_{d}]$ and for
all time $t\in[0,T_{\varepsilon}^{*}]$: 
\[
S_{J+1}(t)\leq\,(2M_{m}^{1}+(J+2)C_{p})\varepsilon^{2w_{m}}.
\]
 We can apply the Gronwall's lemma, for all $\varepsilon\in]0,\varepsilon_{d}]$
and for all time $t\in[0,T_{\varepsilon}^{*}]$, 
\begin{align}
\hspace{-0.1cm}\left\Vert \widetilde{v}_{\varepsilon}^{R}(t,\cdot)\right\Vert _{\overset{\circ}{H}{}^{J+1}(\T\times\R)}^{2}\leq\int_{0}^{t}e^{2C_{J+1}^{2}(t-s)}S_{J+1}(s)ds\leq\frac{2M_{m}^{1}+(J+2)C_{p}}{2C_{J+1}^{2}}\varepsilon^{2w_{m}}\left(e^{2C_{J+1}^{2}t}-1\right).\label{ineqvelo1}
\end{align}
 Setting $K_{J+1}^{1}:=(2M_{m}^{1}+(J+2)C_{p})/(2C_{J+1}^{2})$ and
$K_{j+1}^{2}:=2C_{J+1}^{2}$, the first inequality of $\mathcal{P}(J+1)$
is proved.\\

\bigskip{}

$\circ$ \textit{Then}, to obtain the estimation $L^{2}([0,T_{\varepsilon}^{*}],H^{J+2})$,
we go back to Equation (\ref{inegenergyJ+1}). We integrate it with
respect to the time $t$, 
\begin{multline}\label{ineqenergyhyper}
\frac{1}{2}\left\Vert \widetilde{v}_{\varepsilon}^{R}(t,.)\right\Vert _{\overset{\circ}{H}^{J+1}}^{2}+\int_{0}^{t}{\displaystyle \sum_{|\alpha|=J+1}\Phi_{\varepsilon}(\nabla,\partial^{\alpha}\widetilde{v}_{\varepsilon}^{R})(s)ds}\\
\leq C_{J+1}^{2}\int_{0}^{t}\left(\left\Vert \widetilde{v}_{\varepsilon}^{R}(s,\cdot)\right\Vert _{\overset{\circ}{H}{}^{J+1}}^{2}+S_{J+1}(s)\right)ds.
\end{multline}
 Since $\left\Vert \widetilde{v}_{\varepsilon}^{R}(t,.)\right\Vert _{\overset{\circ}{H}^{J+1}}^{2}$
is positive, we neglect it in the left-hand side of the Inequality
\eqref{ineqenergyhyper}. According to \eqref{ineqvelo1}, the functions
$t\longmapsto\left\Vert \widetilde{v}_{\varepsilon}^{R}(t,\cdot)\right\Vert _{\overset{\circ}{H}{}^{J+1}}^{2}$
and $S_{J+1}$ are in $L^{1}([0,T_{\varepsilon}^{*}])$. Furthermore,
there exists $K_{J+1}^{2}$ such that for all $\varepsilon\in]0,\varepsilon_{d}]$
and for all time $t\in[0,T_{\varepsilon}^{*}]$: 
\[
\left\Vert \widetilde{v}_{\varepsilon}^{R}(t,\cdot)\right\Vert _{\overset{\circ}{H}{}^{J+1}}^{2}+S(t)\leq K_{J+1}^{2}\varepsilon^{2w_{m}}\, t\leq K_{J+1}^{2}\varepsilon^{2w_{m}}.
\]
 So to say, there exists~$K_{J+1}^{2}(>0)$ such that for all $\varepsilon\in]0,\varepsilon_{d}]$
and for all time $t\in[0,T_{\varepsilon}^{*}]$: 
\[
\int_{0}^{t}{\displaystyle \sum_{|\alpha|=J+1}\Phi_{\varepsilon}(\nabla,\partial^{\alpha}\widetilde{v}_{\varepsilon}^{R})(s)ds\leq\varepsilon^{2w_{s}}\, K_{J+1}^{2}\, t.}
\]
 We end the proof choosing $K_{m}^{i}:={\displaystyle \max_{j\in\llbracket0,m+4\rrbracket}K_{j}^{i}}$
($i\in\{1,2\}$). \vspace{-0.5cm}
 \hfill{}$\square$\\

\bigskip{}

\begin{remark} In the above discussion, we only use the assumption
$M\geq2$ whereas we assume~$M\geq~7/2$. It becomes crucial when
estimating the pressure. \end{remark} 

\subsection{Control over the pressure}

\label{estpressure} 

In Subsection \ref{estpressure} we prove Proposition \ref{lemestq}.
The result is again proved by induction on the size $m$ setting $\mathcal{Q}(m)$:
\begin{equation}
\mathcal{Q}(m)\,:=\ \text{{\it " The Proposition \ref{lemestq} is satisfied up to the integer \ensuremath{m}".}}\label{recurezqnl}
\end{equation}
 To go from $m$ to $m+1$, the proof is once more based on an energy
method for Equation~(\ref{eqqepsilon}) in the anisotropic Sobolev
spaces $H_{(1,\varepsilon)}^{m}$ (defined page \pageref{eqinfininorm}):
\begin{lemma} \label{lemallq}There exits a positive constant $\varepsilon_{d}>0$
such that for any $J\in\llbracket0,m+3\rrbracket$, there exist three
positive constants $C_{J}^{1}$, $C_{J}^{2}$ and $C_{J}^{3}$ such
that for all $\varepsilon\in]0,\varepsilon_{d}]$ and for all time
$t\in[0,T_{\varepsilon}^{*}]$: 
\begin{multline}\label{ineqergJq}
\frac{1}{2}\partial_{t}\left\Vert \widetilde{q}_{\varepsilon}^{R}(t,\cdot)\right\Vert _{\overset{\circ}{H}{}_{(1,\varepsilon)}^{J}(\T\times\R)}^{2}\leq C_{J}^{1}\left(1+\left\Vert \widetilde{v}_{\varepsilon}^{R}(t,\cdot)\right\Vert _{H^{m+4}(\T\times\R)}^{2}\right)\left\Vert \widetilde{q}_{\varepsilon}^{R}(t,\cdot)\right\Vert _{\overset{\circ}{H}{}_{(1,\varepsilon)}^{J}(\T\times\R)}^{2}\\
+C_{J}^{2}\left(1+\left\Vert \widetilde{v}_{\varepsilon}^{R}(t,\cdot)\right\Vert _{H^{m+4}}^{2}\right)\left\Vert \widetilde{q}_{\varepsilon}^{R}(t,\cdot)\right\Vert _{H_{(1,\varepsilon)}^{J-1}(\T\times\R)}^{2}\\
+C_{J}^{3}\left(\left\Vert \widetilde{v}_{\varepsilon}^{R}(t,\cdot)\right\Vert _{H^{m+4}(\T\times\R)}^{2}+\varepsilon^{2(N-R)}\right).
\end{multline}
 \end{lemma}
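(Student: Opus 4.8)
\noindent\textbf{[Proof plan for Lemma \ref{lemallq}.]} The plan is to run an energy method directly in the anisotropic scale, one multi-index at a time. Fixing $\alpha\in\N^{2}$ with $|\alpha|=J\leq m+3$, I would apply $\varepsilon^{\alpha_{1}}\partial^{\alpha}$ to Equation \eqref{eqqepsilon}, pair the result with $\varepsilon^{\alpha_{1}}\partial^{\alpha}\widetilde{q}_{\varepsilon}^{R}$ in $L^{2}(\T\times\R)$, and sum over $|\alpha|=J$ (the squared norm being the convenient one here). The left side produces $\tfrac12\partial_{t}\|\widetilde{q}_{\varepsilon}^{R}\|_{\overset{\circ}{H}{}_{(1,\varepsilon)}^{J}}^{2}$, while on the right appear the contributions of the singular transport $\varepsilon^{-1}h\,\partial_{y}$, of the $v_{\varepsilon}^{a}$-transport $\varepsilon^{M-2}(v_{\varepsilon}^{1a}\partial_{\theta}+\varepsilon v_{\varepsilon}^{2a}\partial_{y})$, of the amplification $C\varepsilon^{M-2}\widetilde{q}_{\varepsilon}^{R}(\partial_{\theta}v_{\varepsilon}^{1a}+\varepsilon\partial_{y}v_{\varepsilon}^{2a})$, of the two pressure--velocity couplings involving $q_{\varepsilon}^{a}$ and $\widetilde{v}_{\varepsilon}^{R}$, and of the source $\widetilde{S}_{\varepsilon}^{0,R,N}$. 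The whole task is to bound each of these by a combination of $\|\widetilde{q}_{\varepsilon}^{R}\|_{\overset{\circ}{H}{}_{(1,\varepsilon)}^{J}}^{2}$, $\|\widetilde{q}_{\varepsilon}^{R}\|_{H_{(1,\varepsilon)}^{J-1}}^{2}$, $\|\widetilde{v}_{\varepsilon}^{R}\|_{H^{m+4}}^{2}$ and $\varepsilon^{2(N-R)}$, with $\varepsilon$-independent constants.

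The first step, and the heart of the matter, is the transport term $\varepsilon^{-1}h\,\partial_{y}$. Because $h=h(\theta)$ is independent of $y$, $\langle\varepsilon^{-1}h\,\partial_{y}(\varepsilon^{\alpha_{1}}\partial^{\alpha}\widetilde{q}_{\varepsilon}^{R}),\varepsilon^{\alpha_{1}}\partial^{\alpha}\widetilde{q}_{\varepsilon}^{R}\rangle=0$ by antisymmetry, and the only surviving piece is the commutator $[\varepsilon^{\alpha_{1}}\partial^{\alpha},\varepsilon^{-1}h\,\partial_{y}]\widetilde{q}_{\varepsilon}^{R}$. By Leibniz this is a finite sum of terms $\varepsilon^{\beta_{1}-1}(\partial_{\theta}^{\beta_{1}}h)\,\varepsilon^{\alpha_{1}-\beta_{1}}\partial_{\theta}^{\alpha_{1}-\beta_{1}}\partial_{y}^{\alpha_{2}+1}\widetilde{q}_{\varepsilon}^{R}$ with $\beta_{1}\geq1$; since $\varepsilon^{\beta_{1}-1}\leq1$, $h\in\mathcal{C}^{\infty}(\T)$, and the weighted derivative of $\widetilde{q}_{\varepsilon}^{R}$ that appears has total order $J+1-\beta_{1}\leq J$ with its $\theta$-order correctly weighted, its $L^{2}$ norm is controlled by $\|\widetilde{q}_{\varepsilon}^{R}\|_{\overset{\circ}{H}{}_{(1,\varepsilon)}^{J}}+\|\widetilde{q}_{\varepsilon}^{R}\|_{H_{(1,\varepsilon)}^{J-1}}$. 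This is precisely why one works in the anisotropic scale: the weight $\varepsilon^{\alpha_{1}}$ exactly absorbs the $\varepsilon^{-1}$ in front of the transport. I would then handle the $v_{\varepsilon}^{a}$-transport in the same spirit: the top-order piece is symmetrised by one integration by parts, leaving $\tfrac12\|\partial_{\theta}v_{\varepsilon}^{1a}+\varepsilon\partial_{y}v_{\varepsilon}^{2a}\|_{L^{\infty}}\|\widetilde{q}_{\varepsilon}^{R}\|_{\overset{\circ}{H}{}_{(1,\varepsilon)}^{J}}^{2}$ with the harmless prefactor $\varepsilon^{M-2}\leq1$, and the Leibniz remainders lose at most one factor $\varepsilon^{-1}$ (turning a $\theta$-derivative into an anisotropic weight), which $\varepsilon^{M-2}$ compensates since $M\geq\tfrac72$; the uniform bound \eqref{eqhupvq} and Sobolev embedding control $v_{\varepsilon}^{a}$. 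The amplification term $C\varepsilon^{M-2}\widetilde{q}_{\varepsilon}^{R}(\partial_{\theta}v_{\varepsilon}^{1a}+\varepsilon\partial_{y}v_{\varepsilon}^{2a})$ is then immediate.

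For the pressure--velocity couplings $\varepsilon^{M-2}(\widetilde{v}_{\varepsilon}^{1R}\partial_{\theta}q_{\varepsilon}^{a}+\widetilde{v}_{\varepsilon}^{2R}\partial_{y}q_{\varepsilon}^{a})$ and $C\varepsilon^{M-2}q_{\varepsilon}^{a}(\partial_{\theta}\widetilde{v}_{\varepsilon}^{1R}+\partial_{y}\widetilde{v}_{\varepsilon}^{2R})$, I would use a bilinear (Moser / Gagliardo--Nirenberg) estimate in the anisotropic norm. A $\theta$-derivative hitting $q_{\varepsilon}^{a}$ costs $\|\partial_{\theta}q_{\varepsilon}^{a}\|_{\overset{\circ}{H}{}_{(1,\varepsilon)}^{J}}\leq\varepsilon^{-1}\|q_{\varepsilon}^{a}\|_{\overset{\circ}{H}{}_{(1,\varepsilon)}^{J+1}}\leq\varepsilon^{-1}C_{a}$ (using $J+1\leq m+6$), and placing an $L^{\infty}$ norm on $q_{\varepsilon}^{a}$ or on $\partial_{\theta}q_{\varepsilon}^{a}$ costs at most $\varepsilon^{-1/2}$, respectively $\varepsilon^{-3/2}$, through the embedding \eqref{eqinfininorm}. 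In each such term at most one derivative beyond the $J$ already present falls on $\widetilde{v}_{\varepsilon}^{R}$, so $\widetilde{v}_{\varepsilon}^{R}$ is measured in $H^{J+1}\subset H^{m+4}$ --- which is why $H^{m+4}$, one order above the $H^{m+3}$ of the pressure, appears on the right-hand side, and why the proof of Proposition \ref{lemestq} will need the regularisation estimate \eqref{eqpropestu2}. After Young's inequality the worst $\varepsilon$-power is $\varepsilon^{2(M-7/2)}\leq1$, so these terms feed $C_{J}^{3}\|\widetilde{v}_{\varepsilon}^{R}\|_{H^{m+4}}^{2}$ and $C_{J}^{1}(1+\|\widetilde{v}_{\varepsilon}^{R}\|_{H^{m+4}}^{2})\|\widetilde{q}_{\varepsilon}^{R}\|_{\overset{\circ}{H}{}_{(1,\varepsilon)}^{J}}^{2}$. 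Finally, in $\widetilde{S}_{\varepsilon}^{0,R,N}$ the consistency error $-\varepsilon^{N-R}(\varepsilon^{-N}\mathcal{L}_{0}(\varepsilon,q_{\varepsilon}^{a},v_{\varepsilon}^{a}))$ is bounded by $\varepsilon^{N-R}C_{\mathcal{L}}$ in $\overset{\circ}{H}{}_{(1,\varepsilon)}^{J}$ via \eqref{eql0h}, contributing $\varepsilon^{2(N-R)}$ after Young, while the nonlinear self-interactions $\varepsilon^{R+M-2}(\widetilde{v}_{\varepsilon}^{1R}\partial_{\theta}\widetilde{q}_{\varepsilon}^{R}+\widetilde{v}_{\varepsilon}^{2R}\partial_{y}\widetilde{q}_{\varepsilon}^{R})$ and $C\varepsilon^{R+M-2}\widetilde{q}_{\varepsilon}^{R}(\partial_{\theta}\widetilde{v}_{\varepsilon}^{1R}+\partial_{y}\widetilde{v}_{\varepsilon}^{2R})$ are disposed of exactly as the couplings --- top-order piece symmetrised by integration by parts, the rest by the anisotropic Moser estimate and the $T_{\varepsilon}^{*}$-bounds \eqref{eqnormsup} --- the prefactor $\varepsilon^{R+M-2}$ (or $\varepsilon^{R+M-5/2}$ after one weight loss) staying $\geq0$ because $M\geq\tfrac72$. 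Summing over $|\alpha|=J$ then yields \eqref{ineqergJq}. The step I expect to be the main obstacle is exactly this last book-keeping: $q_{\varepsilon}^{a}$ and $\widetilde{q}_{\varepsilon}^{R}$ are controlled only in anisotropic norms, so every $\theta$-derivative of the pressure costs a power of $\varepsilon$, and one must verify that the factor $\varepsilon^{M-2}$ in front of each pressure coupling is strong enough ($M-2\geq3/2$) to neutralise the worst such loss --- which is precisely the reason the sharper hypothesis $M\geq\tfrac72$ is imposed here, whereas $M\geq2$ sufficed for the velocity estimates.
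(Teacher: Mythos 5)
Your proposal follows essentially the same route as the paper: an energy method in the anisotropic scale, with the singular transport killed by antisymmetry plus a commutator absorbed by the weight $\varepsilon^{\alpha_1}$, the $v_\varepsilon^a$-terms symmetrised by one integration by parts, the pressure--velocity couplings and the nonlinear self-interactions treated by Leibniz/Moser with the losses $\varepsilon^{-1}$ per unweighted $\theta$-derivative and $\varepsilon^{-1/2}$ per $L^{\infty}$ embedding compensated by $\varepsilon^{M-2}$ with $M\geq 7/2$, and the consistency error bounded via \eqref{eql0h} to give $\varepsilon^{2(N-R)}$ --- exactly the content of the paper's Lemmas \ref{lemcomh}, \ref{lemm3}, \ref{lemm0} and \ref{lemsourceq}. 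One small bookkeeping remark: for the self-interaction term $\varepsilon^{R+M-2}\widetilde{v}_\varepsilon^{1R}\partial_\theta\widetilde{q}_\varepsilon^{R}$ the worst surviving power is $\varepsilon^{R+M-7/2}$ (weight plus $L^{\infty}$ embedding of the pressure), not $\varepsilon^{R+M-5/2}$, but this is still nonnegative under $M\geq 7/2$, so your conclusion is unchanged.
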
 The lack of dissipation has two consequences. 
\begin{description}
\item [{\quad{}}] -There is no \textit{absorption phenomena}. It makes
us consider the anisotropic Sobolev spaces instead of the classical
Sobolev spaces. An unexpected effect of considering the anisotropic
Sobolev norms is that the family $\{\partial_{\theta}\widetilde{q}_{\varepsilon}^{R}\}_{\varepsilon}$
becomes singular with respect to $\varepsilon$ in $L^{2}$. However~$\{\varepsilon\partial_{\theta}\widetilde{q}_{\varepsilon}^{R}\}_{\varepsilon}$
is. We introduce a power of $\varepsilon$ when necessary thanks to
the integer $M$. It explains why $M$ is assumed at least larger
than $3$. 
\item [{\quad{}}] -There is no \textit{regularization phenomena} over
the pressure. A difficulty appears when we want to estimate terms
such as $\nabla p$. Each time it appears, we integrate by parts (if
possible) to pass the derivative on the velocity. It explains the
appearance of the $H^{m+4}$ norm of the velocity. The regularization
of the velocity plays again a crucial role in this process. 
\end{description}
The energy method in $H_{(1,\varepsilon)}^{m}$ consists in differentiating
Equation (\ref{equepsilon}) by $\varepsilon^{\alpha_{1}}\partial^{\alpha}$.
Then we multiply it by $\varepsilon^{\alpha_{1}}\partial^{\alpha}\widetilde{q}_{\varepsilon}^{R}$
and integrate (with respect to the space variables $(\theta,y)$),
\begin{multline}
\frac{1}{2}\partial_{t}\left\Vert \varepsilon^{\alpha_{1}}\partial^{\alpha}\widetilde{q}_{\varepsilon}^{R}\right\Vert _{L^{2}}^{2}+\left\langle \varepsilon^{\alpha_{1}}\partial^{\alpha}(\mathcal{H}\widetilde{q}_{\varepsilon}^{R}),\varepsilon^{\alpha_{1}}\partial^{\alpha}\widetilde{q}_{\varepsilon}^{R}\right\rangle \\
+\left\langle \varepsilon^{\alpha_{1}}\partial^{\alpha}(\mathcal{B}\widetilde{q}_{\varepsilon}^{R}),\varepsilon^{\alpha_{1}}\partial^{\alpha}\widetilde{q}_{\varepsilon}^{R}\right\rangle +\left\langle \varepsilon^{\alpha_{1}}\partial^{\alpha}(\mathcal{D}\widetilde{v}_{\varepsilon}^{R}),\varepsilon^{\alpha_{1}}\partial^{\alpha}\widetilde{q}_{\varepsilon}^{R}\right\rangle \\
=\left\langle \varepsilon^{\alpha_{1}}\partial^{\alpha}S_{\varepsilon}^{0,R,N},\varepsilon^{\alpha_{1}}\partial^{\alpha}\widetilde{q}_{\varepsilon}^{R}\right\rangle -\left\langle \varepsilon^{\alpha_{1}}\partial^{\alpha}(\mathcal{F}\widetilde{v}_{\varepsilon}^{R}),\varepsilon^{\alpha_{1}}\partial^{\alpha}\widetilde{q}_{\varepsilon}^{R}\right\rangle ,
\end{multline}
 where operators $\mathcal{H}$, $\mathcal{B}$, $\mathcal{D}$ and
$\mathcal{F}$ are defined as 
\begin{align*}
 & \mathcal{H}\widetilde{q}_{\varepsilon}^{R}:=\varepsilon^{-1}h\,\partial_{y}\widetilde{q}_{\varepsilon}^{R},\quad\mathcal{F}\widetilde{v}_{\varepsilon}^{R}:=\varepsilon^{M-2}\left(\widetilde{v}_{\varepsilon}^{1R}\,\partial_{\theta}q_{\varepsilon}^{a}+\varepsilon\,\widetilde{v}_{\varepsilon}^{2R}\,\partial_{y}q_{\varepsilon}^{a}\right)+C\,\varepsilon^{M-2}q_{\varepsilon}^{a}\left(\partial_{\theta}\widetilde{v}_{\varepsilon}^{1R}+\varepsilon\,\partial_{y}\widetilde{v}_{\varepsilon}^{2R}\right)\,,\\
 & \mathcal{B}\widetilde{q}_{\varepsilon}^{R}:=\varepsilon^{M-2}\left(v_{\varepsilon}^{1a}\,\partial_{\theta}\widetilde{q}_{\varepsilon}^{R}+\varepsilon\, v_{\varepsilon}^{2a}\,\partial_{y}\widetilde{q}_{\varepsilon}^{R}\right),\quad\mathcal{D}\widetilde{q}_{\varepsilon}^{R}:=C\,\varepsilon^{M-2}\widetilde{q}_{\varepsilon}^{R}\left(\partial_{\theta}v_{\varepsilon}^{1a}+\varepsilon\,\partial_{y}v_{\varepsilon}^{2a}\right)\,.
\end{align*}

In the sequel we prove several lemmas where we estimate each contributions.
We sort them contingent on how much they are singular with respect
to $\varepsilon$ in the anisotropic Sobolev spaces. In Subsection
\ref{subsectionnonsingular}, we study the contribution of operators
$\mathcal{H}$, $\mathcal{B}$ and $\mathcal{D}$. Then in Subsection
\ref{subsectionsingular}, due to technical computations, we have
to assume that $M\geq7/2$ and deal with the terms~$\mathcal{F}$
and~$S_{\varepsilon}^{0,R,N}$. \\
 Finally in Subsection \ref{subsubsectionprooflemma}, once Lemma
\ref{lemallq} is proved, we present the induction to prove Proposition
\ref{lemestq}.

\subsubsection{Non-singular terms: $M\geq3$ }

\label{subsectionnonsingular}

First we deal with the contributions of operators $\mathcal{H}$,
$\mathcal{B}$ and $\mathcal{D}$. Since we introduce the anisotropic
Sobolev spaces to get ride of the singular transport $\mathcal{H}$,
it is no longer singular:

\begin{lemma}\label{lemcomh} Assume that $M\geq2$. Select a multi-index
$\alpha\in\N^{2}$ of length smaller than $m+3$. Then, there exist
two positive constants $C_{\mathcal{H}}^{1}$ and $C_{\mathcal{H}}^{2}$
such that for all $\varepsilon\in]0,1]$ and for all time $t\in[0,T_{\varepsilon}^{*}]$:
\begin{align*}
\left|\left\langle \varepsilon^{\alpha_{1}}\partial^{\alpha}\left(\varepsilon^{-1}h\,\partial_{y}\widetilde{q}_{\varepsilon}^{R}\right),\varepsilon^{\alpha_{1}}\partial^{\alpha}\widetilde{q}_{\varepsilon}^{R}\right\rangle \right|(t)\leq & C_{\mathcal{H}}^{1}\left\Vert \widetilde{q}_{\varepsilon}^{R}(t,\cdot)\right\Vert _{\overset{\circ}{H}{}_{(1,\varepsilon)}^{|\alpha|}(\T\times\R)}^{2}+C_{\mathcal{H}}^{2}\left\Vert \widetilde{q}_{\varepsilon}^{R}(t,\cdot)\right\Vert _{H_{(1,\varepsilon)}^{|\alpha|-1}(\T\times\R)}^{2}.
\end{align*}
 \end{lemma}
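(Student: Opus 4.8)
The plan is to estimate the quantity $\langle \varepsilon^{\alpha_1}\partial^\alpha(\varepsilon^{-1}h\,\partial_y\widetilde q_\varepsilon^R),\varepsilon^{\alpha_1}\partial^\alpha\widetilde q_\varepsilon^R\rangle$ by the classical technique of splitting off the principal (commutator-free) part and controlling the commutator. Writing $\mathcal H\widetilde q_\varepsilon^R=\varepsilon^{-1}h\,\partial_y\widetilde q_\varepsilon^R$, I would first observe that $\varepsilon^{\alpha_1}\partial^\alpha\mathcal H=\mathcal H\,\varepsilon^{\alpha_1}\partial^\alpha+[\varepsilon^{\alpha_1}\partial^\alpha,\mathcal H]$. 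The key point is that $\mathcal H$ is, for fixed $\varepsilon$, a first-order transport operator whose coefficient $\varepsilon^{-1}h(\theta)$ depends only on $\theta$; hence it commutes with $\partial_y$, and the only nontrivial contributions to the commutator come from $\partial_\theta$ derivatives hitting $h$. So $[\varepsilon^{\alpha_1}\partial^\alpha,\mathcal H]$ is a sum of terms of the shape $\varepsilon^{\alpha_1-1}\,(\partial_\theta^{j}h)\,\partial_y\partial^{\beta}\widetilde q_\varepsilon^R$ with $j\geq 1$ and $|\beta|\leq|\alpha|-1$, $\beta$ obtained from $\alpha$ by removing $j$ copies of the $\theta$-direction; thus $\beta_1=\alpha_1-j$.

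The first (principal) term is handled by integration by parts: since $\partial_y$ is skew-adjoint on $L^2(\T\times\R)$ and $h=h(\theta)$ is independent of $y$,
\[
\langle \mathcal H(\varepsilon^{\alpha_1}\partial^\alpha\widetilde q_\varepsilon^R),\varepsilon^{\alpha_1}\partial^\alpha\widetilde q_\varepsilon^R\rangle
=\varepsilon^{-1}\int_{\T\times\R} h\,\partial_y\bigl(\tfrac12(\varepsilon^{\alpha_1}\partial^\alpha\widetilde q_\varepsilon^R)^2\bigr)\,d\theta\,dy=0,
\]
so this term vanishes exactly — this is precisely the payoff of the singular change of unknowns and of working in the anisotropic spaces. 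For the commutator term, each summand $\varepsilon^{\alpha_1-1}(\partial_\theta^j h)\,\partial_y\partial^\beta\widetilde q_\varepsilon^R$ with $j\geq1$ is rewritten by inserting the weight correctly: I would pair $\varepsilon^{\alpha_1-1}\partial_y\partial^\beta\widetilde q_\varepsilon^R=\varepsilon\cdot\varepsilon^{\beta_1+j-2}\partial_y\partial^\beta\widetilde q_\varepsilon^R$ and note $\varepsilon^{\beta_1}\partial_y\partial^\beta\widetilde q_\varepsilon^R$ is (a piece of) a homogeneous anisotropic norm of order $|\beta|+1\leq|\alpha|$; the leftover powers of $\varepsilon$ are of the form $\varepsilon^{j-1}\leq1$ for $j\geq1$, and $\|\partial_\theta^j h\|_{L^\infty}$ is a fixed constant. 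Then a Cauchy–Schwarz / Young inequality against $\varepsilon^{\alpha_1}\partial^\alpha\widetilde q_\varepsilon^R$ gives, for each such term, a bound
\[
\lesssim \|\widetilde q_\varepsilon^R\|_{\overset{\circ}{H}{}^{|\alpha|}_{(1,\varepsilon)}}^2+\|\widetilde q_\varepsilon^R\|_{\overset{\circ}{H}{}^{|\alpha|-1}_{(1,\varepsilon)}}^2\lesssim \|\widetilde q_\varepsilon^R\|_{\overset{\circ}{H}{}^{|\alpha|}_{(1,\varepsilon)}}^2+\|\widetilde q_\varepsilon^R\|_{H^{|\alpha|-1}_{(1,\varepsilon)}}^2.
\]
Summing over the finitely many commutator summands (their number depends only on $|\alpha|\leq m+3$) yields the claimed inequality with constants $C_{\mathcal H}^1,C_{\mathcal H}^2$ depending only on $\|h\|_{W^{m+3,\infty}(\T)}$ and $m$.

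I expect the only mild subtlety — and the place to be careful rather than a genuine obstacle — to be the bookkeeping of the $\varepsilon$-powers when distributing the anisotropic weight $\varepsilon^{\alpha_1}$ through the commutator: one must check that after moving $j$ of the $\partial_\theta$'s onto $h$, the residual weight $\varepsilon^{\alpha_1-1}$ attached to $\partial_y\partial^\beta$ is never more singular than $\varepsilon^{\beta_1}$ up to a nonnegative power of $\varepsilon$, which indeed holds since $\alpha_1-1=\beta_1+(j-1)\geq\beta_1$. Because the coefficient is $\theta$-only and $\partial_y$ contributes skew-adjointness with no commutator cost, there is no loss of a power of $\varepsilon$ here, which is exactly why $\mathcal H$ is "non-singular" in the anisotropic scale; the genuinely delicate $\varepsilon$-losses are deferred to the operator $\mathcal F$ and the source $S_\varepsilon^{0,R,N}$, treated later under the hypothesis $M\geq 7/2$. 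The proof is therefore short and elementary, which is consistent with the paper's remark that it "do not write it"; in writing it out I would simply make the commutator expansion explicit and invoke Young's inequality termwise.
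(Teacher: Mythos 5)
Your argument is correct and is exactly the classical commutator proof the paper has in mind when it omits the details: the leading term $\langle \mathcal{H}(\varepsilon^{\alpha_{1}}\partial^{\alpha}\widetilde{q}_{\varepsilon}^{R}),\varepsilon^{\alpha_{1}}\partial^{\alpha}\widetilde{q}_{\varepsilon}^{R}\rangle$ vanishes by skew-adjointness of $\partial_{y}$ with the $\theta$-only coefficient $h$, and the commutator terms $\varepsilon^{\alpha_{1}-1}(\partial_{\theta}^{j}h)\,\partial_{\theta}^{\alpha_{1}-j}\partial_{y}^{\alpha_{2}+1}\widetilde{q}_{\varepsilon}^{R}$ ($j\geq1$) carry the correct anisotropic weight up to the harmless factor $\varepsilon^{j-1}\leq1$, so Cauchy--Schwarz gives precisely the stated bound with constants depending only on $m$ and $\|h\|_{W^{m+3,\infty}(\T)}$. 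This mirrors the commutator treatment of $\mathcal{H}$ in the velocity estimates (Lemma \ref{lemH}), and your bookkeeping $\alpha_{1}-1=\beta_{1}+(j-1)\geq\beta_{1}$ is the right check, so there is nothing to add.
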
 When $|\alpha|=0$, the contribution of $\mathcal{H}$
is even vanishing. Since it is very classical, we do not write the
proof.\\

Presently we compute the contribution of operators $\mathcal{B}$
and $\mathcal{D}$. We have to estimate the term~$\partial_{\theta}\widetilde{q}_{\varepsilon}^{R}$
singular in $\varepsilon$ in the anisotropic spaces. It thus requires:

\begin{lemma}\label{lemm3} Assume $M\geq3$. Select a multi-index
$\alpha\in\N^{2}$ with length smaller than $m+3$. Select $\mathcal{V}\in\{\mathcal{B},\mathcal{D}\}$.
There exist two positive constants $C_{\mathcal{V}}^{1}$ and $C_{\mathcal{V}}^{2}$
such that for all $\varepsilon\in]0,1]$ and for all time $t\in[0,T_{\varepsilon}^{*}]$,
\begin{align*}
\left|\left\langle \varepsilon^{\alpha_{1}}\partial^{\alpha}\left(\mathcal{V}\widetilde{q}_{\varepsilon}^{R}\right),\varepsilon^{\alpha_{1}}\partial^{\alpha}\widetilde{q}_{\varepsilon}^{R}\right\rangle \right|(t)\leq C_{\mathcal{V}}^{1}\left\Vert \widetilde{q}_{\varepsilon}^{R}(t,\cdot)\right\Vert _{\overset{\circ}{H}{}_{(1,\varepsilon)}^{|\alpha|}(\T\times\R)}^{2}+C_{\mathcal{V}}^{2}\left\Vert \widetilde{q}_{\varepsilon}^{R}(t,\cdot)\right\Vert _{H_{(1,\varepsilon)}^{|\alpha|-1}(\T\times\R)}^{2}.
\end{align*}
 \end{lemma}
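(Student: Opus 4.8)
\textbf{Plan of proof for Lemma \ref{lemm3}.}

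The plan is to treat both operators $\mathcal{V}\in\{\mathcal{B},\mathcal{D}\}$ along the same lines as the (omitted) proof of Lemma \ref{lemcomh}, the only new feature being that the $\theta$-derivative hitting $\widetilde{q}_{\varepsilon}^{R}$ (either explicitly in $\mathcal{B}$ through $\partial_\theta\widetilde{q}_{\varepsilon}^{R}$, or after redistributing derivatives in $\mathcal{D}$) is \emph{not} weighted by $\varepsilon$ in the anisotropic norm $H_{(1,\varepsilon)}^{|\alpha|}$, hence a priori singular of order $\varepsilon^{-1}$ in $L^2$; this singularity is precisely absorbed by the prefactor $\varepsilon^{M-2}$ with $M\geq 3$. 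First I would apply $\varepsilon^{\alpha_1}\partial^\alpha$ to $\mathcal{V}\widetilde{q}_{\varepsilon}^{R}$, and split via the Leibniz rule into the ``principal'' term, where all $|\alpha|$ derivatives fall on $\widetilde{q}_{\varepsilon}^{R}$ and the $H^{m+6}$ coefficient $v_{\varepsilon}^{a}$ (which is bounded by $C_a$ uniformly in $\varepsilon$ thanks to \eqref{eqhupvq}) is undifferentiated, and the ``commutator'' term $[\varepsilon^{\alpha_1}\partial^\alpha,\mathcal V\,\cdot\,/\widetilde{q}_{\varepsilon}^{R}]\widetilde{q}_{\varepsilon}^{R}$, which by the commutator/Moser estimates is of order $|\alpha|-1$ in $\widetilde{q}_{\varepsilon}^{R}$ and therefore contributes only to the $H_{(1,\varepsilon)}^{|\alpha|-1}$ term $C_{\mathcal V}^2$, once one checks that the $\varepsilon$-weights match up (each lost $\partial_\theta$ in the commutator releases exactly one factor $\varepsilon$ from $\varepsilon^{\alpha_1}$, which is the book-keeping behind the anisotropic norm).

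For the principal term of $\mathcal{B}$, one has $\varepsilon^{M-2}v_{\varepsilon}^{1a}\,\varepsilon^{\alpha_1}\partial^\alpha\partial_\theta\widetilde{q}_{\varepsilon}^{R}+\varepsilon^{M-1}v_{\varepsilon}^{2a}\,\varepsilon^{\alpha_1}\partial^\alpha\partial_y\widetilde{q}_{\varepsilon}^{R}$ tested against $\varepsilon^{\alpha_1}\partial^\alpha\widetilde{q}_{\varepsilon}^{R}$. The $\partial_y$ piece is already in the $\overset{\circ}{H}{}_{(1,\varepsilon)}^{|\alpha|}$ norm and is handled by integration by parts in $y$ moving half a derivative (the usual transport cancellation, modulo a $\partial_y v_\varepsilon^{2a}$ term that is bounded). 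For the $\partial_\theta$ piece, I would write $\varepsilon^{\alpha_1}\partial^\alpha\partial_\theta\widetilde{q}_{\varepsilon}^{R}=\varepsilon^{-1}\,\varepsilon^{\alpha_1+1}\partial^{\alpha+e_1}\widetilde{q}_{\varepsilon}^{R}$, so that $\varepsilon^{\alpha_1+1}\partial^{\alpha+e_1}\widetilde{q}_{\varepsilon}^{R}$ is a genuine $\overset{\circ}{H}{}_{(1,\varepsilon)}^{|\alpha|+1}$ quantity; the coefficient $\varepsilon^{M-2}v_{\varepsilon}^{1a}$ then absorbs the $\varepsilon^{-1}$ because $M-2-1=M-3\geq 0$, and a Cauchy--Schwarz gives a bound by $C\,\|\widetilde{q}_{\varepsilon}^{R}\|_{\overset{\circ}{H}{}_{(1,\varepsilon)}^{|\alpha|+1}}\|\widetilde{q}_{\varepsilon}^{R}\|_{\overset{\circ}{H}{}_{(1,\varepsilon)}^{|\alpha|}}$ --- but here one must instead integrate by parts in $\theta$ first, exactly as in the transport estimate, so that the top-order term $\langle v_\varepsilon^{1a}\partial_\theta(\cdot),(\cdot)\rangle$ becomes $-\tfrac12\langle(\partial_\theta v_\varepsilon^{1a})(\cdot),(\cdot)\rangle$ with $\partial_\theta v_\varepsilon^{1a}$ bounded by $C_a$, and the surviving $\varepsilon^{-1}\varepsilon^{M-2}=\varepsilon^{M-3}$ is harmless. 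That reduces everything to $\|\widetilde{q}_{\varepsilon}^{R}\|_{\overset{\circ}{H}{}_{(1,\varepsilon)}^{|\alpha|}}^2$ as required. The operator $\mathcal{D}=C\varepsilon^{M-2}\widetilde{q}_{\varepsilon}^{R}(\partial_\theta v_\varepsilon^{1a}+\varepsilon\partial_y v_\varepsilon^{2a})$ has \emph{no} derivative on $\widetilde{q}_{\varepsilon}^{R}$ at all, so after the Leibniz split all terms are of order $\leq|\alpha|$ in $\widetilde{q}_{\varepsilon}^{R}$ with $H^{m+6}$-coefficients (plus the harmless $\varepsilon^{M-2}\leq 1$), and the Moser estimate in $H^{m+3}(\T\times\R)$ --- an algebra since $m+3\geq 2$ --- gives directly $C_{\mathcal D}^1\|\widetilde{q}_{\varepsilon}^{R}\|_{\overset{\circ}{H}{}_{(1,\varepsilon)}^{|\alpha|}}^2$; one only needs the elementary remark that for the anisotropic norm the product of an undifferentiated coefficient with $\varepsilon^{\alpha_1}\partial^\alpha\widetilde q_\varepsilon^R$ stays controlled, and lower-order products go into $C_{\mathcal D}^2\|\widetilde{q}_{\varepsilon}^{R}\|_{H_{(1,\varepsilon)}^{|\alpha|-1}}^2$.

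The main (and only real) obstacle is the bookkeeping of $\varepsilon$-powers in the anisotropic setting: one must check that every time a $\partial_\theta$ is redistributed off $\widetilde{q}_{\varepsilon}^{R}$ onto a coefficient (in the commutator terms), or conversely peeled off a coefficient onto $\widetilde{q}_{\varepsilon}^{R}$, the weight $\varepsilon^{\alpha_1}$ converts correctly so that the resulting quantity is still measured in the \emph{anisotropic} norm of the appropriate order and no spurious negative power of $\varepsilon$ survives --- this is exactly where $M\geq 3$ enters, since the worst case ($\mathcal B$, all remaining weight used, one extra $\partial_\theta$ on $\widetilde{q}_{\varepsilon}^{R}$) leaves $\varepsilon^{M-3}\leq 1$. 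Everything else is routine: transport-type integration by parts to kill the top-order symmetric term, Cauchy--Schwarz, the Moser/commutator inequalities in $H^{m+3}(\T\times\R)$, and the uniform bounds \eqref{eqhupvq} on $v_\varepsilon^{a}$. I would therefore present the argument for, say, $\mathcal B$ in the representative case $\alpha=(\alpha_1,0)$ in detail and then indicate that $\mathcal D$ and the general $\alpha$ follow by the same scheme, exactly as was done for Lemma \ref{lemcomh}.
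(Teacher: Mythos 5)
Your plan coincides with the paper's proof: Leibniz expansion, an integration by parts in $\theta$ for the extremal term so that only the bounded coefficient $\partial_{\theta}v_{\varepsilon}^{1a}$ (controlled via \eqref{eqhupvq}) appears, and for the remaining terms the conversion of $\partial_{\theta}\widetilde{q}_{\varepsilon}^{R}$ into the weighted derivative $\varepsilon\,\partial_{\theta}\widetilde{q}_{\varepsilon}^{R}$ at the cost of one power of $\varepsilon$ borrowed from $\varepsilon^{M-2}$, which is precisely where $M\geq3$ enters, while $\mathcal{D}$ is treated as a zero-order term needing only $M\geq2$. The two small imprecisions in your bookkeeping --- after the integration by parts the top-order term carries $\varepsilon^{M-2}$ rather than $\varepsilon^{M-3}$, and a derivative transferred onto the coefficient in the $y$ direction releases no factor $\varepsilon$ from the weight $\varepsilon^{\alpha_{1}}$ (the extra $\varepsilon$ must then come from $\varepsilon^{M-2}$, as you note in your ``worst case'') --- do not affect the validity of the argument.
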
 Of course the contribution of $\mathcal{D}$ only requires
$M\geq2$ since $\partial_{y}q_{\varepsilon}^{R}$ is bounded in the
anisotropic Sobolev spaces.

\begin{proof}[Proof of Lemma \ref{lemm3}] We prove the result
for operator $\mathcal{B}$. We consider a multi-index $\alpha\in\N^{2}$
such that $|\alpha|\geq1$. We have: 
\begin{multline*}
\left\langle \varepsilon^{\alpha_{1}}\partial^{\alpha}\left(\mathcal{B}\,\widetilde{q}_{\varepsilon}^{R}\right),\varepsilon^{\alpha_{1}}\partial^{\alpha}\widetilde{q}_{\varepsilon}^{R}\right\rangle =\varepsilon^{M-2}\left\langle \varepsilon^{\alpha_{1}}\partial^{\alpha}\left(v_{\varepsilon}^{a1}\,\partial_{\theta}\widetilde{q}_{\varepsilon}^{R}\right),\varepsilon^{\alpha_{1}}\partial^{\alpha}\widetilde{q}_{\varepsilon}^{R}\right\rangle \\
+\varepsilon^{M-2}\left\langle \varepsilon^{\alpha_{1}}\partial^{\alpha}\left(\varepsilon\, v_{\varepsilon}^{a2}\,\partial_{y}\widetilde{q}_{\varepsilon}^{R}\right),\varepsilon^{\alpha_{1}}\partial^{\alpha}\widetilde{q}_{\varepsilon}^{R}\right\rangle \,.
\end{multline*}
 The two terms are estimated performing the same proof. Thus we only
write the proof for the term $\varepsilon^{M-2}\left\langle \varepsilon^{\alpha_{1}}\partial^{\alpha}\left(v_{\varepsilon}^{a1}\,\partial_{\theta}\widetilde{q}_{\varepsilon}^{R}),\varepsilon^{\alpha_{1}}\partial^{\alpha}\widetilde{q}_{\varepsilon}^{R}\right)\right\rangle $.
Of course, the estimates for the second term only requires $M\geq2$
since it appears the derivative $\partial_{y}$ instead of $\partial_{\theta}$.\\

We apply the Leibniz formula. Then to diminish the number of derivatives
acting on the pressure (for the extremal term), we consider an integration
by parts. There exists a family $\{C_{\alpha,\beta}\}$ of positive
constants such that, 
\begin{align}\label{eqnose}
 & \varepsilon^{M-2}\left|\left\langle \varepsilon^{\alpha_{1}}\partial^{\alpha}\left(v_{\varepsilon}^{a1}\,\partial_{\theta}\widetilde{q}_{\varepsilon}^{R}\right),\varepsilon^{\alpha_{1}}\partial^{\alpha}\widetilde{q}_{\varepsilon}^{R}\right\rangle \right|\nonumber \\
 & =\varepsilon^{M-2}\sum_{\beta<\alpha}C_{\alpha,\beta}\int_{\T\times\R}\varepsilon^{2\alpha_{1}}\partial^{\alpha-\beta}v_{\varepsilon}^{a1}\,\partial^{\beta}\left(\partial_{\theta}\widetilde{q}_{\varepsilon}^{R}\right)\,\partial^{\alpha}\widetilde{q}_{\varepsilon}^{R}d\theta\, dy\nonumber \\
 & \hspace{6,5cm}-\varepsilon^{(M-2)}/2\int_{\T\times\R}\partial_{\theta}v_{\varepsilon}^{a1}(\varepsilon^{\alpha_{1}}\partial^{\alpha}\widetilde{q}_{\varepsilon}^{R})^{2}\, d\theta\, dy\,.
\end{align}

$\circ$ \textit{First}, we get a bound for the last term in the right-hand-side
of Equation \eqref{eqnose}. For all $\varepsilon\in]0,1]$ and for
all time $t\in[0,T_{\varepsilon}^{*}]$: 
\begin{align}
\left|-\varepsilon^{(M-2)}/2\int_{\T\times\R}\partial_{\theta}v_{\varepsilon}^{a1}(t,\cdot)(\varepsilon^{\alpha_{1}}\partial^{\alpha}\widetilde{q}_{\varepsilon}^{R}(t,\cdot))^{2}\, d\theta\, dy\right| & \leq\left\Vert v_{\varepsilon}^{a}(t,\cdot)\right\Vert _{W^{1,\infty}}/2\left\Vert q_{\varepsilon}(t,\cdot)\right\Vert _{\overset{\circ}{H}{}_{(1,\varepsilon)}^{|\alpha|}}^{2}\,,\nonumber \\
 & \leq C_{a}/2\left\Vert q_{\varepsilon}(t,\cdot)\right\Vert _{\overset{\circ}{H}{}_{(1,\varepsilon)}^{|\alpha|}}^{2}.\label{ineqipp}
\end{align}

$\circ$ \textit{Then}, to control the other terms in Equation \eqref{eqnose},
we make appear the $\varepsilon$-derivative $\varepsilon\,\partial_{\theta}$
paying a loss of precision on~$M$. For all $\varepsilon\in]0,1]$
and for all time $t\in[0,T_{\varepsilon}^{*}]$, 
\begin{align*}
 & \left|\varepsilon^{M-2}\int_{\T\times\R}\varepsilon^{2\alpha_{1}}\ \partial^{\alpha-\beta}v_{\varepsilon}^{a1}(t,\cdot)\ \partial^{\beta}\partial_{\theta}\widetilde{q}_{\varepsilon}^{R}(t,\cdot)\ \partial^{\alpha}\widetilde{q}_{\varepsilon}^{R}(t,\cdot)d\theta\, dy\right|\\
 & \hspace{4cm}=\left|\fbox{\ensuremath{\textcolor{marin}{\varepsilon^{M-3}}}}\int_{\T\times\R}\varepsilon^{2\alpha_{1}+\textcolor{marin}{1}}\ \partial^{\alpha-\beta}v_{\varepsilon}^{a1}(t,\cdot)\ \partial^{\beta}\partial_{\theta}\widetilde{q}_{\varepsilon}^{R}(t,\cdot)\ \partial^{\alpha}\widetilde{q}_{\varepsilon}^{R}(t,\cdot)\, d\theta\, dy\right|\,,\\
 & \hspace{4cm}\leq\frac{\left\Vert v_{\varepsilon}^{a1}(t,\cdot)\right\Vert _{W^{m+4,\infty}}}{2}\left(\left\Vert \varepsilon^{\alpha_{1}+1}\partial^{\beta}\partial_{\theta}\widetilde{q}_{\varepsilon}^{R}(t,\cdot)\right\Vert _{L^{2}}^{2}+\left\Vert \varepsilon^{\alpha_{1}}\partial^{\alpha}\widetilde{q}_{\varepsilon}^{R}(t,\cdot)\right\Vert _{L^{2}}^{2}\right)\,,\\
 & \hspace{4cm}\leq\frac{C_{a}}{2}\left(\left\Vert \widetilde{q}_{\varepsilon}^{R}(t,\cdot)\right\Vert _{H_{(1,\varepsilon)}^{|\alpha|-1}}^{2}+2\left\Vert \widetilde{q}_{\varepsilon}^{R}(t,\cdot)\right\Vert _{\overset{\circ}{H}{}_{(1,\varepsilon)}^{|\alpha|}}^{2}\right).
\end{align*}
 Choosing $A:=\sum_{\beta<\alpha}C_{\alpha,\beta}C_{a}/2>0$ and $B:=(\sum_{\beta<\alpha}C_{\alpha,\beta}+1/2)C_{a}>0$
we obtain that for all $\varepsilon\in]0,1]$ and for all time $t\in[0,T_{\varepsilon}^{*}]$,
\begin{equation}
\varepsilon^{M-2}\left|\left\langle \varepsilon^{\alpha_{1}}\partial^{\alpha}\left(v_{\varepsilon}^{a1}\,\partial_{\theta}\widetilde{q}_{\varepsilon}^{R}\right),\varepsilon^{\alpha_{1}}\partial^{\alpha}\widetilde{q}_{\varepsilon}^{R}\right\rangle \right|(t)\leq A\left\Vert \widetilde{q}_{\varepsilon}^{R}(t,\cdot)\right\Vert _{H_{(1,\varepsilon)}^{|\alpha|-1}}^{2}+B\left\Vert \widetilde{q}_{\varepsilon}^{R}(t,\cdot)\right\Vert _{\overset{\circ}{H}{}_{(1,\varepsilon)}^{|\alpha|}}^{2}.\label{eqlemb1}
\end{equation}
 That ends the proof. \end{proof}

\subsubsection{Singular terms: $M\geq7/2$}

\label{subsectionsingular}

There remains to estimate the contributions of $\mathcal{F}$ and
$S_{\varepsilon}^{0,R,N}$. They are put aside since we need~$M\geq7/2$
to desingularize it. It is actually a technical assumption and it
should be relaxed to $M\geq3$ by proving more regularity on the approximated
solution $(q_{\varepsilon}^{a},v_{\varepsilon}^{a})$. \\

\paragraph{Estimate for operator $\mathcal{F}$.}

We start by estimating the contribution of the operator $\mathcal{F}$.
The main difference with the three previous operators ($\mathcal{H}$,
$\mathcal{B}$ and $\mathcal{D}$) is that the operator $\mathcal{F}$
acts on the velocity $\widetilde{v}_{\varepsilon}^{R}$. We have to
be prudent with terms of the form 
\[
\partial^{\alpha}(q_{\varepsilon}^{a}\,(\partial_{\theta}\widetilde{v}_{\varepsilon}^{1R}+\partial_{y}\widetilde{v}_{\varepsilon}^{2R})).
\]
 First it is more singular with respect to the number of derivatives
acting on the velocity. Once more, when estimating the product, we
have to use a control over the pressure $q_{\varepsilon}^{a}$ in
$L^{\infty}$-norm. We may lose some regularity to go back to the
anisotropic Sobolev norms. This term can be more singular in $L^{2}$-norm.
Thus it requires: \begin{lemma} \label{lemm0}Assume $M\geq7/2$.
Select a multi-index $\alpha\in\N^{2}$ with length smaller than $m+3$.
There exist two positive constants $C_{\mathcal{F}}^{1}$ and $S_{\mathcal{F}}$
such that for all $\varepsilon\in]0,1]$ and for all time $t\in[0,T_{\varepsilon}^{*}]$:
\[
\left|\left\langle \varepsilon^{\alpha_{1}}\partial^{\alpha}\left(\mathcal{F}\widetilde{v}_{\varepsilon}^{R}\right),\varepsilon^{\alpha_{1}}\partial^{\alpha}\widetilde{q}_{\varepsilon}^{R}\right\rangle \right|(t)\leq S_{\mathcal{F}}\left\Vert \widetilde{v}_{\varepsilon}^{R}(t,\cdot)\right\Vert _{H^{m+4}(\T\times\R)}^{2}+C_{\mathcal{F}}^{1}\left\Vert \widetilde{q}_{\varepsilon}^{R}(t,\cdot)\right\Vert _{\overset{\circ}{H}{}_{(1,\varepsilon)}^{|\alpha|}(\T\times\R)}^{2}.
\]
 \end{lemma}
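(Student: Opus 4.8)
\textbf{Proof strategy for Lemma \ref{lemm0}.} The plan is to expand $\mathcal{F}\widetilde{v}_{\varepsilon}^{R}$ into its four constituent terms coming from the definition
\[
\mathcal{F}\widetilde{v}_{\varepsilon}^{R}:=\varepsilon^{M-2}\left(\widetilde{v}_{\varepsilon}^{1R}\,\partial_{\theta}q_{\varepsilon}^{a}+\varepsilon\,\widetilde{v}_{\varepsilon}^{2R}\,\partial_{y}q_{\varepsilon}^{a}\right)+C\,\varepsilon^{M-2}q_{\varepsilon}^{a}\left(\partial_{\theta}\widetilde{v}_{\varepsilon}^{1R}+\varepsilon\,\partial_{y}\widetilde{v}_{\varepsilon}^{2R}\right),
\]
and estimate the four pairings $\langle\varepsilon^{\alpha_{1}}\partial^{\alpha}(\cdot),\varepsilon^{\alpha_{1}}\partial^{\alpha}\widetilde{q}_{\varepsilon}^{R}\rangle$ separately. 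For the first two terms, which involve $q_{\varepsilon}^{a}$ differentiated and $\widetilde{v}_{\varepsilon}^{R}$ undifferentiated (before applying $\partial^{\alpha}$), I would use the Leibniz rule on $\partial^{\alpha}$, then exploit the boundedness of the approximated pressure in the anisotropic space $\mathcal{H}^{m+6}_{1,(1,\varepsilon)}$ from \eqref{eqhupvq}, the $L^{\infty}$-control \eqref{eqnormsup} on $\sqrt{\varepsilon}\,\widetilde{q}_{\varepsilon}^{R}$, and a Young inequality to split the product into a $\|\widetilde{v}_{\varepsilon}^{R}\|_{H^{m+4}}^{2}$ piece and a $\|\widetilde{q}_{\varepsilon}^{R}\|_{\overset{\circ}{H}{}_{(1,\varepsilon)}^{|\alpha|}}^{2}$ piece. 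Here the weight $\varepsilon^{\alpha_{1}}$ must be distributed carefully: when $\partial_{\theta}$ falls on $q_{\varepsilon}^{a}$ one uses that $q_{\varepsilon}^{a}$ is bounded \emph{with} the anisotropic weights, so each $\theta$-derivative carries its own $\varepsilon$, which is exactly what makes \eqref{eqhupvq} usable.

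The genuinely delicate term is the last one, $C\,\varepsilon^{M-2}q_{\varepsilon}^{a}(\partial_{\theta}\widetilde{v}_{\varepsilon}^{1R}+\varepsilon\,\partial_{y}\widetilde{v}_{\varepsilon}^{2R})$, because $\partial^{\alpha}$ acting on this produces, in the extremal Leibniz term, $q_{\varepsilon}^{a}\,\partial^{\alpha}\partial_{\theta}\widetilde{v}_{\varepsilon}^{1R}$, i.e. $|\alpha|+1$ derivatives on the velocity — one more than the $H^{m+3}$ regularity that is available for $\widetilde{v}_{\varepsilon}^{R}$ at the current induction stage. This is precisely where the $H^{m+4}$ norm in the statement comes from: the regularization estimate \eqref{eqpropestu2} of Proposition \ref{propestu} gives an $L^2_t$-control on $\varepsilon^{-(1-\delta_{\alpha_1,0})}\partial^{\alpha}\widetilde{v}_{\varepsilon}^{R}$ up to $|\alpha|\le m+4$, so one $H^{m+4}$ derivative of the velocity is legitimate provided it is weighted by $\varepsilon^{-1}$ when it is a $\theta$-derivative. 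Concretely, for the $\partial_{\theta}\widetilde{v}_{\varepsilon}^{1R}$ contribution I would write $\varepsilon^{M-2}q_{\varepsilon}^{a}\,\partial^{\alpha}\partial_{\theta}\widetilde{v}_{\varepsilon}^{1R}=\varepsilon^{M-3}q_{\varepsilon}^{a}\cdot(\varepsilon\,\partial_{\theta})\partial^{\alpha}\widetilde{v}_{\varepsilon}^{1R}$, bound $q_{\varepsilon}^{a}$ by $C_a\varepsilon^{-1/2}$ in $L^{\infty}$ (via \eqref{eqinfininorm} and \eqref{eqhupvq}), and use Young to absorb: the net power of $\varepsilon$ is $M-3-1/2=M-7/2\ge0$ exactly under the hypothesis $M\ge 7/2$, which explains the technical assumption. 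The $\varepsilon\,\partial_{y}\widetilde{v}_{\varepsilon}^{2R}$ contribution is easier since $\partial_y$ is not weighted and carries an extra explicit $\varepsilon$.

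After summing the Leibniz contributions and collecting, the non-extremal terms (fewer than $|\alpha|+1$ derivatives on the velocity) are harmless and fold into either the $\|\widetilde{q}_{\varepsilon}^{R}\|_{\overset{\circ}{H}{}_{(1,\varepsilon)}^{|\alpha|}}^{2}$ term or lower-order velocity norms dominated by $\|\widetilde{v}_{\varepsilon}^{R}\|_{H^{m+4}}^2$, using the algebra property of $H^{m}(\R^2)$ for $m\ge2$ and the uniform bounds on $q_{\varepsilon}^{a},v_{\varepsilon}^{a}$. The main obstacle, as indicated, is the bookkeeping of $\varepsilon$-powers in the extremal term: one must simultaneously account for the prefactor $\varepsilon^{M-2}$, the weight $\varepsilon^{2\alpha_1}$, the $L^{\infty}$-loss $\varepsilon^{-1/2}$ on $q_{\varepsilon}^{a}$, and the reorganization that converts an unweighted $\partial_{\theta}$ into the anisotropically weighted $\varepsilon\,\partial_{\theta}$ appearing in \eqref{eqpropestu2}; it is this balance that forces $M-7/2\ge0$ and produces the constant $S_{\mathcal F}$ multiplying $\|\widetilde{v}_{\varepsilon}^{R}\|_{H^{m+4}}^{2}$ rather than a smaller power of $\varepsilon$.
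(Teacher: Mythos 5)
Your overall strategy --- splitting $\mathcal{F}\widetilde{v}_{\varepsilon}^{R}$ into its four pieces, applying Leibniz and Young, putting $q_{\varepsilon}^{a}$ in $L^{\infty}$ through the anisotropic embedding \eqref{eqinfininorm} together with \eqref{eqhupvq}, and accepting $|\alpha|+1\leq m+4$ derivatives on the velocity --- is the same as the paper's. But your $\varepsilon$-bookkeeping misplaces the actual difficulty, and the step you describe for the first term is wrong as stated. In $\varepsilon^{M-2}\,\widetilde{v}_{\varepsilon}^{1R}\,\partial_{\theta}q_{\varepsilon}^{a}$, after Leibniz the factor $\partial^{\beta}\partial_{\theta}q_{\varepsilon}^{a}$ carries $\beta_{1}+1$ derivatives in $\theta$, while the available weight $\varepsilon^{\alpha_{1}}\leq\varepsilon^{\beta_{1}}$ covers only $\beta_{1}$ of them: the intrinsic $\partial_{\theta}$ coming from the operator $\mathcal{F}$ does \emph{not} ``carry its own $\varepsilon$''. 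To measure this factor in $W_{(1,\varepsilon)}^{m+4,\infty}$ you must insert $\varepsilon\cdot\varepsilon^{-1}$, and then pay a further $\varepsilon^{-1/2}$ for the $L^{\infty}$ embedding \eqref{eqinfininorm}; the total loss $\varepsilon^{-3/2}$ against the prefactor $\varepsilon^{M-2}$ is exactly where the hypothesis $M\geq7/2$ is consumed in the paper's proof. As written, your treatment of this term does not close.

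Conversely, the term you single out as critical, $C\,\varepsilon^{M-2}\,q_{\varepsilon}^{a}\,\partial^{\alpha}\partial_{\theta}\widetilde{v}_{\varepsilon}^{1R}$, is milder than you think: the velocity is estimated in \emph{classical} Sobolev norms, so $\bigl\Vert\partial^{\alpha-\beta}\partial_{\theta}\widetilde{v}_{\varepsilon}^{1R}\bigr\Vert_{L^{2}}\leq\bigl\Vert\widetilde{v}_{\varepsilon}^{R}\bigr\Vert_{H^{m+4}}$ directly, no power of $\varepsilon$ needs to be spent to weight that derivative, and the only loss is the $\varepsilon^{-1/2}$ embedding applied to $\varepsilon^{\alpha_{1}}\partial^{\beta}q_{\varepsilon}^{a}$; the paper closes this term with a factor $\varepsilon^{M-5/2}\leq1$. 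Your detour through $\varepsilon\,\partial_{\theta}$ acting on the velocity still yields a valid bound (it merely wastes one power of $\varepsilon$, which $M\geq7/2$ covers), but it leads you to attribute the hypothesis $M\geq7/2$ to the wrong term. Two minor points: the remainder pressure $\widetilde{q}_{\varepsilon}^{R}$ enters only as the test function, bounded in $L^{2}$ by its $\overset{\circ}{H}{}_{(1,\varepsilon)}^{|\alpha|}$-norm, so \eqref{eqnormsup} is not needed here; and the lemma itself does not invoke the regularization estimate \eqref{eqpropestu2} --- the $H^{m+4}$ norm appears pointwise in time and is only integrated later, in the proof of Proposition \ref{lemestq}.
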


\begin{proof}[Proof of Lemma \ref{lemm0}] Select a multi-index
$\alpha\in\N^{2}$ satisfying $|\alpha|\leq m+3$. We decompose $\mathcal{F}$
as follows: 
\begin{multline}
\left\langle \varepsilon^{\alpha_{1}}\partial^{\alpha}\ (\mathcal{F}\widetilde{q}_{\varepsilon}^{R}),\varepsilon^{\alpha_{1}}\partial^{\alpha}\widetilde{q}_{\varepsilon}^{R}\right\rangle =\left\langle \varepsilon^{M-2}\varepsilon^{\alpha_{1}}\partial^{\alpha}\left(\widetilde{v}_{\varepsilon}^{1R}\,\partial_{\theta}q_{\varepsilon}^{a}\right),\varepsilon^{\alpha_{1}}\partial^{\alpha}\widetilde{q}_{\varepsilon}^{R}\right\rangle \\
+\left\langle \varepsilon^{M-2}\varepsilon^{\alpha_{1}}\partial^{\alpha}\left(\varepsilon\,\widetilde{v}_{\varepsilon}^{2R}\,\partial_{y}q_{\varepsilon}^{a}\right),\varepsilon^{\alpha_{1}}\partial^{\alpha}\widetilde{q}_{\varepsilon}^{R}\right\rangle \\
+\left\langle C\,\varepsilon^{M-2}\varepsilon^{\alpha_{1}}\partial^{\alpha}\left(q_{\varepsilon}^{a}\,\partial_{\theta}\widetilde{v}_{\varepsilon}^{1R}\right),\varepsilon^{\alpha_{1}}\partial^{\alpha}\widetilde{q}_{\varepsilon}^{R}\right\rangle +\left\langle C\,\varepsilon^{M-2}\varepsilon^{\alpha_{1}}\partial^{\alpha}\left(q_{\varepsilon}^{a}\,\varepsilon\partial_{y}\widetilde{v}_{\varepsilon}^{2R}\right),\varepsilon^{\alpha_{1}}\partial^{\alpha}\widetilde{q}_{\varepsilon}^{R}\right\rangle \,.
\end{multline}
 The first and second term (respectively the third and fourth term)
can be computed following the same line. So we only provide the estimates
for the first term (respectively the third term).\\

$\circ$ \textit{The first term.} We start by estimating the contribution
of $\left\langle \varepsilon^{M-2}\varepsilon^{\alpha_{1}}\partial^{\alpha}\left(\widetilde{v}_{\varepsilon}^{1R}\,\partial_{\theta}q_{\varepsilon}^{a}\right),\varepsilon^{\alpha_{1}}\partial^{\alpha}\widetilde{q}_{\varepsilon}^{R}\right\rangle $:
\begin{align*}
\left\langle \varepsilon^{M-2}\varepsilon^{\alpha_{1}}\partial^{\alpha}\left(\widetilde{v}_{\varepsilon}^{1R}\,\partial_{\theta}q_{\varepsilon}^{a}\right),\varepsilon^{\alpha_{1}}\partial^{\alpha}\widetilde{q}_{\varepsilon}^{R}\right\rangle \leq\frac{1}{2}\left(\left\Vert \varepsilon^{M-2}\varepsilon^{\alpha_{1}}\partial^{\alpha}\left(\widetilde{v}_{\varepsilon}^{1R}\,\partial_{\theta}q_{\varepsilon}^{a}\right)\right\Vert _{L^{2}}^{2}+\left\Vert \varepsilon^{\alpha_{1}}\partial^{\alpha}\widetilde{q}_{\varepsilon}^{R}\right\Vert _{L^{2}}^{2}\right)
\end{align*}
 We use the Leibniz formula. Then apply the Minkowski inequality and
loss a power of $\varepsilon$ to get a control of the approximated
solution in the anisotropic Sobolev spaces. There exists a family
of positive constant $\{C_{\alpha,\beta}\}$ such that 
\begin{align*}
\left\Vert \varepsilon^{M-2}\varepsilon^{\alpha_{1}}\partial^{\alpha}\left(\widetilde{v}_{\varepsilon}^{1R}\,\partial_{\theta}q_{\varepsilon}^{a}\right)\right\Vert _{L^{2}} & \leq\varepsilon^{M-2}\sum_{0\leq\beta\leq\alpha}C_{\alpha,\beta}\left\Vert \varepsilon^{\alpha_{1}+\textcolor{marin}{1-1}}\partial^{\beta}\partial_{\theta}q_{\varepsilon}^{a}\right\Vert _{L^{\infty}}\left\Vert \partial^{\alpha-\beta}\widetilde{v}_{\varepsilon}^{1R}\right\Vert _{L^{2}}\,,\\
 & \leq\fbox{\ensuremath{\textcolor{marin}{\varepsilon^{M-3}}}}\sum_{0\leq\beta\leq\alpha}C_{\alpha,\beta}\left\Vert q_{\varepsilon}^{a}\right\Vert _{W_{(1,\varepsilon)}^{m+4,\infty}}\left\Vert v_{\varepsilon}^{R}\right\Vert _{H^{m+3}}\,.
\end{align*}
 Then, we use the imbeddings $\left\Vert q_{\varepsilon}^{a}(t,\cdot)\right\Vert _{W_{(1,\varepsilon)}^{m+4,\infty}}\leq\varepsilon^{-\frac{1}{2}}\left\Vert q_{\varepsilon}^{a}(t,\cdot)\right\Vert _{H_{(1,\varepsilon)}^{m+6}}$,
\begin{align*}
\left\Vert \varepsilon^{M-2}\varepsilon^{\alpha_{1}}\partial^{\alpha}\left(\widetilde{v}_{\varepsilon}^{1R}\,\partial_{\theta}q_{\varepsilon}^{a}\right)\right\Vert _{L^{2}} & \leq\fbox{\ensuremath{\textcolor{marin}{\varepsilon^{M-7/2}}}}\sum_{0\leq\beta\leq\alpha}C_{\alpha,\beta}\left\Vert q_{\varepsilon}^{a}\right\Vert _{H_{(1,\varepsilon)}^{m+6}}\left\Vert v_{\varepsilon}^{R}\right\Vert _{H^{m+3}}\,,\\
 & \leq C_{a}\sum_{0\leq\beta\leq\alpha}C_{\alpha,\beta}\left\Vert v_{\varepsilon}^{R}(t,\cdot)\right\Vert _{{H}{}^{m+3}}\,.
\end{align*}
 Finally for all $\varepsilon\in]0,1]$ and for all time $t\in[0,T_{\varepsilon}^{*}]$,
\begin{multline}
\left|\left\langle \varepsilon^{M-2}\varepsilon^{\alpha_{1}}\partial^{\alpha}\left(\widetilde{v}_{\varepsilon}^{1R}\,\partial_{\theta}q_{\varepsilon}^{a}\right),\varepsilon^{\alpha_{1}}\partial^{\alpha}\widetilde{q}_{\varepsilon}^{R}\right\rangle \right|(t)\\
\leq\frac{1}{2}\left(C_{a}^{2}\left(\sum_{0\leq\beta\leq\alpha}C_{\alpha,\beta}\right)^{2}\left\Vert v_{\varepsilon}^{R}(t,\cdot)\right\Vert _{{H}{}^{m+3}}^{2}+\left\Vert \varepsilon^{\alpha_{1}}\partial^{\alpha}\widetilde{q}_{\varepsilon}^{R}(t,\cdot)\right\Vert _{L^{2}}^{2}\right)\,.
\end{multline}

$\circ$ \textit{Estimate of the third term.} We now move to the estimate
of $\left\langle C\,\varepsilon^{M-2}\varepsilon^{\alpha_{1}}\partial^{\alpha}\left(q_{\varepsilon}^{a}\,\partial_{\theta}\widetilde{v}_{\varepsilon}^{1R}\right),\varepsilon^{\alpha_{1}}\partial^{\alpha}\widetilde{q}_{\varepsilon}^{R}\right\rangle $.
First, apply the Cauchy-Schwarz inequality together with the Young
inequality: 
\[
\left|\left\langle C\,\varepsilon^{M-2}\varepsilon^{\alpha_{1}}\partial^{\alpha}\left(q_{\varepsilon}^{a}\,\partial_{\theta}\widetilde{v}_{\varepsilon}^{1R}\right),\varepsilon^{\alpha_{1}}\partial^{\alpha}\widetilde{q}_{\varepsilon}^{R}\right\rangle \right|\leq\frac{1}{2}\left(\left\Vert C\,\varepsilon^{M-2}\varepsilon^{\alpha_{1}}\partial^{\alpha}\left(q_{\varepsilon}^{a}\,\partial_{\theta}\widetilde{v}_{\varepsilon}^{1R}\right)\right\Vert _{L^{2}}^{2}+\left\Vert \varepsilon^{\alpha_{1}}\partial^{\alpha}\widetilde{q}_{\varepsilon}^{R}\right\Vert _{L^{2}}^{2}\right).
\]
 To get rid with the contribution of the nonlinear term $\left\Vert C\,\varepsilon^{M-2}\varepsilon^{\alpha_{1}}\partial^{\alpha}\left(q_{\varepsilon}^{a}\,\partial_{\theta}\widetilde{v}_{\varepsilon}^{1R}\right)\right\Vert _{L^{2}}$
we start with a Leibniz formula, then: 
\begin{align*}
\left\Vert C\varepsilon^{M-2}\varepsilon^{\alpha_{1}}\partial^{\alpha}\left(q_{\varepsilon}^{a}\,\partial_{\theta}\widetilde{v}_{\varepsilon}^{1R}\right)\right\Vert _{L^{2}} & \leq\ C\varepsilon^{M-2}\sum_{0\leq\beta\leq\alpha}C_{\alpha,\beta}\left\Vert \varepsilon^{\alpha_{1}}\partial^{\alpha-\beta}\partial_{\theta}\widetilde{v}_{\varepsilon}^{1R}\,\partial^{\beta}q_{\varepsilon}^{a}\right\Vert _{L^{2}}\,,\\
 & \leq\ C\,\varepsilon^{M-2}\sum_{0\leq\beta\leq\alpha}C_{\alpha,\beta}\left\Vert \varepsilon^{\alpha_{1}}\partial^{\beta}q_{\varepsilon}^{a}\right\Vert _{L^{\infty}}\left\Vert \partial^{\alpha-\beta}\partial_{\theta}\widetilde{v}_{\varepsilon}^{1R}\right\Vert _{L^{2}}\,,\\
 & \leq\ C\,\varepsilon^{M-2}\sum_{0\leq\beta\leq\alpha}C_{\alpha,\beta}\left\Vert q_{\varepsilon}^{a}\right\Vert _{W_{(1,\varepsilon)}^{|\alpha|,\infty}}\left\Vert v_{\varepsilon}^{R}\right\Vert _{H^{m+4}}\,,\\
 & \leq\ C\,\fbox{\ensuremath{\textcolor{marin}{\varepsilon^{M-5/2}}}}\sum_{0\leq\beta\leq\alpha}C_{\alpha,\beta}\left\Vert q_{\varepsilon}^{a}\right\Vert _{H_{(1,\varepsilon)}^{m+5}}\left\Vert v_{\varepsilon}^{R}\right\Vert _{H^{m+4}}\,,\\
 & \leq\ C\, C_{a}\sum_{0\leq\beta\leq\alpha}C_{\alpha,\beta}\left\Vert v_{\varepsilon}^{R}\right\Vert _{H^{m+4}}\,.
\end{align*}
 Finally for all $\varepsilon\in]0,1]$ and for all time $t\in[0,T_{\varepsilon}^{*}]$
we have, 
\begin{multline}
\left|\left\langle C\varepsilon^{M-2}\varepsilon^{\alpha_{1}}\partial^{\alpha}\left(q_{\varepsilon}^{a}\,\partial_{\theta}\widetilde{v}_{\varepsilon}^{1R}\right),\varepsilon^{\alpha_{1}}\partial^{\alpha}\widetilde{q}_{\varepsilon}^{R}\right\rangle \right|\\
\leq\ \frac{1}{2}C^{2}\, C_{a}^{2}\,\left(\sum_{0\leq\beta\leq\alpha}C_{\alpha,\beta}\right)^{2}\left\Vert v_{\varepsilon}^{R}\right\Vert _{{H}{}^{m+4}}^{2}+\frac{1}{2}\left\Vert \widetilde{q}_{\varepsilon}^{R}\right\Vert _{\overset{\circ}{H}{}_{(1,\varepsilon)}^{|\alpha|}}^{2}\,.
\end{multline}
 It proves Lemma \ref{lemm0}. \end{proof}

\bigskip{}

\paragraph{Estimate of the source term $S_{\varepsilon}^{0,R,N}$.}

The term $S_{\varepsilon}^{0,R,N}$ contains all difficulties ever
met: 
\begin{description}
\item [{\quad{}}] -Singular terms (with respect to the number of derivatives)
such as $\nabla\widetilde{q}_{\varepsilon}^{R}$ are dealt by integrations
by parts to a cost on the regularity of the velocity. 
\item [{\quad{}}] -Singular terms in $\varepsilon$ in anisotropic spaces
such as $\partial_{\theta}\widetilde{q}_{\varepsilon}^{R}$ are dealt
assume the integer $M$ is large enough. 
\end{description}
\begin{lemma} \label{lemsourceq} Assume $M\geq7/2$. Let a multi-index
$\alpha\in\N^{2}$ with length smaller than $m+3$. There exist three
positive constants $C_{S}^{1}$, $C_{S}^{2}$ and $S_{S}$ such that
for all $\varepsilon\in]0,1]$ and for all time $t\in[0,T_{\varepsilon}^{*}]$:
\vspace{-0.3cm}
 
\begin{multline*}
\left|\left\langle \varepsilon^{\alpha_{1}}\partial^{\alpha}S_{\varepsilon}^{0,R,N},\varepsilon^{\alpha_{1}}\partial^{\alpha}\widetilde{q}_{\varepsilon}^{R}\right\rangle \right|(t)\leq C_{S}^{1}\left(1+\left\Vert v_{\varepsilon}^{R}(t,\cdot)\right\Vert _{H^{m+4}(\T\times\R)}^{2}\right)\left\Vert \widetilde{q}_{\varepsilon}^{R}(t,\cdot)\right\Vert _{\overset{\circ}{H}{}_{(1,\varepsilon)}^{|\alpha|}(\T\times\R)}^{2}\\
+C_{S}^{2}\left(1+\left\Vert v_{\varepsilon}^{R}(t,\cdot)\right\Vert _{H^{m+4}(\T\times\R)}^{2}\right)\left\Vert \widetilde{q}_{\varepsilon}^{R}(t,\cdot)\right\Vert _{{H}{}_{(1,\varepsilon)}^{|\alpha|-1}(\T\times\R)}^{2}\\
+S_{S}\left(\left\Vert v_{\varepsilon}^{R}(t,\cdot)\right\Vert _{H^{m+4}(\T\times\R)}^{2}+\varepsilon^{2(N-R)}\right).
\end{multline*}
 \end{lemma}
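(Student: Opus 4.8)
The strategy is to reproduce, for the source term $S^{0,R,N}_\varepsilon$ defined in \eqref{source0}, the same sequence of manoeuvres that were carried out for $\mathcal F$ in Lemma \ref{lemm0} and for $S^{R,N}_\varepsilon$ in Lemma \ref{controlG}: split $S^{0,R,N}_\varepsilon$ into its three constituent pieces, estimate each scalar product $\langle\varepsilon^{\alpha_1}\partial^\alpha(\cdot),\varepsilon^{\alpha_1}\partial^\alpha\widetilde q^R_\varepsilon\rangle$ separately by Cauchy--Schwarz and Young, and control every resulting $L^2$ norm by a Leibniz expansion followed by a Gagliardo--Nirenberg product estimate, inserting powers of $\varepsilon$ where needed to move from classical to anisotropic Sobolev norms and to absorb the negative powers coming from the singular transport. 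Concretely, I would write
\[
S^{0,R,N}_\varepsilon=\underbrace{-\varepsilon^{N-R}\bigl(\varepsilon^{-N}\mathcal L_0(\varepsilon,q^a_\varepsilon,v^a_\varepsilon)\bigr)}_{(\mathrm I)}\ \underbrace{-\,\varepsilon^{R+M-2}\bigl(\widetilde v^{1R}_\varepsilon\partial_\theta\widetilde q^R_\varepsilon+\widetilde v^{2R}_\varepsilon\partial_y\widetilde q^R_\varepsilon\bigr)}_{(\mathrm{II})}\ \underbrace{-\,C\varepsilon^{R+M-2}\widetilde q^R_\varepsilon\bigl(\partial_\theta\widetilde v^{1R}_\varepsilon+\partial_y\widetilde v^{2R}_\varepsilon\bigr)}_{(\mathrm{III})}
\]
and treat the three terms in turn.

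For term $(\mathrm I)$: by the bound \eqref{eql0h}, $\|\varepsilon^{-N}\mathcal L_0(\varepsilon,q^a_\varepsilon,v^a_\varepsilon)(t,\cdot)\|_{H^{m+6}_{(1,\varepsilon)}}<C_{\mathcal L}$, so $\|\varepsilon^{\alpha_1}\partial^\alpha(\mathrm I)\|_{L^2}\le\varepsilon^{N-R}C_{\mathcal L}$, and Cauchy--Schwarz then Young with weight $1$ gives $|\langle\varepsilon^{\alpha_1}\partial^\alpha(\mathrm I),\varepsilon^{\alpha_1}\partial^\alpha\widetilde q^R_\varepsilon\rangle|\le\tfrac12\varepsilon^{2(N-R)}C_{\mathcal L}^2+\tfrac12\|\widetilde q^R_\varepsilon\|_{\overset\circ H{}^{|\alpha|}_{(1,\varepsilon)}}^2$; this contributes to $S_S\,\varepsilon^{2(N-R)}$ and to $C_S^1\|\widetilde q^R_\varepsilon\|_{\overset\circ H{}^{|\alpha|}_{(1,\varepsilon)}}^2$. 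For term $(\mathrm{III})$, which is the closest analogue of the fourth piece of $\mathcal F$ in Lemma \ref{lemm0}: after Cauchy--Schwarz one must bound $\varepsilon^{R+M-2}\varepsilon^{\alpha_1}\partial^\alpha(\widetilde q^R_\varepsilon\,\partial_\theta\widetilde v^{1R}_\varepsilon)$ in $L^2$; here $\partial_\theta$ is the singular direction, so one writes $\varepsilon^{\alpha_1}\partial_\theta=\varepsilon^{-1}(\varepsilon^{\alpha_1+1}\partial_\theta)$ and pays a factor $\varepsilon^{-1}$, leaving $\varepsilon^{R+M-3}$; the $L^\infty$ control of $\widetilde q^R_\varepsilon$ in $W^{m+1,\infty}_{(1,\varepsilon)}$ from \eqref{eqnormsup} supplies an extra $\varepsilon^{-1/2}$ through the embedding \eqref{eqinfininorm}, so altogether the prefactor is $\varepsilon^{R+M-7/2}$, which is $\le1$ exactly when $M\ge7/2$ and $R\ge0$. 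A Leibniz expansion and Gagliardo--Nirenberg then give a bound $\lesssim\varepsilon^{R+M-7/2}\|\widetilde q^R_\varepsilon\|_{H^{|\alpha|}_{(1,\varepsilon)}}\|\widetilde v^R_\varepsilon\|_{H^{m+4}}$, which one absorbs into the product $\|\widetilde v^R_\varepsilon\|_{H^{m+4}}^2$ (to be handled by Proposition \ref{propestu}) plus $\|\widetilde q^R_\varepsilon\|_{\overset\circ H{}^{|\alpha|}_{(1,\varepsilon)}}^2$; the term with $\partial_y$ is easier (no singular direction, no loss). Term $(\mathrm{II})$ is the genuinely nonlinear and singular-in-regularity one: here one integrates by parts to move $\partial_\theta$ (resp.\ $\partial_y$) off $\widetilde q^R_\varepsilon$, exactly as in the ``last term'' of Lemma \ref{controlG}; the extremal Leibniz term $\widetilde v^{\cdot R}_\varepsilon\,\partial_\theta\partial^\alpha\widetilde q^R_\varepsilon$ is handled by the identity $\widetilde v^{1R}_\varepsilon\partial_\theta\partial^\alpha\widetilde q^R_\varepsilon\cdot\partial^\alpha\widetilde q^R_\varepsilon=\tfrac12\widetilde v^{1R}_\varepsilon\partial_\theta((\partial^\alpha\widetilde q^R_\varepsilon)^2)$ and one integration by parts, producing $\tfrac12\partial_\theta\widetilde v^{1R}_\varepsilon\,(\partial^\alpha\widetilde q^R_\varepsilon)^2$, bounded in $L^\infty\cdot L^2$ using \eqref{eqnormsup} for $\widetilde v^R_\varepsilon$; the non-extremal terms are bounded by Gagliardo--Nirenberg, always keeping one $\varepsilon$ in reserve from $\varepsilon^{R+M-2}$ to neutralize the $\varepsilon^{-1}$ of the $\partial_\theta$-direction and one factor $\sqrt\varepsilon$ from the $L^\infty$ embedding, so that again $M\ge7/2$ is exactly what makes the prefactor $\le1$ on $[0,T^*_\varepsilon]$. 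This yields terms of the form $(1+\|\widetilde v^R_\varepsilon\|_{H^{m+4}}^2)\|\widetilde q^R_\varepsilon\|_{\overset\circ H{}^{|\alpha|}_{(1,\varepsilon)}}^2$, $(1+\|\widetilde v^R_\varepsilon\|_{H^{m+4}}^2)\|\widetilde q^R_\varepsilon\|_{H^{|\alpha|-1}_{(1,\varepsilon)}}^2$, and $S_S\|\widetilde v^R_\varepsilon\|_{H^{m+4}}^2$.

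Collecting the three contributions and taking $C_S^1,C_S^2,S_S$ to be the sums of the respective constants gives the stated inequality. The main obstacle, as the excerpt already flags, is the bookkeeping of powers of $\varepsilon$ in term $(\mathrm{III})$ and in the nonlinear part of $(\mathrm{II})$: one has simultaneously a $\varepsilon^{-1}$ from every $\partial_\theta$ hitting the pressure in the anisotropic norm, a $\varepsilon^{-1/2}$ from each $L^\infty$ embedding \eqref{eqinfininorm}, and an $\varepsilon^{-m}$-type loss if one passes carelessly between $H^m$ and $H^m_{(1,\varepsilon)}$ via \eqref{eqrefnorm}. The delicate point is to \emph{avoid} the crude \eqref{eqrefnorm} wherever possible by keeping the weights $\varepsilon^{\alpha_1}$ attached to $\partial^\alpha\widetilde q^R_\varepsilon$ all the way through the Leibniz expansion, so that only a fixed, $|\alpha|$-independent deficit $M-7/2\ge0$ is incurred; checking that this is always achievable (in particular that the worst Leibniz term, where all derivatives land on $\widetilde q^R_\varepsilon$, is exactly the one cured by integration by parts) is where the assumption $M\ge7/2$ is consumed and where the argument must be carried out with care. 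Everything else is a routine application of Cauchy--Schwarz, Young, Leibniz and Gagliardo--Nirenberg together with \eqref{eqhupvq}, \eqref{eql0h} and \eqref{eqnormsup}.
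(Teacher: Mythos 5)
Your proposal is correct and follows essentially the same route as the paper: the same three-way splitting of $S_{\varepsilon}^{0,R,N}$, the bound \eqref{eql0h} for the residual piece, the integration by parts plus $L^{\infty}$ control of $\partial_{\theta}\widetilde{v}_{\varepsilon}^{1R}$ for the extremal Leibniz term of the transport piece, and the $\varepsilon^{-1/2}$ loss from the anisotropic $L^{\infty}$ embedding consuming $M\geq7/2$ exactly where the paper consumes it. The only slip is in your bookkeeping for term $(\mathrm{III})$: there the $\partial_{\theta}$ falls on the velocity, which is measured in classical Sobolev norms, so no $\varepsilon^{-1}$ needs to be paid (the paper's prefactor there is $\varepsilon^{R+M-5/2}$, with only the $\varepsilon^{-1/2}$ embedding loss, the case split between small and large $|\beta|$ being done by hand rather than by a single Gagliardo--Nirenberg bound); your more pessimistic prefactor $\varepsilon^{R+M-7/2}$ is still $\leq1$ under $M\geq7/2$, so the argument closes all the same.
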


\begin{proof}[Proof of Lemma \ref{lemsourceq}] We decompose
$S_{\varepsilon}^{0,R,N}$ into: 
\begin{multline*}
\left\langle \varepsilon^{\alpha_{1}}\partial^{\alpha}S_{\varepsilon}^{0,R,N},\varepsilon^{\alpha_{1}}\partial^{\alpha}\widetilde{q}_{\varepsilon}^{R}\right\rangle =\left\langle \varepsilon^{\alpha_{1}}\partial^{\alpha}\varepsilon^{N-R}\left(\varepsilon^{-N}\mathcal{L}_{0}(\varepsilon,q_{\varepsilon}^{a},v_{\varepsilon}^{a})\right),\varepsilon^{\alpha_{1}}\partial^{\alpha}\widetilde{q}_{\varepsilon}^{R}\right\rangle \\
+\left\langle \varepsilon^{R+M-2}\varepsilon^{\alpha_{1}}\partial^{\alpha}\left(\widetilde{v}_{\varepsilon}^{1R}\,\partial_{\theta}\widetilde{q}_{\varepsilon}^{R}\right),\varepsilon^{\alpha_{1}}\partial^{\alpha}\widetilde{q}_{\varepsilon}^{R}\right\rangle +\left\langle \varepsilon^{R+M-2}\varepsilon^{\alpha_{1}}\partial^{\alpha}\left(\widetilde{v}_{\varepsilon}^{2R}\,\partial_{y}\widetilde{q}_{\varepsilon}^{R}\right),\varepsilon^{\alpha_{1}}\partial^{\alpha}\widetilde{q}_{\varepsilon}^{R}\right\rangle \\
+\left\langle C\,\varepsilon^{R+M-2}\varepsilon^{\alpha_{1}}\partial^{\alpha}\left(\widetilde{q}_{\varepsilon}^{R}\,\partial_{\theta}\widetilde{v}_{\varepsilon}^{1R}\right),\varepsilon^{\alpha_{1}}\partial^{\alpha}\widetilde{q}_{\varepsilon}^{R}\right\rangle +\left\langle C\,\varepsilon^{R+M-2}\varepsilon^{\alpha_{1}}\partial^{\alpha}\left(\widetilde{q}_{\varepsilon}^{R}\,\partial_{y}\widetilde{v}_{\varepsilon}^{2R}\right),\varepsilon^{\alpha_{1}}\partial^{\alpha}\widetilde{q}_{\varepsilon}^{R}\right\rangle .
\end{multline*}
 In what follows we study each of these contributions. Since contributions
of the second and third term (respectively fourth and fifth term)
are estimated following the same steps, we only write the estimates
for the second term (respectively the fourth term).\\

$\circ$ \textit{Contribution of the First term}. Since $q_{\varepsilon}^{a}$
is built as an approximated solution it satisfies Inequality \eqref{eql0h}.
Then, for all $\varepsilon\in]0,1]$ and for all time $t\in[0,T_{\varepsilon}^{*}]$:
\begin{align*}
 & \left|\left\langle \varepsilon^{N-R}\varepsilon^{\alpha_{1}}\partial^{\alpha}\left(\varepsilon^{-N}\mathcal{L}_{0}(\varepsilon,q_{\varepsilon}^{a},v_{\varepsilon}^{a})\right),\varepsilon^{\alpha_{1}}\partial^{\alpha}\widetilde{q}_{\varepsilon}^{R}\right\rangle \right|(t)\\
 & \hspace{5cm}\leq\frac{1}{2}\left(\varepsilon^{2(N-R)}\left\Vert \varepsilon^{-N}\mathcal{L}_{0}(\varepsilon,q_{\varepsilon}^{a},v_{\varepsilon}^{a})\right\Vert _{\overset{\circ}{H}{}_{(1,\varepsilon)}^{|\alpha|}}^{2}+\left\Vert \widetilde{q}_{\varepsilon}^{R}(t,\cdot)\right\Vert _{\overset{\circ}{H}{}_{(1,\varepsilon)}^{|\alpha|}}^{2}\right)\,,\\
 & \hspace{5cm}\leq\frac{1}{2}\left(\varepsilon^{2(N-R)}C_{\mathcal{L}}^{2}+\left\Vert \widetilde{q}_{\varepsilon}^{R}(t,\cdot)\right\Vert _{\overset{\circ}{H}{}_{(1,\varepsilon)}^{|\alpha|}}^{2}\right)\,.
\end{align*}

$\circ$ \textit{Contribution of the second term}. We now estimate
the contribution of 
\[
\left\langle \varepsilon^{R+M-2}\varepsilon^{\alpha_{1}}\partial^{\alpha}\left(\widetilde{v}_{\varepsilon}^{1R}\,\partial_{\theta}\widetilde{q}_{\varepsilon}^{R}\right),\varepsilon^{\alpha_{1}}\partial^{\alpha}\widetilde{q}_{\varepsilon}^{R}\right\rangle .
\]
 We need to clearly understand the regularity required on the pressure
and the velocity. We expand the derivatives of the product thanks
to the Leibniz formula and put aside the extremal terms in the summation.
There exists a family of positive constant $\{C_{\alpha,\beta}\}$
such that 
\begin{align}\label{eqtermeqsource}
 & \left|\left\langle \varepsilon^{R+M-2}\varepsilon^{\alpha_{1}}\partial^{\alpha}\left(\widetilde{v}_{\varepsilon}^{1R}\partial_{\theta}\widetilde{q}_{\varepsilon}^{R}\right),\varepsilon^{\alpha_{1}}\partial^{\alpha}\widetilde{q}_{\varepsilon}^{R}\right\rangle \right|\nonumber \\
 & \hspace{2cm}=\varepsilon^{R+M-2}\int_{\T\times\R}\varepsilon^{2\alpha_{1}}\ \partial^{\alpha}\widetilde{v}_{\varepsilon}^{1R}\ \partial_{\theta}\widetilde{q}_{\varepsilon}^{R}\ \partial^{\alpha}\widetilde{q}_{\varepsilon}^{R}d\theta\, dy\nonumber \\
 & \hspace{4cm}+\varepsilon^{R+M-2}\sum_{0<\beta<\alpha}C_{\alpha,\beta}\int_{\T\times\R}\varepsilon^{2\alpha_{1}}\ \partial^{\alpha-\beta}\widetilde{v}_{\varepsilon}^{1R}\ \partial_{\theta}\partial^{\beta}\widetilde{q}_{\varepsilon}^{R}\ \partial^{\alpha}\widetilde{q}_{\varepsilon}^{R}d\theta\, dy\nonumber \\
 & \hspace{6cm}-\varepsilon^{R+M-2}\int_{\T\times\R}\varepsilon^{2\alpha_{1}}\frac{\partial_{\theta}\widetilde{v}_{\varepsilon}^{1R}}{2}\left(\partial^{\alpha}\widetilde{q}_{\varepsilon}^{R}\right)^{2}d\theta\, dy\,.
\end{align}
 -\textit{First}, we deal with $-\varepsilon^{R+M-2}\int_{\T\times\R}\varepsilon^{2\alpha_{1}}\frac{\partial_{\theta}\widetilde{v}_{\varepsilon}^{1R}}{2}\left(\partial^{\alpha}\widetilde{q}_{\varepsilon}^{R}\right)^{2}d\theta\, dy$.
We perform as in Inequality \eqref{ineqipp}. For all $\varepsilon\in]0,1]$
and for all time $t\in[0,T_{\varepsilon}^{*}]$, 
\begin{multline}
\left|\varepsilon^{R+M-2}\int_{\T\times\R}\varepsilon^{2\alpha_{1}}\frac{\partial_{\theta}\widetilde{v}_{\varepsilon}^{1R}}{2}\left(\partial^{\alpha}\widetilde{q}_{\varepsilon}^{R}\right)^{2}d\theta\, dy\right|\leq\frac{1}{2}\left\Vert \partial_{\theta}\widetilde{v}_{\varepsilon}^{R}\right\Vert _{L^{\infty}}\left\Vert \varepsilon^{\alpha_{1}}\ \partial^{\alpha}\widetilde{q}_{\varepsilon}^{R}\right\Vert _{L^{2}}^{2}\leq\left\Vert \widetilde{q}_{\varepsilon}^{R}\right\Vert _{\overset{\circ}{H}{}_{(1,\varepsilon)}^{|\alpha|}}^{2}.\label{ineqqsource1}
\end{multline}
 -\textit{Then}, we consider the second extremal term $\varepsilon^{R+M-2}\int_{\T\times\R}\varepsilon^{2\alpha_{1}}\ \partial^{\alpha}\widetilde{v}_{\varepsilon}^{1R}\ \partial_{\theta}\widetilde{q}_{\varepsilon}^{R}\ \partial^{\alpha}\widetilde{q}_{\varepsilon}^{R}\, d\theta\, dy$.
We have to get a control over the family $\{\partial_{\theta}\widetilde{q}_{\varepsilon}^{R}\}_{\varepsilon}$.
To do so, we introduce a power of $\varepsilon$ (to a cost on~$M$).
\begin{align*}
 & \varepsilon^{R+M-2}\left|\left\langle \varepsilon^{2\alpha_{1}}\ \partial^{\alpha}\widetilde{v}_{\varepsilon}^{1R}(t,\cdot)\ \partial_{\theta}\widetilde{q}_{\varepsilon}^{R}(t,\cdot),\partial^{\alpha}\widetilde{q}_{\varepsilon}^{R}(t,\cdot)\right\rangle \right|\\
 & \hspace{4cm}\leq\varepsilon^{R+M-2}\left\Vert \partial_{\theta}\widetilde{q}_{\varepsilon}^{R}(t,\cdot)\right\Vert _{L^{\infty}}\left|\left\langle \varepsilon^{\alpha_{1}}\partial^{\alpha}\widetilde{v}_{\varepsilon}^{1R}(t,\cdot),\varepsilon^{\alpha_{1}}\partial^{\alpha}\widetilde{q}_{\varepsilon}^{R}(t,\cdot)\right\rangle \right|\,,\\
 & \hspace{4cm}\leq\frac{\varepsilon^{R+M-\textcolor{marin}{3}}}{2}\left\Vert \textcolor{marin}{\varepsilon}\partial_{\theta}\widetilde{q}_{\varepsilon}^{R}(t,\cdot)\right\Vert _{L^{\infty}}\left(\left\Vert v_{\varepsilon}^{R}(t,\cdot)\right\Vert _{{H}^{m+3}}^{2}+\left\Vert \widetilde{q}_{\varepsilon}^{R}(t,\cdot)\right\Vert _{\overset{\circ}{H}{}_{(1,\varepsilon)}^{|\alpha|}}^{2}\right)\,,\\
 & \hspace{4cm}\leq\frac{\textcolor{marin}{\varepsilon^{R+M-3}}}{2}\left\Vert \widetilde{q}_{\varepsilon}^{R}(t,\cdot)\right\Vert _{W_{(1,\varepsilon)}^{1,\infty}}\left(\left\Vert v_{\varepsilon}^{R}(t,\cdot)\right\Vert _{{H}^{m+3}}^{2}+\left\Vert \widetilde{q}_{\varepsilon}^{R}(t,\cdot)\right\Vert _{\overset{\circ}{H}{}_{(1,\varepsilon)}^{|\alpha|}}^{2}\right)\,.
\end{align*}
 Then we use the equivalence of norms \eqref{eqinfininorm} together
with assumption $M\geq7/2$, 
\begin{align}\label{ineqqsource2}
 & \varepsilon^{R+M-2}\left|\left\langle \varepsilon^{2\alpha_{1}}\ \partial^{\alpha}\widetilde{v}_{\varepsilon}^{1R}(t,\cdot)\ \partial_{\theta}\widetilde{q}_{\varepsilon}^{R}(t,\cdot),\partial^{\alpha}\widetilde{q}_{\varepsilon}^{R}(t,\cdot)\right\rangle \right|\nonumber \\
 & \hspace{4cm}\leq\frac{\fbox{\ensuremath{\textcolor{marin}{\varepsilon^{R+M-7/2}}}}}{2}\left\Vert \widetilde{q}_{\varepsilon}^{R}(t,\cdot)\right\Vert _{H_{(1,\varepsilon)}^{1}}\left(\left\Vert v_{\varepsilon}^{R}(t,\cdot)\right\Vert _{{H}^{m+3}}^{2}+\left\Vert \widetilde{q}_{\varepsilon}^{R}(t,\cdot)\right\Vert _{\overset{\circ}{H}{}_{(1,\varepsilon)}^{|\alpha|}}^{2}\right)\,,\nonumber \\
 & \hspace{4cm}\leq\left(\left\Vert v_{\varepsilon}^{R}(t,\cdot)\right\Vert _{{H}^{m+4}}^{2}+\left\Vert \widetilde{q}_{\varepsilon}^{R}(t,\cdot)\right\Vert _{\overset{\circ}{H}_{(1,\varepsilon)}^{|\alpha|}}^{2}\right)\,.
\end{align}
 -\textit{Finally}, there remains to get a bound for the sum appearing
in the Equation \eqref{eqtermeqsource}. The Gagliardo-Niremberg's
estimate only provides a control in terms of the $H_{(1,\varepsilon)}^{|\alpha|+1}$-norm
of the pressure. Here one can notice that $0<\beta<\alpha$ and so
$|\alpha-\beta|\leq m+2$. Thus $\partial^{\alpha-\beta}\widetilde{v}_{\varepsilon}^{R}$
is bounded in~$L^{\infty}$. For all $\varepsilon\in]0,1]$ and for
all time $t\in[0,T_{\varepsilon}^{*}]$, 
\begin{align}\label{ineqqsource}
 & \varepsilon^{M+R-2}\left|\left\langle \varepsilon^{2\alpha_{1}}\partial^{\alpha-\beta}\widetilde{v}_{\varepsilon}^{1R}\,\partial_{\theta}\partial^{\beta}\widetilde{q}_{\varepsilon}^{R},\partial^{\alpha}\widetilde{q}_{\varepsilon}^{R}\right\rangle \right|\nonumber \\
 & \hspace{3cm}\leq\frac{1}{2}\left(\fbox{\ensuremath{\textcolor{marin}{\varepsilon^{2(R+M-3)}}}}\left\Vert \varepsilon^{\alpha_{1}+1}\partial^{\alpha-\beta}\widetilde{v}_{\varepsilon}^{1R}\partial_{\theta}\partial^{\beta}\widetilde{q}_{\varepsilon}^{R}\right\Vert _{L^{2}}^{2}+\left\Vert \varepsilon^{\alpha_{1}}\partial^{\alpha}\widetilde{q}_{\varepsilon}^{R}\right\Vert _{L^{2}}^{2}\right)\nonumber \\
 & \hspace{3cm}\leq\frac{1}{2}\left(1+\left\Vert \widetilde{v}_{\varepsilon}^{R}(t,\cdot)\right\Vert _{H^{m+4}}^{2}\right)\left\Vert \widetilde{q}_{\varepsilon}^{R}(t,\cdot)\right\Vert _{{H}{}_{(1,\varepsilon)}^{|\alpha|}}^{2}
\end{align}
 -Finally plugging estimates \eqref{ineqqsource1}, \eqref{ineqqsource2}
and \eqref{ineqqsource} into \eqref{eqtermeqsource} we get for all
$\varepsilon\in]0,1]$ and for all time~$t\in[0,T_{\varepsilon}^{*}]$,
\vspace{-0.3cm}
 
\begin{multline*}
\left|\left\langle \varepsilon^{R+M-2}\varepsilon^{\alpha_{1}}\partial^{\alpha}\left(\widetilde{v}_{\varepsilon}^{1R}\,\partial_{\theta}\widetilde{q}_{\varepsilon}^{R}\right),\varepsilon^{\alpha_{1}}\partial^{\alpha}\widetilde{q}_{\varepsilon}^{R}\right\rangle \right|(t)\\
\leq\left(\sum_{0<\beta<\alpha}\frac{C_{\alpha,\beta}}{2}\left(\left\Vert \widetilde{v}_{\varepsilon}^{R}(t,\cdot)\right\Vert _{H^{m+4}}^{2}+1\right)+\frac{3}{2}\right)\left\Vert \widetilde{q}_{\varepsilon}^{R}(t,\cdot)\right\Vert _{{H}{}_{(1,\varepsilon)}^{|\alpha|}}^{2}+\left\Vert \widetilde{v}_{\varepsilon}^{R}(t,\cdot)\right\Vert _{H^{m+4}}^{2}\,.
\end{multline*}

$\circ$ \textit{Contribution of the fourth term}. To estimate $\left\langle C\varepsilon^{R+M-2}\varepsilon^{\alpha_{1}}\partial^{\alpha}\left(\widetilde{q}_{\varepsilon}^{R}\,\partial_{\theta}\widetilde{v}_{\varepsilon}^{1R}\right),\varepsilon^{\alpha_{1}}\partial^{\alpha}\widetilde{q}_{\varepsilon}^{R}\right\rangle $,
we first apply the Cauchy-Schwarz inequality, 
\[
\left|\left\langle C\varepsilon^{R+M-2}\varepsilon^{\alpha_{1}}\partial^{\alpha}\left(\widetilde{q}_{\varepsilon}^{R}\,\partial_{\theta}\widetilde{v}_{\varepsilon}^{1R}\right),\varepsilon^{\alpha_{1}}\partial^{\alpha}\widetilde{q}_{\varepsilon}^{R}\right\rangle \right\Vert \leq\frac{1}{2}\left(\left\Vert C\varepsilon^{R+M-2}\varepsilon^{\alpha_{1}}\partial^{\alpha}\left(\widetilde{q}_{\varepsilon}^{R}\,\partial_{\theta}\widetilde{v}_{\varepsilon}^{1R}\right)\right\Vert _{L^{2}}^{2}+\left\Vert \widetilde{q}_{\varepsilon}^{R}\right\Vert _{\overset{\circ}{H}{}_{(1,\varepsilon)}^{|\alpha|}}^{2}\right)\,.
\]
 The control over $\partial^{\alpha}\left(\widetilde{q}_{\varepsilon}^{R}\,\partial_{\theta}\widetilde{v}_{\varepsilon}^{1R}\right)$
could be done thanks to the Gagliardo-Niremberg inequality. However
it only gives a bound which depends on the $H_{(1,\varepsilon)}^{|\alpha|+1}$-norm
(respectively $H^{|\alpha|+1}$-norm) of the pressure (respectively
velocity). We have to be more accurate. We compute this contribution
thanks to the Leibniz formula: 
\begin{align*}
\left\Vert C\varepsilon^{R+M-2}\varepsilon^{\alpha_{1}}\partial^{\alpha}\left(\widetilde{q}_{\varepsilon}^{R}\,\partial_{\theta}\widetilde{v}_{\varepsilon}^{1R}\right)\right\Vert _{L^{2}} & \leq C\varepsilon^{R+M-2}\sum_{0\leq\beta\leq\alpha}C_{\alpha,\beta}\left\Vert \varepsilon^{\alpha_{1}}\ \partial^{\beta}\widetilde{q}_{\varepsilon}^{R}\ \partial^{\alpha-\beta}\partial_{\theta}\widetilde{v}_{\varepsilon}^{1R}\right\Vert _{L^{2}}\,.
\end{align*}
 Then we study the competition between the regularity over the pressure
$\widetilde{q}_{\varepsilon}^{R}$ and over the velocity~$\widetilde{v}_{\varepsilon}^{R}$.
To do so, we cut the sum in two parts depending on the length of $\beta$.
When $|\beta|$ is small enough, $\partial^{\beta}\widetilde{q}_{\varepsilon}^{R}$
is bounded in $L^{\infty}$ whereas when $|\beta|$ is large it is
$\partial^{\alpha-\beta}\partial_{\theta}\widetilde{v}_{\varepsilon}^{1R}$
which is bounded in $L^{\infty}$. Therefore we decompose the sum
into 
\begin{multline*}
\left\Vert C\varepsilon^{R+M-2}\varepsilon^{\alpha_{1}}\partial^{\alpha}\left(\widetilde{q}_{\varepsilon}^{R}\,\partial_{\theta}\widetilde{v}_{\varepsilon}^{1R}\right)\right\Vert _{L^{2}}\leq\ \underbrace{C\varepsilon^{R+M-2}\sum_{0\leq\beta\leq\alpha,0\leq|\beta|\leq1}C_{\alpha,\beta}\left\Vert \varepsilon^{\alpha_{1}}\ \partial^{\beta}\widetilde{q}_{\varepsilon}^{R}\ \partial^{\alpha-\beta}\partial_{\theta}\widetilde{v}_{\varepsilon}^{1R}\right\Vert _{L^{2}}}_{=:C_{\beta s}}\\
+\underbrace{C\varepsilon^{R+M-2}\sum_{0\leq\beta\leq\alpha,2\leq|\beta|\leq|\alpha|}C_{\alpha,\beta}\left\Vert \varepsilon^{\alpha_{1}}\ \partial^{\beta}\widetilde{q}_{\varepsilon}^{R}\ \partial^{\alpha-\beta}\partial_{\theta}\widetilde{v}_{\varepsilon}^{1R}\right\Vert _{L^{2}}}_{=:C_{\beta l}}\,.
\end{multline*}
 We start by estimating $C_{\beta s}$, 
\begin{align}\label{ineqsum1}
C_{\beta s} & \leq\ C\varepsilon^{R+M-2}\sum_{0\leq\beta\leq\alpha,0\leq|\beta|\leq1}C_{\alpha,\beta}\left\Vert \varepsilon^{\alpha_{1}}\partial^{\beta}\widetilde{q}_{\varepsilon}^{R}\right\Vert _{L^{\infty}}\left\Vert \partial^{\alpha-\beta}\partial_{\theta}\widetilde{v}_{\varepsilon}^{1R}\right\Vert _{L^{2}}\,,\nonumber \\
 & \leq\ C\varepsilon^{R+M-2}\sum_{0\leq\beta\leq\alpha,0\leq|\beta|\leq1}C_{\alpha,\beta}\left\Vert \widetilde{q}_{\varepsilon}^{R}\right\Vert _{W_{(1,\varepsilon)}^{1,\infty}}\left\Vert \widetilde{v}_{\varepsilon}^{1R}\right\Vert _{H^{m+4}}\,,\nonumber \\
 & \leq\ C\varepsilon^{R+M-2-\textcolor{marin}{1/2}}\sum_{0\leq\beta\leq\alpha,0\leq|\beta|\leq1}C_{\alpha,\beta}\left\Vert \widetilde{q}_{\varepsilon}^{R}\right\Vert _{H_{(1,\varepsilon)}^{1}}\left\Vert \widetilde{v}_{\varepsilon}^{1R}\right\Vert _{H^{m+4}}\,,\nonumber \\
 & \leq\ 2C\fbox{\ensuremath{\textcolor{marin}{\varepsilon^{R+M-5/2}}}}\sum_{0\leq\beta\leq\alpha,0\leq|\beta|\leq1}C_{\alpha,\beta}\left\Vert v_{\varepsilon}^{R}\right\Vert _{H^{m+4}}\,.
\end{align}
 Presently, we get a bound the second term $C_{\beta l}$ by 
\begin{align}\label{ineqsum2}
C_{\beta l} & \leq\, C\varepsilon^{R+M-2}\sum_{0\leq\beta\leq\alpha,2\leq|\beta|\leq|\alpha|}C_{\alpha,\beta}\left\Vert \varepsilon^{\alpha_{1}}\partial^{\beta}\widetilde{q}_{\varepsilon}^{R}\right\Vert _{L^{2}}\left\Vert \partial^{\alpha-\beta}\partial_{\theta}\widetilde{v}_{\varepsilon}^{1R}\right\Vert _{L^{\infty}},\nonumber \\
 & \leq\, C\varepsilon^{R+M-2}\sum_{0\leq\beta\leq\alpha,2\leq|\beta|\leq|\alpha|}C_{\alpha,\beta}\left\Vert \widetilde{q}_{\varepsilon}^{R}\right\Vert _{H_{(1,\varepsilon)}^{|\alpha|}}\left\Vert v_{\varepsilon}^{R}\right\Vert _{W^{m+2,\infty}},\nonumber \\
 & \leq C\varepsilon^{R+M-2}\sum_{0\leq\beta\leq\alpha,2\leq|\beta|\leq|\alpha|}C_{\alpha,\beta}\left\Vert v_{\varepsilon}^{R}\right\Vert _{H^{m+4}}\left\Vert \widetilde{q}_{\varepsilon}^{R}\right\Vert _{H_{(1,\varepsilon)}^{|\alpha|}}\,.
\end{align}
 Joining the two estimates \eqref{ineqsum1} and \eqref{ineqsum2}
together we deduce that for all $\varepsilon\in]0,1]$ and for all
time~$t\in[0,T_{\varepsilon}^{*}]$:
\begin{multline*}
\left|\left\langle C\varepsilon^{R+M-2}\varepsilon^{\alpha_{1}}\partial^{\alpha}\left(\widetilde{q}_{\varepsilon}^{R}\,\partial_{\theta}\widetilde{v}_{\varepsilon}^{1R}\right),\varepsilon^{\alpha_{1}}\partial^{\alpha}\widetilde{q}_{\varepsilon}^{R}\right\rangle \right|\leq\,2\, C^{2}\left(\sum_{0\leq\beta\leq\alpha,0\leq|\beta|\leq1}C_{\alpha,\beta}\right)^{2}\left\Vert \widetilde{v}_{\varepsilon}^{1R}\right\Vert _{H^{m+4}}^{2}\\
+\frac{1}{2}\left(C^{2}\left(\sum_{0\leq\beta\leq\alpha,2\leq|\beta|\leq|\alpha|}C_{\alpha,\beta}\right)^{2}\left\Vert v_{\varepsilon}^{R}\right\Vert _{H^{m+4}}^{2}+1\right)\left\Vert \widetilde{q}_{\varepsilon}^{R}\right\Vert _{H_{(1,\varepsilon)}^{|\alpha|}}^{2}\,.
\end{multline*}
 This achieves the proof. \end{proof}

\bigskip{}

Finally, Lemma \ref{lemallq} is a simple corollary of Lemmas \ref{lemcomh}-$\ldots$-\ref{lemsourceq}.

\subsubsection{Proof of Proposition \ref{lemestq}}

\label{subsubsectionprooflemma}

In this subsection we prove Proposition $\ref{lemestq}$. We prove,
by induction on the size $J$ that property~$\mathcal{Q}(J)$ defined
page \pageref{recurezqnl} is satisfied for $J\in\llbracket0,m+3\rrbracket$.
Since the proof for the initialization of the induction ($\mathcal{Q}(0)$
is true) and the increment ($\mathcal{Q}(J)\Rightarrow\mathcal{Q}(J+1)$),
we only the proof of the increment.\\
 One aspect of the proof is to give meaning to the \textit{a priori}
Inequality (\ref{ineqergJq}). \\

\begin{proof}[Proof of Proposition \ref{lemestq}]{We assume
$\mathcal{Q}(J)$ is true, for some $J\in\llbracket0,m+2\rrbracket$}.

$\circ$ Apply Lemma $\ref{lemallq}$, there exist three positive
constants $C_{J+1}^{1}$, $C_{J+1}^{2}$ and $C_{J+1}^{3}$ such that
for all $\varepsilon\in]0,1]$ and for all time $t\in[0,T_{\varepsilon}^{*}]$:
\vspace{-0.2cm}
 
\begin{equation}
\partial_{t}\left\Vert \widetilde{q}_{\varepsilon}^{R}(t,\cdot)\right\Vert _{H_{(1,\varepsilon)}^{J+1}(\T\times\R)}^{2}\leq\psi_{\varepsilon}^{J+1}(t)\left\Vert \widetilde{q}_{\varepsilon}^{R}(t,\cdot)\right\Vert _{\overset{\circ}{H}{}_{(1,\varepsilon)}^{J+1}}^{2}+\varphi_{\varepsilon}^{J+1}(t),\label{eqqsimp}
\end{equation}
 with for all $\varepsilon\in]0,1]$ and for all time $t\in[0,T_{\varepsilon}^{*}]$:
\begin{align*}
 & \psi_{\varepsilon}^{J+1}(t):=2C_{J+1}^{1}\left(1+\left\Vert \widetilde{v}_{\varepsilon}^{R}(t,\cdot)\right\Vert _{H^{m+4}}^{2}\right),\\
 & \varphi_{\varepsilon}^{J+1}(t):=2C_{J+1}^{2}\left(1+\left\Vert \widetilde{v}_{\varepsilon}^{R}(t,\cdot)\right\Vert _{H^{m+4}}^{2}\right)\left\Vert \widetilde{q}_{\varepsilon}^{R}(t,\cdot)\right\Vert _{H_{(1,\varepsilon)}^{J}}^{2}+2C_{J+1}^{3}\left(\left\Vert \widetilde{v}_{\varepsilon}^{R}(s,\cdot)\right\Vert _{H^{m+4}}^{2}+\varepsilon^{2(N-R)}\right).
\end{align*}

$\circ$ On the one hand, since $\widetilde{v}_{\varepsilon}^{R}$
is in $L_{t}^{2}H_{\theta,y}^{m+4}$ (see Proposition \ref{propestu}),
the family $\{\psi_{\varepsilon}^{J+1}\}_{\varepsilon\in]0,\varepsilon_{d}]}$
is bounded in $L^{1}$: 
\begin{align*}
\forall\,\varepsilon\in]0,\varepsilon_{d}],\ \forall\, t\in[0,T_{\varepsilon}^{*}],\qquad\int_{0}^{t}\psi_{\varepsilon}^{J+1}(s)ds\lesssim1.
\end{align*}
 On the other hand for all $\varepsilon\in]0,\varepsilon_{d}]$, applying
the assumption of induction $\mathcal{Q}(J)$ then Proposition~\ref{propestu},
the function $\varphi_{\varepsilon}^{J+1}$ is bounded in $L^{1}([0,t])$
for any $t\in[0,t_{\varepsilon}^{*}]$: 
\begin{align*}
\int_{0}^{t}\varphi_{\varepsilon}^{J+1}(s)ds & \lesssim\int_{0}^{t}\left(\left(1+\left\Vert \widetilde{v}_{\varepsilon}^{R}(s,\cdot)\right\Vert _{H^{m+4}}^{2}\right)\left\Vert \widetilde{q}_{\varepsilon}^{R}(s,\cdot)\right\Vert _{H_{(1,\varepsilon)}^{J}}^{2}\right.+\left.\left(\left\Vert \widetilde{v}_{\varepsilon}^{R}(s,\cdot)\right\Vert _{H^{m+4}}^{2}+\varepsilon^{2(N-R)}\right)\right)ds\,,\\
 & \lesssim\,\int_{0}^{t}\left(\varepsilon^{2w_{m}}+\left\Vert v_{\varepsilon}^{R}(s,\cdot)\right\Vert _{H^{m+4}}^{2}\right)\,\, ds\lesssim\,\varepsilon^{2w_{m}}t.
\end{align*}
 We can integrate the Inequality \eqref{eqqsimp} with respect to
the time and apply the Gronwall Lemma. For all $\varepsilon\in]0,\varepsilon_{d}]$
and for all time $t\in[0,T_{\varepsilon}^{*}]$: 
\[
\left\Vert \widetilde{q}_{\varepsilon}^{R}(t,\cdot)\right\Vert _{\overset{\circ}{H}{}_{(1,\varepsilon)}^{J+1}}^{2}\leq\sup_{s\in[0,t]}(\varphi_{\varepsilon}^{J+1}(s))\exp\left({\int_{0}^{t}\psi_{\varepsilon}^{J+1}(s)ds}\right)\lesssim\, t\,\varepsilon^{2w_{m}}\,.
\]
 It proves the induction. \end{proof}

\bibliographystyle{plain}
\bibliography{biblio}

\begin{thebibliography}{10}

\bibitem{MR2533930}
Thomas Alazard and R{\'e}mi Carles.
\newblock Supercritical geometric optics for nonlinear {S}chr\"odinger
  equations.
\newblock {\em Arch. Ration. Mech. Anal.}, 194(1):315--347, 2009.

\bibitem{MR802803}
C.~Bardos, R.~Santos, and R.~Sentis.
\newblock Diffusion approximation and computation of the critical size.
\newblock In {\em Numerical solutions of nonlinear problems ({R}ocquencourt,
  1983)}, pages 1--39. INRIA, Rocquencourt, 1984.

\bibitem{MR2296806}
Didier Bresch, Beno{\^{\i}}t Desjardins, and David G{\'e}rard-Varet.
\newblock On compressible {N}avier-{S}tokes equations with density dependent
  viscosities in bounded domains.
\newblock {\em J. Math. Pures Appl. (9)}, 87(2):227--235, 2007.

\bibitem{MR2182053}
Christophe Cheverry.
\newblock Sur la propagation de quasi-singulariti\'es.
\newblock In {\em S\'eminaire: \'{E}quations aux {D}\'eriv\'ees {P}artielles.
  2004--2005}, pages Exp. No. VIII, 20. \'Ecole Polytech., Palaiseau, 2005.

\bibitem{MR2233700}
Christophe Cheverry.
\newblock Cascade of phases in turbulent flows.
\newblock {\em Bull. Soc. Math. France}, 134(1):33--82, 2006.

\bibitem{MR2032985}
Christophe Cheverry, Olivier Gu{\`e}s, and Guy M{\'e}tivier.
\newblock Oscillations fortes sur un champ lin\'eairement d\'eg\'en\'er\'e.
\newblock {\em Ann. Sci. \'Ecole Norm. Sup. (4)}, 36(5):691--745, 2003.

\bibitem{che2008}
Cheverry Christophe.
\newblock A deterministic model for the propagation of turbulent oscillations.
\newblock {\em J. Differential Equations}, 247:2637--2679, september 2009.

\bibitem{MR1803225}
P.~Degond, T.~Goudon, and F.~Poupaud.
\newblock Diffusion limit for nonhomogeneous and non-micro-reversible
  processes.
\newblock {\em Indiana Univ. Math. J.}, 49(3):1175--1198, 2000.

\bibitem{MR906919}
Pierre Degond and Sylvie Mas-Gallic.
\newblock Existence of solutions and diffusion approximation for a model
  {F}okker-{P}lanck equation.
\newblock In {\em Proceedings of the conference on mathematical methods applied
  to kinetic equations ({P}aris, 1985)}, volume~16, pages 589--636, 1987.

\bibitem{MR2055317}
Vincent Giovangigli and Marc Massot.
\newblock Entropic structure of multicomponent reactive flows with partial
  equilibrium reduced chemistry.
\newblock {\em Math. Methods Appl. Sci.}, 27(7):739--768, 2004.

\bibitem{MR892024}
David Hoff and Joel Smoller.
\newblock Global existence for systems of parabolic conservation laws in
  several space variables.
\newblock {\em J. Differential Equations}, 68(2):210--220, 1987.

\bibitem{MR1637634}
Pierre-Louis Lions.
\newblock {\em Mathematical topics in fluid mechanics. {V}ol. 2}, volume~10 of
  {\em Oxford Lecture Series in Mathematics and its Applications}.
\newblock The Clarendon Press Oxford University Press, New York, 1998.
\newblock Compressible models, Oxford Science Publications.

\bibitem{MR928191}
Tetu Makino, Seiji Ukai, and Shuichi Kawashima.
\newblock On compactly supported solutions of the compressible {E}uler
  equation.
\newblock In {\em Recent topics in nonlinear {PDE}, {III} ({T}okyo, 1986)},
  volume 148 of {\em North-Holland Math. Stud.}, pages 173--183. North-Holland,
  Amsterdam, 1987.

\bibitem{MR804006}
D.~W. McLaughlin, G.~C. Papanicolaou, and O.~R. Pironneau.
\newblock Convection of microstructure and related problems.
\newblock {\em SIAM J. Appl. Math.}, 45(5):780--797, 1985.

\bibitem{MR1127004}
F.~Poupaud.
\newblock Diffusion approximation of the linear semiconductor {B}oltzmann
  equation: analysis of boundary layers.
\newblock {\em Asymptotic Anal.}, 4(4):293--317, 1991.

\end{thebibliography}

\end{document}